\title[Abstract homomorphisms from some topological groups]{Abstract homomorphisms from some topological groups to acylindrically hyperbolic groups}
\newtheorem{theorem}{Theorem}
\newtheorem{bigtheorem}{Theorem}
\numberwithin{theorem}{section}
\newtheorem{corollary}[theorem]{Corollary}
\newtheorem{proposition}[theorem]{Proposition}
\newtheorem{lemma}[theorem]{Lemma}
\theoremstyle{definition}\newtheorem{definition}[theorem]{Definition}
\theoremstyle{definition}
\theoremstyle{definition}\newtheorem{example}[theorem]{Example}
\theoremstyle{definition}\newtheorem{remark}[theorem]{Remark}
\theoremstyle{definition}
\theoremstyle{definition}
\theoremstyle{definition}
\theoremstyle{definition}
\theoremstyle{definition}\newtheorem{notation}[theorem]{Notation}
\theoremstyle{definition}\newtheorem{constants}[theorem]{Constants}
\newcommand{\HEG}{\operatorname{HEG}}
\newcommand{\Ell}{\operatorname{Ell}}
\newcommand{\Lox}{\operatorname{Lox}}
\newcommand{\Rad}{\operatorname{Rad}}
\newcommand{\CRad}{\operatorname{CRad}}
\newcommand{ \Rel}{\operatorname{Rel}}
\newcommand{\MCG}{\operatorname{Mod}}
\newcommand{\PMCG}{\operatorname{PMod}}
\newcommand{\C}{\operatorname{C}}
\newcommand{\SL}{\operatorname{SL}}
\newcommand{\Out}{\operatorname{Out}}
\newcommand{\im}{\operatorname{im}}
\begin{document}

\author{Oleg Bogopolski}
\address{{Sobolev Institute of Mathematics of Siberian Branch of Russian Academy
of Sciences, Novosibirsk, Russia}\newline
{and D\"{u}sseldorf University, Germany}}
\email{Oleg$\_$Bogopolski@yahoo.com}

\author[Samuel M. Corson]{Samuel M. Corson}
\address{Instituto de Ciencias Matem\'aticas CSIC-UAM-UC3M-UCM, 28049 Madrid, Spain.}
\email{sammyc973@gmail.com}

\keywords{acylindrically hyperbolic group, completely metrizable group,
locally compact Hausdorff group, mapping class group, automatic continuity,
Hawaiian earring group, fundamental group of graph of groups,  relatively hyperbolic group, universal acylindrical action}
\subjclass[2010]{Primary 20F65, 20F67; Secondary 20F70, 54H11, 57M07}
\thanks{The work of the second author is supported by European Research Council grant PCG-336983 and by the Severo Ochoa Programme for Centres of Excellence in R\&D SEV-20150554.}

\begin{abstract}
We describe homomorphisms $\varphi:H\rightarrow G$ for which the codomain is acylindrically hyperbolic and the domain is a topological group which is either completely metrizable or locally countably compact Hausdorff.  It is shown that, in a certain sense,
either the image of $\varphi$ is small or $\varphi$ is almost continuous.
We also describe homomorphisms from the Hawaiian earring group to $G$ as above.

We prove a more precise result for homomorphisms $\varphi:H\rightarrow \MCG(\Sigma)$, where $H$ as above and
$\MCG(\Sigma)$ is the mapping class group of a connected compact surface $\Sigma$. In this case there exists an open normal subgroup $V\leqslant H$ such that $\varphi(V)$ is finite.
We also prove the analogous statement for homomorphisms $\varphi:H\rightarrow \Out(G)$, where $G$ is a one-ended hyperbolic group.

Some automatic continuity results for relatively hyperbolic groups and fundamental groups of graphs of groups  are also deduced.
As a by-product, we prove that the Hawaiian earring group is acylindrically hyperbolic,
but does not admit any universal acylindrical action on a hyperbolic space.

%We prove combination theorems for these groups with respect to the property of automatic continuity.
\end{abstract}

\maketitle

\setcounter{tocdepth}{1}
\tableofcontents

\begin{section}{Introduction}
It is natural to ask, for which commonly interesting classes $\mathcal{A}$ and $\mathcal{B}$ of topological groups any abstract homomorphism $\varphi: A\rightarrow B$, where $A\in \mathcal{A}$ and $B\in \mathcal{B}$,
is continuous, has finite image, or satisfies a weaker variation of these properties.
%Farb and Masur~\cite{Farb_Masur} proved a variation of Margulis' superrigidity theorem~\cite{Margulis},
%where the domain of $\varphi$ as above and the target group is the mapping class group of a compact surface;
%they proved that in this case the image of $\varphi$ is always finite. Fujuwara~\cite{Fujiwara} generalized
%their result for the outer automorphism group of a one-ended hyperbolic group.
%Bader and Furman~\cite{Bader_Furman}
%generalized Margulis' theorem, by replacing the domain of $\varphi$ by a closed subgroup in a
%certain locally compact second countable group.
In many interesting cases one can see that, informally speaking,

\medskip

\centerline {\it either the image of $\varphi$ is small or $\varphi$ is almost continuous.}

\medskip

%Moreover, one can make these notions precise depending on the classes of groups under consideration.
% with appropriately %where these notions can be defined precisely.

So the famous superrigidity theorem of Margulis~\cite{Margulis} says
that if $G$ is a real semisimple Lie group $G$ of rank at least 2 with finite center and no nontrivial compact factors,
$\Gamma$ is an irreducible lattice in $G$, and $H$ is a simple Lie group,
then any homomorphism $\varphi:\Gamma\rightarrow H$ has finite image or uniquely extends
to a continuous homomorphism $\widetilde{\varphi}:G\rightarrow H$.
%that any abstract homomorphism from an irreducible lattices of certain semisimple Lie groups
%to simple Lie groups either has finite image or uniquely extends
%to a continuous homomorphism from the ambient Lie group.
Some variations of this theorem for non-Lie type groups are given by Farb and Masur~\cite{Farb_Masur},  Fujiwara~\cite{Fujiwara}, Bader and Furman~\cite{Bader_Furman}.

Another important result in this direction is a theorem of Nikolov and Segal~\cite[Theorem 1.1]{NS},
which says that every abstract homomorphism from a finitely generated in topological sense profinite group to any profinite group is continuous.
Papers~\cite{Du, MN, BHK, CK,PS, KV, CC,CK} deal with automatic continuity of abstract homomorphisms from locally compact Hausdorff groups to some discrete groups; papers~\cite{CC,CK} also deal with completely metrizable groups as domains.
%These papers do not cover the case, where the domain of $\varphi$ is an arbitrary {\it countably} locally compact group and the target group of $\varphi$ is an arbitrary acylindrically hyperbolic group.

One of the novelties of this paper is that we describe homomorphisms $\varphi$ whose domain is an arbitrary completely metrizable or {\it countably} locally compact Hausdorff group and whose target group is an arbitrary acylindrically hyperbolic group (in fact we consider larger classes of domains and targets).

Recall that a space is {\it countably compact} if every countable open cover admits a finite subcover, and a space $X$ is {\it locally countably compact}
if for each point $x\in X$ there exists a countably compact neighborhood of $x$ in $X$.
%\marginpar{\tiny Each discrete group $G$ is locally compact. Therefore it is better to say that the structure
%of $G^{\circ}$ is unclear, or that the algebraic-topological structure of $G$ is unclear.}
Clearly the class $\bf{LCC}$ of locally countably compact Hausdorff groups includes the class $\bf{LC}$ of locally compact Hausdorff groups, but the inclusion is strict~\cite{HvRS}.  The class of locally compact groups has been studied for several decades and is well understood (see, for example, the book~\cite{Stroppel}).  By contrast, almost nothing is known regarding the algebraic-topological structure of a locally countably compact Hausdorff group.
It seems that the structure of such groups can be very complicated: there exist two countably compact Hausdorff topological groups
whose direct product is not countably compact ~\cite{HvRS}.
%He asks, whether groups with this property exist if we assume only ZFC.
%Therefore none of the classical structure theorems for locally compact groups
%can be used in the proof of Theorem \ref{metricandlcH}.

%In case a topological group is either completely metrizable or locally countably compact Hausdorff
%we obtain an analogous theorem:

The acylindrically hyperbolic groups- those groups which admit a non-elementary acylindrical action on a hyperbolic space- have become a focus of attention in geometric group theory.  The class of such groups is large enough to admit a great number of groups of classical interest.  Examples include
non-(virtually cyclic) groups that are hyperbolic relative to proper subgroups,
many 3-manifold groups, many groups acting on trees,
non-(virtually cyclic) groups acting properly on proper CAT(0)-spaces and containing rank-one elements, non-cyclic directly indecomposable right-angled Artin groups,
all but finitely many mapping class groups,
groups of deficiency at least~2,
$\Out(F_n)$ for $n\geqslant 2$ (see \cite{Osin_1, Osin_3} for references and historical remarks).

Suppose that a group $G$ acts isometrically on a metric space $S$.
An element $g\in G$ is called {\it elliptic} for this action if some (equivalently every)
orbit of $\langle g\rangle$ is bounded. We introduce the elliptic radical of an arbitrary group $G$.
%Let $\Ell(G\curvearrowright S)$ be the set of elliptic elements of $G$ for the given action of $G$ on $S$.

\begin{definition}
%Let $G$ be an arbitrary group.
The {\it elliptic radical} of $G$, denoted by $\Rad_{\Ell}(G)$, is the set of all elements $g\in G$ which
are elliptic for any acylindrical action of $G$ on any hyperbolic space.
\end{definition}

Note that the elliptic radical of an acylindrically hyperbolic group is, in a certain sense, relatively small
(see Remark~\ref{EllipticRadicalSmall}).
For example, the elliptic radical of a relatively hyperbolic group is contained in the union of the set of elements of finite order and the set of parabolic elements (see Example~\ref{Rad_Rel_Hyp}).
%The elliptic radical of the mapping class group of a closed compact orientable surface
%of genus at least 2 consists of elements of finite order and reducible elements.
%\marginpar{\tiny cite}

Informally, the main Theorem~\ref{metricandlcH} says that
abstract homomorphisms from certain topological groups to acylindrically hyperbolic groups
either have relatively small images or are almost continuous.

\begin{bigtheorem}\label{metricandlcH}
%\marginpar{\tiny Using arguments of Kramer (proofs of his C,D) in the case of locally compact groups,
%one can choose $V$ to be an open subgroup containing $H^{\circ}$.}
Let $H$ be a topological group which is either completely metrizable or locally countably compact Hausdorff.
Let $\varphi:H\rightarrow G$ be an abstract homomorphism, where $G$ is an arbitrary group.
Then either $\varphi(H)$ is contained in the elliptic radical $\Rad_{\Ell}(G)$,
or there exists a normal open subgroup $V\leqslant H$ with finite $\varphi(V)$.
%with finite $\varphi(V)$.
%Let $G$ be a group which acts coboundedly and acylindrically on a hyperbolic space $S$.
%Then for any abstract group homomorphism $\varphi:H \rightarrow G$
%either $\varphi(H)$ acts elliptically on $S$, or there exists a normal open subgroup $V\leqslant H$
%with finite $\varphi(V)$.
%%such that $\varphi(V)$ is a subset of $G$ consisting of only elliptic elements with respect to this action.
\end{bigtheorem}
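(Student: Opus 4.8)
The plan is to assume the first alternative fails and from this manufacture the required open normal subgroup. If $\varphi(H)\not\subseteq\Rad_{\Ell}(G)$ then, by the very definition of the elliptic radical, there is an acylindrical action $\rho$ of $G$ on a hyperbolic space $S$ and an element $a\in H$ for which $\rho(\varphi(a))$ is loxodromic. Write $\psi=\rho\circ\varphi\colon H\to\operatorname{Isom}(S)$ for the resulting isometric action and fix a basepoint $s_0\in S$. The proof then rests on two independent pillars: a \emph{topological} input, extracting from the hypotheses on $H$ an open subgroup on which $\psi$ has bounded orbits, and a \emph{geometric} input, using acylindricity to turn bounded orbits into finiteness.

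For the topological pillar I would study the length function $\ell(h)=d(s_0,\psi(h)s_0)$, which is symmetric and subadditive, so that the sublevel sets $B_n=\{h:\ell(h)\le n\}$ satisfy $B_nB_m\subseteq B_{n+m}$ and $H=\bigcup_{n}B_n$. In the completely metrizable case the Baire category theorem furnishes some non-meager $B_N$; in the locally countably compact Hausdorff case one runs the same covering argument inside a countably compact neighbourhood of the identity, such spaces again being Baire. The aim is to promote this to an open subgroup $U\leqslant H$ on which $\ell$ is bounded, i.e.\ with $\psi(U)$ of bounded orbits. This step is where I expect the \textbf{main obstacle} to lie: since $\varphi$ is only an abstract homomorphism, $\ell$ need not be Borel and the $B_n$ need not have the Baire property, so a direct appeal to a Pettis/Steinhaus argument is unavailable. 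I would instead use a Steinhaus-type automatic continuity argument tailored to these classes of groups, building the neighbourhood of $1$ by a completeness (respectively accumulation) argument directly from non-meagerness and closing it up into a subgroup whose orbits remain bounded.

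With $\psi(U)$ of bounded orbits in hand, I pass to a normal subgroup. Because every element of $\psi(U)$ is then elliptic and the restriction of an acylindrical action to a subgroup is again acylindrical, Osin's classification \cite{Osin_1} — which crucially admits no parabolic type — shows that ``every element elliptic'' is equivalent here to ``bounded orbits''. Carrying out the bounded-orbit extraction against every acylindrical action of $G$, and arranging that a single open $U$ works uniformly, yields $\varphi(U)\subseteq\Rad_{\Ell}(G)$. Since $\Rad_{\Ell}(G)$ is a normal subgroup of $G$, its preimage $V:=\varphi^{-1}(\Rad_{\Ell}(G))$ is a normal subgroup of $H$ containing the open set $U$, hence $V$ is open and normal. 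If $V=H$ we are in the first alternative; otherwise $\varphi(a)\notin\Rad_{\Ell}(G)$ forces $V\neq H$, and it remains to prove that $\varphi(V)$ is finite.

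The geometric pillar is the clean endgame. For the witnessing action, $\psi(V)\trianglelefteq\psi(H)$ has bounded orbits while $\psi(H)$ contains the loxodromic $\psi(a)$. Choosing a coarse center $c\in S$ of $\psi(V)$ with $D:=\sup_{v\in V}d(c,\psi(v)c)<\infty$, normality gives $\psi(a)^{n}\psi(V)\psi(a)^{-n}=\psi(V)$, so each point $\psi(a)^{n}c$ is again a coarse center with the same bound $D$; as $\psi(a)$ is loxodromic, $d(c,\psi(a)^{n}c)\to\infty$. Fixing $n$ with $d(c,\psi(a)^{n}c)\ge R(D)$ for the acylindricity constants $R(D),N(D)$, every $v\in V$ moves both $c$ and $\psi(a)^{n}c$ by at most $D$, so acylindricity bounds $|\psi(V)|\le N(D)$ and $\psi(V)$ is finite. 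The final point requiring care is transferring finiteness of $\psi(V)=\rho(\varphi(V))$ to finiteness of $\varphi(V)$ itself, since a single $\rho$ could collapse part of $\varphi(V)$; I would resolve this by applying the bound across enough acylindrical actions that the genuine image $\varphi(V)\subseteq\Rad_{\Ell}(G)$ is pinned to a finite set, this interplay between the single witnessing action and the full family defining $\Rad_{\Ell}(G)$ being where the bookkeeping concentrates.
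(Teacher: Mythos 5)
There is a genuine gap, and it sits exactly where you flag it: your ``topological pillar'' is the entire content of the theorem, and the Steinhaus-type fix you defer to does not exist. Since $\varphi$ is an arbitrary abstract homomorphism, the sublevel sets $B_n$ of the displacement function are completely arbitrary subsets of $H$; the Baire category theorem does give a non-meager $B_N$, but without the Baire property the Pettis/Steinhaus mechanism genuinely fails (a non-meager set without the Baire property, e.g.\ a Bernstein set, can have $B_N B_N^{-1}$ with empty interior), so no argument from non-meagerness alone can produce the open set. The paper avoids category methods entirely: in Proposition~\ref{weak_metricandlcH} it assumes every neighborhood of $1_H$ contains a pre-loxodromic element and constructs nested infinite products $w_{r,i}=u_r^n(u_{r+1}^n(\cdots(u_i^n)^{m_{i-1}}\cdots)^{m_{r+1}})^{m_r}$ with carefully chosen exponents $m_i$, extracts limits $w_r=u_r^nw_{r+1}^{m_r}$ using completeness of the metric (respectively, a nested sequence of nonempty closed sets in a countably compact neighborhood), and then feeds these into the machinery of Theorem~\ref{thebigone}: Claim~1 there forces each $\varphi(w_r)$ to be loxodromic, and the length estimate of Corollary~\ref{length_power_1} yields $|\varphi(w_1)|\geqslant i(i+1)/2$ for all $i$, a contradiction requiring no measurability whatsoever. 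Note also that this machinery needs a \emph{cobounded} acylindrical action, which is why the paper first invokes Lemma~\ref{TwoRadicals} ($\Rad_{\Ell}=\CRad_{\Ell}$); your witnessing action is not assumed cobounded and you never address this.

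A second concrete error: $\Rad_{\Ell}(G)$ is in general only a conjugation-invariant \emph{set}, not a subgroup. By Example~\ref{Rad_Rel_Hyp}(b), for a hyperbolic group it equals the set of finite-order elements; already for the infinite dihedral group this is the set of involutions together with $1$, whose product contains loxodromic translations. Hence your $V:=\varphi^{-1}(\Rad_{\Ell}(G))$ need not be a subgroup at all, and the step producing an open normal $V$ collapses. Relatedly, your plan to get $\varphi(U)\subseteq\Rad_{\Ell}(G)$ by running the extraction ``against every acylindrical action'' with one $U$ working uniformly has no mechanism behind it, and the paper never needs such uniformity: it fixes one cobounded action in which $\varphi(H)$ has a loxodromic element, obtains an open $\mathcal{O}$ with $\varphi(\mathcal{O}^{(2)})\subseteq\Ell(G\curvearrowright S)$, upgrades elementwise ellipticity to a \emph{uniform} displacement bound via the strengthened Koubi statement (Proposition~\ref{GenerKoubi} --- this is a step your outline skips: elementwise ellipticity of a set does not bound its orbit, which is exactly why $\mathcal{O}^{(2)}$ appears), then sets $U=\mathcal{O}\cap\mathcal{O}^{h^n}$ and applies the acylindricity cardinality bound, which by Definition~\ref{def Bowditch} counts elements of $G$ itself --- so your closing worry about transferring finiteness from $\operatorname{Isom}(S)$ back to $\varphi(V)$ is based on a misreading and evaporates. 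A minimality argument then makes $\varphi(U)$ a normal subgroup, and Lemma~\ref{Normalizer} finishes. Your geometric endgame (coarse centers translated by the loxodromic) is essentially sound and parallels this last stage, but it rests on the two broken steps above.
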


Theorem~\ref{generalisation_of_B}, generalizing Theorem~\ref{metricandlcH}, says that an analogous statement is  valid for all finite index subgroups in completely metrizable or locally countably compact Hausdorff topological groups.

The second main theorem is Theorem~\ref{thebigone}. It describes homomorphisms from the Hawaiian earring group
to acylindrically hyperbolic groups, and its proof is used in the proof of Theorem~\ref{metricandlcH}.
Definitions of the Hawaiian earring group $\HEG$ and its subgroups $\HEG^n$ are given in Section~2.

\begin{bigtheorem}\label{thebigone}
Let $G$ be a group which acts coboundedly and acylindrically on a hyperbolic space $S$. Then for any abstract group homomorphism $\varphi:\HEG\rightarrow G$, there exists a natural number $n$ such that $\varphi (\HEG^n)$ is a subgroup of $G$ consisting of only elliptic elements with respect to this action.
\end{bigtheorem}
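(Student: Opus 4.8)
The plan is to argue by contradiction, assuming that for every $n$ the subgroup $\varphi(\HEG^n)$ contains an element $\varphi(w_n)$ that is not elliptic, and to manufacture from these witnesses a single configuration in $S$ that violates acylindricity. First I would record the geometric inputs. Since the action is cobounded and acylindrical, Osin's trichotomy applies: the action is elliptic, or lineal with $G$ virtually cyclic, or of general type. The elliptic case is immediate, since then every element of $G$ is elliptic and $n=1$ works. In the lineal case $G$ is virtually cyclic, so it admits a finite-index infinite cyclic subgroup and a homomorphism onto $\mathbb{Z}$ whose kernel consists of finite-order, hence elliptic, elements; composing with $\varphi$ and invoking the $n$-slenderness of $\mathbb{Z}$ (every homomorphism from $\HEG$ to $\mathbb{Z}$ kills some tail $\HEG^n$) shows that $\varphi(\HEG^n)$ lands in the finite-order part and is therefore elliptic. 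This reduces matters to the general-type case, where every non-elliptic element is loxodromic, there is no parabolic behaviour, and there is a uniform constant $\tau_0>0$ bounding the translation lengths of loxodromic elements from below.

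Next I would exploit the defining feature of $\HEG$ that has no free-group analogue: infinite products whose supports march off to infinity converge. Concretely, with $w_n\in\HEG^n$ I may form $W=w_1w_2w_3\cdots$, because each individual circle is traversed by only finitely many of the factors; for every $k$ this yields the self-similar factorization $W=P_k\,T_{k+1}$ with $P_k=w_1\cdots w_k$ and tail $T_{k+1}=w_{k+1}w_{k+2}\cdots\in\HEG^{k+1}$. Applying the merely abstract homomorphism $\varphi$ gives the exact identities $\varphi(T_{k+1})=\varphi(P_k)^{-1}\varphi(W)$, and the same construction applies to every sub-product $W_A=\prod_{j\in A}w_j$ for $A\subseteq\mathbb{N}$. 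These algebraic identities are the only bridge between the combinatorics of $\HEG$ and the geometry of the action, precisely because $\varphi$ need not be continuous.

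The heart of the proof is to turn this bridge into a violation of acylindricity. Before forming $W$ I would prepare the witnesses: replacing each $w_n$ by a high power and a suitable conjugate (using that the elementary closure of a loxodromic element is virtually cyclic, and that distinct loxodromic axes ping-pong, via the WPD property supplied by acylindricity), so that consecutive factors have very long and strongly transverse axes to which the standard no-cancellation estimates in a hyperbolic space apply along all partial products. The goal is then to show that the family $\{\varphi(W_A)\}_{A\subseteq\mathbb{N}}$, or the sequence of tails $\{\varphi(T_k)\}$, forces infinitely many distinct elements of $G$ to move a fixed pair of far-apart points of $S$ by a bounded amount, contradicting the acylindricity counting bound; equivalently, to force $\varphi(W)$ to translate along a geodesic by more than any finite quantity, which no single isometry can do.

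I expect the main obstacle to lie exactly here. Because $\varphi$ is only an abstract homomorphism, the tails $\varphi(T_k)$ are a priori geometrically uncontrolled, so the careful estimates on the prepared factors $\varphi(w_n)$ do not transfer automatically to $P_k$, to $T_k$, or to $W$. Overcoming this requires a delicate, Baire-category-flavoured selection of the witnesses, combined with the elementary-closure and WPD structure, so that the exact identities $\varphi(T_{k+1})=\varphi(P_k)^{-1}\varphi(W)$ localize the orbit point $\varphi(T_k)s_0$ well enough to apply acylindricity. Once such a contradiction is secured, no tail $\HEG^n$ can contain a loxodromic witness, which yields the desired $n$ for which $\varphi(\HEG^n)$ consists entirely of elliptic elements.
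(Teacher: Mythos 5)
Your setup is sound and you have correctly located the crux, but the proposal stops exactly where the proof has to begin: you never show that the tail images are loxodromic or that lengths grow, and you explicitly defer this to ``a delicate, Baire-category-flavoured selection'' of witnesses that is not supplied. This is a genuine gap, and the deferred step is the entire content of the theorem. Moreover, the two mechanisms you float are not quite the right ones. No Baire-category or topological selection is available or needed in $\HEG$: the infinite products exist purely combinatorially, and the paper's resolution is algebraic, not topological. And the plain product $W=w_1w_2\cdots$ (or $W_A=\prod_{j\in A}w_j$) with witnesses ``prepared in advance'' by powers, conjugation and ping-pong cannot work: each tail $T_{k+1}$ depends on infinitely many future factors, so no finite amount of advance preparation of the $w_n$ controls $\varphi(T_{k+1})$ — cancellation inside elementary subgroups $E_G(\cdot)$ can in principle make a tail elliptic or even trivial, and your estimates on the prepared factors say nothing about that, as you yourself note.

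The paper circumvents (rather than solves) the tail problem by making the exponents adaptive and the case analysis exhaustive. One takes $u_i\in\HEG^i$ with $U_i=\varphi(u_i)$ loxodromic and forms the \emph{nested} word $w_i=u_i^{n}w_{i+1}^{m_i}$, where $n=NL!$ is built from the uniform constants of the action ($L$ from Lemma~\ref{elem_index}, $N=\max\{C_0,C_1\}$ from Corollaries~\ref{lox-lox} and~\ref{lox-ell}) and $m_i=2\alpha\beta nM_iK_i$ is chosen \emph{after} seeing $U_i$ (via $U_i^{L!}=z_i^{K_i}$ and $M_i\geqslant\max\{|U_i^n|,i+1\}$). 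Then $W_i=U_i^{n}W_{i+1}^{m_i}$, and a four-case analysis shows $W_i\in\Lox(G\curvearrowright S)$ \emph{whatever} the uncontrolled tail $W_{i+1}$ does: if $W_{i+1}$ is loxodromic with $E_G(U_i)\neq E_G(W_{i+1})$, apply Corollary~\ref{lox-lox}; if $E_G(U_i)=E_G(W_{i+1})$, then $W_i$ is a nontrivial power of $z_i$; if $W_{i+1}$ is elliptic with $W_{i+1}^{m_i}\notin E_G(U_i)$, apply Corollary~\ref{lox-ell}; and in the residual case $W_{i+1}^{m_i}\in E_G(U_i)$ the key trick is to look one index ahead — $W_{i+2}$ must also fall in the residual case, forcing $W_{i+1}\in E_G(U_{i+1})$, so $W_{i+1}$ is torsion of order dividing $L!\mid m_i$, whence $W_{i+1}^{m_i}=1$ and $W_i=U_i^n$. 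Finally, the contradiction is not an acylindricity counting violation for infinitely many group elements, but the statement that the single element $W_1$ would satisfy $|W_1|_{s_0}\geqslant i(i+1)/2$ for all $i$: Corollary~\ref{length_power_1} (resting on stable norms, Lemma~\ref{LemmaAboutAxes} and Bowditch's injectivity-radius bound, Lemma~\ref{Bow}) gives the uniform growth $|W_{i+1}^{m_i}|_{s_0}\geqslant |W_{i+1}|_{s_0}+m_i/\alpha-\beta$, and since $|U_i^n|_{s_0}\leqslant m_i/(2\alpha)$ one gets $|W_i|_{s_0}\geqslant |W_{i+1}|_{s_0}+i$ and telescopes. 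Your lineal-case shortcut is also incomplete as stated (a virtually cyclic $G$ may surject onto the infinite dihedral group rather than $\mathbb{Z}$, and $\mathbb{Z}/2\mathbb{Z}$ is not n-slender), but this is moot since the paper's argument needs no trichotomy reduction at all.
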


%Let $\mathcal{A}(G)$ be the class of all acylindrical actions of $G$ on hyperbolic spaces.
%We define the {\it elliptic radical} of $G$ by
%$$
%\Rad_{\Ell}(G)=\underset{(G\curvearrowright S)\in \mathcal{A}(G)}{\bigcap} \Ell(G\curvearrowright S).
%$$

%Using a result of Hajnal and Juh$\acute{\rm a}$sz~\cite{Hajnal_Juhasz},
%One can have, for example, the $H$ in the hypotheses be a locally compact Hausdorff group.
%However (under ZFC + Countinuum hypothesis)

%\medskip

Results like Theorems~\ref{metricandlcH} and~\ref{thebigone} are called \textit{atomic properties} in the literature (see \cite{Ed2} and \cite{N}).
Note that in Theorems~~\ref{metricandlcH} and~\ref{thebigone} we are not requiring the group $G$
to be acyllindrically hyperbolic.  For example any group acts coboundedly and acylindrically on a point,
but the set of elliptic elements with respect to this action is all of $G$.

\begin{remark}
The conclusions in Theorems~\ref{thebigone} and~\ref{metricandlcH} cannot be strengthened to say that $\ker(\varphi)$ is open, as seen by easy examples below.

The free product $G = F_2 \ast (\mathbb{Z}/2\mathbb{Z})$ of the free group of rank $2$ and the group of order $2$ is acylindrically hyperbolic.  There exists a homomorphism from $\HEG$ to $\mathbb{Z}/2\mathbb{Z}$ whose kernel does not contain any $\HEG^n$ (see, for example, \cite{CS}), hence the kernel is not open, and so such a homomorphism exists to $G$.

Also, by endowing the direct product $\prod_{\mathbb{N}}(\mathbb{Z}/2\mathbb{Z})$ with the Tychonov topology (each factor being discrete) we obtain a topological group which is completely metrizable and compact Hausdorff.
Let $e_i$ be the element of $\prod_{\mathbb{N}}(\mathbb{Z}/2\mathbb{Z})$ with $\operatorname{supp}(e_i)=\{i\}$.
Then any homomorphism $\varphi: \prod_{\mathbb{N}}(\mathbb{Z}/2\mathbb{Z})\rightarrow \mathbb{Z}/2\mathbb{Z}$ which
sends all elements $e_i$ to the nontrivial element of $\mathbb{Z}/2\mathbb{Z}$, does not have open kernel. Therefore there is
a homomorphism from $\prod_{\mathbb{N}}(\mathbb{Z}/2\mathbb{Z})$ to $G$ which does not have open kernel.

\end{remark}

%\begin{remark}\marginpar{\tiny Delete}
%Note that in Theorems~\ref{metricandlcH} and~\ref{MCG_1} the image of $\varphi$ can be infinite: Consider $id:G\rightarrow G$ with the discrete topology on $G$. However these theorems show that either $\varphi:H\rightarrow G$ is almost continuous,
%or $im(\varphi)$ is {\it relatively small}.
%\end{remark}

\begin{remark}
Theorem~\ref{metricandlcH} implies that if $G$ is an acylindrically hyperbolic group, then
$G$ cannot be given a nondiscrete completely metrizable or locally compact Hausdorff group topology.
In particular, $\HEG$ cannot be given a nondiscrete topology of this kind.
\end{remark}

%\medskip

An exact computation of the elliptic radical is, in general, a difficult problem.
However, one can compute this radical in some important cases which we list below.
Recall that an acylindrical action of a group $G$ on a hyperbolic space $S$ is called {\it universal acylindrical} if the set of elliptic elements for this action coincides with the elliptic radical $\Rad_{\Ell}(G)$.
%Let $I$ be the set of all acylindrical actions of $G$ on hyperbolic spaces.
%Recall that $G$ admits a {\it universal acylindrical action on a hyperbolic space} if
%$\Rad_{\Ell}(G)=\Ell(G\curvearrowright S)$ for some action $(G\curvearrowright S)\in I$.
The universal (cobounded) acylindrical actions of groups on hyperbolic spaces were first introduced by Osin in~\cite{Osin_1} (using another terminology, see Section~6) and studied in~\cite{Abbott,ABO,ABD}. Many interesting groups admit such an action.
Among them are the following.

\begin{enumerate}
\item[(a)] Hyperbolic groups.

\item[(b)] Relatively hyperbolic groups with respect to a collection of subgroups $\{H_{\lambda}\}_{\lambda\in \Lambda}$ such that none of the $H_{\lambda}$'s is virtually cyclic or acylindrically hyperbolic.

\item[(c)] Mapping class groups of compact surfaces.

\item[(d)] Right-angled Artin groups.

\item[(e)] Fundamental groups of compact orientable 3-manifolds with empty or toroidal boundary.

\item[(f)] Hierarchically hyperbolic groups, see~\cite[Theorem A]{ABD}. This includes groups that act properly and cocompactly on a proper $CAT(0)$-complex, in particular, right-angled Artin groups and right-angled Coxeter groups.
\end{enumerate}

\medskip

\noindent
Cases (a)-(e) were considered in~\cite[Example 6.5]{Osin_1} and in~\cite[Theorem 2.18]{ABO}. Case~(f)
was considered in~\cite[Theorem A]{ABD}. The history and the sequence of appearance of these results are illuminated in~\cite{ABO,ABD}.

%relatively hyperbolic groups with respect
%to a collection of subgroups $\{H_{\lambda}\}_{\lambda\in \Lambda}$ and none of the $H_{\lambda}$'s
%is virtually cyclic or acylindrically hyperbolic,
%then $G$ acts universal acylindrically on the relative Cayley graph $\Gamma(G,X\cup \mathcal{H})$.

%\end{remark}

\medskip

%The elliptic radical of the mapping class group of an orientable surface
%In some cases infinite finite index subgroups of $\Rad_{\Ell}(G)$ have smaller elliptical radical.
%This is the case for the mapping class group. In this way we prove the following theorem.

Approaching to Theorem~\ref{MCG_1}, we focus on the case (c).
Let $\Sigma_{g,b}$ be an orientable surface of genus $g\geqslant 0$ with $b\geqslant 0$ boundary components.
In Subsection 9.1, we recall definitions of the mapping class group $\MCG(\Sigma_{g,b})$ and of
the curve graph ${\rm C}(\Sigma_{g,b})$.
It is known that if $3g+b-4>0$ then the curve graph $\C(\Sigma_{g,b})$ is hyperbolic (Masur-Minsky~\cite{MM}),
the action of $\MCG(\Sigma_{g,b})$ on  $\C(\Sigma_{g,b})$ is acylindrical (Bowditch~\cite{Bowditch}), and that the elliptic elements of this action are precisely elements of finite order and reducible elements (Masur-Minsky~\cite{MM}).

In this case Theorem~\ref{metricandlcH} says that for any homomorphism
$\varphi:H\rightarrow \MCG(\Sigma_{g,b})$, where $H$ as above, there is an open normal subgroup $V$ of $H$ such that $\varphi(V)$ consists of elements of finite order and reducible elements.
Theorem~\ref{MCG_1} strengthens this statement.

\begin{bigtheorem}\label{MCG_1}
Let $H$ be a topological group which is either completely metrizable or locally countably compact Hausdorff.
Let $\Sigma$ be a connected compact surface (possibly non-orientable and possibly with boundary components).
Then for any homomorphism $\varphi:H\rightarrow \MCG(\Sigma)$ there exists an open normal subgroup $V\leqslant H$ such that $\varphi(V)$ is finite.
\end{bigtheorem}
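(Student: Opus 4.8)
The plan is to argue by induction on a complexity $\xi(\Sigma)$ of connected compact surfaces---say the number of curves in a pants decomposition---which is strictly decreased by cutting along any essential simple closed curve. The non-orientable case I would reduce to the orientable one by passing to the orientation double cover $\widetilde{\Sigma}$: since $\MCG(\Sigma)$ maps to $\MCG(\widetilde{\Sigma})$ with finite kernel (as the centraliser of the deck involution), a subgroup with finite image in $\MCG(\widetilde{\Sigma})$ has finite image in $\MCG(\Sigma)$. For the base of the induction---surfaces with $\MCG(\Sigma)$ finite or virtually cyclic---I would apply Theorem~\ref{metricandlcH} directly: such a group acts properly on a point or on a line, so its elliptic radical equals its torsion, and a torsion subgroup of a virtually cyclic group is finite; hence either $\varphi(H)\subseteq\Rad_{\Ell}(\MCG(\Sigma))$ forces $\varphi(H)$ finite, or Theorem~\ref{metricandlcH} supplies the open normal $V$ outright.

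For the inductive step I would first invoke Theorem~\ref{metricandlcH}. Because the action of $\MCG(\Sigma)$ on its curve graph is universal acylindrical (case (c)), $\Rad_{\Ell}(\MCG(\Sigma))$ is precisely the set of periodic and reducible classes. Thus either the theorem already yields an open normal $V$ with $\varphi(V)$ finite, or $\varphi(H)$ contains no pseudo-Anosov element. In the latter case Ivanov's subgroup dichotomy (an infinite irreducible subgroup of a mapping class group contains a pseudo-Anosov) shows that $\varphi(H)$ is either finite---and we are done with $V=H$---or preserves an essential multicurve.

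The heart of the matter is the reducible case, and here is where the main obstacle lies: a Dehn twist is reducible, hence belongs to $\Rad_{\Ell}(\MCG(\Sigma))$, so Theorem~\ref{metricandlcH} gives no purchase on the twisting directions; and one cannot simply pass to the subsurface mapping class groups, since the target $\MCG(\Sigma\setminus\sigma)$ of a full cut is essentially a product, whose elliptic radical is far too large. I would circumvent this by cutting one curve at a time. Using Theorem~\ref{generalisation_of_B} to descend to the finite-index subgroup of $H$ whose image fixes a single curve $c$ of the invariant multicurve together with the relevant complementary components and boundary---while still producing subgroups open and normal in $H$---I obtain the restriction homomorphism to $\MCG(\Sigma\setminus c)$, whose kernel is the rank-one group $\langle T_c\rangle\cong\mathbb{Z}$. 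The surface $\Sigma\setminus c$ is connected (if $c$ is non-separating) or has two connected pieces (if $c$ is separating), each of strictly smaller complexity.

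Applying the induction hypothesis to the one or two connected pieces yields open normal subgroups of $H$ on which the image in each $\MCG(\Sigma\setminus c)$-factor is finite; intersecting them leaves the image inside a group $\Gamma$ that is an extension of $\langle T_c\rangle\cong\mathbb{Z}$ by a finite group, i.e.\ a virtually cyclic group. Because the twisting part is now of rank one, Theorem~\ref{metricandlcH} applied to this virtually cyclic target regains traction: $\Rad_{\Ell}(\Gamma)$ is again the torsion of $\Gamma$, a torsion subgroup of a virtually cyclic group is finite, and so either the image is already finite or the theorem provides the final open normal subgroup. Intersecting the finitely many open normal subgroups produced along the way gives the required $V\trianglelefteq H$ with $\varphi(V)$ finite. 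The delicate points I expect to occupy most of the work are the repeated use of Theorem~\ref{generalisation_of_B} to keep every subgroup open and normal in $H$ while fixing curves and components, and the bookkeeping of the Nielsen--Thurston reduction structure together with its analogue for non-orientable surfaces.
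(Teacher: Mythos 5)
Your proposal is essentially the paper's own strategy: reduce the non-orientable case through the orientation double cover (the paper uses Szepietowski's embedding, Lemma~\ref{nonorMod}), induct on complexity, combine Theorem~\ref{generalisation_of_B} with the cobounded acylindrical action on the curve graph (Theorem~\ref{Masur-Minsky, Bowditch}) and Ivanov's dichotomy (Theorem~\ref{Ivanov}) to reach the reducible case, and then cut along the invariant multicurve, absorbing the Dehn-twist kernel. The one substantive variation is in the reducible step: the paper cuts along the \emph{whole} multicurve $M$ at once, obtaining the exact sequence $1\to A\to\PMCG(\Sigma,M)\to\prod_i\PMCG(\Sigma_i)\to 1$ with $A\cong\mathbb{Z}^s$ free abelian of finite rank, and handles $A$ through a formalism of ``$H$-admissible'' groups closed under subgroups, commensurability and extensions (Proposition~\ref{extensions}); $\mathbb{Z}^s$ is admissible because $\mathbb{Z}$ is hyperbolic. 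Your one-curve-at-a-time cut, ending with Theorem~\ref{generalisation_of_B} applied to the virtually cyclic extension of $\langle T_c\rangle$ by a finite group, is an unwinding of the same mechanism: your endgame is precisely the $s=1$ case, since a virtually cyclic group is hyperbolic and so has torsion elliptic radical (Example~\ref{Rad_Rel_Hyp}(b)). Your stated worry that the full cut produces a product ``whose elliptic radical is far too large'' is misplaced: the paper never applies Theorem~\ref{metricandlcH} to the product directly, and the extension-closure of admissibility makes the full cut harmless. You also correctly sense the real technical burden: a finite-index subgroup of $H$ need not be open, closed, or completely metrizable in its own right, so the induction hypothesis must itself be the finite-index statement (this is why the paper proves Theorem~\ref{MCG_2} rather than Theorem~\ref{MCG_1} by induction), and your repeated appeals to Theorem~\ref{generalisation_of_B} are exactly what that formalism systematizes.

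One concrete error: your base case is misidentified. The surfaces excluded from the curve-graph machinery are those with $3g+b-4\leqslant 0$, and their mapping class groups are virtually \emph{free} but in general not virtually cyclic: $\PMCG(\Sigma_{0,4})\cong F_2$ and $\PMCG(\Sigma_{1,1})\cong\SL_2(\mathbb{Z})$ (Remark~\ref{exceptional cases}). These cases cannot be deferred to your inductive step --- they contain essential curves, but Theorem~\ref{Masur-Minsky, Bowditch} does not apply to them --- so ``acts properly on a point or on a line'' does not cover them. The repair is the same device one notch up: virtually free groups are hyperbolic, their elliptic radical is their set of torsion elements, and a subgroup contained in the torsion of a hyperbolic group is finite (bounded orbits as in Remark~\ref{EllipticRadicalSmall}(b), together with properness of the Cayley-graph action); this is exactly how the paper concludes in Proposition~\ref{extensions}(3) that all hyperbolic groups are admissible. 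Finally, to obtain $V$ normal in $H$ as Theorem~\ref{MCG_1} demands, intersecting your finitely many open subgroups is not enough: as in the paper, first choose $V$ with $|\varphi(V)|$ minimal among open subgroups, which forces $\varphi(V)$ to be normal in $\varphi(H)$, and then replace $V$ by $\varphi^{-1}(\varphi(V))$, which is open, normal in $H$, and has the same finite image.
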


\medskip

The proof of this theorem uses a generalization of Theorem~\ref{metricandlcH} (see Theorem~\ref{generalisation_of_B}) and a theorem of Ivanov (see~\cite[Theorem 1]{Ivanov_2}) on the structure of subgroups of the mapping class groups.
\medskip

\begin{remark}
A homomorphism between topological groups $\varphi: H\rightarrow G$ is called {\it virtually continuous}
if there exists a finite index subgroup $H_1$ of $H$ such that the restriction $\varphi\upharpoonright H_1$ is continuous.
Using the fact that $\MCG(\Sigma)$ has a finite index torsion-free subgroup (see~\cite[Theorem 3]{Ivanov_2}), one can prove
that the statement of Theorem~\ref{MCG_1} is equivalent to the assertion that any homomorphism $\varphi:H\rightarrow \MCG(\Sigma)$
is virtually continuous, where $\MCG(\Sigma)$ is considered as a discrete topological group.

%2) Let $H^{\circ}$ be the maximal connected component of $H$ containing $1_H$.
%Wrong:Every open subgroup contains $H^{0}$.
%Since $H^{\circ}$ is closed, we can apply Theorem B to $H^{\circ}$ to see that $\varphi(H^{\circ})$ is finite.

\medskip

%{\it Proof.} Wrong: $H^{\circ}$ is connected in the induced topology. Then every open subgroup of $H^{\circ}$
%must coincide with $H^{\circ}$. This also proves that $V$ contains $H^{\circ}$.

\end{remark}

%\medskip

Note that $\MCG^{\pm}(\Sigma)=\Out(\pi_1(\Sigma))$ if $\Sigma$ is orientable and closed
(here $\MCG^{\pm}(\Sigma)$ is the full mapping class group of $\Sigma$).
%In general, $\MCG(\Sigma)$ is a subgroup of $Out(\pi_1(\Sigma))$.
Therefore it is natural to ask, whether Theorem~\ref{MCG_1} holds for $\varphi:H\rightarrow \Out(G)$,
where $G$ is a one-ended hyperbolic group.
Combining Theorem~C with a result of Levitt~\cite{Levitt} about the structure of $\Out(G)$, we deduce the following theorem.

%In~\cite[Section 3]{Fujiwara}, Fujiwarja noticed that the JSJ-decomposition of such $G$
%(described by Bowditch in~\cite{Bowditch_1}) implies that $Out(G)$ has a subgroup of finite index which is
%a direct product of a finitely generated free abelian group and finitely many mapping class groups
%of 2-orbifolds of finite volume. Moreover, Fujiwara proved that the mapping class group of such an orbifold embeds
%to the mapping class of a compact surface. Combining this with Theorem C we deduce the following theorem.

\begin{bigtheorem}\label{Out}
Let $H$ be a topological group which is either completely metrizable or locally countably compact Hausdorff.
Let $G$ be a one-ended hyperbolic group.
Then for any homomorphism $\varphi:H\rightarrow \Out(G)$
%almost factors through the canonical projection $H\rightarrow H/H_0$.
there exists an open normal subgroup $V\leqslant H$ such that $\varphi(V)$ is finite.
\end{bigtheorem}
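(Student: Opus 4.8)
The plan is to reduce the statement to two situations already under control: homomorphisms into mapping class groups, governed by Theorem~\ref{MCG_1}, and homomorphisms into finitely generated free abelian groups. The bridge is Levitt's description of $\Out(G)$ for a one-ended hyperbolic group $G$~\cite{Levitt}: after passing to a finite-index subgroup $\Out_1(G)\leqslant\Out(G)$ (which I may take normal by replacing it with its normal core), there is an exact sequence
\[
1\longrightarrow \mathcal T\longrightarrow \Out_1(G)\stackrel{\rho}{\longrightarrow}\prod_{i=1}^{m}\MCG(\Sigma_i),
\]
where $\mathcal T$ is a finitely generated virtually abelian group (the group of twists supported on the edges of the canonical cyclic JSJ decomposition) and each $\Sigma_i$ is a compact surface arising from a quadratically hanging vertex. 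Thus $\Out(G)$ is virtually an extension of a subgroup of a product of mapping class groups by a finitely generated virtually abelian group, and the theorem should follow by treating the two layers in turn.

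First I would set $H_1=\varphi^{-1}(\Out_1(G))$, a finite-index normal subgroup of $H$, and form $\psi=\rho\circ(\varphi\upharpoonright H_1)\colon H_1\to\prod_i\MCG(\Sigma_i)$. Projecting to each factor gives maps $\psi_i\colon H_1\to\MCG(\Sigma_i)$, to which I apply the finite-index version of Theorem~\ref{MCG_1} furnished by Theorem~\ref{generalisation_of_B}: the argument of Theorem~\ref{MCG_1}, namely Theorem~\ref{generalisation_of_B} combined with Ivanov's subgroup theorem~\cite{Ivanov_2}, is insensitive to replacing $H$ by a finite-index subgroup. Intersecting the finitely many resulting open normal subgroups produces an open normal $W$ on which $\psi$ has finite image. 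Passing to $N=\ker(\psi\upharpoonright W)$, a finite-index subgroup of $W$, we obtain $\varphi(N)\subseteq\mathcal T$, which reduces the problem to a homomorphism into a finitely generated virtually abelian group.

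For this abelian layer Theorem~\ref{metricandlcH} gives no information: the elliptic radical $\Rad_{\Ell}(\mathcal T)$ can be all of $\mathcal T$, since whenever $\mathcal T$ has a free abelian subgroup of rank at least two, every element is elliptic for every acylindrical action on a hyperbolic space (acylindricity forces the centralizer of a loxodromic to be virtually cyclic, which rules out loxodromics here). I would instead invoke Dudley's automatic continuity theorem, by which any homomorphism from a completely metrizable or locally compact Hausdorff group into a finitely generated free abelian group is continuous, hence has open kernel. After passing to a finite-index free abelian subgroup of $\mathcal T$ and pulling back, this kills the twist contribution on an open subgroup and, together with the previous step, should yield the desired open normal $V$ with $\varphi(V)$ finite.

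The hard part will be neither Levitt's input nor the mapping class group step (Theorem~\ref{MCG_1}), both of which are essentially available, but the bookkeeping of \emph{openness in $H$} as one descends through the finite-index subgroups $H_1$, $W$, $N$ and their successive restrictions: a subgroup open in a non-open finite-index subgroup need not be open in $H$, so at each stage I must call on the finite-index generalization Theorem~\ref{generalisation_of_B}, whose conclusion is phrased relative to the ambient completely metrizable or locally countably compact Hausdorff group, rather than naively transporting openness across the inclusions. One further technical point deserves attention: Dudley's theorem, as stated, covers the completely metrizable and locally compact Hausdorff cases, so for the locally countably compact Hausdorff domains I would need to re-run, for free abelian targets, the machinery developed in the proof of Theorem~\ref{metricandlcH} to obtain the same open-kernel conclusion.
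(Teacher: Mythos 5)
Your proposal is correct in outline and follows the same skeleton as the paper's proof: Levitt's structure theorem for $\Out(G)$, a finite-index mapping class group theorem (the paper's Theorem~\ref{MCG_2}, obtained exactly as you describe from Theorem~\ref{generalisation_of_B} plus Ivanov's theorem \cite{Ivanov_2}), and then a separate treatment of the virtually abelian twist layer. The two routes diverge in packaging and in the abelian step. The paper automates your ``openness bookkeeping'' by introducing the class of \emph{$H$-admissible} groups and proving its closure properties (Proposition~\ref{extensions}): admissibility passes to subgroups, finite-index overgroups, and extensions, and an open subgroup $U\leqslant H$ is again completely metrizable or locally countably compact Hausdorff, so one can descend through your chain $H_1$, $W$, $N$ while staying open in the ambient group --- this is precisely the induction you sketch, done once and for all. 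For the twist layer your diagnosis is right (for virtually $\mathbb{Z}^s$ with $s\geqslant 2$ the elliptic radical is everything, so Theorem~\ref{metricandlcH} says nothing), but the paper never invokes Dudley \cite{Du}: the group $\mathbb{Z}$ is hyperbolic with \emph{trivial} elliptic radical (its translation action on a line is cobounded and acylindrical with loxodromic generator), so Theorem~\ref{generalisation_of_B} applies directly to $\mathbb{Z}$, and the extension closure of Proposition~\ref{extensions} (equivalently, applying Theorem~\ref{generalisation_of_B} coordinatewise to the projections $\mathbb{Z}^s\rightarrow\mathbb{Z}$) handles the whole virtually abelian kernel uniformly for all domains in the class. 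This closes, with machinery already in place, the locally countably compact Hausdorff gap you flag at the end: there is nothing to re-run.

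One genuine omission you should repair: in Levitt's theorem \cite{Levitt} the quadratically hanging vertex groups are fundamental groups of compact $2$-\emph{orbifolds} $\Sigma_v$, so the factors in the exact sequence are mapping class groups of orbifolds, not literally of compact surfaces, and Theorem~\ref{MCG_1} does not apply verbatim. The paper fixes this by citing Harvey--Maclachlan \cite{HarveyMaclachlan}, extended to the non-orientable case by Fujiwara \cite{Fujiwara}: each $\MCG(\Sigma_v)$ is commensurable with the mapping class group of a genuine compact surface, and since the relevant conclusion is commensurability-invariant (Proposition~\ref{extensions}), the reduction goes through. A smaller inaccuracy: Theorem~\ref{generalisation_of_B} and Theorem~\ref{MCG_2} yield an open $V\leqslant H$ with $H_0\cap V$ normal only in $H_0$ (and with image normal in $\varphi(H_0)$), not the open \emph{normal} subgroups of $H$ you claim at the $W$ stage; this is harmless, since normality in $H$ is needed only at the very end and is recovered, as in the paper, by choosing $|\varphi(H_0\cap V)|$ minimal and then saturating with $\ker\varphi$.
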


%\noindent
%{\bf Problem 1.} {\it Does the analogous statement hold for ${\GL}_n(\mathbb{Z})$ and for ${\Out}(F_n)$?}

\medskip

\begin{remark}
It is interesting to compare Theorems~C and~D with the following results of Farb and Masur, and Fujiwara.

In~\cite[Theorem 1.1]{Farb_Masur} Farb and Masur proved the following.
Let $H$ be a real semisimple Lie group of rank at least 2 with finite center and no nontrivial compact factors, let $\Gamma$ be an irreducible lattice in $H$, and $S$ be a compact connected orientable surface.
Then any homomorphism $\varphi:\Gamma\rightarrow \MCG(S)$ has finite image.
Fujiwara proved the same for nonorientable surfaces and then
extended this result to homomorphisms $\varphi:\Gamma\rightarrow \Out(G)$, where $G$ is a one-ended hyperbolic group~\cite[Theorems 1 and 3]{Fujiwara}.
\end{remark}

Theorems~\ref{metricandlcH} and~\ref{thebigone} have many other applications.
As a corollary of Theorem~\ref{thebigone}, we show that $\HEG$ is an acylindrically hyperbolic group which does not admit any universal acylindrical action (see Proposition~\ref{HEG_No_Univer}).
This seems to be the first torsion-free example of this kind.
In \cite{Abbott} Abbott proved that Dunwoody's inaccessible finitely generated group does not have universal
acylindrical actions. Her proof extensively uses the fact that this group has unbounded torsion.

\medskip

\noindent
{\bf Problem 1.} {\it Does there exist a finitely generated (countable) torsion-free acylindrically hyperbolic group which does not admit universal acylindrical actions?}

\medskip

Note that since free groups admit universal acylindrical actions and $\HEG$ does not, this gives still another proof that
$\HEG$ is not a free group.
As another corollary of Theorem A, we recover a result of Eda from~\cite{Ed1} that
any endomorphism of the Hawaiian earring group is induced, up to conjugacy, by a continuous map
from the Hawaiian earring to itself preserving the basepoint $(0,0)$ (see Corollary~\ref{App1}).

Other applications include automatic continuity results for homomorphisms whose domain is as in
Theorem~\ref{metricandlcH} or~\ref{thebigone} and whose codomain is relatively hyperbolic or the fundamental group of a graph of groups.
To present some of them in a short form, we recall the following terminology.

A group $G$ is \emph{cm-slender} if every abstract homomorphism from a completely metrizable topological group $H$ to $G$ has open kernel \cite{CC}.  Similarly one defines $G$ to be \emph{lcH-slender} (respectively \emph{n-slender}) by replacing $H$ with a locally compact Hausdorff group (respectively $\HEG$).
The abbreviation \emph{lccH-slender} stands for locally countably compact slenderness.

Free (abelian) groups were classically shown to be cm-slender, lcH-slender and n-slender (see \cite{Du} and \cite{Hi}).  More recent work has shown that torsion-free word hyperbolic groups, right-angled Artin groups, braid groups and many other groups satisfy various of these slenderness conditions (see \cite{N}, \cite{CC}, \cite{CK}, \cite{KV}). Note that a group which is either n-slender, cm-slender, or lccH-slender must be torsion-free.
We prove the following combination theorems.

\begin{bigtheorem}\label{slendernessrelhyp}
If $G$ is torsion-free and relatively hyperbolic with respect to
a collection $\{H_{\lambda}\}_{\lambda \in \Lambda}$ of n-slender (respectively cm-slender, lcH-slender, lccH-slender) subgroups, then $G$ is also n-slender (respectively cm-slender, lcH-slenderm, lccH-slender).
\end{bigtheorem}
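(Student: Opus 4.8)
The plan is to handle all four slenderness notions at once, reducing every homomorphism out of the given domain into $G$ to a homomorphism into a single peripheral subgroup $H_\lambda$, where the hypothesis can be invoked. First I would fix the natural geometry: since $G$ is hyperbolic relative to $\{H_\lambda\}_{\lambda\in\Lambda}$, it acts coboundedly and acylindrically on its relative (coned-off) Cayley graph $S$, which is hyperbolic (see \cite{Osin_1}). For this action the elliptic elements are exactly the elements of finite order together with the parabolic elements (those conjugate into some $H_\lambda$); as $G$ is torsion-free, the elliptic elements are precisely the parabolic ones.

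The heart of the argument is a reduction lemma: if $K\leqslant G$ is a subgroup all of whose elements are elliptic for the action on $S$, then $K$ is trivial or is contained in a conjugate $gH_\lambda g^{-1}$. I would prove this by first noting that the restriction to $K$ of an acylindrical action is again acylindrical, so by the classification of groups acting acylindrically on a hyperbolic space a subgroup containing no loxodromic element has bounded orbits. For a relatively hyperbolic group a subgroup with bounded orbits on $S$ is finite or parabolic, by the standard trichotomy (finite, parabolic, or containing a loxodromic element) for subgroups of relatively hyperbolic groups; torsion-freeness then excludes nontrivial finite subgroups. I expect this lemma to be the main obstacle: the passage from ``every element elliptic'' to ``contained in one peripheral conjugate'' must hold for arbitrary, possibly uncountable and non-finitely-generated, subgroups $K$, and it is here that the full structure theory of relatively hyperbolic groups is needed.

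With the lemma in hand the n-slender case is short. Given $\varphi:\HEG\to G$, Theorem~\ref{thebigone} applied to the cobounded acylindrical action on $S$ produces $n$ with $\varphi(\HEG^n)$ consisting of elliptic elements. The reduction lemma places $\varphi(\HEG^n)$ (after conjugating by some $g^{-1}$) inside a single $H_\lambda$, giving a homomorphism $\HEG^n\to H_\lambda$. Since $\HEG^n\cong\HEG$ by an isomorphism carrying the tail subgroups of $\HEG^n$ to those of $\HEG$, n-slenderness of $H_\lambda$ forces this map to kill some tail; hence $\varphi(\HEG^m)=1$ for some $m\geqslant n$, so $\ker\varphi$ is open and $G$ is n-slender.

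The remaining three cases run in parallel using Theorem~\ref{metricandlcH} in place of Theorem~\ref{thebigone}. Let $H$ be completely metrizable, respectively locally compact Hausdorff, respectively locally countably compact Hausdorff, and $\varphi:H\to G$ a homomorphism; a locally compact Hausdorff group is in particular locally countably compact Hausdorff, so Theorem~\ref{metricandlcH} applies in each case. If its second alternative holds, there is an open normal $V\leqslant H$ with $\varphi(V)$ finite, hence trivial, so $V\leqslant\ker\varphi$ and $\ker\varphi$ is open. Otherwise $\varphi(H)\subseteq\Rad_{\Ell}(G)$, and since every element of $\Rad_{\Ell}(G)$ is by definition elliptic for the action on $S$, the subgroup $\varphi(H)$ is all-elliptic; the reduction lemma conjugates it into some $H_\lambda$, and the corresponding slenderness hypothesis on $H_\lambda$ makes the resulting map $H\to H_\lambda$ have open kernel. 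In every case $\ker\varphi$ is open, which is exactly the asserted slenderness of $G$.
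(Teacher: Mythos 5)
Your proposal is correct and follows essentially the same route as the paper: the published proof invokes Corollaries~\ref{App3} and~\ref{App3.5}, which are obtained exactly as in your argument by applying Theorem~\ref{thebigone} (respectively Theorem~\ref{metricandlcH}) to the cobounded acylindrical action on the relative Cayley graph and then passing from an all-elliptic image to a finite-or-parabolic subgroup. The ``reduction lemma'' you flag as the main obstacle --- a subgroup of a relatively hyperbolic group containing no loxodromic element is finite or parabolic --- is precisely Lemma~\ref{BB}, which the paper quotes from Bogopolski--Bux, so it needs only a citation rather than the re-derivation from Osin's trichotomy that you sketch.
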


\begin{bigtheorem}\label{slendernessgraph}
Suppose that $G$ is the fundamental group of a (possibly infinite) graph of groups $(\Gamma,\mathbb{G})$ where each vertex group $G_v$ is n-slender (respectively cm-slender, lcH-slender, lccH-slender) and that the action of $G$ on the Bass-Serre tree associated with $(\Gamma,\mathbb{G})$ is $(k,n)$-acylindrical for some $k,n\in \mathbb{N}$.  Then $G$ is n-slender (respectively cm-slender, lcH-slender, lccH-slender).
\end{bigtheorem}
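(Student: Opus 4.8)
The plan is to reduce every case to a single principle: once a sufficiently large part of the domain is shown to have image inside one conjugate $gG_vg^{-1}$ of a vertex group, the slenderness hypothesis on $G_v$ finishes the argument. Throughout let $T$ be the Bass--Serre tree of $(\Gamma,\mathbb{G})$; it is a simplicial tree and hence $0$-hyperbolic, and the point stabilizers of the $G$-action on $T$ are contained in conjugates of the vertex groups $G_v$. First I would check that the $(k,n)$-acylindricity of the action is acylindricity in the sense of Osin on the hyperbolic space $T$: if $d_T(x,y)>k+2\varepsilon$, then any $g$ with $d_T(x,gx),d_T(y,gy)\leqslant\varepsilon$ fixes a subsegment of $[x,y]$ of length greater than $k$, so it lies in a pointwise segment-stabilizer of cardinality at most $n$; thus Osin's condition holds with the constant $N=n$. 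The restriction of an acylindrical action to any subgroup is again acylindrical, a fact I use repeatedly.

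For the cm-, lcH- and lccH-slender cases I would apply Theorem~\ref{metricandlcH} to $\varphi\colon H\to G$, noting that a locally compact Hausdorff group is in particular locally countably compact Hausdorff, so all three are covered. This yields a dichotomy. In the first alternative $\varphi(H)\subseteq\Rad_{\Ell}(G)$; since the elliptic radical consists of elements elliptic for \emph{every} acylindrical action, every element of $\varphi(H)$ is elliptic for the $T$-action, so $\varphi(H)$ acts acylindrically on $T$ with no loxodromic element. By Osin's classification of groups acting acylindrically on a hyperbolic space~\cite{Osin_1} the only remaining possibility is that $\varphi(H)$ has bounded orbits, and a group with a bounded orbit on a tree fixes a point; as every point stabilizer lies in a conjugate of a vertex group, $\varphi(H)\subseteq gG_vg^{-1}$. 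In the second alternative there is a normal open $V\leqslant H$ with $\varphi(V)$ finite, and a finite group acting on a tree also fixes a point, so again $\varphi(V)\subseteq gG_vg^{-1}$. In both alternatives we obtain an open subgroup $U\leqslant H$ (namely $U=H$, respectively $U=V$) whose image lies in a copy of $G_v$; the open subgroup $U$ inherits the relevant topological property, so slenderness of $G_v$ makes $\ker(\varphi\upharpoonright U)=\ker\varphi\cap U$ open in $U$, hence open in $H$. Since $\ker\varphi$ contains this open subgroup it is itself open. Observe that this descent never requires $G$ to be torsion-free, because we pass to $G_v$ before invoking slenderness.

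For the n-slender case I would run the parallel argument with Theorem~\ref{thebigone} in place of Theorem~\ref{metricandlcH}. Given $\varphi\colon\HEG\to G$, the goal is to produce $n$ with $\varphi(\HEG^n)$ elliptic for the $T$-action; then, exactly as above, Osin's classification forces $\varphi(\HEG^n)$ to fix a point and hence to land in some $gG_vg^{-1}$, after which n-slenderness of $G_v$ together with the isomorphism $\HEG^n\cong\HEG$ (carrying tail subgroups to tail subgroups) yields an $n'\geqslant n$ with $\varphi(\HEG^{n'})=1$, so that $\ker\varphi$ is open. The difficulty is that Theorem~\ref{thebigone} requires the action to be \emph{cobounded}, whereas the $G$-action on $T$ need not be---the quotient graph $\Gamma$ may have infinite diameter---and neither need the action of $\varphi(\HEG)$ on its minimal invariant subtree be cobounded; already an infinite free product, decomposed along a ray, furnishes a minimal acylindrical but non-cobounded tree action. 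This is the main obstacle.

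To circumvent it I would try to manufacture a cobounded acylindrical action of $\varphi(\HEG)$ on a $0$-hyperbolic space to which Theorem~\ref{thebigone} applies. A natural candidate is the orbit $\varphi(\HEG)x_0$ equipped with the restricted tree metric: subspaces of a $0$-hyperbolic space are $0$-hyperbolic, a single orbit is acted on coboundedly, and acylindricity transfers verbatim because the metric is unchanged. The delicate point---the step I expect to demand the most care---is that such an orbit is not a geodesic space, so one must either extend Theorem~\ref{thebigone} to this setting or replace the orbit by a genuinely geodesic cobounded model (for instance a suitable connected graph built on the orbit, or a tree-adapted refinement of the infinite-product argument underlying Theorem~\ref{thebigone}) while preserving both coboundedness and acylindricity. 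Once some $\varphi(\HEG^n)$ is shown to consist of elliptic elements, the descent to a vertex group and the appeal to n-slenderness of $G_v$ close the argument as in the other cases.
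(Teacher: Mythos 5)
Your treatment of the cm-, lcH- and lccH-slender cases is essentially the paper's own: the paper packages it as Corollary~\ref{App6}, obtained from Theorem~\ref{metricandlcH} together with Proposition~\ref{equiv_acylindric} and Lemma~\ref{FixedpointLemma} (whose proof is precisely your appeal to the trichotomy of Theorem~\ref{Osin_trichotomy}), followed by the same descent to a vertex group and the same observation that an open subgroup of $H$ inherits the relevant topological property. However, your one-line verification that a $(k,n)$-acylindrical tree action is acylindrical in the sense of Definition~\ref{def Bowditch} is incorrect: an element $g$ with $d_T(x,gx),d_T(y,gy)\leqslant \varepsilon$ need not fix any subsegment of $[x,y]$, since $g$ may be loxodromic with translation length at most $\varepsilon$ and axis containing $[x,y]$, and there may be many such elements. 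These loxodromics are exactly why the paper's proof of Proposition~\ref{equiv_acylindric} splits into an elliptic and a loxodromic case, controls the latter via the root-counting Lemma~\ref{number of roots lox}, and arrives at the bound $2\sigma n^2+n$ rather than your claimed constant $N=n$. This error is local and repairable, but as written that step fails.

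The substantive gap is the n-slender case, which you leave unproved. You are right that Theorem~\ref{thebigone} assumes coboundedness and that the action on the Bass--Serre tree of an infinite graph of groups need not be cobounded; the paper handles this case through Corollary~\ref{App7}, which it derives from Theorem~\ref{thebigone} together with Proposition~\ref{equiv_acylindric} and Lemma~\ref{FixedpointLemma}. Neither of your suggested work-arounds closes the gap: the orbit $\varphi(\HEG)s_0$ is not geodesic, and the machinery behind Theorem~\ref{thebigone} (axes, $\varepsilon$-lines, the local-to-global Lemma~\ref{QuasiGeodLocalGlobal}, Lemma~\ref{LemmaAboutAxes}) is formulated for geodesic spaces, while the geodesic substitutes --- the subtree spanned by an orbit, or a minimal invariant subtree --- are in general not cobounded for $\varphi(\HEG)$, which need not be finitely generated (loxodromic elements of the image may have axes receding arbitrarily far from any basepoint). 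A more efficient repair is to trace where coboundedness actually enters the proof of Theorem~\ref{thebigone}: only through the constants $C_0,C_1$ of Corollaries~\ref{lox-lox} and~\ref{lox-ell}, both of which rest on Lemma~\ref{Elliptic_cobounded}; the constants $L,\alpha,\beta$ from Lemma~\ref{elem_index} and Corollary~\ref{length_power_1} need acylindricity alone. On a simplicial tree the conclusions of Corollaries~\ref{lox-lox} and~\ref{lox-ell} can be established without coboundedness by translation-length arguments for tree isometries, using acylindricity to bound the overlap of axes of loxodromics $a,b$ with $E_G(a)\neq E_G(b)$ and the positivity of the injectivity radius from Lemma~\ref{Bow}; after that the proof of Theorem~\ref{thebigone} runs verbatim and yields Corollary~\ref{App7}, and your descent to a vertex group via the isomorphism $\HEG^m\cong\HEG$ finishes exactly as you describe. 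As it stands, though, your proposal establishes only three of the four slenderness assertions.
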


\begin{remark}
%This will be a short historical remark about automatic (almost) continuity in different categories of groups.

%1) Recall that one of equivalent definitions of a profinite group says that a group $G$ is profinite
%if it is a compact Hausdorff and totally disconnected topological group. A topological
%group is said to be {\it finitely generated in topological sense} if it has a finitely generated dense subgroup.

%In~\cite[Theorem 1.1]{NS} Nikolov and Segal proved that every homomorphism from a finitely generated in topological
%sense profinite group to any profinite group is continuous.

In~\cite[Theorem D]{KV} Kramer and Varghese proved that given an abstract homomorphism $\varphi$ from a locally compact Hausdorff group $H$ to a group $G$ in which torsion subgroups are finite and abelian subgroups are a direct sum of cyclic groups, it is either the case that $\ker(\varphi)$ is open or that $\im(\varphi)$ lies in the normalizer of a finite non-trivial subgroup of $G$.

Note that the technique they use cannot be applied in the case of completely metrizable or locally countably
compact Hausdorff groups.
\end{remark}

The paper is organized as follows.  We first provide some background in Sections~\ref{HE} and~\ref{AH}.
In Section~\ref{ProofofTheoremA} we give a proof of Theorem~\ref{thebigone}, which serves as
a basis for the proof of Theorem~\ref{metricandlcH}.
A weak version of Theorem~\ref{metricandlcH} is proved in Section~\ref{ProofofTheoremBweak}.
Universally acylindrical actions and the elliptic radical are discussed in Section~\ref{Universal}.
Theorem~\ref{metricandlcH} and its strong version are proved in Sections~\ref{ProofofTheoremB} and~\ref{ProofofTheoremBstrong}, respectively.
Theorems~\ref{MCG_1} and~\ref{Out} are proved Section~\ref{MCG}. Some general applications are given in Section~\ref{GeneralApplications}.
Theorems~\ref{slendernessrelhyp} and~\ref{slendernessgraph} are proved in Section~\ref{Automatic_Continuity}.

We thank Andreas Thom for a useful discussion about a result of Koubi from~\cite{Koubi}.

\end{section}

\begin{section}{Hawaiian earring and its fundamental group}\label{HE}

We define $\mathbb{N}=\{1,2,\dots\}$ and $\mathbb{N}_0=\mathbb{N}\cup \{0\}$.
The {\it Hawaiian earring} is the subspace $\operatorname{E}=\bigcup_{n\in \mathbb{N}} C_n$ of the Euclidean plane $\mathbb{R}^2$ where $C_n$ is the circle with center $(-\frac{1}{n},0)$ and radius $\frac{1}{n}$, see Fig.~1.

\vspace*{-30mm}
\hspace*{35mm}
\includegraphics[scale=0.7]{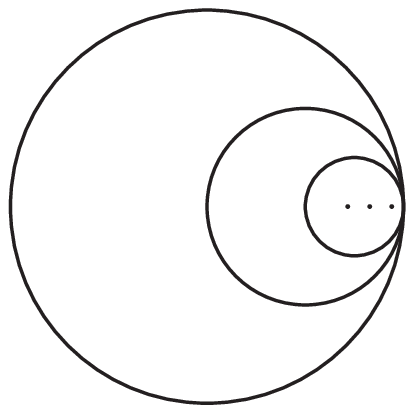}

\vspace*{-14cm}

\begin{center}
Fig. 1. Hawaiian earring
\end{center}

\medskip

We stress that $\operatorname{E}$ is a topological space with the topology induced by that of the Euclidean plane.
The fundamental group of $\operatorname{E}$ with respect to the basepoint (0,0) is denoted by $\HEG$ in the present paper.  The unusual properties of this group have been explored over the last several decades.  Higman showed that $\HEG$ is not free (see \cite{Hi} and \cite{Griffiths}).  Much later the abelianization of $\HEG$ was explicitly computed by Eda and Kawamura \cite{EdKaw} and was shown to include exotic direct summands, as well as the group $\mathbb{R}$.

We recall a combinatorial description of this group, which somewhat resembles that for free groups (see~\cite{Ed1}).

For a totally ordered set $X$ let $X^{\ast}$ be the same set with the reverse order.
For two totally ordered sets $X$ and $Y$ we define their concatenation as the
set $X\sqcup Y$ together with the order which preserves that of both $X$ and $Y$ and places elements in $X$ below those of $Y$.

Let $A=\{a_n^{\pm 1}\}_{n\in \mathbb{N}}$ be a countably infinite set with formal inverses.
By $W(A)$ we denote the class of all maps $w:X\rightarrow A$ with the following two properties:

(1) $X$ is a totally ordered set;

(2) for each $a\in A$ the set $\{i\in X\,|\, w(i)=a\}$ is finite.

\medskip

The elements of $W(A)$ are called {\it words}.
For a word $w:X\rightarrow A$ we define the {\it inverse word} $w^{-1}:X^{\ast}\rightarrow A$ by setting
$w^{-1}(i) = (w(i))^{-1}$.
For two words $w:X\rightarrow A$ and $u:Y\rightarrow A$ we define their concatenation $wu:X\sqcup Y\rightarrow A$
by
$$wu(i) = \begin{cases}w(i) \text{ if } i\in X\\ u(i) \text{ if }i\in Y\end{cases}.$$

\noindent Note that $W(A)$ is not a group since $W(A)$ is a class (not a set yet) and since the concatenation $ww^{-1}$ does not coincide with the trivial map $\emptyset:\emptyset \rightarrow A$.

To define a group, we first define an equivalence relation $\equiv$ on $W(A)$:
Given two words $w:X\rightarrow A$ and $u:Y\rightarrow A$, we write
$w \equiv u$ provided there exists an order isomorphism $\varphi: X \rightarrow Y$ such that $w(i) = u(\varphi(i))$.
For brevity, we denote the set $W(A)/\equiv$ again by $W(A)$ and we will term the $\equiv$ classes as words.
Now we may assume that $W(A)$ is a set.

There are two ways to transform $W(A)$ to a group. The first way utilizes an appropriate definition of (infinite)
cancellation in words. The second way (which we describe below) uses restriction maps $p_n:W(A)\rightarrow F_n$,
where $n\in \mathbb{N}_0$ and $F_n$ is the free group with basis $\{a_1,\dots,a_n\}$.

For a word $w:X\rightarrow A$ we define
$$p_n(w) = w\upharpoonright\{i\in X\mid w(i)\in \{a_1^{\pm 1},\dots,a_n^{\pm 1}\}\}.$$
For $w, u \in W(A)$ we write $w \sim u$ provided that for every $n\in \mathbb{N}$ the words $p_n(w)$ and $p_n(u)$ are equal as elements in the free group $F_n$.
The quotient set $\HEG = W(A)/\sim$ possesses a group structure given by concatenation: $[w][u] = [wu]$ and $[u]^{-1} = [u^{-1}]$.

For each $n\in \mathbb{N}_0$, we define subgroups $\HEG_n$ and $\HEG^n$ of the group $\HEG$:

\begin{enumerate}
\item[{\small $\bullet$}] The subgroup $\HEG_n$ consists of those elements of $\HEG$ which have
a representative $w:X\rightarrow A$ satisfying $w(X) \subseteq \{a_1^{\pm 1},\dots ,a_n^{\pm 1}\}$.

\medskip

\item[{\small $\bullet$}] The subgroup $\HEG^n$  consists of those elements of $\HEG$ which have
a representative $w:X\rightarrow A$ satisfying $w(X) \cap \{a_1^{\pm 1},\dots ,a_n^{\pm 1}\} = \emptyset$.
\end{enumerate}

\noindent Then, for each $n\in \mathbb{N}_0$, there is a natural decomposition $\HEG \simeq \HEG_n \ast \HEG^n$.
Moreover, there are natural isomorphisms $\HEG_n\simeq F_n$ and $\HEG^n\simeq \HEG$.

\medskip

The group $\HEG$ can be considered as a topological group with respect to the topology $\mathcal{T}$ for which the set $\{\ker(p_n)\,|\, n\in \mathbb{N}_0\}$ is
a basis of open sets of neighborhoods of the neutral element. Note that $\ker(p_n)$ coincides with
the normal closure of the subgroup $\HEG^n$ in $\HEG$.

\section{Acylindric actions of groups on hyperbolic spaces}\label{AH}

In this paper, all actions of groups on metric spaces  are supposed to be isometric. Recall that a group $G$ acts {\it coboundedly} on a metric space $S$ if there exists a bounded subset $B\subset S$ such that
$S=G\cdot B$.

We work only with rectifiable paths in metric spaces. In most cases the paths under consideration are just finite concatenations of geodesics. At some point we consider a bi-infinite path, whose construction is given precisely.
For a path $p$, let $p_{-}$ and $p_{+}$ denote the initial and the terminal points of $p$ and $\ell(p)$ the length of $p$.
For $\varkappa\geqslant 1$ and $\varepsilon\geqslant 0$, we call the path $p$ a
{\it $(\varkappa,\varepsilon)$-quasi-geodesic} if for any subpath $q$ of $p$ holds
$$
d(q_{-},q_{+})\geqslant \frac{1}{\varkappa}\ell(q)-\varepsilon.
$$
Let $M>0$. A path $p$ is called an {\it $M$-local $(\varkappa,\varepsilon)$-quasi-geodesic} if for any subpath $q$ of $p$ with $\ell(q)\leqslant M$ the above inequality holds.

For two subsets $A,B$ in a metric space $S$ let $d_{\operatorname{Hau}}(A,B)$ be the {\it Hausdorff distance} between
$A$ and $B$.

\subsection{Hyperbolic spaces}

Let $A,B,C$ be three points in a metric space $S$.
Recall that the {\it Gromov product} of $A,B$ with respect to $C$ is the number
$$
(A,B)_C:=\frac{d(C,A)+d(C,B)-d(A,B)}{2}.
$$

We use the following definition of a $\delta$-hyperbolic space (see~\cite[Chapter III.H, Definition~1.16 and Proposition~1.17]{BH}).

For $\delta\geqslant 0$, we say that a geodesic triangle $ABC$ in $S$ is {\it $\delta$-thin at the vertex $C$}
if for any two points $A_1$ and $B_1$ on the sides $[C,A]$ and $[C,B]$ with $d(C,A_1)=d(C,B_1)\leqslant (A,B)_C$,
we have $d(A_1,B_1)\leqslant \delta$.  We say that a metric space $S$ is {\it $\delta$-hyperbolic} if it is geodesic and every geodesic triangle in $S$ is $\delta$-thin at each of its vertices.

\begin{lemma}\label{HausdorffQuasiGeod}
{\rm (see~\cite[Chapitre 3, Th$\acute{\text{\rm e}}$or$\grave{\text{\rm e}}$me 3.1]{CDP})}
For all $\delta\geqslant 0$, $\varkappa\geqslant 1$, $\varepsilon\geqslant 0$, there exists a constant
$\mu=\mu(\delta,\varkappa,\varepsilon)> 0$ with the following property:

If $S$ is a $\delta$-hyperbolic space and $p$ and $q$ are infinite $(\varkappa,\varepsilon)$-quasi-geodesics in $S$ with the same limit points on the Gromov boundary $\partial S$, then the Hausdorff distance between $p$ and $q$
is at most $\mu(\delta,\varkappa,\varepsilon)$.
\end{lemma}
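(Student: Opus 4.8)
The plan is to reduce the bi-infinite problem to the classical stability (Morse) property for \emph{finite} quasi-geodesic segments, and then to glue along a geodesic quadrilateral whose two ``ends'' are pushed out toward the common boundary points $\xi,\eta\in\partial S$. (I treat the generic case where each quasi-geodesic is two-ended; the one-ended ray case is the obvious simplification, with a single end pushed toward the common limit point.)

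First I would invoke the finite-segment stability lemma: in a $\delta$-hyperbolic space there is a constant $R=R(\delta,\varkappa,\varepsilon)$ such that every $(\varkappa,\varepsilon)$-quasi-geodesic segment and every geodesic segment sharing its endpoints have Hausdorff distance at most $R$ (this is standard and itself part of~\cite{CDP}). I would also record the two elementary consequences of $\delta$-thinness that I need: geodesic triangles are $\delta_1$-slim for some $\delta_1=\delta_1(\delta)$, meaning each side lies in the $\delta_1$-neighborhood of the union of the other two, whence geodesic quadrilaterals (split by a diagonal into two triangles) are $2\delta_1$-slim; and for a geodesic segment $[a,b]$ and a point $x$ one has $\bigl|\,d(x,[a,b])-(a,b)_x\,\bigr|\leqslant \delta_1$, so the Gromov product measures distance to a geodesic up to bounded error.

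Next, fix a basepoint $o$ and an arbitrary point $x$ on $p$; the goal is to bound $d(x,q)$ by a constant independent of $x$. The hypothesis that $p$ and $q$ converge to the same boundary points $\xi,\eta$ means precisely that, as a point tends to $\xi$ along $p$ and a point tends to $\xi$ along $q$, their Gromov product with respect to $o$ tends to $+\infty$ (and likewise at $\eta$). Using the change-of-basepoint inequality $|(a,b)_x-(a,b)_o|\leqslant d(x,o)$, this divergence transfers to the fixed basepoint $x$. Hence I may choose points $p_-,p_+$ on $p$ lying on the two sides of $x$, and $q_-,q_+$ on $q$, with $p_-,q_-$ far out toward $\xi$ and $p_+,q_+$ far out toward $\eta$, so that both $(p_-,q_-)_x$ and $(p_+,q_+)_x$ exceed $R+3\delta_1$. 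By the finite stability lemma, $x$ lies within $R$ of the geodesic $[p_-,p_+]$; by $2\delta_1$-slimness of the quadrilateral $p_-p_+q_+q_-$, that geodesic lies in the $2\delta_1$-neighborhood of $[p_-,q_-]\cup[q_-,q_+]\cup[q_+,p_+]$, so $x$ is within $R+2\delta_1$ of one of these three sides. The Gromov-product estimate gives $d(x,[p_-,q_-])\geqslant (p_-,q_-)_x-\delta_1>R+2\delta_1$ and similarly $d(x,[p_+,q_+])>R+2\delta_1$, which rules out the two ``short'' sides. Therefore $x$ is within $R+2\delta_1$ of $[q_-,q_+]$, and a final application of finite stability to the subsegment of $q$ from $q_-$ to $q_+$ puts $x$ within $2R+2\delta_1$ of $q$. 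Thus $\mu:=2R+2\delta_1$ bounds the one-sided distance from $p$ to $q$, and by symmetry from $q$ to $p$, giving the desired Hausdorff bound.

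The main obstacle is making the ``short sides are far from $x$'' step genuinely uniform: one must check that for \emph{each} individual $x$ the four endpoints can always be driven far enough toward $\xi$ and $\eta$ to force $(p_-,q_-)_x,(p_+,q_+)_x$ past the threshold $R+3\delta_1$, and, crucially, that the resulting bound $2R+2\delta_1$ depends on neither the (variable) position of $x$ nor the amount of pushing. This is exactly where the hypothesis of \emph{equal} limit points enters, through the divergence of the relevant Gromov products; the remaining care is purely bookkeeping of the constants $R$, $\delta_1$ and of the change-of-basepoint term $d(x,o)$, which is harmless since it is absorbed before taking the endpoints sufficiently far out.
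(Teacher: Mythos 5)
Your argument is correct, and it is worth noting that the paper itself gives no proof of this lemma at all: it is quoted verbatim from Coornaert--Delzant--Papadopoulos \cite[Chapitre 3, Th\'eor\`eme 3.1]{CDP}, so any proof you supply is ``different'' from the paper by default. Your route is the standard one and it is sound: reduce to the Morse stability constant $R(\delta,\varkappa,\varepsilon)$ for finite quasi-geodesic segments, trap an arbitrary point $x\in p$ near the geodesic $[p_-,p_+]$, use $2\delta_1$-slimness of the quadrilateral $p_-p_+q_+q_-$, and kill the two short sides via the comparison $d(x,[a,b])\geqslant (a,b)_x-\delta_1$ (in fact $(a,b)_x\leqslant d(x,[a,b])$ holds with no error term, so your estimate is safely conservative). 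The uniformity issue you flag is handled correctly: the hypothesis of equal limit points is exactly the divergence of the mutual Gromov products at the basepoint $o$, the change-of-basepoint penalty $d(x,o)$ is finite for each fixed $x$ and is absorbed before choosing $p_\pm,q_\pm$, and the final bound $\mu=2R+2\delta_1$ involves neither $x$ nor the chosen endpoints, which is all the uniformity the statement requires. One small caution: your parenthetical claim that the one-ended ray case is ``the obvious simplification'' is only true if the finite initial points of the two rays coincide or lie at bounded distance --- two rays with the same single boundary point but distant origins can have arbitrarily large Hausdorff distance, so ``same limit points'' must be read as including the finite endpoints in that case. This does not affect the paper, which applies the lemma only to the bi-infinite paths $L(s,g^n)$ and $L(t,g^m)$, precisely the case you treat in full.
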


\begin{lemma}\label{QuasiGeodLocalGlobal} {\rm (see~\cite[Chapitre 3,
Th$\acute{\text{\rm e}}$or$\grave{\text{\rm e}}$me 1.4]{CDP})}
Let $S$ be a $\delta$-hyperbolic space.
Let $k\geqslant 1$, $k'>k$ and $l\geqslant 0$. There exists a real number $M=M(\delta,k,k',l)>0$ such that
every $M$-local $(k,l)$-quasi-geodesic is a global $(k',l)$-quasi-geodesic.
\end{lemma}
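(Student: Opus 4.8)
The plan is to prove this ``local-to-global'' principle in two stages: first show that an $M$-local $(k,l)$-quasi-geodesic fellow-travels the geodesic joining its endpoints, with a fellow-traveling constant depending only on $\delta,k,l$ and \emph{not} on the length of the path; and then convert this into the desired global quasi-geodesic inequality by a projection-and-summation argument. Throughout I would use only the two standard features of a $\delta$-hyperbolic space recalled above: the $\delta$-thinness of geodesic triangles (hence the $2\delta$-thinness of geodesic quadrilaterals), together with the exponential divergence of geodesics.

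First, a reduction. Let $p$ be an $M$-local $(k,l)$-quasi-geodesic and let $q$ be a subpath. If $\ell(q)\leqslant M$ then $q$ is an honest $(k,l)$-quasi-geodesic, so $d(q_-,q_+)\geqslant \frac1k\ell(q)-l\geqslant\frac1{k'}\ell(q)-l$ because $k'>k$; thus the required inequality is automatic for short subpaths. Since any subpath of $p$ is again an $M$-local $(k,l)$-quasi-geodesic, it therefore suffices to prove, for every $M$-local $(k,l)$-quasi-geodesic $p$ with $\ell(p)>M$, that $d(p_-,p_+)\geqslant \frac1{k'}\ell(p)-l$.

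The heart is the fellow-traveling estimate. Set $\gamma=[p_-,p_+]$ and let $x=p(t_0)$ be a point of $p$ at maximal distance $D$ from $\gamma$. I would consider the maximal subarc of $p$ around $x$ on which $p$ stays at distance $\geqslant D/2$ from $\gamma$; since $p_\pm\in\gamma$, this excursion is interior and its endpoints $y,z$ lie at distance exactly $D/2$ from $\gamma$. If this subarc has length at most $M$, it is an honest $(k,l)$-quasi-geodesic, hence by the stability (Morse) lemma for quasi-geodesics in hyperbolic spaces \cite{BH} it lies in the $R_0(\delta,k,l)$-neighborhood of $[y,z]$; projecting $y,z$ to $\gamma$ and applying $2\delta$-thinness of the resulting quadrilateral bounds $d(x,\gamma)$ by $R_0+2\delta+D/2$, whence $D\leqslant 2R_0+4\delta=:R$. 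The remaining possibility is that $p$ stays at distance $\geqslant D/2$ from $\gamma$ along an arc of length exceeding $M$; here I would invoke the exponential divergence of geodesics: a length-$M$ honest quasi-geodesic subarc of this excursion has endpoints a definite distance $\geqslant M/k-l$ apart yet is confined outside a large neighborhood of $\gamma$, and for $M$ larger than a threshold $M_0(\delta,k,l)$ this is incompatible with $2\delta$-thin quadrilaterals, since the chord of such a subarc is then forced to within $O(\delta)$ of $\gamma$. Choosing $M>M_0$ rules this case out and yields the length-independent bound $d(p(t),\gamma)\leqslant R$ for all $t$.

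Finally, I would deduce the global inequality. Cover $p$ by consecutive subarcs $p_1,\dots,p_N$ of length $M$ (the last of length $\leqslant M$); each is an honest $(k,l)$-quasi-geodesic, so $d((p_i)_-,(p_i)_+)\geqslant \frac Mk-l$. Since every $p(t)$ is within $R$ of $\gamma$, the nearest-point projection to $\gamma$ is coarsely monotone and advances along $\gamma$ by at least $\frac Mk-l-O(\delta+R)$ on each piece; summing over the $N\approx \ell(p)/M$ pieces and controlling the bounded backtracking of the projection gives $d(p_-,p_+)\geqslant \big(\frac1k-O(\frac{\delta+R}{M})\big)\ell(p)$, a purely multiplicative estimate. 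For $M$ sufficiently large (depending only on $\delta,k,l,k'$) the coefficient exceeds $\frac1{k'}$, so $d(p_-,p_+)\geqslant\frac1{k'}\ell(p)\geqslant\frac1{k'}\ell(p)-l$ for every long subpath, while short subpaths were already handled in the reduction; the additive constant $l$ is thus inherited unchanged. The main obstacle is the fellow-traveling step, and within it the deep-excursion case: the entire difficulty is to make the fellow-traveling radius $R$ and the threshold $M_0$ depend only on $\delta,k,l$ and not on $\ell(p)$, which is precisely where the exponential divergence of geodesics in a hyperbolic space is indispensable and where the constants must be tracked with care.
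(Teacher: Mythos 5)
This lemma is quoted in the paper from \cite{CDP} without an in-paper proof, so your attempt must be measured against the standard argument there, which proceeds by discretizing the path and propagating alignment along a chain of geodesic segments. Your outline assembles the right ingredients (Morse stability, thin quadrilaterals, exponential divergence) but has genuine gaps at both of its load-bearing points. First, in the fellow-traveling step, the deep-excursion case does not work as stated: to force the chord of a length-$M$ window to pass within $O(\delta)$ of $\gamma$ via thin quadrilaterals, you need the nearest-point projections of the window's endpoints to be coarsely separated along $\gamma$, and a window deep in the excursion can be essentially radial (moving between heights $D/2$ and $D$), in which case its endpoints are $\geqslant M/k-l$ apart while their projections on $\gamma$ remain close. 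Excluding this requires an a priori bound on $D$ --- exactly what the step is trying to establish --- so the argument is circular; ``exponential divergence'' is invoked but never instantiated as a usable inequality that breaks the circle.

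Second, and more seriously, the final summation step assumes the nearest-point projection of $p$ to $\gamma$ is coarsely monotone with bounded backtracking once $p$ is known to lie in the $R$-neighborhood of $\gamma$. This is false for arbitrary paths in that neighborhood (such a path can traverse along $\gamma$ forward and then backward), and for $M$-local quasi-geodesics it is unproven: showing that the local progress $M/k-l$ of each window accumulates in a single direction is precisely the content of the theorem, not a bookkeeping step one can defer to ``controlling the bounded backtracking.'' The standard repair, and the route taken in \cite{CDP}, is an induction: discretize $p$ at scale comparable to $M$, join consecutive sample points by geodesics, observe that each window spanning two consecutive chords is an honest $(k,l)$-quasi-geodesic so the Morse lemma bounds the Gromov product at each joint by a constant $c(\delta,k,l)$, and then apply the broken-geodesic (chain) lemma: a concatenation of geodesic segments of length much larger than $2c+C\delta$ with Gromov products at most $c$ at the joints is a global quasi-geodesic, proved by induction on the number of segments using $\delta$-thin triangles. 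This inductive propagation of alignment is the missing idea in your proposal; with it, global progress comes first and your fellow-traveling estimate follows a posteriori from the Morse lemma, whereas in the order you chose neither stage closes.
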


\medskip

\subsection{Acylindrical actions}

\begin{definition}\label{def Bowditch} (see \cite{Bowditch}) An action $G\curvearrowright S$ of a group $G$ on a metric space $(S, d)$ is \textit{acylindrical} if for each $\epsilon>0$ there exist $R, N>0$ such that for any two points $p, q\in S$ satisfying $d(p, q) \geqslant R$ the set

\begin{center}  $\{g\in G \mid d(p, gp)\leqslant \epsilon\text{ and }d(q, gq) \leqslant \epsilon\}$

\end{center}

\noindent is of cardinality at most $N$.
\end{definition}

Recall that an action of a group $G$ on a hyperbolic space $S$ is called {\it elementary} if the limit set of $G$
on the Gromov boundary $\partial G$ contains at most 2 points.

\begin{definition} {\rm (see~\cite[Definition 1.3]{Osin_1})
A group $G$ is called {\it acylindrically hyperbolic} if it satisfies one of the following equivalent
conditions:

\begin{enumerate}
\item[{\rm (AH$_1$)}] There exists a generating set $X$ of $G$ such that the corresponding Cayley graph $\Gamma(G,X)$
is hyperbolic, $|\partial \Gamma (G,X)|>2$, and the natural action of $G$ on $\Gamma(G,X)$ is acylindrical.

\medskip

\item[{\rm (AH$_2$)}] $G$ admits a non-elementary acylindrical action on a hyperbolic space.
\end{enumerate}
}
\end{definition}

\noindent In the case (AH$_1$), we also write that $G$ is {\it acylindrically hyperbolic with respect to $X$}.

\begin{definition}
{\rm
Given a group $G$ acting on a metric space $S$, an element $g\in G$ is called {\it elliptic}
if some (equivalently, any) orbit of $g$ is bounded, and {\it loxodromic} if the map
$\mathbb{Z}\rightarrow S$ defined by
$n\mapsto g^nx$ is a quasi-isometric embedding for some (equivalently, any) $x\in S$. That is,
for $x\in S$, there exist $\varkappa\geqslant 1$ and $\varepsilon\geqslant 0$ such that for any $n,m\in \mathbb{Z}$ we have
$$
d(g^nx,g^mx)\geqslant \frac{1}{\varkappa} |n-m|-\varepsilon.
$$

\noindent The set of loxodromic (respectively elliptic) elements of $G$ with respect to this action
is denoted by $\Lox(G \curvearrowright S)$ (respectively by $\Ell(G \curvearrowright S)$).

}
\end{definition}

Bowditch~\cite[Lemma 2.2]{Bowditch} proved that every element of a group $G$ acting acylindrically on a hyperbolic space is either elliptic or loxodromic (see a more general statement in~\cite[Theorem 1.1]{Osin_1}).
Every loxodromic element $g$ of such group $G$
is contained in a unique maximal virtually cyclic subgroup~\cite[Lemma 6.5]{DGO}. This subgroup, denoted by $E_G(g)$, is called the {\it elementary subgroup associated with $g$}; it can be described as follows (see equivalent definitions in~\cite[Corollary~6.6]{DGO}):
$$
E_G(g)\! \!\! =\{f\in G\,|\, \exists  n\in \mathbb{N}:  f^{-1}g^nf=g^{\pm n}\}.
$$

\begin{lemma}\label{elem_index} {\rm (see~\cite[Lemma 6.8]{Osin_1})}
Suppose that a group $G$ acts acylindrically on a hyperbolic space $S$. Then there exists $L\in \mathbb{N}$
such that for every loxodromic element $g\in G$, $E_G(g)$ contains a cyclic subgroup
of index at most~$L$.
\end{lemma}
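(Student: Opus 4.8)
The plan is to analyze the structure of $E_G(g)$ through its orientation-preserving part and to extract the needed uniformity from acylindricity applied to a suitable power of $g$. First I would pass to the subgroup $E_G^+(g)\leqslant E_G(g)$ consisting of those $f$ that fix \emph{both} boundary points $g^{+\infty},g^{-\infty}\in\partial S$ of the loxodromic element $g$; since $E_G(g)$ stabilizes the set $\{g^{+\infty},g^{-\infty}\}$, the index $[E_G(g):E_G^+(g)]$ is at most $2$. For $f\in E_G^+(g)$ the defining relation $f^{-1}g^nf=g^{\pm n}$ must carry the plus sign, because $f^{-1}g^nf$ has the same attracting endpoint $g^{+\infty}$ as $g^{+n}$ and a different one from $g^{-n}$; hence every $f\in E_G^+(g)$ commutes with some positive power of $g$. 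Let $K$ denote the maximal finite normal (torsion) subgroup of the infinite virtually cyclic group $E_G^+(g)$, so that $E_G^+(g)/K\cong\mathbb{Z}$. Since $K$ is finite, taking products of the relevant powers shows that all of $K$ centralizes a single power $g^m$ with $m\geqslant 1$.

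The heart of the matter is a bound $|K|\leqslant N$ that does not depend on $g$. Here I would first use hyperbolicity to produce a basepoint that is almost fixed by $K$: a finite, hence bounded, orbit $Ky$ has a circumradius $\rho$, and the set of almost-circumcentres $\{z:\sup_{w\in Ky}d(z,w)\leqslant\rho+\delta\}$ is nonempty and, by a standard quasi-centre property of $\delta$-hyperbolic spaces, has diameter bounded by a constant $C=C(\delta)$ depending on $\delta$ alone. As $K$ preserves $Ky$ it preserves this set, so choosing $y_0$ in it gives $d(y_0,fy_0)\leqslant C$ for all $f\in K$. Now I exploit that $K$ centralizes $g^m$: for every $k$ and every $f\in K$ we have $d(g^{mk}y_0,f g^{mk}y_0)=d(g^{mk}y_0,g^{mk}f y_0)=d(y_0,fy_0)\leqslant C$, so each $f$ moves both $y_0$ and $g^{mk}y_0$ by at most $C$. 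Because $g^m$ is loxodromic, $d(y_0,g^{mk}y_0)\to\infty$, so for a suitable $k$ these two points are at distance at least $R$, where $R=R(C)$ and $N=N(C)$ are the acylindricity constants for $\epsilon=C$. Acylindricity then yields $|K|\leqslant N$, a bound depending only on the action.

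Finally I would assemble the index estimate. Since $\mathbb{Z}$ is free, the extension $1\to K\to E_G^+(g)\to\mathbb{Z}\to 1$ splits, so $E_G^+(g)\cong K\rtimes\langle t\rangle$; letting $t$ act on $K$ through $\operatorname{Aut}(K)$, some power $t^{j}$ with $j\leqslant|\operatorname{Aut}(K)|$ centralizes $K$, whence $\langle t^{j}\rangle$ is an infinite cyclic subgroup of index $j\,|K|\leqslant|K|\,|\operatorname{Aut}(K)|$ in $E_G^+(g)$. Combined with $[E_G(g):E_G^+(g)]\leqslant 2$ and the uniform bound $|K|\leqslant N$, this produces a cyclic subgroup of $E_G(g)$ of index at most $L:=2\,N\cdot N!$, a value that works simultaneously for every loxodromic $g$. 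I expect the main obstacle to be the second step: arranging that the almost-fixed basepoint $y_0$ has displacement bounded by a constant independent of $g$, rather than by the diameter of the orbit $Ky$, which a priori grows with $g$. This is precisely where the quasi-centre property of $\delta$-hyperbolic spaces must be invoked, since it is what allows a single $\epsilon=C(\delta)$—and hence a single pair of acylindricity constants—to govern all loxodromic elements at once.
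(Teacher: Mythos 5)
The paper does not actually prove this lemma---it imports it verbatim from Osin (\cite[Lemma 6.8]{Osin_1})---and your argument is a correct reconstruction of essentially that standard proof: pass to the index-$\leqslant 2$ subgroup $E_G^+(g)$ fixing both endpoints, bound the maximal finite normal subgroup $K$ uniformly by combining the quasi-centre property of $\delta$-hyperbolic spaces (which gives a displacement bound $C(\delta)$ independent of $g$) with acylindricity at $\epsilon=C$ applied to the pair $y_0$, $g^{mk}y_0$ far apart along the axis, and then read off the index. Two small points: your observation that every $f\in E_G^+(g)$ commutes with a positive power of $g$ is exactly what is needed to rule out the infinite dihedral quotient (in $D_\infty$ no reflection commutes with a nontrivial translation), so the assertion $E_G^+(g)/K\cong\mathbb{Z}$ should be flagged as resting on it rather than stated as automatic; and the final detour through $\operatorname{Aut}(K)$ is unnecessary, since in the splitting $E_G^+(g)\cong K\rtimes\langle t\rangle$ the subgroup $\langle t\rangle$ is already infinite cyclic of index $|K|\leqslant N$, so $L=2N$ suffices in place of $2N\cdot N!$.
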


\begin{definition}\label{pseodolength} Suppose that $G$ acts on a metric space $(S,d)$. Let $s_0$ be a point in $S$.
Then we define a $G$-equivariant pseudo-metric $d_{s_0}$ on $G$ by the formula $d_{s_0}(g,h)=d(gs_0,hs_0)$, where $g,h\in G$.
We set $|g|_{s_0}=d_{s_0}(1,g)$. We call the function $|\,\cdot \,|_{s_0}:G\rightarrow \mathbb{R}_{\geqslant 0}$ the {\it length function} on $G$ induced by the action of $G$ on the based space $(S,s_0)$.
\end{definition}

Note that if $X$ is a generating set of $G$, then the length function on $G$
induced by the action of $G$ on the based Cayley graph $(\Gamma(G,X),1)$ is
the usual length function on $G$ with respect to $X$.

\begin{lemma}\label{Elliptic_cobounded}
Let $G$ be a group which acts coboundedly and acylindrically on a hyperbolic space S.
Let $s_0$ be a point in $S$ and $|\cdot |_{s_0}:G\rightarrow \mathbb{R}_{\geqslant 0}$ be the length function on $G$
induced by the action of $G$ on the based space $(S,s_0)$. Then there exists a constant $K> 0$ such that the following holds:  For any $g\in \Ell (G\curvearrowright S)$ there exists $h\in G$ such that $|h^{-1}g^nh|_{s_0}\leqslant K$ for any $n\in \mathbb{Z}$.
\end{lemma}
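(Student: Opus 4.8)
The plan is to exploit the coarse uniqueness of circumcenters of bounded sets in a $\delta$-hyperbolic space, combined with coboundedness of the action. Fix $\delta$ so that $S$ is $\delta$-hyperbolic, and choose a bounded set $B\subseteq S$ with $S=G\cdot B$; put $D=\sup_{b\in B}d(s_0,b)<\infty$. For any $p\in S$ write $p=gb$ with $g\in G$, $b\in B$, so that $d(p,gs_0)=d(gb,gs_0)=d(b,s_0)\leqslant D$; thus the orbit $G\cdot s_0$ is $D$-dense in $S$. The constant $K$ will depend only on $\delta$ and $D$. (It is worth noting that acylindricity plays no role here: coboundedness and hyperbolicity suffice.)

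Next I would isolate the key geometric fact. For a bounded set $Y\subseteq S$ let $r(Y)=\inf_{c\in S}\sup_{y\in Y}d(c,y)$ be its circumradius, and call $c$ a \emph{quasi-center} of $Y$ if $\sup_{y\in Y}d(c,y)\leqslant r(Y)+1$ (such $c$ exists by definition of the infimum). I claim any two quasi-centers $c_1,c_2$ satisfy $d(c_1,c_2)\leqslant 4\delta+2$. This rests on the standard midpoint estimate: if $m$ is a midpoint of a geodesic $[c_1,c_2]$, then for every $y\in S$ one has $d(m,y)\leqslant\max\{d(c_1,y),d(c_2,y)\}-\tfrac12 d(c_1,c_2)+2\delta$, a consequence of thin triangles. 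Taking the supremum over $y\in Y$ and using $\sup_{y\in Y}d(m,y)\geqslant r(Y)$ gives $r(Y)\leqslant (r(Y)+1)-\tfrac12 d(c_1,c_2)+2\delta$, whence $d(c_1,c_2)\leqslant 4\delta+2$.

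Now fix $g\in\Ell(G\curvearrowright S)$ and set $Y=\langle g\rangle\cdot s_0$, which is bounded since $g$ is elliptic. Choose a quasi-center $c$ of $Y$. For each $n\in\mathbb{Z}$ the isometry $g^n$ permutes $Y$ (as $g^n\langle g\rangle=\langle g\rangle$), so $\sup_{y\in Y}d(g^n c,y)=\sup_{y\in Y}d(c,g^{-n}y)=\sup_{y\in Y}d(c,y)\leqslant r(Y)+1$; hence $g^n c$ is again a quasi-center of $Y$, and the previous paragraph yields $d(c,g^n c)\leqslant 4\delta+2$ for all $n$. Finally, by $D$-density choose $h\in G$ with $x:=hs_0$ satisfying $d(x,c)\leqslant D$. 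Then for every $n$,
$$|h^{-1}g^n h|_{s_0}=d(hs_0,g^n hs_0)=d(x,g^n x)\leqslant d(x,c)+d(c,g^n c)+d(g^n c,g^n x)\leqslant 2D+4\delta+2,$$
using that $g^n$ is an isometry in the last term. Thus $K=2D+4\delta+2$ works uniformly.

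The only genuinely delicate point is the coarse uniqueness of quasi-centers, i.e. the midpoint inequality; everything else is bookkeeping. I expect the main care to go into extracting that inequality from the paper's ``$\delta$-thin at each vertex'' definition (rather than from a slimness hypothesis), but this is routine hyperbolic geometry, and the precise additive constants only inflate $K$ without affecting the argument.
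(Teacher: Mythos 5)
Your proof is correct, and its overall skeleton coincides with the paper's: produce a point whose $\langle g\rangle$-orbit has uniformly bounded diameter, then use coboundedness to conjugate that point into a bounded neighborhood of $s_0$ (the paper's $K=2L+4\delta+1$ is your $K=2D+4\delta+2$ up to constants, with your $D$ playing the role of the paper's $L$). The genuine difference is how the almost-fixed point is obtained: the paper simply cites \cite[Corollary~6.7]{Osin_1} (with a nod to \cite{BG}) for the existence of a point $s$ with $\operatorname{diam}(\langle g\rangle s)<4\delta+1$, whereas you prove this from scratch via coarse uniqueness of quasi-centers of the bounded orbit $\langle g\rangle s_0$ --- which is in fact the standard argument underlying the cited result. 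Your midpoint inequality does follow from the paper's ``$\delta$-thin at each vertex'' definition: writing $c=d(c_1,c_2)$ and comparing $c/2$ with the Gromov product $(y,c_2)_{c_1}$, the identity $(y,c_2)_{c_1}+(y,c_1)_{c_2}=c$ shows that thinness applies either at $c_1$ or at $c_2$, yielding $d(m,y)\leqslant \max\{d(c_1,y),d(c_2,y)\}-\tfrac12 c+\delta$; so you even get additive constant $\delta$ rather than $2\delta$, and your $K$ only improves. Your parenthetical observation that acylindricity plays no role is also accurate --- the paper's proof likewise uses only hyperbolicity and coboundedness, and it is worth noting that uniformity of $K$ over all elliptic $g$ (which the statement requires) holds in both arguments since the bound on the orbit diameter depends only on $\delta$. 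In sum: the self-contained route costs a half page of routine hyperbolic geometry but makes the lemma independent of Osin's machinery; the paper's citation is shorter but leaves the key geometric input opaque.
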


\medskip

\begin{proof} For any point $s\in S$ and any real $r> 0$ let $B_r(s)$ be the open ball in $S$ of radius $r$ with the center at $s$. Since the action of $G$ on $S$ is cobounded, there exists a constant $L> 0$ such that
$$
S=G\cdot B_L(s_0).\eqno{(3.1)}
$$
Let $\delta\geqslant 0$ be a constant such that $S$
is $\delta$-hyperbolic and let $g$ be an element from $\Ell (G\curvearrowright S)$.
Then there exists a point $s\in S$ such that the diameter of the orbit $\langle g\rangle s$ is less than $4\delta+1$
(see~\cite[Corollary~6.7]{Osin_1}; the proof can be also extracted from~\cite{BG}).
Thus, $\langle g\rangle s\subseteq B_{4\delta+1}(s)$. By (3.1), there exist an element $h\in G$ and a point $s_1\in B_L(s_0)$
such that $s=hs_1$. Then $h^{-1}\langle g\rangle h s_1\subseteq B_{4\delta+1}(s_1)$. Thus
$h^{-1}\langle g\rangle h s_0\subseteq B_K(s_0)$ for $K=2L+4\delta+1$.
\end{proof}

\medskip

In the following we use variants of~Lemma~2.10 and~Lemma 2.11 from~\cite{Bog_1}. These lemmas were formulated
for groups $G$ and generating sets $X$ of $G$ such that the Cayley graph $\Gamma(G,X)$ is hyperbolic and
$G$ acts acylindrically on $\Gamma(G,X)$.
They can be easily adapted to the case where $G$ acts coboundedly and acylindrically on a hyperbolic space.

\begin{lemma}\label{lem 2.2}
{\rm (see~\cite[Lemma 2.10]{Bog_1})}
Let $G$ be a group which acts coboundedly and acylindrically on a hyperbolic space $S$.
Let $s_0$ be a point in $S$ and $|\cdot |_{s_0}:G\rightarrow \mathbb{R}_{\geqslant 0}$ be the length function on $G$
induced by the action of $G$ on the based space $(S,s_0)$.
Then there exists a constant $N_0\in \mathbb{N}$ such that the following holds:
For any $a,b\in \Lox(G\curvearrowright S)$ with $E_G(a)\neq E_G(b)$ and for any $n,m\in \mathbb{N}$ we have that
$$|a^nb^m|_{s_0}>\frac{\min\{n,m\}}{N_0}.$$
\end{lemma}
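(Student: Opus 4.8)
\emph{The plan} is to reduce the whole estimate to a single uniform bound on a Gromov product and then read off the inequality from thin-triangle geometry. First I would fix a hyperbolicity constant $\delta$ for $S$ and record the uniform data that acylindricity supplies: a positive lower bound $\tau_0$ for the stable translation length $\tau(g)=\lim_{k\to\infty}|g^k|_{s_0}/k$ of every loxodromic $g$, together with the index bound $L$ from Lemma~\ref{elem_index}, which lets one treat the orbit maps $k\mapsto g^ks_0$ as quasi-geodesics with controlled constants. Since the sequence $|g^k|_{s_0}=d(s_0,g^ks_0)$ is subadditive, its limit equals its infimum, so $|g^k|_{s_0}\geq k\,\tau(g)\geq k\tau_0$ with \emph{no} additive error; this is the lower bound that will drive the final estimate. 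I would also recall the standard fact that in an acylindrical action two loxodromics sharing an endpoint on $\partial S$ lie in a common elementary subgroup, so that $E_G(a)\neq E_G(b)$ forces the fixed-point pairs of $a$ and $b$ to be disjoint, and in particular no nontrivial power of $a$ equals a power of $b$.

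The identity organizing the proof is the Gromov-product relation at the junction point $a^ns_0$. Forming the triangle on $s_0$, $a^ns_0$, $a^nb^ms_0$ and using equivariance gives
\[
|a^nb^m|_{s_0}=|a^n|_{s_0}+|b^m|_{s_0}-2\,(a^{-n}s_0,b^ms_0)_{s_0},
\]
where the Gromov product based at $a^ns_0$ has been carried back to $s_0$ by the isometry $a^{-n}$. Combined with $|a^n|_{s_0}\geq n\tau_0$ and $|b^m|_{s_0}\geq m\tau_0$, the lemma comes down to bounding $(a^{-n}s_0,b^ms_0)_{s_0}$ independently of $a,b,n,m$.

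This uniform bound is the heart of the matter and the step I expect to be \emph{the main obstacle}. Geometrically $(a^{-n}s_0,b^ms_0)_{s_0}$ measures how long the geodesic from $s_0$ toward $a^{-n}s_0$ (tracking the $a^{-1}$-axis, heading to the repelling fixed point of $a$) and the geodesic from $s_0$ toward $b^ms_0$ (tracking the $b$-axis, heading to the attracting fixed point of $b$) fellow-travel before diverging. Because these two endpoints are distinct, the overlap is finite for each fixed pair; the difficulty is uniformity. I would argue by contradiction via Definition~\ref{def Bowditch}: if the overlap has length $T$, then by $\delta$-thinness the two geodesics $2\delta$-fellow-travel up to distance about $T$ from $s_0$, so matched orbit points satisfy $d(a^{-i}s_0,b^{\sigma(i)}s_0)\leq D$ for a uniform $D$. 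Fixing a deep point $x=a^{-p'}s_0$ with $p'$ near the middle of the overlap, so that $d(s_0,x)\geq R$, each small index $p$ then produces an element $c_p=b^{-\sigma(p)}a^{-p}$ moving both $s_0$ and $x$ by at most a uniform $\epsilon$; distinct $p$ give distinct $c_p$, since $c_p=c_{p'}$ would force a relation $a^{p-p'}=b^{\sigma(p)-\sigma(p')}$, which is excluded. A long overlap yields more than $N=N(\epsilon)$ such elements and contradicts acylindricity, so $(a^{-n}s_0,b^ms_0)_{s_0}\leq B$ for a uniform $B$. The delicate point is that when the translation lengths are large the orbit quasi-geodesic constants are not uniform; I would absorb this by indexing the matchings along whichever of the two orbits has the \emph{smaller} translation length, so that the count of available orbit points in the overlap region scales correctly.

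Finally I would assemble the pieces: the identity now gives $|a^nb^m|_{s_0}\geq \tau_0(n+m)-2B\geq 2\tau_0\min\{n,m\}-2B$, and choosing $N_0>1/\tau_0$ large enough that the linear term dominates once $\min\{n,m\}\geq 4B/\tau_0$ yields $|a^nb^m|_{s_0}>\min\{n,m\}/N_0$ in that range. The only remaining subtlety is the bounded range of small $\min\{n,m\}$, where the linear estimate is vacuous and the claimed right-hand side is at most a fixed small number; here the strict inequality follows by enlarging $N_0$, using positivity of the displacement of the nontrivial element $a^nb^m$ (in the Cayley-graph setting from which the lemma is adapted this positivity is just the statement that a nontrivial element displaces the basepoint by at least one edge-length). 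This completes the plan.
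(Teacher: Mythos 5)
Your reduction via the identity $|a^nb^m|_{s_0}=|a^n|_{s_0}+|b^m|_{s_0}-2\,(a^{-n}s_0,b^ms_0)_{s_0}$ is correct, as are the preliminary facts ($|g^k|_{s_0}\geqslant k\,\mathbf{inj}(G\curvearrowright S)$ by subadditivity and Lemma~\ref{Bow}, and disjointness of limit points when $E_G(a)\neq E_G(b)$). But the step you yourself flag as the heart of the matter --- a bound $B=B(\delta,\text{action})$ on $(a^{-n}s_0,b^ms_0)_{s_0}$ uniform over all $a,b$ with $E_G(a)\neq E_G(b)$ --- is false, and with it your assembled estimate $|a^nb^m|_{s_0}\geqslant \tau_0(n+m)-2B$. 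Take $G=F_2=\langle x,y\rangle$ acting on its Cayley tree ($0$-hyperbolic, cobounded, acylindrical), $s_0=1$, $a=x$, $b=x^{-n}y$, $m=1$. Both elements are loxodromic and $E_G(a)=\langle x\rangle\neq\langle x^{-n}y\rangle=E_G(b)$, yet $a^nb=y$, so $|a^nb|_{s_0}=1$ while your bound grows linearly in $n$; correspondingly $(a^{-n}s_0,bs_0)_{s_0}=\tfrac12\bigl(n+(n+1)-1\bigr)=n$ is unbounded. Note this example also shows the lemma's conclusion \emph{cannot} be upgraded from $\min\{n,m\}$ to $n+m$; an approach that would prove the $n+m$ bound is proving too much, which is a structural sign it must fail. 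Where exactly your acylindricity argument breaks: fellow-traveling of the geodesics $[s_0,a^{-n}s_0]$ and $[s_0,b^ms_0]$ does not supply matched \emph{orbit} points $b^{\sigma(i)}s_0$. In the example the second geodesic overlaps $\{a^{-i}s_0\}$ along length $n$ but contains no intermediate point of the $\langle b\rangle$-orbit at all (the translation length of $b$ is about $n+1$), so there are no group elements $c_p=b^{-\sigma(p)}a^{-p}$ with small displacement to count; your fallback of indexing along the orbit with smaller translation length matches orbit points of $a$ with mere geodesic points on the $b$-side, which are not of the form $b^js_0$ and hence yield no group elements for Definition~\ref{def Bowditch}.

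For comparison: the paper gives no internal proof of this lemma --- it quotes \cite[Lemma 2.10]{Bog_1} and remarks that the Cayley-graph version adapts to cobounded acylindrical actions --- so the comparison here is with what a correct argument must do. The quantity that acylindricity bounds uniformly (given $E_G(a)\neq E_G(b)$) is the coarse overlap of the two \emph{axes}, not a Gromov product at an arbitrary basepoint: the latter can additionally be as large as the distance from $s_0$ to the nearer axis, and that distance is precisely compensated inside $|a^n|_{s_0}$ and $|b^m|_{s_0}$ via estimates of the type $d(s_0,gs_0)\geqslant 2d(s_0,A_{8\delta}(g))+[g]-6\delta$ (Lemma~\ref{LemmaAboutAxes}; compare Proposition~\ref{DifferenceOfLengths}). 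The $\min\{n,m\}$ survives because the power of the element whose axis is far from $s_0$ can be partially cancelled, as when $b$ swallows $a^{-n}$ above. Finally, your patch for the range of small $\min\{n,m\}$ --- positivity of the displacement of the nontrivial element $a^nb^m$ --- is specific to Cayley graphs; in a general based space $(S,s_0)$ a nontrivial element may displace $s_0$ by an amount with no positive uniform lower bound, so enlarging $N_0$ does not dispose of the bounded range, and that range too must come out of the same axis-based geometry.
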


\begin{lemma}\label{lem 2.3}
{\rm (see~\cite[Lemma 2.11]{Bog_1})}
Let $G$ be a group which acts coboundedly and acylindrically on a hyperbolic space $S$.
Let $s_0$ be a point in $S$ and $|\cdot |_{s_0}:G\rightarrow \mathbb{R}_{\geqslant 0}$ be the length function on $G$
induced by the action of $G$ on the based space $(S,s_0)$.
Then there exists a constant $N_1\in \mathbb{N}$ such that the following holds:
For any $a\in \Lox(G\curvearrowright X)$, for any $b\in \Ell(G\curvearrowright X)\setminus E_G(a)$, and for any $n\in \mathbb{N}$ we have that
$$|a^nb|_{s_0}>\frac{n}{N_1}.$$
\end{lemma}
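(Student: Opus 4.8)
The plan is to reduce the statement to a lower bound on a single distance and then realize that distance by an explicit quasi-geodesic whose length grows linearly in $n$; the argument parallels that of Lemma~\ref{lem 2.2}, with the loxodromic ray $b^m s_0$ replaced by the single point $bs_0$. Writing $o=s_0$, note first that $|a^nb|_{o}=d(o,a^nbo)=d(a^{-n}o,bo)$, so it suffices to bound $d(a^{-n}o,bo)$ from below. I would fix the bi-infinite axis $\ell$ of $a$, namely the concatenation of geodesics through the orbit $(a^ko)_{k\in\mathbb{Z}}$. Using acylindricity together with Lemma~\ref{elem_index}, the orbit map $k\mapsto a^ko$ is a $(\lambda,c)$-quasi-geodesic and $|a^k|_{o}\geqslant \tau_0|k|-c_0$, with constants $\lambda,c,\tau_0>0,c_0$ that do not depend on the loxodromic $a$; this uniformity is what ultimately produces a single $N_1$.

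Next I would build a candidate path and invoke the local-to-global principle. Consider the concatenation $P$ of the axis segment $[o,a^no]$ with the $a^n$-translate $a^n[o,bo]$ of a geodesic $[o,bo]$; its endpoints are $o$ and $a^nbo$. Away from the junction $a^no$ the path $P$ agrees either with $\ell$ or with an isometric copy of a geodesic, so by Lemma~\ref{QuasiGeodLocalGlobal} it is enough to check that $P$ does not backtrack near $a^no$, i.e. to bound the Gromov product at the junction. Translating by $a^{-n}$, this product equals $\beta:=(a^{-n}o\,,\,bo)_{o}$, the length along which $[o,bo]$ fellow-travels the backward ray of $\ell$. If $\beta\leqslant B$ for a uniform $B$, then $P$ is an $M$-local $(\lambda',c')$-quasi-geodesic for constants depending only on $\delta$ and the acylindricity data, hence by Lemma~\ref{QuasiGeodLocalGlobal} a global quasi-geodesic, and
\[
|a^nb|_{o}=d(o,a^nbo)\ \geqslant\ \tfrac{1}{\lambda'}\,\ell(P)-c'\ \geqslant\ \tfrac{1}{\lambda'}\bigl(\tau_0 n-c_0\bigr)-c',
\]
which exceeds $n/N_1$ once $N_1$ is chosen large (the finitely many small $n$ being absorbed into the constant).

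The main obstacle is precisely the uniform bound $\beta\leqslant B$, and this is where the hypothesis $b\notin E_G(a)$ and acylindricity must be used together. My plan is to argue contrapositively: if $\beta$ is large, then $[o,bo]$ shadows a long segment $[o,a^{-J}o]$ of $\ell$ with $\tau_0 J\approx\beta$, so $b$ coarsely carries a long sub-segment of $\ell$ back onto $\ell$; feeding two far-apart points of $\ell$ and the acylindricity constants (for $\epsilon$ of size controlled by $\delta$ and $\lambda$) into Definition~\ref{def Bowditch}, and using Lemma~\ref{elem_index} to control the elementary subgroup, one shows that coarse preservation of an arbitrarily long axis segment forces $b$ to fix the pair of endpoints of $\ell$ on $\partial S$, i.e. $b\in E_G(a)$, contrary to hypothesis. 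I expect the delicate part to be extracting, from the fellow-traveling statement, a genuinely long family of elements displacing two distant points of $\ell$ by a bounded amount, so that Definition~\ref{def Bowditch} applies with a uniform $N$; Lemma~\ref{HausdorffQuasiGeod} and Lemma~\ref{Elliptic_cobounded} are the natural tools for making the fellow-traveling and the boundedness of the relevant orbits quantitative.
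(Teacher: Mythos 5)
The pivotal step of your plan --- the uniform bound $\beta=(a^{-n}s_0,bs_0)_{s_0}\leqslant B$ for all elliptic $b\notin E_G(a)$, with the contrapositive ``$\beta$ large $\Rightarrow b\in E_G(a)$'' --- is false, and the proposal collapses there. Take $G=\mathbb{Z}/2\ast\mathbb{Z}/3=\langle c\rangle\ast\langle b_0\rangle\cong PSL(2,\mathbb{Z})$ acting on its Bass--Serre tree $T$ (a cobounded action on a $0$-hyperbolic space, acylindrical by Proposition~\ref{equiv_acylindric} since edge stabilizers are trivial), let $s_0$ be the vertex fixed by $c$, and let $a=cb_0$, a loxodromic whose axis passes through $s_0$ with $[a]=2$. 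A normal-form computation gives $c\,a^n c=(b_0c)^n\neq a^{\pm n}$ for all $n$, so $c\notin E_G(a)$; hence $b:=a^{-m}ca^{m}$ is elliptic and lies outside $E_G(a)$ for every $m$. Since $c$ swaps the two edges at the valence-two vertex $s_0$, the geodesic $[a^{m}s_0,\,ca^{m}s_0]$ passes through $s_0$, so after translating by $a^{-m}$ the geodesic $[s_0,bs_0]$ \emph{begins with the entire negative-axis segment} $[s_0,a^{-m}s_0]$; consequently $(a^{-n}s_0,bs_0)_{s_0}\geqslant 2m$ for all $n\geqslant m$, unbounded as $m\to\infty$. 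Thus a large Gromov product does not force $b$ to carry a long segment of the axis coarsely back onto the axis --- it only constrains the position of the single orbit point $bs_0$ --- and your concatenated path $P$ genuinely backtracks by an amount comparable to $n$, so Lemma~\ref{QuasiGeodLocalGlobal} cannot be applied. Any correct proof must \emph{allow} this cancellation and recoup it from the ellipticity of $b$ (roughly: an elliptic $b$ has a quasi-centre, so a deep incursion of $[s_0,bs_0]$ along the axis is paid for twice, $|b|_{s_0}\geqslant 2\beta-O(\delta)$); that idea is absent from your sketch, and it is the actual content of \cite[Lemma 2.11]{Bog_1}, which the present paper cites without reproving.

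Two secondary problems. First, your claim that the line through the orbit $(a^k s_0)_{k\in\mathbb{Z}}$ is a quasi-geodesic with constants independent of $a$ is unjustified: Corollary~\ref{AxisQuasiGeod} applies only to $\varepsilon$-lines, i.e.\ lines based at points $s$ with $d(s,gs)\leqslant[g]+\varepsilon$, whereas $d(s_0,as_0)$ may exceed $[a]$ by an arbitrary amount for the fixed basepoint $s_0$ (this is repairable: use the geodesic $[s_0,a^ns_0]$, whose lower bound $|a^n|_{s_0}\geqslant n\,\mathbf{inj}(G\curvearrowright S)$ follows from Lemmas~\ref{inf} and~\ref{Bow}). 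Second, the closing remark that ``the finitely many small $n$ are absorbed into the constant'' is not available: a bound of the form $|a^nb|_{s_0}\geqslant \alpha n-\beta'$ gives $|a^nb|_{s_0}>n/N_1$ only for large $n$, and in the based setting there is no positive lower bound for small $n$ --- in the example above, $b'=b_0^{-1}=a^{-1}c$ is elliptic with $b'\notin E_G(a)$, yet $ab'=c$ fixes $s_0$, so $|ab'|_{s_0}=0$. (In the Cayley-graph setting of \cite{Bog_1} the small-$n$ regime is covered by $|g|_X\geqslant 1$ for $g\neq 1$; for a general based length function this regime requires either a separate argument or an additive-constant form of the statement, which is all that the applications via Lemma~\ref{Elliptic_cobounded} and Corollary~\ref{lox-ell} actually use.)
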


\begin{corollary}\label{lox-lox}
Let $G$ be a group which acts coboundedly and acylindrically on a hyperbolic space S.
Then there exists a constant $C_0\in \mathbb{N}$ such that the following holds:
For any $a,b\in \Lox(G\curvearrowright S)$ with $E_G(a)\neq E_G(b)$ and for any $n,m\geqslant C_0$ we have that $a^{n}b^m\in \Lox(G\curvearrowright S)$.
\end{corollary}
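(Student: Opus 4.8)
The plan is to combine the dichotomy of Bowditch---every element of a group acting acylindrically on a hyperbolic space is either elliptic or loxodromic---with the uniform lower bound of Lemma~\ref{lem 2.2}. Thus it suffices to produce a single $C_0\in\mathbb{N}$, depending only on the action through the constants $N_0$ of Lemma~\ref{lem 2.2} and $K$ of Lemma~\ref{Elliptic_cobounded}, such that $a^nb^m$ fails to be elliptic whenever $n,m\geqslant C_0$. I would argue by contradiction: suppose $a,b\in\Lox(G\curvearrowright S)$ with $E_G(a)\neq E_G(b)$, that $n,m\geqslant C_0$, and that $w:=a^nb^m\in\Ell(G\curvearrowright S)$.

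The main point is that Lemma~\ref{lem 2.2} is anchored at the fixed basepoint $s_0$, whereas an elliptic element only has \emph{some} bounded orbit, possibly far from $s_0$; the device that reconciles these is the conjugation supplied by Lemma~\ref{Elliptic_cobounded}. Applying that lemma to the elliptic element $w$ yields the uniform constant $K>0$ and an element $h\in G$ with $|h^{-1}wh|_{s_0}\leqslant K$. Now observe that $h^{-1}wh=(h^{-1}ah)^{n}(h^{-1}bh)^{m}$, and set $a'=h^{-1}ah$ and $b'=h^{-1}bh$. Conjugation preserves loxodromicity and carries the elementary subgroups to $E_G(a')=h^{-1}E_G(a)h$ and $E_G(b')=h^{-1}E_G(b)h$, so $a',b'\in\Lox(G\curvearrowright S)$ and $E_G(a')\neq E_G(b')$. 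Hence Lemma~\ref{lem 2.2} applies to the pair $(a',b')$ and gives
\[
K\geqslant |h^{-1}wh|_{s_0}=|(a')^{n}(b')^{m}|_{s_0}>\frac{\min\{n,m\}}{N_0}.
\]

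It follows that $\min\{n,m\}<KN_0$. Therefore, taking $C_0:=\lfloor KN_0\rfloor+1$ forces a contradiction as soon as $n,m\geqslant C_0$, since then $\min\{n,m\}\geqslant C_0>KN_0$. Consequently $w=a^nb^m$ cannot be elliptic, and by Bowditch's dichotomy it is loxodromic, as required. I expect the only delicate points to be bookkeeping ones: checking that conjugation preserves both the loxodromic hypothesis and the inequality $E_G(a')\neq E_G(b')$, so that Lemma~\ref{lem 2.2} is genuinely applicable to the conjugated pair, and confirming that $K$ and $N_0$ are independent of the particular elements $a,b$, which is exactly what makes the resulting $C_0$ uniform.
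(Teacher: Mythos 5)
Your proof is correct and takes essentially the same route as the paper's: both argue by contradiction, use Lemma~\ref{Elliptic_cobounded} to conjugate the putatively elliptic element $a^nb^m$ so that its $|\cdot|_{s_0}$-length is at most $K$, and then contradict the lower bound of Lemma~\ref{lem 2.2}, with the paper choosing $C_0=\lceil K\rceil N_0$ rather than your $\lfloor KN_0\rfloor+1$ (an immaterial difference). Your explicit checks that conjugation preserves loxodromicity and the condition $E_G(a')\neq E_G(b')$, and your appeal to Bowditch's elliptic/loxodromic dichotomy to convert ``not elliptic'' into ``loxodromic,'' simply spell out steps the paper leaves implicit.
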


\begin{proof}
Let $s_0$ be a point in $S$ and $|\cdot |_{s_0}:G\rightarrow \mathbb{R}_{\geqslant 0}$ be the length function on $G$
induced by the action of $G$ on the based space $(S,s_0)$.
We set $C_0=\lceil K \rceil N_0$, where $K$ and $N_0$ are the constants from Lemmas~\ref{Elliptic_cobounded} and~\ref{lem 2.2},
respectively.

Suppose to the contrary that there exist $a,b\in \Lox(G\curvearrowright S)$ and $n,m\geqslant C_0$ such that $a^nb^m\in \Ell(G\curvearrowright S)$.
By Lemma~\ref{Elliptic_cobounded}, there exists $g\in G$ such that $|g^{-1}(a^nb^m)g|_{s_0}\leqslant K$. We set $a_1=g^{-1}ag$, $b_1=g^{-1}bg$. Then $a_1,b_1\in  \Lox(G\curvearrowright S)$ and $|a_1^nb_1^m|_{s_0}\leqslant K$.
This contradicts Lemma~\ref{lem 2.2}, which says that $$|a_1^nb_1^m|_{s_0}>\frac{\min\{n,m\}}{N_0}\geqslant K.$$
\end{proof}

\medskip

The proof of the following corollary follows analogously from Lemma~\ref{lem 2.3}.

\begin{corollary}\label{lox-ell}
Let $G$ be a group which acts coboundedly and acylindrically on a hyperbolic space $S$.
There exists a constant $C_1\in \mathbb{N}$ such that the following holds:

If $a\in \Lox(G\curvearrowright S)$ and $b\in \Ell(G\curvearrowright S)\setminus E_G(a)$, then $a^nb\in \Lox(G\curvearrowright S)$ for any $n\geqslant C_1$.
\end{corollary}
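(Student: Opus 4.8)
The plan is to argue by contradiction in exact parallel with the proof of Corollary~\ref{lox-lox}, replacing the role of the second loxodromic element by the elliptic element $b$ and using Lemma~\ref{lem 2.3} in place of Lemma~\ref{lem 2.2}. I would fix a basepoint $s_0\in S$ and the induced length function $|\cdot|_{s_0}$, and set $C_1=\lceil K\rceil\,N_1$, where $K$ is the constant from Lemma~\ref{Elliptic_cobounded} and $N_1$ is the constant from Lemma~\ref{lem 2.3}.

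Suppose for contradiction that $a\in\Lox(G\curvearrowright S)$, $b\in\Ell(G\curvearrowright S)\setminus E_G(a)$ and $n\geqslant C_1$ satisfy $a^nb\in\Ell(G\curvearrowright S)$. Since $a^nb$ is elliptic, Lemma~\ref{Elliptic_cobounded} supplies $g\in G$ with $|g^{-1}(a^nb)g|_{s_0}\leqslant K$ (using the statement for exponent~$1$). Conjugating, I would set $a_1=g^{-1}ag$ and $b_1=g^{-1}bg$, so that $|a_1^nb_1|_{s_0}=|g^{-1}(a^nb)g|_{s_0}\leqslant K$.

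The one point that must be checked --- and which I expect to be the only real step beyond Corollary~\ref{lox-lox} --- is that the hypotheses of Lemma~\ref{lem 2.3} survive conjugation. Being loxodromic or elliptic is a conjugation-invariant property (an orbit of $\langle g^{-1}xg\rangle$ is merely a $g^{-1}$-translate of an orbit of $\langle x\rangle$), so $a_1\in\Lox(G\curvearrowright S)$ and $b_1\in\Ell(G\curvearrowright S)$. Moreover the elementary subgroup is equivariant, $E_G(g^{-1}ag)=g^{-1}E_G(a)g$, directly from the description $E_G(a)=\{f\in G\mid \exists\,m\in\mathbb{N}:f^{-1}a^mf=a^{\pm m}\}$; hence $b\notin E_G(a)$ forces $b_1\notin E_G(a_1)$. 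Thus $b_1\in\Ell(G\curvearrowright S)\setminus E_G(a_1)$, as required.

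Finally I would apply Lemma~\ref{lem 2.3} to $a_1$, $b_1$ and $n$ to obtain $|a_1^nb_1|_{s_0}>\frac{n}{N_1}\geqslant\frac{C_1}{N_1}=\lceil K\rceil\geqslant K$, contradicting the bound $|a_1^nb_1|_{s_0}\leqslant K$ established above. This contradiction shows $a^nb\in\Lox(G\curvearrowright S)$ for every $n\geqslant C_1$.
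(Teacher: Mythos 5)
Your proof is correct and follows exactly the route the paper intends: the paper states that Corollary~\ref{lox-ell} ``follows analogously from Lemma~\ref{lem 2.3},'' i.e.\ by repeating the proof of Corollary~\ref{lox-lox} with $C_1=\lceil K\rceil N_1$, Lemma~\ref{Elliptic_cobounded}, and a conjugation. Your explicit verification that the hypotheses of Lemma~\ref{lem 2.3} survive conjugation (via $E_G(g^{-1}ag)=g^{-1}E_G(a)g$) is precisely the step the paper leaves implicit, and it is done correctly.
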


\medskip

\subsection{Translation lengths and stable norms.}

The aim of this subsection is to prove Corollary~\ref{length_power_1} that will be used in the proof of Theorem~A.

\begin{definition} (see~\cite{CDP} and~\cite{Gromov})
Suppose that $G$ is a group acting on a metric space~$S$. Let $g$ be an element of $G$.

The {\it translation length} of $g$ is the number $[g]$ defined by
$$
[g]=\underset{s\in S}{\inf} d(s,gs).
$$

The {\it stable norm} of $g$ is the number $||g||$ defined by
$$
||g||=\underset{n\rightarrow \infty}{\lim}\frac{1}{n} d(s,g^ns),
$$
where $s$ is an arbitrary point of $S$. One can show that this definition does not depend on the choice of~$s$.
This limit exists since the function $f:\mathbb{N}\rightarrow \mathbb{R}_{\geqslant 0}$, $n\mapsto d(s,g^ns)$, is subadditive.

\end{definition}

The following lemma is quite trivial.

\begin{lemma}\label{inf} The stable norm satisfies the following properties.

\begin{enumerate}
\item[{\rm (1)}] $\displaystyle{[g]\geqslant ||g||=\underset{n\in \mathbb{N}}{\inf}\frac{d(s,g^ns)}{n}}$.

\medskip

\item[{\rm (2)}] $||x^{-1}gx||=||g||$ for any $x\in G$.

\medskip

\item[{\rm (3)}] $||g^k||=|k|\cdot ||g||$ for any $k\in \mathbb{Z}$.
\end{enumerate}
\end{lemma}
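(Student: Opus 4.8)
The plan is to dispatch the three assertions in order, the whole argument resting on the subadditivity of the sequence $f(n)=d(s,g^ns)$ noted in the definition, together with elementary uses of the triangle inequality and of the fact that $G$ acts by isometries. For the equality in part~(1), I would invoke Fekete's subadditive lemma. Subadditivity $f(n+m)\leqslant f(n)+f(m)$ holds because $d(s,g^{n+m}s)\leqslant d(s,g^ns)+d(g^ns,g^{n+m}s)$ and $d(g^ns,g^{n+m}s)=d(s,g^ms)$ by isometry; Fekete's lemma then guarantees that $\lim_{n\to\infty}f(n)/n$ exists and equals $\inf_{n\in\mathbb{N}}f(n)/n$. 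This simultaneously confirms that the limit defining $\|g\|$ exists and gives the asserted formula $\|g\|=\inf_{n\in\mathbb{N}}d(s,g^ns)/n$.

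For the inequality $[g]\geqslant\|g\|$ I would fix an arbitrary point $s'\in S$ and telescope along its orbit: $d(s',g^ns')\leqslant\sum_{i=0}^{n-1}d(g^is',g^{i+1}s')=n\,d(s',gs')$, the last equality again by isometry. Dividing by $n$ and letting $n\to\infty$ yields $\|g\|\leqslant d(s',gs')$ for every $s'$, and taking the infimum over $s'$ gives $\|g\|\leqslant\inf_{s'\in S}d(s',gs')=[g]$. Part~(2) rests on the base-point independence of the stable norm: writing $(x^{-1}gx)^n=x^{-1}g^nx$ and applying the isometry $x$ gives $d(s,x^{-1}g^nxs)=d(xs,g^nxs)$, so evaluating the definition of $\|g\|$ at the base point $s'=xs$ produces $\|x^{-1}gx\|=\lim_n\tfrac{1}{n}d(s',g^ns')=\|g\|$.

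For part~(3) I would first take $k\geqslant 1$ and use the substitution $m=kn$. Since $d(s,(g^k)^ns)=d(s,g^{kn}s)$, one has $\tfrac{1}{n}d(s,g^{kn}s)=k\cdot\tfrac{1}{kn}d(s,g^{kn}s)$, and as $n\to\infty$ the right-hand factor tends to $\|g\|$, being a subsequence of the convergent sequence defining $\|g\|$; hence $\|g^k\|=k\,\|g\|$, while $k=0$ is trivial. The negative case reduces to the identity $\|g^{-1}\|=\|g\|$, which follows from $d(s,g^{-n}s)=d(g^ns,s)=d(s,g^ns)$ after applying the isometry $g^n$. Combining these gives $\|g^k\|=|k|\,\|g\|$ for all $k\in\mathbb{Z}$.

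This lemma is genuinely routine, as its statement advertises, so there is no serious obstacle; the only points demanding a modicum of care are the correct appeal to Fekete's lemma for the equality in~(1) and the repeated—but legitimate—use of the fact, asserted in the definition, that the limit defining $\|g\|$ does not depend on the chosen base point.
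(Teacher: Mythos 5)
Your proof is correct, and since the paper itself omits any argument (labelling the lemma ``quite trivial''), your write-up via Fekete's subadditive lemma, telescoping for $[g]\geqslant \|g\|$, and the isometry/base-point-independence reductions for (2) and (3) is precisely the standard argument the authors intend. Your reliance on base-point independence of $\|g\|$ is legitimate here because the paper asserts it in the definition (and it follows at once from $|d(s,g^ns)-d(s',g^ns')|\leqslant 2d(s,s')$).
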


\begin{lemma}\label{32delta} {\rm \cite[Chapitre 10, Proposition 6.4]{CDP}}
Let $S$ be a $\delta$-hyperbolic space and $g$ a loxodromic isometry of $S$.
Then
$$
||g||\leqslant [g]\leqslant ||g||+32\delta.
$$
\end{lemma}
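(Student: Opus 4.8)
The left inequality $||g||\leqslant [g]$ is already contained in Lemma~\ref{inf}(1), so the plan is to prove the right inequality $[g]\leqslant ||g||+32\delta$ by exhibiting, for each $n$, a point whose $g$-displacement is at most $\tfrac{1}{n}d(s,g^ns)+32\delta$; letting $n\to\infty$ and using $||g||=\inf_n \tfrac{1}{n}d(s,g^ns)$ then finishes. The geometric idea is that a loxodromic $g$ has an ``axis'' along which it translates by an amount close to $||g||$, and that a point on the axis whose projected one-step shift is minimal has small displacement.

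First I would produce the axis. Since $g\in\Lox(G\curvearrowright S)$, the map $n\mapsto g^ns_0$ is a quasi-isometric embedding, so the orbit is a bi-infinite quasi-geodesic with two distinct endpoints $g^{\pm}$ on $\partial S$; let $\gamma$ be a bi-infinite geodesic joining $g^-$ to $g^+$. Because $g$ fixes both $g^{\pm}$, every translate $g^n\gamma$ is again a geodesic with the same pair of ideal endpoints, so by $\delta$-thinness all the $g^n\gamma$ lie within a single Hausdorff distance $h:=d_{\operatorname{Hau}}(\gamma,g\gamma)$ of $\gamma$, where $h$ is bounded by a fixed multiple of $\delta$ independent of $g$ (this is a direct thinness consequence, the same-endpoints analogue of Lemma~\ref{HausdorffQuasiGeod}, and it is the point at which the estimate becomes uniform in $g$).

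Fix $p=\gamma(0)$ and for each $j$ let $\gamma(t_j)$ be a point of $\gamma$ nearest to $g^jp$, so that $d(g^jp,\gamma(t_j))\leqslant h$, $t_0=0$, the $t_j$ increase, and $|d(p,g^jp)-t_j|\leqslant h$; consequently $t_j/j\to||g||$. For a given $n$ the gaps $t_{j+1}-t_j$ over $0\leqslant j<n$ sum to $t_n$, so some gap is at most $t_n/n$; taking $y=\gamma(t_j)$ for that index and using that $g$ is an isometry gives $d(gy,\gamma(t_{j+1}))\leqslant d(y,g^jp)+d(g^{j+1}p,\gamma(t_{j+1}))\leqslant 2h$, whence $d(y,gy)\leqslant (t_{j+1}-t_j)+2h\leqslant t_n/n+2h$. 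Thus $[g]\leqslant t_n/n+2h$ for every $n$, and letting $n\to\infty$ yields $[g]\leqslant ||g||+2h\leqslant ||g||+32\delta$ once $h$ is bounded by $16\delta$.

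The main obstacle is securing a constant that is simultaneously additive and uniform in $g$: comparing via the orbit quasi-geodesic and Lemma~\ref{HausdorffQuasiGeod} would only yield a constant $\mu(\delta,\varkappa,\varepsilon)$ depending on the loxodromic constants of $g$, so it is essential to pass to a genuine bi-infinite geodesic axis and to bound the Hausdorff distance between two geodesics sharing their ideal endpoints by a fixed multiple of $\delta$. A secondary technical point, should $S$ not be assumed proper, is the existence of a bi-infinite geodesic between $g^-$ and $g^+$; I would handle this either from the standing hypotheses of the ambient theory or by approximating with long finite geodesics $[g^{-N}p,g^Np]$, repeating the pigeonhole estimate on each and letting $N\to\infty$.
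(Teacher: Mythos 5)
The paper does not prove Lemma~\ref{32delta} at all: it is quoted from \cite[Chapitre 10, Proposition 6.4]{CDP}, so the only comparison available is with that source. The argument there is genuinely different from yours and worth knowing: one may assume $[g]>32\delta$ (otherwise the claim is trivial since $\|g\|\geqslant 0$), takes a point $x$ with $d(x,gx)$ close to $[g]$, and shows by a thin-triangle argument that the broken path through the orbit points $g^jx$ is a local quasi-geodesic with \emph{uniform} constants --- the key point being that a large Gromov product $(x, g^2x)_{gx}$ would produce a point near the midpoint of $[x,gx]$ with displacement strictly less than $[g]$, contradicting near-minimality. The local-to-global Lemma~\ref{QuasiGeodLocalGlobal} then gives $d(x,g^nx)\geqslant n([g]-32\delta)-\mathrm{const}$, i.e.\ $\|g\|\geqslant [g]-32\delta$. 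This route needs no boundary points, no bi-infinite geodesics, and no properness, which matters here: in this paper the lemma is applied to actions on arbitrary hyperbolic spaces, typically Cayley graphs with respect to infinite generating sets, which are badly non-proper.

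Your outline (equivariant projection to an axis plus an averaging/pigeonhole step) is a recognized alternative, and your observation that Lemma~\ref{HausdorffQuasiGeod} applied to the orbit quasi-geodesic would give only $g$-dependent constants, so one must pass to geodesics sharing ideal endpoints, is exactly right. But as written there are two genuine gaps. First, you assert ``the $t_j$ increase'' with no justification, and the proof needs it: the pigeonhole gives an index $j$ with \emph{signed} gap $t_{j+1}-t_j\leqslant t_n/n$, while your displacement estimate bounds $d(y,gy)$ by $|t_{j+1}-t_j|+2h$. If projections backtrack --- and nothing in your setup forbids backtracks of size up to roughly $d(p,gp)+2h$, where $d(p,gp)\geqslant[g]$ is precisely the quantity not yet under control --- the index realizing the minimal signed gap can have $|t_{j+1}-t_j|$ large (alternate gaps $+A$ and $-A+2t_n/n$ with $A$ huge). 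This is repairable: $g$ induces on the parameter line of $\gamma$ a $(1,O(h))$-quasi-isometry fixing, not swapping, the two ends, hence coarsely a translation by some $\tau$, and $t_j\geqslant d(p,g^jp)-h\to\infty$ forces $\tau\geqslant -O(h)$, so backtracks are $O(h)$; but this costs additional additive terms beyond your $2h$, and whether the total fits inside $32\delta$ requires constant-chasing you have not done. Second, the non-proper case is not a ``secondary technical point'' here but the main case, and your fallback via $[g^{-N}p,g^Np]$ is under-specified: the marker points must lie on $\sigma_N$ and hence vary with $N$, so the convergence $\frac{1}{n}d(p_N,g^np_N)\to\|g\|$ has to be arranged (e.g.\ by anchoring $p_N$ as a nearest point of $\sigma_N$ to a fixed $p$, using that $d(p,\sigma_N)$ is bounded --- a $g$-dependent but harmless bound), and the same monotonicity problem reappears there. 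In short: the strategy is viable, but the proof as proposed has a real hole at the pigeonhole step and defers the hardest setting to a sketch; the cited CDP argument avoids both issues.
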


\begin{definition}
Let $G$ be a group acting on a metric space $S$ and let $g$ be a loxodromic element in $G$ and $s$ be a point in $S$.
We choose a geodesic path $p_0$ with $(p_0)_{-}=s$ and $(p_0)_{+}=gs$ and set $p_i=g^ip_0$ for any $i\in \mathbb{Z}$.
The bi-infinite path $\dots p_{-1}p_0p_1\dots $ is denoted by $L(s,g)$.
For $\varepsilon\geqslant 0$ we call the path $L(s,g)$ an {\it $\varepsilon$-line} of~$g$ if $d(s,gs)\leqslant [g]+\varepsilon$.
We define an {\it $\varepsilon$-axis} of $g$ as follows:
$$
A_{\varepsilon}(g)=\{x\in S\,|\, d(x,gx)\leqslant [g]+\varepsilon\}.
$$
Clearly, any $\varepsilon$-line of $g$ lies in the $\varepsilon$-axis of $g$.

\begin{lemma}\label{LemmaAboutAxes} {\rm(see~\cite[Proposition 2.28]{Coulon})}
Let $g$ be a loxodromic isometry of a $\delta$-hyperbolic space $S$ and $s$ be an arbitrary point of $S$.
Then
$$
d(s,gs)\geqslant 2d(s,A_{8\delta}(g))+[g]-6\delta.
$$
\end{lemma}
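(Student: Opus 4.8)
The plan is to realize geometrically the intuition behind the inequality: to move $s$ to $gs$ one must descend from $s$ to the axis $A_{8\delta}(g)$, travel along it by at least the translation length $[g]$, and then climb back up to $gs$; the term $-6\delta$ measures the failure of this broken path to be a geodesic.

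First I would fix the data. Write $D:=d(s,A_{8\delta}(g))$ and, for a small $\eta>0$, choose $y\in A_{8\delta}(g)$ with $d(s,y)\leqslant D+\eta$; at the end I let $\eta\to 0$. The set $A_{8\delta}(g)$ is $g$-invariant, since $d(gx,g\cdot gx)=d(x,gx)$, so $gy\in A_{8\delta}(g)$ and, by equivariance, $gy$ is a near-minimizer of $d(gs,\cdot)$ on $A_{8\delta}(g)$ with $d(gs,gy)=d(s,y)\leqslant D+\eta$. Three elementary lower bounds are then available and use no hyperbolicity: $d(s,y)\geqslant D$ and $d(gy,gs)=d(s,y)\geqslant D$ because $y,gy\in A_{8\delta}(g)$, and $d(y,gy)\geqslant [g]$ because $[g]=\inf_x d(x,gx)$. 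Thus the lemma reduces to the \emph{almost-additivity} estimate
$$ d(s,gs)\ \geqslant\ d(s,y)+d(y,gy)+d(gy,gs)-6\delta, $$
after which the three bounds yield $d(s,gs)\geqslant 2D+[g]-6\delta-2\eta$, and $\eta\to 0$ finishes the proof.

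For the almost-additivity I would show that a geodesic $[s,gs]$ passes within $O(\delta)$ of both $y$ and $gy$; equivalently, that the Gromov products $(s,gy)_y$ and $(y,gs)_{gy}$ are $O(\delta)$. The reason is that $y$ is, up to $\eta$, the nearest point of the axis to $s$: if $\alpha:=(s,gy)_y$ were large, then by $\delta$-thinness the geodesics $[y,s]$ and $[y,gy]$ would fellow-travel (within a bounded multiple of $\delta$) for length $\alpha$, so the point $w\in[y,gy]$ at distance $\alpha$ from $y$ would satisfy $d(s,w)\leqslant d(s,y)-\alpha+O(\delta)$; since $w$ lies on a geodesic joining two points of $A_{8\delta}(g)$ and hence (coarsely) in $A_{8\delta}(g)$, we get $d(s,w)\geqslant D$ and therefore $\alpha=O(\delta)$. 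The symmetric argument at $gy$ bounds $(y,gs)_{gy}$, and combining the two Gromov-product estimates along the broken path $s\to y\to gy\to gs$ gives the displayed inequality. (When $[g]$ is comparable to $\delta$ the projections need not be well separated, but then the right-hand side $2D+[g]-6\delta$ is small and the estimate follows already from the cruder fact that the midpoint of $[s,gs]$ lies near the axis.)

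The main obstacle is precisely the step ``$w$ lies on a geodesic between two points of $A_{8\delta}(g)$, hence coarsely in $A_{8\delta}(g)$'', together with the sharp bookkeeping that produces exactly $6\delta$ rather than some larger multiple of $\delta$. This rests on the quasiconvexity of the displacement sublevel set $A_{8\delta}(g)$, i.e. on the coarse convexity of the function $x\mapsto d(x,gx)$ along geodesics in a $\delta$-hyperbolic space. One clean way to organize this is to replace the $8\delta$-line $L(y,g)$ (the concatenation of the geodesics $[g^i y,g^{i+1}y]$, each of length $d(y,gy)\leqslant [g]+8\delta$) by a genuine geodesic axis: $L(y,g)$ is an $8\delta$-local quasi-geodesic, so by Lemma~\ref{QuasiGeodLocalGlobal} it is a global quasi-geodesic, and by Lemma~\ref{HausdorffQuasiGeod} it lies at bounded Hausdorff distance from a geodesic line, onto which one may project $s$ and $gs$; their projections are separated by roughly $[g]$, and the standard projection estimates give the conclusion with an explicit constant. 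Matching that constant to the optimal $6\delta$ is exactly the careful thin-triangle analysis carried out in~\cite[Proposition 2.28]{Coulon}, which is what I would ultimately invoke.
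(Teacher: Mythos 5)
You have, in substance, reproduced the paper's approach: the paper gives no proof of this lemma at all but imports it verbatim from \cite[Proposition 2.28]{Coulon}, and your argument, after its heuristic reduction to almost-additivity along $s\to y\to gy\to gs$, explicitly concedes the sharp constant and falls back on invoking that same proposition, so the two ``proofs'' coincide where it matters. One caveat so the sketch is not mistaken for a standalone proof: as written it cannot deliver the exact $6\delta$, since the local-to-global step is invalid at scale $8\delta$ (Lemma~\ref{QuasiGeodLocalGlobal} applies only at a sufficiently large scale $M=M(\delta,k,k',l)$, and the paper's Corollary~\ref{AxisQuasiGeod} handles short translation lengths only via acylindricity, which is not assumed here), and the step placing $w$ ``coarsely in $A_{8\delta}(g)$'' really lands $w$ in a larger sublevel set $A_{8\delta+O(\delta)}(g)$, whose comparison with $A_{8\delta}(g)$ by a uniform constant is essentially the content of Coulon's thin-triangle analysis (indeed, deducing it from the displayed inequality itself would be circular).
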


\end{definition}

\begin{definition} Let $G$ be a group acting on a metric space.
The {\it injectivity radius} for the action of $G$ on $S$ is the number
$$
\mathbf{inj}\,(G\curvearrowright S)=\inf \{||g||: g\in \Lox(G\curvearrowright S)\}.
$$
\end{definition}

Bowditch proved the following important lemma.

\begin{lemma}\label{Bow}{\rm (\cite[Lemma 2.2]{Bowditch})}
If $G$ acts acylindrically on a hyperbolic space $S$, then $\mathbf{inj}\, (G\curvearrowright S)>0$.
\end{lemma}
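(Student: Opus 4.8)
The plan is to argue by contradiction, extracting from a loxodromic element of very small stable norm a violation of acylindricity. So I would assume $\mathbf{inj}(G\curvearrowright S)=0$, which means there exist loxodromic elements $g$ with $\|g\|$ arbitrarily small. The guiding idea is that for such a $g$ a long initial block of powers $1,g,\dots,g^N$ displaces a suitable point $p$ by an arbitrarily small amount; feeding $p$ together with a far-away translate $q=g^m p$ into Definition~\ref{def Bowditch} then produces more than $N$ distinct elements in an acylindricity set, a contradiction.

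The crucial geometric input, which I would isolate as a preliminary lemma, is a \emph{uniform} displacement estimate: there is a constant $C=C(\delta)$, depending only on the hyperbolicity constant, such that every loxodromic $g$ admits a point $p$ with
$$ n\|g\|\ \le\ d(p,g^n p)\ \le\ n\|g\|+C \qquad (n\ge 0).$$
The lower bound is immediate from Lemma~\ref{inf}(1). For the upper bound I would take $p$ on a bi-infinite geodesic (or quasi-geodesic) $\gamma$ joining the two fixed points of $g$ on $\partial S$. Since $g^n$ fixes both of these endpoints, each translate $g^n\gamma$ has the same pair of limit points as $\gamma$, so Lemma~\ref{HausdorffQuasiGeod} gives a single Hausdorff bound $\mu=\mu(\delta,\varkappa,\varepsilon)$ between $\gamma$ and every $g^n\gamma$. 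Writing $x_n$ for the signed position on $\gamma$ of the nearest-point projection of $g^n p$, one then has $d(p,g^n p)=|x_n|+O(\mu)$, and the relation $g^{n+m}=g^n g^m$ combined with this uniform fellow-travelling forces $(x_n)$ to be quasi-additive, $|x_{n+m}-x_n-x_m|\le C_0(\mu)$. A quasi-additive real sequence stays within a bounded distance of its linear rate, and that rate is exactly $\|g\|$ by the definition of the stable norm; this yields the displayed bound with $C=C(\delta)$.

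Granting this estimate, the remainder is bookkeeping. I would invoke acylindricity (Definition~\ref{def Bowditch}) with $\epsilon:=C+1$ to obtain constants $R,N$, then pick a loxodromic $g$ with $\|g\|\le 1/N$ and a point $p$ as above. For every $i$ with $0\le i\le N$ we get $d(p,g^i p)\le i\|g\|+C\le 1+C=\epsilon$. Because $g$ is loxodromic, $d(p,g^m p)\to\infty$, so I may fix $m$ with $d(p,g^m p)\ge R$ and set $q:=g^m p$; as $g^m$ is an isometry, $d(q,g^i q)=d(p,g^i p)\le\epsilon$ for the same range of $i$. A loxodromic element has infinite order, so $g^0,g^1,\dots,g^N$ are $N+1$ distinct elements, all lying in $\{h\in G\mid d(p,hp)\le\epsilon,\ d(q,hq)\le\epsilon\}$, a set of cardinality at most $N$ by the choice of $R,N$. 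This contradiction gives $\mathbf{inj}(G\curvearrowright S)>0$.

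I expect the displacement estimate to be the only genuine difficulty, and specifically its \emph{uniformity} in $g$. The naive bound $d(p,g^n p)\le n\,d(p,gp)$ advances at the rate of the fundamental-domain length $d(p,gp)\approx[g]$, which by Lemma~\ref{32delta} may exceed $\|g\|$ by as much as $32\delta$; such a per-step error of order $\delta$ would only produce boundedly many small-displacement powers and no contradiction. Replacing the per-step error by a single additive constant $C(\delta)$, valid no matter how small $\|g\|$ is, is exactly what makes the number $i\le\epsilon/\|g\|$ of admissible powers grow without bound. This is achieved by measuring displacement along a \emph{geodesic} line between the two fixed points rather than along the quasi-axis, so that the asymptotic rate $\|g\|$ rather than $[g]$ governs the growth; here global hyperbolicity, entering through Lemma~\ref{HausdorffQuasiGeod}, is essential, and the local-to-global principle of Lemma~\ref{QuasiGeodLocalGlobal} would be used to keep the quasi-geodesic constants, and hence $C(\delta)$, independent of $g$.
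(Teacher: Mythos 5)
The paper gives no proof of this lemma at all --- it is imported verbatim from Bowditch --- so there is no internal argument to compare against; what you have written is in substance Bowditch's original argument. Your endgame is correct and cleanly quantified: fix $C=C(\delta)$ first, set $\epsilon=C+1$, extract $R,N$ from Definition~\ref{def Bowditch}, only then choose $g$ loxodromic with $\|g\|\leqslant 1/N$, and the $N+1$ distinct powers $1,g,\dots,g^N$ (distinct since a loxodromic element has infinite order) displace both $p$ and $q=g^mp$ by at most $\epsilon$, contradicting acylindricity. The quasi-additivity step is also sound: from $|x_{n+m}-x_n-x_m|\leqslant C_0$ one gets $|x_n-n\rho|\leqslant C_0$ by Fekete applied to $x_n\pm C_0$, and $\rho=\|g\|$ since $d(p,g^np)=|x_n|+O(\mu)$.

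The one genuine gap is your opening move: ``take $p$ on a bi-infinite geodesic (or quasi-geodesic) $\gamma$ joining the two fixed points of $g$.'' In the generality of this paper $S$ is not proper --- curve graphs and Cayley graphs with respect to infinite generating sets are the motivating examples --- so a geodesic line between two boundary points need not exist; and while a quasi-geodesic joining $g^{\pm\infty}$ always exists (the orbit path $L(s,g)$), its constants depend on $g$, which destroys the uniformity of $\mu$ in Lemma~\ref{HausdorffQuasiGeod} and hence of $C(\delta)$ --- exactly the point you yourself flag as crucial. Your appeal to Lemma~\ref{QuasiGeodLocalGlobal} is not a construction here: in the only relevant regime $\|g\|\ll\delta$ the path $L(s,g)$ is not an $M$-local $(1,\cdot)$-quasi-geodesic for the threshold $M=M(\delta,1,2,1+\delta)$, so local-to-global does not apply to it directly. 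The repair is available from the paper's own toolkit: pass to a power. Since $[g^k]\geqslant\|g^k\|=k\|g\|$ by Lemma~\ref{inf}, one may choose $k$ with $[g^k]>M$, and then Case~1 of the proof of Corollary~\ref{AxisQuasiGeod} --- which, unlike Case~2, does not use the present lemma --- shows that any $1$-line $L(s,g^k)$ is a global $(2,1+\delta)$-quasi-geodesic, i.e.\ a quasi-axis with constants depending only on $\delta$, joining the common fixed points of $g^k$ and $g$; your projection argument then runs verbatim with this $\gamma$ (using almost-nearest-point projections, as exact projections may likewise fail to exist in a non-proper space). One final caution on circularity: you must not instead invoke Corollary~\ref{AxisQuasiGeod} as a whole, Lemma~\ref{HausdorffDistance}, Proposition~\ref{DifferenceOfLengths}, or Corollary~\ref{length_power_1}, since their proofs all rest on Lemma~\ref{Bow}; as written, your proposal steers clear of this trap, and with the quasi-axis step supplied as above it is a complete proof.
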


We deduce from this lemma the following corollary.

\begin{corollary}\label{AxisQuasiGeod}
Let $G$ be a group which acts acylindrically on a $\delta$-hyperbolic space~$S$.
Then for any $\varepsilon\geqslant 0$, there exist constants $\varkappa=\varkappa(\varepsilon)\geqslant 1$ and
$\tau=\tau(\varepsilon)\geqslant 0$ such that
for any $g\in \Lox(G\curvearrowright S)$ we have that any $\varepsilon$-line of $g$ is a $(\varkappa,\tau)$-quasi-geodesic.
\end{corollary}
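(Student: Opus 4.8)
The plan is to extract the constants uniformly from just two pieces of data attached to the action: the hyperbolicity constant $\delta$ and the injectivity radius $\iota:=\mathbf{inj}\,(G\curvearrowright S)$, which is strictly positive by Lemma~\ref{Bow}. Fix a loxodromic $g$ and an $\varepsilon$-line $L(s,g)=\cdots p_{-1}p_0p_1\cdots$ with vertices $x_i:=g^is$ and common segment length $\ell_0:=d(s,gs)$. Two elementary estimates drive everything. First, since $\ell_0\geqslant[g]\geqslant||g||$ by Lemma~\ref{inf}(1) while $\ell_0\leqslant[g]+\varepsilon\leqslant||g||+32\delta+\varepsilon$ by Lemma~\ref{32delta}, the segment length is pinned between $||g||$ and $||g||+32\delta+\varepsilon$. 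Second, by $g$-equivariance the Gromov product at every interior vertex equals $(g^{-1}s,gs)_s=\ell_0-\tfrac12 d(s,g^2s)\leqslant \ell_0-||g||\leqslant 32\delta+\varepsilon$, where $d(s,g^2s)\geqslant 2||g||$ again comes from Lemma~\ref{inf}(1); thus the backtracking at each junction is bounded by $\beta:=32\delta+\varepsilon$, independently of $g$. Finally the vertices form a quasi-geodesic sequence with no additive error, since $d(x_i,x_j)=d(s,g^{|i-j|}s)\geqslant|i-j|\,||g||$, so every vertex-to-vertex subpath is a $(\varkappa_0,0)$-quasi-geodesic with $\varkappa_0:=1+(32\delta+\varepsilon)/\iota\geqslant \ell_0/||g||$.

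It remains to promote this to arbitrary subpaths, and here I would split according to the size of $||g||$, a threshold $\Lambda=\Lambda(\delta,\varepsilon)$ being fixed below. If $||g||<\Lambda$ then $\ell_0<\Lambda+32\delta+\varepsilon$ is bounded, and a direct computation suffices: for a subpath $q$ from $q_-\in p_a$ to $q_+\in p_b$ with $b\geqslant a+2$, the triangle inequality against $x_{a+1},x_b$ together with the vertex estimate gives $d(q_-,q_+)\geqslant(b-a-1)||g||-2\ell_0\geqslant\tfrac1{\varkappa_0}\ell(q)-2||g||-2\ell_0$, whose additive term $2||g||+2\ell_0\leqslant 4\Lambda+2(32\delta+\varepsilon)$ depends only on $\delta,\varepsilon$; the cases $b\leqslant a+1$ follow from the exact one-junction identity $d(q_-,q_+)=\ell(q)-2(q_-,q_+)_{x_{a+1}}\geqslant\ell(q)-2\beta$, using monotonicity of the Gromov product along the geodesic segments. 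This yields a $(\varkappa_0,\tau_B)$-quasi-geodesic with $\tau_B=\tau_B(\delta,\varepsilon)$.

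If instead $||g||\geqslant\Lambda$, then every segment has length $\ell_0\geqslant\Lambda$ while the junction backtracking stays $\leqslant\beta$. Choosing $\Lambda$ large compared with $\beta$ and $\delta$, the path $L(s,g)$ is a concatenation of long geodesics with uniformly small backtracking, so a standard local-geodesic argument in $\delta$-hyperbolic spaces shows it is an $M_0$-local $(\lambda_0,c_0)$-quasi-geodesic on a scale $M_0\asymp\Lambda$ (a window of that length meets at most two junctions, and the one- and two-junction Gromov-product estimates bound the defect by $4\beta+O(\delta)$); Lemma~\ref{QuasiGeodLocalGlobal} then upgrades this to a global $(\lambda_1,c_1)$-quasi-geodesic with $\lambda_1,c_1$ depending only on $\delta,\varepsilon$. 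Setting $\varkappa(\varepsilon)=\max\{\varkappa_0,\lambda_1\}$ and $\tau(\varepsilon)=\max\{\tau_B,2\beta,c_1\}$ completes the argument, since $\delta$ and $\iota$ are fixed by the action.

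The main obstacle is precisely the uniformity of the constants over all loxodromic $g$: the naive vertex-to-endpoint comparison loses up to $\ell_0$ at each end, and $\ell_0$ is unbounded as $||g||\to\infty$. The dichotomy above is what resolves this. When $||g||$ is large the endpoint loss is controlled not by the vertex estimate but by hyperbolicity, through the small-backtracking/local-to-global mechanism; when $||g||$ is small the segments are uniformly short, so the endpoint loss is bounded outright. Acylindricity enters only through the single inequality $\iota>0$ of Lemma~\ref{Bow}, which keeps $\varkappa_0$ finite; the rest is pure $\delta$-hyperbolic geometry. (Lemma~\ref{LemmaAboutAxes} is not needed on this route, though it gives an alternative way to see that the basepoint of an $\varepsilon$-line lies uniformly close to the $8\delta$-axis of $g$.)
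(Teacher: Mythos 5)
Your proof is correct, and its skeleton matches the paper's: both run a dichotomy on the size of the translation length, treat the long regime via the local-to-global Lemma~\ref{QuasiGeodLocalGlobal}, and treat the short regime via positivity of the injectivity radius (Lemma~\ref{Bow}); your threshold on $||g||$ is interchangeable with the paper's threshold $[g]>M$ up to $32\delta$ by Lemma~\ref{32delta}. The genuine difference is the junction estimate in the long regime. The paper bounds the backtracking at a phase point $A_i$ by a contradiction inside a $\delta$-thin triangle: if the Gromov product there exceeded $\frac{\varepsilon+\delta}{2}$, one would produce a point $Y$ with $d(g^{-1}Y,Y)<[g]$, violating the infimum defining $[g]$; this yields local defect $\varepsilon+\delta$ and never invokes the stable norm in that case. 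You instead compute the Gromov product outright, $(x_a,x_{a+2})_{x_{a+1}}=\ell_0-\tfrac12 d(s,g^2s)\leqslant \ell_0-||g||\leqslant 32\delta+\varepsilon$, combining $||g^2||=2\,||g||$ (Lemma~\ref{inf}) with $[g]\leqslant ||g||+32\delta$ (Lemma~\ref{32delta}); this is purely metric (no thin triangles at this step), is uniform over all loxodromic $g$ and all junctions simultaneously, and doubles, via monotonicity of the Gromov product along the adjoining geodesics, as the exact one-junction identity you use in the short regime. What each approach buys: the paper's contradiction argument gives a smaller local defect and avoids Lemma~\ref{32delta} in its Case~1, while yours isolates the clean structural statement ``long geodesic pieces with uniformly bounded Gromov products at the joints,'' after which the local-to-global step is routine; your short-case computation ($d(x_i,x_j)\geqslant |i-j|\,||g||$ plus a triangle inequality against the vertex net, losing only the bounded quantity $2\,||g||+2\ell_0$) is the paper's estimate via $[g^n]\geqslant n\theta$ in different clothing. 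One small caution on wording: at the point where you assert that every vertex-to-vertex subpath ``is a $(\varkappa_0,0)$-quasi-geodesic,'' you have only verified the endpoint inequality for vertex pairs; the statement for arbitrary subpaths is exactly what your subsequent promotion supplies, so the logic is sound but the phrasing runs ahead of the proof.
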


%\medskip

\begin{proof}
We set $M=M(\delta,1,2,\varepsilon+\delta)$, see Lemma~\ref{QuasiGeodLocalGlobal}.
Let $g$ be a loxodromic element of $G$ and let $L(s,g)$ be an $\varepsilon$-line of $g$.
We call the points $A_i=g^is$ the {\it phase points} of $L(s,g)$. Let $p_i$ be the subpath of $L(s,g)$ with $(p_i)_{-}=A_i$
and $(p_i)_{+}=A_{i+1}$. Clearly, $p_i$ is a geodesic.

{\it Case 1.}  Suppose that $[g]>M$.

We claim that $L(s,g)$ is an $M$-local $(1,\varepsilon + \delta)$-quasi-geodesic.
Indeed, let $q$ be a subpath of $L(s,g)$ of length less than or equal to $M$. If $q$ is contained in some $p_n$,
then $q$ is a geodesic and the claim holds. Suppose that $q$ is not contained in any $p_n$. Since $\ell(p_n)>M$ for any $n$ and
$\ell(q)\leqslant M$, the path $q$ decomposes as $q=q_1q_2$, where $q_1$ is a terminal subpath of some $p_{i-1}$ and
$q_2$ is an initial subpath of some $p_i$.
Let $q_3$ be a geodesic path connecting $q_{-}$ and $q_{+}$, see Fig. 2.

\vspace*{-30mm}
\hspace*{0mm}
\includegraphics[scale=0.7]{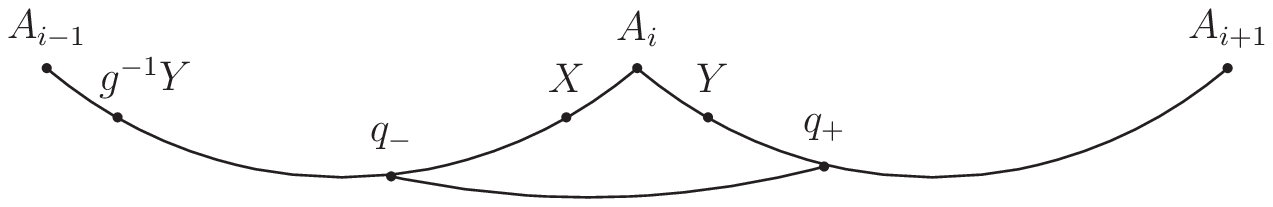}

\vspace*{-15.5cm}

\begin{center}
Fig. 2. A part of the $\varepsilon$-line $L(s,g)$.
\end{center}

Let $\Delta$ be the geodesic triangle with the sides $q_1,q_2,q_3$ and
let $X\in q_1$ and $Y\in q_2$ be the points such that $d(A_i,X)=d(A_i,Y)=(q_{-},q_{+})_{A_i}$.
Since $\Delta$ is $\delta$-thin, we have $d(X,Y)\leqslant \delta$.

{\it Subcase 1.} Suppose that $d(A_i,X)>\frac{\varepsilon+\delta}{2}$. Then
$$
\begin{array}{ll}
d(g^{-1}Y,Y) & \leqslant d(g^{-1}Y,X)+d(X,Y)\vspace*{2mm}\\
& = d(A_{i-1},A_i)-2d(X,A_i)+d(X,Y)\vspace*{2mm}\\
& < ([g]+\varepsilon )-(\varepsilon+\delta)+\delta=[g],
\end{array}
$$
a contradiction.

{\it Subcase 2.} Suppose that $d(A_i,X)\leqslant\frac{\varepsilon+\delta}{2}$.  Then
$$
d(q_{-},q_{+})=\ell(q_1)+\ell(q_2)-2(q_{-},q_{+})_{A_i}=
\ell(q)-2d(A_i,X)\geqslant \ell(q)-(\varepsilon+\delta).
$$

Therefore $L(s,g)$ is an $M$-local $(1,\varepsilon+\delta)$-quasi-geodesic.
By Lemma~\ref{QuasiGeodLocalGlobal}, $L(s,g)$ is a global $(2,\varepsilon+\delta)$-quasi-geodesic.

\medskip

{\it Case 2.} Suppose that $[g]\leqslant M$.

Denote $\theta={\text{\rm {\bf inj}}}\,(G\curvearrowright S)$. By Lemmas~\ref{inf} and~\ref{Bow} we have
$$
[g^n]\geqslant ||g^n||=n||g||\geqslant n\theta>0\eqno{(3.2)}
$$
for all natural $n$. In particular,
$$
M\geqslant [g]\geqslant \theta.\eqno{(3.3)}
$$
We prove that $L(s,g)$ is an $(\alpha,\beta)$-quasi-geodesic for $\alpha=\frac{3(M + \varepsilon)}{\theta}$
and $\beta = 2(M+\varepsilon)$.
Let $p$ be a subpath of $L(s,g)$. If $p$ does not contain phase points of $L(s,g)$,
then $p$ is a geodesic path, and hence a $(1,0)$-quasi-geodesic.
Suppose that $p$ contains at least one phase point of $L(s,g)$. Let $q$ be the maximal subpath of $p$ such that
$q_{-}=g^is$ and $q_{+}=g^{i+n}s$ for some $i\in \mathbb{Z}$ and $n\in \mathbb{N}_0$.   If $n = 0$ then $\ell(p) < 2([g] + \varepsilon)$  and it is easy to check that  $$d(p_{-}, p_{+}) \geqslant  0  > \frac{1}{\alpha}\ell(p) - \beta.$$  If $n>0$ then
$$
\begin{array}{ll}
d(p_{-},p_{+}) & \geqslant d(q_{-},q_{+})-2([g]+\varepsilon)\vspace*{2mm}\\
& \geqslant [g^n]-2([g]+\varepsilon)\vspace*{2mm}\\
& \geqslant n\theta-2([g]+\varepsilon)\vspace*{2mm}\\
& \geqslant \frac{1}{\alpha}(3n(M+\varepsilon)) - \beta\vspace*{2mm}\\
& \geqslant \frac{1}{\alpha}(n+2)([g] + \varepsilon) - \beta\vspace*{2mm}\\
& \geqslant \frac{1}{\alpha}\ell(p) - \beta.
\end{array}
$$
Thus, $L(s,g)$ is an $(\alpha,\beta)$-quasi-geodesic in this case.
In both cases $L(s,g)$ is a $(\varkappa,\tau)$-quasi-geodesic for $\varkappa=\max\{2,\alpha\}$
and $\tau=\max\{\varepsilon + \delta, \beta\}$.
\end{proof}

\begin{lemma}\label{HausdorffDistance}
Let $G$ be a group which acts acylindrically on a $\delta$-hyperbolic space~$S$.
For any $\varepsilon\geqslant 0$, there exists a constant $\nu=\nu(\delta,\varepsilon)>0$ such that for any loxodromic element $g\in G$
and any nonzero integers $n,m$, we have
$$
d_{\operatorname{Hau}} (A_{\varepsilon}(g^n),A_{\varepsilon}(g^m))\leqslant \nu(\delta,\varepsilon).
$$
\end{lemma}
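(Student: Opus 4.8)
The plan is to reduce the comparison of the two $\varepsilon$-axes to a comparison of two bi-infinite quasi-geodesics sharing the same pair of endpoints, to which Lemma~\ref{HausdorffQuasiGeod} applies directly. First I would set $\varkappa=\varkappa(\varepsilon)$ and $\tau=\tau(\varepsilon)$ to be the constants furnished by Corollary~\ref{AxisQuasiGeod}, and put $\nu=\mu(\delta,\varkappa,\tau)$, where $\mu$ is the constant from Lemma~\ref{HausdorffQuasiGeod}. Since $\varkappa$ and $\tau$ depend only on $\delta$ and $\varepsilon$ and are uniform over all loxodromic elements of $G$, so does $\nu$; this uniformity is precisely what lets a single constant serve for every power $g^n$.

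Next, fix nonzero integers $n,m$. Both $g^n$ and $g^m$ are loxodromic, since a nonzero power of a loxodromic element is loxodromic (one may also see $\|g^n\|=|n|\,\|g\|>0$ from Lemma~\ref{inf} and Lemma~\ref{Bow}). The crucial observation is that the unordered pair of limit points of $g^n$ on $\partial S$ coincides with that of $g$, hence with that of $g^m$; call it $\{\xi_+,\xi_-\}$. Now take any $x\in A_\varepsilon(g^n)$, so that $d(x,g^nx)\leqslant [g^n]+\varepsilon$. Then the line $L(x,g^n)$ is an $\varepsilon$-line of $g^n$, so by Corollary~\ref{AxisQuasiGeod} it is a $(\varkappa,\tau)$-quasi-geodesic; moreover it is contained in $A_\varepsilon(g^n)$, it passes through $x$, and its two limit points are $\{\xi_+,\xi_-\}$. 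Choosing similarly any $y\in A_\varepsilon(g^m)$, the line $L(y,g^m)$ is a $(\varkappa,\tau)$-quasi-geodesic contained in $A_\varepsilon(g^m)$ with the same limit points $\{\xi_+,\xi_-\}$.

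Applying Lemma~\ref{HausdorffQuasiGeod} to these two infinite quasi-geodesics yields $d_{\operatorname{Hau}}(L(x,g^n),L(y,g^m))\leqslant \nu$. Since $x\in L(x,g^n)$, there is a point $z\in L(y,g^m)\subseteq A_\varepsilon(g^m)$ with $d(x,z)\leqslant \nu$, whence $d(x,A_\varepsilon(g^m))\leqslant \nu$. As $x\in A_\varepsilon(g^n)$ was arbitrary, every point of $A_\varepsilon(g^n)$ lies within $\nu$ of $A_\varepsilon(g^m)$, and by the symmetric argument (swapping the roles of $n$ and $m$) every point of $A_\varepsilon(g^m)$ lies within $\nu$ of $A_\varepsilon(g^n)$. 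Hence $d_{\operatorname{Hau}}(A_\varepsilon(g^n),A_\varepsilon(g^m))\leqslant \nu$, as required.

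I expect the difficulties to be bookkeeping rather than conceptual. The two points to verify carefully are: that the quasi-geodesic constants can be chosen uniformly over all powers $g^n$ — which is exactly the uniform content of Corollary~\ref{AxisQuasiGeod} over $\Lox(G\curvearrowright S)$ — and that $L(x,g^n)$ and $L(y,g^m)$ genuinely share the same pair of boundary endpoints, so that Lemma~\ref{HausdorffQuasiGeod} is applicable; this rests on the standard fact that the limit points of a loxodromic isometry are unchanged under passage to nonzero powers. The only genuine technicality is the borderline case $\varepsilon=0$, where the infimum defining $[g^n]$ need not be attained and $A_0(g^n)$ could be empty; this is dealt with by establishing the estimate for every $\varepsilon>0$, where the axes are nonempty, and passing to the limit if the $\varepsilon=0$ statement is required.
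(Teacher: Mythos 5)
Your proposal is correct and follows essentially the same route as the paper's own proof: both take an arbitrary point on each axis, invoke Corollary~\ref{AxisQuasiGeod} to make the corresponding $\varepsilon$-lines $(\varkappa(\varepsilon),\tau(\varepsilon))$-quasi-geodesics with the same limit points on $\partial S$, and then apply Lemma~\ref{HausdorffQuasiGeod} to bound the distance from each point to the other axis by $\nu=\mu(\delta,\varkappa(\varepsilon),\tau(\varepsilon))$. Your additional remarks --- the explicit symmetry step and the caveat about $A_0(g)$ possibly being empty when the infimum defining $[g]$ is not attained --- are sound refinements that the paper leaves implicit (it only ever uses $\varepsilon=8\delta$), but they do not change the argument.
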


\medskip

\begin{proof}
Let $s\in A_{\varepsilon}(g^n)$ and $t\in A_{\varepsilon}(g^m)$ be arbitrary points.
By Corollary~\ref{AxisQuasiGeod}, the bi-infinite paths $L(s,g^n)$ and $L(t,g^m)$ are $(\varkappa(\varepsilon),\tau(\varepsilon))$-quasi-geodesics.
Since these quasi-geodesics have the same
limit points on the Gromov boundary of $S$, then, by Lemma~\ref{HausdorffQuasiGeod}, there exists
a number $\mu=\mu(\delta,\varkappa(\varepsilon),\tau(\varepsilon))$ such that
$$
d_{\operatorname{Hau}}(L(s,g^n),L(t,g^m))\leqslant \mu.
$$
Since $s\in L(s,g^n)$ and $L(t,g^m)\subseteq A_{\varepsilon}(g^m)$,
we have
$$
d(s,A_{\varepsilon}(g^m))\leqslant \mu,
$$
and the proof is completed.
\end{proof}

\begin{proposition}\label{DifferenceOfLengths}
Let $G$ be a group which acts acylindrically on a $\delta$-hyperbolic space $S$.
There exists a constant $c_0>0$ with the following property:
Let $s_0$ be a point in $S$ and $|\cdot |_{s_0}:G\rightarrow \mathbb{R}_{\geqslant 0}$ be the length function on $G$ induced by the action of $G$ on the based space $(S,s_0)$.
Then for any loxodromic element $g\in G$ and any nonzero integers $n$ and $m$
we have
$$
|g^n|_{s_0}-|g^m|_{s_0}\geqslant [g^n]-[g^m]-c_0.
$$
In particular,
$$
\hspace*{15mm}|g^n|_{s_0}-|g^m|_{s_0}\geqslant (|n|-|m|)\,||g||-(c_0+32\delta).
$$
\end{proposition}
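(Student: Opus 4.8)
The plan is to sandwich each length $|g^k|_{s_0}$ between the translation length $[g^k]$ and the distance from the base point $s_0$ to an axis of $g^k$, and then to exploit that all these axes are uniformly Hausdorff-close. Throughout I would fix $\varepsilon=8\delta$, which is precisely the value at which Lemma~\ref{LemmaAboutAxes} is phrased and to which Lemma~\ref{HausdorffDistance} may be applied.

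First I would extract a lower bound for the larger term. Since $g$ is loxodromic, so is $g^n$, and applying Lemma~\ref{LemmaAboutAxes} with the point $s_0$ gives
$$|g^n|_{s_0}=d(s_0,g^ns_0)\geqslant 2\,d(s_0,A_{8\delta}(g^n))+[g^n]-6\delta.$$
Next I would produce a matching upper bound for the subtracted term by a direct triangle-inequality argument: choosing a point $x\in A_{8\delta}(g^m)$ that nearly realizes $d(s_0,A_{8\delta}(g^m))$ and using that $g^m$ is an isometry with $d(x,g^mx)\leqslant [g^m]+8\delta$, one obtains (after letting the slack go to zero)
$$|g^m|_{s_0}\leqslant 2\,d(s_0,A_{8\delta}(g^m))+[g^m]+8\delta.$$

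Subtracting these two inequalities yields $|g^n|_{s_0}-|g^m|_{s_0}\geqslant [g^n]-[g^m]+2\big(d(s_0,A_{8\delta}(g^n))-d(s_0,A_{8\delta}(g^m))\big)-14\delta$. The step I expect to carry the real content is controlling the difference of the two axis-distances by a bound independent of $g,n,m$; this is exactly where Lemma~\ref{HausdorffDistance} enters. Since $d_{\operatorname{Hau}}(A_{8\delta}(g^n),A_{8\delta}(g^m))\leqslant \nu(\delta,8\delta)$ and the function $d(s_0,\cdot)$ is $1$-Lipschitz in its set argument, we get $d(s_0,A_{8\delta}(g^n))-d(s_0,A_{8\delta}(g^m))\geqslant -\nu(\delta,8\delta)$. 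Setting $c_0=2\nu(\delta,8\delta)+14\delta$ then delivers the first displayed inequality, with $c_0$ depending only on $\delta$ as required.

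For the \emph{in particular} statement I would simply feed the stable norm into the translation length. By Lemma~\ref{32delta} together with Lemma~\ref{inf}(3) one has $[g^n]\geqslant ||g^n||=|n|\cdot||g||$ and $[g^m]\leqslant ||g^m||+32\delta=|m|\cdot||g||+32\delta$, whence $[g^n]-[g^m]\geqslant (|n|-|m|)\,||g||-32\delta$; combining this with the first inequality produces the bound with constant $c_0+32\delta$. The only points worth checking are the non-emptiness of $A_{8\delta}(g^m)$ (immediate for $\delta>0$ from the definition of $[g^m]$ as an infimum, and for the tree case $\delta=0$ from loxodromicity) and the harmless passage to the infimum in the upper bound via an arbitrarily small slack.
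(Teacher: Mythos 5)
Your proof is correct and follows essentially the same route as the paper's: the lower bound on $|g^n|_{s_0}$ via Lemma~\ref{LemmaAboutAxes}, a triangle-inequality upper bound on $|g^m|_{s_0}$ through a near-optimal point of $A_{8\delta}(g^m)$, the comparison of the two $8\delta$-axes via Lemma~\ref{HausdorffDistance}, and the identical constant $c_0=14\delta+2\nu(\delta,8\delta)$; your only cosmetic deviation is routing the Hausdorff bound through the $1$-Lipschitz property of $d(s_0,\cdot)$ rather than transporting a single point $u\in A_{8\delta}(g^n)$ to a nearby $v\in A_{8\delta}(g^m)$ as the paper does. The ``in particular'' step via Lemmas~\ref{inf} and~\ref{32delta} matches the paper exactly.
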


\medskip

\begin{proof}  %Recall that $A_{8\delta}(g^n)$ is the $8\delta$-axis of $g^n$.
Let $\lambda>0$ be an arbitrary real number. Then there exists a point $u\in A_{8\delta}(g^n)$ such that
$$d(s_0,u)<d(s_0,A_{8\delta}(g^n))+\lambda.$$
Using Lemma~\ref{LemmaAboutAxes}, we have
$$
\begin{array}{ll}
|g^n|_{s_0} & =d(s_0,g^ns_0)\geqslant 2d(s_0,A_{8\delta}(g^n))+[g^n]-6\delta\vspace*{2mm}\\
& > 2d(s_0,u)+[g^n]-(6\delta+2\lambda).
\end{array}\eqno{(3.4)}
$$

By Lemma~\ref{HausdorffDistance}, there exists  a point $v\in A_{8\delta}(g^m)$ such that $d(u,v)\leqslant \nu(\delta,8\delta)$.
Then
$$
\begin{array}{ll}
|g^m|_{s_0}=d(s_0,g^ms_0) &\leqslant d(s_0,v)+d(v,g^mv)+d(g^mv,g^ms_0)\vspace*{2mm}\\
& = 2d(s_0,v)+d(v,g^mv)\vspace*{2mm}\\
& \leqslant 2(d(s_0,u)+d(u,v))+d(v,g^mv)\vspace*{2mm}\\
& \leqslant 2\bigl(d(s_0,u)+\nu(\delta,8\delta)\bigr)+\bigl([g^m]+8\delta\bigr).
\end{array}\eqno{(3.5)}
$$
It follows from (3.4) and (3.5) that
$$
|g^n|_{s_0}-|g^m|_{s_0}>[g^n]-[g^m]-\bigl(14\delta+ 2\nu(\delta,8\delta)+2\lambda \bigr).
$$
Since $\lambda>0$ was arbitrary, we can set $c_0=14\delta+ 2\nu(\delta,8\delta)$
to satisfy the first inequality in the proposition.
The second inequality follows from the first one
with the help of Lemmas~\ref{inf} and~\ref{32delta}, which give the following estimations:
$$
\begin{array}{ll}
{[g^n]} & \geqslant ||g^n||=|n|\cdot ||g||,\vspace*{2mm}\\
{[g^m]} & \leqslant ||g^m||+32\delta=|m|\cdot ||g||+32\delta.
\end{array}
$$

\end{proof}

\medskip

Using the fact that $||g||\geqslant {\text{\rm{\bf inj}}}\,(G\curvearrowright S)>0$, we deduce from Proposition~\ref{DifferenceOfLengths} the following corollary.

\begin{corollary}\label{length_power_1}
Let $G$ be a group which acts acylindrically on a hyperbolic space~$S$.
There exist constants $\alpha\geqslant 1$ and $\beta\geqslant 0$ such that the following holds:
Let $s_0$ be a point in $S$ and $|\cdot |_{s_0}:G\rightarrow \mathbb{R}_{\geqslant 0}$ be the length function on $G$ induced by the action of $G$ on the based space $(S,s_0)$.
Then for any loxodromic element $g\in G$ and any natural number $n$ we have
$$
|g^n|_{s_0}\geqslant |g|_{s_0}+\frac{1}{\alpha}n-\beta.
$$
\end{corollary}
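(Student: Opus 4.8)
The plan is to specialize the ``in particular'' inequality of Proposition~\ref{DifferenceOfLengths} to the exponent $m=1$ and then absorb the resulting stable-norm term using the uniform lower bound on $||g||$ furnished by Bowditch's Lemma~\ref{Bow}. The whole statement is a short deduction from these two ingredients, so the real content lies in checking that the constants $\alpha,\beta$ can be chosen independently of $g$.

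First I would fix a loxodromic element $g\in G$ and a natural number $n$, and apply Proposition~\ref{DifferenceOfLengths} with the nonzero integers $n$ and $m=1$. This yields
$$
|g^n|_{s_0}-|g|_{s_0}\geqslant (n-1)\,||g||-(c_0+32\delta),
$$
where $c_0>0$ and $\delta$ are constants that do not depend on the chosen $g$.

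Next I would set $\theta=\mathbf{inj}\,(G\curvearrowright S)$, which is strictly positive by Lemma~\ref{Bow}. Since $g$ is loxodromic we have $||g||\geqslant \theta$, and it is essential here that this bound is uniform over all loxodromic elements of $G$. Consequently $(n-1)\,||g||\geqslant (n-1)\theta=\theta n-\theta$. Choosing $\alpha=\max\{1,\theta^{-1}\}$, so that $\alpha\geqslant 1$ and $\alpha^{-1}\leqslant \theta$, together with $\beta=\theta+c_0+32\delta$, and using $n\geqslant 0$ to replace $\theta n$ by the smaller quantity $\alpha^{-1}n$, I obtain
$$
|g^n|_{s_0}-|g|_{s_0}\geqslant \theta n-\theta-(c_0+32\delta)\geqslant \frac{1}{\alpha}n-\beta,
$$
which is the asserted inequality.

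I do not expect any substantive obstacle beyond this bookkeeping; the only points that must be watched are the normalizations built into the definitions. The constant $\alpha\geqslant 1$ is guaranteed by the $\max$, and $\beta\geqslant 0$ holds since $\theta,c_0,\delta\geqslant 0$. In particular the borderline case $n=1$ reduces to the requirement $\beta\geqslant \alpha^{-1}$, which is satisfied because $\beta\geqslant \theta\geqslant \alpha^{-1}$ by construction. The uniformity of $\alpha$ and $\beta$ over all loxodromic $g$ is exactly what the uniform constant $c_0$ of Proposition~\ref{DifferenceOfLengths} and the positive injectivity radius of Lemma~\ref{Bow} provide.
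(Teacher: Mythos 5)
Your proposal is correct and is exactly the deduction the paper intends: specializing the second inequality of Proposition~\ref{DifferenceOfLengths} to $m=1$ and bounding $||g||$ below by the positive injectivity radius of Lemma~\ref{Bow}, with the constants $\alpha=\max\{1,\theta^{-1}\}$ and $\beta=\theta+c_0+32\delta$ visibly independent of $g$ and of the basepoint $s_0$. The paper leaves this bookkeeping implicit, and your write-up fills it in correctly, including the harmless borderline case $n=1$.
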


%\subsection{Elliptic subgroups of acylindrically hyperbolic groups.}

\end{section}

\begin{section}{Proof of Theorem~\ref{thebigone}}\label{ProofofTheoremA}

Let $s_0$ be a point in $S$ and $|\cdot |_{s_0}:G\rightarrow \mathbb{R}_{\geqslant 0}$ be the length function on $G$
induced by the action of $G$ on the based space $(S,s_0)$. To shorten the proof, we write $|g|$
instead of $|g|_{s_0}$, $\Lox(G)$ instead of $\Lox (G\curvearrowright S)$,
and $\Ell(G)$ instead of $\Ell (G\curvearrowright S)$.

Suppose the contrary. Then for any $n\in \mathbb{N}$, there exists $u_n\in \HEG^n$ such that $\varphi(u_n)\in \Lox(G)$.
We define $w_1\in \HEG$ inductively by the formula $w_i=u_i^nw_{i+1}^{m_i}$,
where $n$ and $m_i$ are appropriate numbers, which we choose below. Thus,
$$
w_1=u_1^n(u_2^n(\dots )^{m_2})^{m_1}.
$$
Denote $U_i=\varphi(u_i)$ and $W_i=\varphi(w_i)$. We have $W_i=U_i^nW_{i+1}^{m_i}$.
We have that $U_i\in \Lox(G)$, but we cannot assert that $W_i\in \Lox(G)$.

\medskip

$\bullet$ By Lemma~\ref{elem_index}, $E_G(U_i)$ contains a cyclic subgroup of index at most $L$.
Let $z_i$ be a generator of such a subgroup. Then there exists an integer $K_i$ such that
$$U_i^{L!}=z_i^{K_i}.\eqno{(4.1)}$$

Replacing $z_i$ by $z^{-1}_i$ if necessary, we may assume that $K_i\in \mathbb{N}$.
In the following definition of numbers $n$ and $m_i$ for $i\in \mathbb{N}$ we use

$\bullet$ the numbers $C_0$ and $C_1$ from Corollaries~\ref{lox-lox} and~\ref{lox-ell}, and

$\bullet$ the numbers $\alpha\geqslant 1$ and $\beta\geqslant 0$ from Corollary~\ref{length_power_1}.
Increasing $\alpha$ and $\beta$ if necessary, we may assume that $\alpha,\beta \in \mathbb{N}$.

\medskip

\noindent
We set
$N=\max\{C_0,C_1\}$ and
$$
n=NL!,\hspace*{5mm}
M_i=\Bigl\lceil \max\{ |U_i^n|,i+1\}\Bigr\rceil,
\hspace*{5mm}
m_i=2{\alpha}\beta nM_iK_i
\hspace{5mm}{\text{\rm for}}
\hspace*{2mm} i\in \mathbb{N}.\eqno{(4.2)}
$$

\medskip

{\bf Claim 1.} For all $i\in \mathbb{N}$, we have $W_i\in \Lox(G)$.

\medskip

\begin{proof} Recall that $W_i=U_i^nW_{i+1}^{m_i}$. We consider four cases.

{\it Case 1.}  Suppose that $W_{i+1}\in \Lox(G)$ and $E_G(U_i)\neq E_G(W_{i+1})$. Then $W_i\in \Lox(G)$ by Lemma~\ref{lox-lox}.

\medskip

{\it Case 2.}  Suppose that $W_{i+1}\in \Lox(G)$ and $E_G(U_i)=E_G(W_{i+1})$.
Then there exists an integer $P_i$ such that
$$
W_{i+1}^{L!}=z_i^{P_i}.\eqno{(4.3)}
$$
From (4.1) and (4.3), and using (4.2), we deduce
$$
W_i=U_i^nW_{i+1}^{m_i}=z_i^{NK_i+2\alpha\beta NM_iK_iP_i}.
$$
Hence $W_i$ is a nontrivial power
of a loxodromic element. Therefore $W_i\in \Lox(G)$.

\medskip

{\it Case 3.} Suppose that $W_{i+1}\in \Ell(G)$ and $W^{m_i}_{i+1}\notin E_G(U_i)$.
Then $W_i\in \Lox(G)$ by Corollary~\ref{lox-ell}.

\medskip

$\bullet$ Thus, if $W_{i+1}$ satisfies assumptions of one of the Cases 1-3, then $W_i\in \Lox(G)$.

\medskip

{\it Case 4.} Suppose that $W_{i+1}\in \Ell(G)$ and $W^{m_i}_{i+1}\in E_G(U_i)$.
It follows that

1) $W_i=U_i^nW_{i+1}^{m_i}\in E_G(U_i)$,

2) $W^{m_i}_{i+1}$ has a finite order dividing $L!$.

\medskip

$\bullet$ The element $W_{i+2}$ cannot satisfy assumptions of Cases 1-3, since $W_{i+1}\in \Ell(G)$.
Thus, $W_{i+2}$ satisfies assumptions of Case 4, and we have

\medskip

1') $W_{i+1}\in E_G(U_{i+1})$.

Since $W_{i+1}$ is an elliptic element in $E_G(U_{i+1})$, its order divides $L!$.
Since $L!$ is a divisor of $m_i$, we have $W_{i+1}^{m_i}=1$.
Then $W_i=U_i^n\in \Lox(G)$. The proof of the claim is completed.
\end{proof}

\medskip

By Corollary~\ref{length_power_1}, we have
$$
|W_{i+1}^{m_i}|\geqslant |W_{i+1}| +\frac{m_i}{\alpha} -\beta.
$$
From (4.2) we deduce $|U_i^n|\leqslant M_i\leqslant \frac{m_i}{2\alpha}$. Then
$$
|W_i|=|U_i^nW_{i+1}^{m_i}|\geqslant |W_{i+1}^{m_i}|-|U_i^n|\geqslant
|W_{i+1}|+\frac{m_i}{2\alpha}-\beta\geqslant |W_{i+1}|+i.
$$
It follows that
$$
|W_1|\geqslant |W_2|+1\geqslant |W_3|+2+1\geqslant \dots \geqslant |W_{i+1}|+i+(i-1)+\dots +1\geqslant \frac{i(i+1)}{2}
$$
for all $i\in \mathbb{N}$.
A contradiction.\hfill $\Box$

\section{Proof of a weak version of Theorem~\ref{metricandlcH}}\label{ProofofTheoremBweak}

\medskip

In this section we prove the following weak version of Theorem~\ref{metricandlcH}.

\begin{proposition}\label{weak_metricandlcH}
Let $H$ be a topological group which is either completely metrizable or locally countably compact Hausdorff,
$G$ be a group which acts coboundedly and acylindrically on a hyperbolic space $S$, and
$\varphi:H \rightarrow G$ be a group homomorphism.
Then there is an open neighborhood $V$
of identity $1_H$ such that $\varphi(V)\subseteq \Ell(G\curvearrowright S)$.
\end{proposition}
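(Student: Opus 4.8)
The plan is to prove Proposition~\ref{weak_metricandlcH} by contradiction, leveraging the structural result about the Hawaiian earring group (Theorem~\ref{thebigone}) just established. Suppose that no open neighborhood $V$ of $1_H$ satisfies $\varphi(V)\subseteq \Ell(G\curvearrowright S)$. The strategy is to manufacture, out of this failure, a homomorphism $\psi:\HEG\rightarrow G$ together with a nested sequence $\HEG^n$ all of whose images under $\psi$ contain loxodromic elements, thereby contradicting Theorem~\ref{thebigone}. The mechanism for building such a $\psi$ is to choose a sequence $(h_k)_{k\in\mathbb{N}}$ in $H$ converging rapidly to $1_H$ with each $\varphi(h_k)$ loxodromic, and then realize a suitable map from $\HEG$ into $H$ sending each generator $a_k$ to $h_k$. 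The key topological input is that in a completely metrizable group (or a locally countably compact Hausdorff group), a sequence converging to the identity quickly enough can be ``summed'' along the Hawaiian-earring combinatorics, so that the assignment $a_k\mapsto h_k$ extends to a genuine homomorphism $\HEG\rightarrow H$; this is the standard fact underlying n-slenderness-type arguments and is where completeness or local countable compactness is used.

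First I would extract the sequence. For each neighborhood $V$ in a countable neighborhood basis at $1_H$ (metrizable case) or a suitable decreasing countable family in the locally countably compact case, the failure of the conclusion gives an element $h\in V$ with $\varphi(h)\in\Lox(G\curvearrowright S)$. Passing to a subfamily, I obtain a sequence $h_k\rightarrow 1_H$ so fast that the infinite products dictated by the Hawaiian earring structure are convergent in $H$: concretely, for every word $w:X\rightarrow A$ in $W(A)$ the corresponding product of the $h_k^{\pm 1}$ converges, and the resulting element of $H$ depends only on the class of $w$ in $\HEG$. This yields the desired homomorphism $\psi_0:\HEG\rightarrow H$ with $\psi_0(a_k)=h_k$, and then $\psi=\varphi\circ\psi_0:\HEG\rightarrow G$. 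By construction $\psi(a_k)=\varphi(h_k)$ is loxodromic for every $k$, and since $a_k\in\HEG^{k-1}$, the element $\psi(a_k)$ witnesses that $\psi(\HEG^n)$ contains a loxodromic element for every $n$.

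Now I would invoke Theorem~\ref{thebigone}: the action $G\curvearrowright S$ is cobounded and acylindrical, so there must exist some $n$ with $\psi(\HEG^n)\subseteq\Ell(G\curvearrowright S)$. But the element $\psi(a_{n+1})=\varphi(h_{n+1})$ lies in $\psi(\HEG^n)$ and is loxodromic, contradicting the theorem. This contradiction establishes the proposition.

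The hard part will be the extension step, namely verifying that the rapidly-convergent sequence $(h_k)$ genuinely induces a well-defined homomorphism $\psi_0:\HEG\rightarrow H$ rather than merely a set-map on generators. This requires that infinite products of the $h_k^{\pm 1}$ converge in $H$ and that $\sim$-equivalent words in $W(A)$ give the same limit; the first needs a Cauchy-type criterion (available from completeness in the metrizable case, and from a compactness/cluster-point argument in the locally countably compact Hausdorff case), while the second needs that the restriction maps $p_n$ are compatible with the convergence, so that agreement of $p_n(w)$ in each $F_n$ forces equality of limits. Handling the locally countably compact case is likely to be more delicate than the metrizable one, since one cannot directly appeal to completeness and must instead exploit countable compactness of a neighborhood to guarantee the relevant cluster points exist and coincide.
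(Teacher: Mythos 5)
Your overall architecture (contradiction via Theorem~\ref{thebigone}) is in the right spirit --- the paper's proof is indeed a modification of the proof of that theorem --- but the load-bearing step, the extension of $a_k\mapsto h_k$ to a homomorphism $\psi_0:\HEG\rightarrow H$, is a genuine gap, and it is not the ``standard fact underlying n-slenderness-type arguments.'' Slenderness arguments analyze homomorphisms \emph{given} on $\HEG$; they do not construct homomorphisms \emph{into} an arbitrary completely metrizable group from a null sequence, and no such fact is available. The obstruction is that multiplication in a topological group is continuous but not \emph{uniformly} continuous: to make the value of a word converge you must control insertions of deep tails, i.e.\ quantities of the form $d\bigl(P,\,P\,x\,Q\bigr)$ where $P,Q$ are partial products built from earlier letters, and the neighborhood into which $x$ must fall depends on $P$ and $Q$. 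If the sequence $(h_k)$ is fixed in advance, no diagonal/subsequence extraction can secure this simultaneously for the uncountably many words of $\HEG$, with all countable order types and all patterns of repetition (your plan would go through if $H$ carried a complete \emph{bi-invariant} metric, since conjugation is then isometric --- but general completely metrizable groups, e.g.\ homeomorphism groups, admit no such metric). On top of this, well-definedness on $\sim$-classes (invariance under infinite cancellation) is a second unaddressed issue. The paper's proof avoids constructing $\psi_0$ entirely: it runs an \emph{adaptive} tower, choosing each neighborhood $\mathcal{O}_{i+1}$ \emph{after} the pre-loxodromic elements $u_1,\dots,u_i$ and the exponents $m_1,\dots,m_i$ are known, so that only the specific nested products $w_{r,i}=u_r^n(u_{r+1}^n(\cdots(u_i^n)^{m_{i-1}}\cdots)^{m_{r+1}})^{m_r}$ are shown to be Cauchy; the limits satisfy $w_r=u_r^n w_{r+1}^{m_r}$ by continuity of multiplication, and then the \emph{argument} of Theorem~\ref{thebigone} (not its statement) is repeated on the images $\varphi(w_r)$.

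The locally countably compact Hausdorff case is where your sketch is weakest. Such a group need not be first countable, so your extraction step may not even produce a sequence $h_k\rightarrow 1_H$; and ``countably compact'' does not give convergent subsequences or any Cauchy-type criterion, so there is no candidate mechanism for evaluating infinite words. The paper's device here is different in kind: it chooses neighborhoods with $\mathcal{O}_i\supseteq u_i^n\mathcal{O}_{i+1}^{m_i}$ inside a countably compact set $J$, forms the nested nonempty closed sets $J_i$, picks a single point $w_1\in\bigcap_i J_i$, and then --- crucially --- runs a \emph{finitary} contradiction: having fixed $i>|\varphi(w_1)|_{s_0}$, it unwinds $w_1$ through only finitely many levels $w_2,\dots,w_{i+2}$ and applies the length estimate of Theorem~\ref{thebigone}'s proof to conclude $|\varphi(w_1)|_{s_0}\geqslant i$. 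No infinite product, and no homomorphism from $\HEG$, is ever needed. To repair your proposal you would have to replace the extension step by exactly this kind of adaptive construction, at which point you have reproduced the paper's proof rather than found an alternative route.
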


Preparing to the proof, we give the following definition, notation and define some constants.

\medskip

\begin{definition}
We call an element $h\in H$ {\it pre-loxodromic} if $\varphi(h)\in \Lox(G\curvearrowright S)$.
\end{definition}

\medskip

\begin{notation} For a subset $U\subseteq H$ and a natural number $k$ we denote
$$U^k:=\{h^k\,|\, h\in U\}, \hspace*{2mm} U^{1/k}:=\{h\in H\,|\, h^k\in U\}.$$
\end{notation}

\begin{constants} Select universal constants $L, C_0, C_1$ for the group $G$ from Lemma~\ref{elem_index}, Corollary~\ref{lox-lox}, and Corollary~\ref{lox-ell}.  Also select numbers $\alpha\geqslant 1$ and $\beta\geqslant 0$ from
Corollary~\ref{length_power_1}, without loss of generality both being natural numbers.  As in the proof of Theorem~~\ref{thebigone} we let $N = \max\{C_0, C_1\}$ and $$n = N L!.$$
\end{constants}

\medskip

{\it Proof of Proposition~\ref{weak_metricandlcH}}.
The proof is a modification of the proof of Theorem A.

{\bf (a)} Assume first that $H$ is completely metrizable.
Suppose for contradiction that the conclusion of Theorem~B fails, i.e.
that any open neighborhood $V$ of $1_H$ contains a pre-loxodromic element.

This implies the following claim, which will be used several times.

\medskip

{\bf Claim 1.} For any open neighborhood $\mathcal{O}$ of $1_H$ and any natural $k$, the subset $\mathcal{O}^{1/k}$ contains a pre-loxodromic element.

\medskip

\begin{proof} There is an open neighborhood $V$ of  $1_H$ such that
$V^k\subseteq \mathcal{O}$. By assumption, $V$ contains a pre-loxodromic element.\end{proof}

\medskip

Let $d$ be a complete metric which gives the topology on $H$.
We will construct a sequence $(u_i)_{i\in \mathbb{N}}$ of pre-loxodromic elements of $H$ and a sequence
$(\mathcal{O}_i)_{i\in\mathbb{N}}$ of open neighborhoods of $1_H$ with special properties, which will finally lead to a contradiction.

Let $\mathcal{O}_1$ be an arbitrary open neighborhood of  $1_H$, say $\mathcal{O}_1=H$.

\medskip
\noindent
{\bf Step 1.}
Select a pre-loxodromic element $u_1\in \mathcal{O}_1^{1/n}$.
We set $U_1=\varphi(u_1)$ and define the numbers $K_1,M_1$ and $m_1$ as in (4.1) and (4.2).
Select an open neighborhood $\mathcal{O}_2$ of $1_H$ such that
$$
d(u_1^n,\, u_1^n\,\mathcal{O}_2^{m_1})\leqslant \frac{1}{2}.
$$

\medskip

\noindent
{\bf Step 2.} Select a pre-loxodromic element $u_2\in \mathcal{O}_2^{1/n}$.
We set $U_2=\varphi(u_2)$ and define the numbers $K_2,M_2$  and $m_2$ as in (4.1) and (4.2).
Select an open neighborhood $\mathcal{O}_3$ of $1_H$ such that
$$
\begin{array}{rl}
d(u_1^n(u_2^n)^{m_1},\, u_1^n(u_2^n\,\mathcal{O}_3^{m_2})^{m_1})\leqslant & \frac{1}{2^2},\vspace*{2mm}\\
d(u_2^n,\, u_2^n\,\mathcal{O}_3^{m_2})\leqslant & \frac{1}{2^2}.
\end{array}
$$

\medskip

\noindent
{\bf Step\, $\bold{i.}$} Suppose that in previous steps we have already selected pre-loxodromic elements $u_1, \ldots, u_{i-1}$, numbers $K_1,\ldots, K_{i-1}$ and $M_1, \ldots, M_{i-1}$
and $m_1, \ldots, m_{i-1}$,
and neighborhoods $\mathcal{O}_1, \ldots, \mathcal{O}_i$.
Select a pre-loxodromic element $u_i\in \mathcal{O}_i^{1/n}$.
We set $U_i=\varphi(u_i)$ and define the numbers $K_i,M_i$  and $m_i$ as in (4.1) and (4.2).
Select an open neighborhood $\mathcal{O}_{i+1}$ of $1_H$ such that

$$
\begin{array}{rl}
d\bigl(u_1^n(u_2^n(\cdots  (u_i^n)^{m_{i-1}}\cdots)^{m_2})^{m_1},\,\,  u_1^n(u_2^n(\cdots (u_i^n\,\mathcal{O}_{i+1}^{m_i})^{m_{i-1}} \cdots )^{m_2})^{m_1}\bigr)\leqslant & \frac{1}{2^i},\vspace*{2mm}\\

d\bigl(u_2^n(\cdots  (u_i^n)^{m_{i-1}}\cdots)^{m_2},\,\, u_2^n(\cdots (u_i^{n}\,\mathcal{O}_{i+1}^{m_i})^{m_{i-1}} \cdots )^{m_2}\bigr)\leqslant & \frac{1}{2^i},\vspace*{0mm}\\

\vdots\vspace*{0mm}\\

d\bigl(u_i^n,\,\, u_i^n\,\mathcal{O}_{i+1}^{m_i}\bigr)\leqslant & \frac{1}{2^i}.
\end{array}
$$

For any $r,i\in \mathbb{N}$, where $i\geqslant r$, we define the element
$$
w_{r,i}=u_r^n(u_{r+1}^n(\cdots u_{i-1}^n(u_i^n)^{m_{i-1}} \cdots)^{m_{r+1}})^{m_r}.
$$

\medskip

{\bf Claim 2.}
For any $r\in \mathbb{N}$, the sequence $(w_{r,i})_{i\geqslant r}$ is Cauchy
and hence converges.

\medskip

\begin{proof} Using $u_{i+1}^n\in \mathcal{O}_{i+1}$, we deduce from the last group of inequations that
$$
\begin{array}{rl}
d(w_{1,i},w_{1,i+1})\leqslant & \frac{1}{2^i},\vspace*{2mm}\\

d(w_{2,i},w_{2,i+1})\leqslant & \frac{1}{2^i},\vspace*{2mm}\\

\vdots\vspace*{0mm}\\

d(w_{i,i},w_{i,i+1})\leqslant & \frac{1}{2^i}.
\end{array}
$$
Then, for any $j> i\geqslant r$, we have
$$
d(w_{r,i},w_{r,j})\leqslant d(w_{r,i},w_{r,i+1})+\dots +d(w_{r,j-1},w_{r,j})\leqslant
\frac{1}{2^{i}}+\dots +\frac{1}{2^{j-1}}<\frac{1}{2^{i-1}}.
$$
\end{proof}

Let $w_r=\underset{i\rightarrow \infty}{\lim} w_{r,i}$. Passing to limit in the equation $w_{r,i}=u_r^nw_{r+1,i}^{m_r}$,
which holds for any $i\geqslant r+1$, we obtain
$$
w_r = u_r^nw_{r+1}^{m_r}.
$$
Now perform the same argument as in the proof of Theorem~\ref{thebigone} on the images under $\varphi$ of the $u_r$ and $w_r$ for a contradiction.

\medskip

{\bf (b)} Now we assume that $H$ is locally countably compact Hausdorff.
Suppose for contradiction that the conclusion of Theorem~B fails, i.e.
that any open neighborhood $V$ of $1_H$ contains a pre-loxodromic element.

We will construct a sequence $(u_i)_{i\in \mathbb{N}}$ of pre-loxodromic elements of $H$ and a sequence
$(\mathcal{O}_i)_{i\in\mathbb{N}}$ of open neighborhoods of $1_H$ with special properties, which will finally lead to a contradiction.

Let $\mathcal{O}_1$ be an open neighborhood of $1_H$ with $J \supseteq \mathcal{O}_1$ a countably compact subspace of $X$.

\medskip

\noindent
{\bf Step 1.} Select a pre-loxodromic element $u_1\in \mathcal{O}_1^{1/n}$.
We set $U_1=\varphi(u_1)$ and define the numbers $K_1,M_1$ and $m_1$ as in (4.1) and (4.2).
Select an open neighborhood $\mathcal{O}_2$ of $1_H$ for which
$$
\mathcal{O}_1 \supseteq u_1^n\mathcal{O}_2^{m_1}.
$$

\medskip

\noindent
{\bf Step\, $\bold{i.}$} Suppose that in previous steps we have already selected pre-loxodromic elements $u_1, \ldots, u_{i-1}$, numbers $K_1,\ldots, K_{i-1}$ and $M_1, \ldots, M_{i-1}$
and $m_1, \ldots, m_{i-1}$,
and neighborhoods $\mathcal{O}_1, \ldots, \mathcal{O}_i$.
Select a pre-loxodromic element $u_i\in \mathcal{O}_i^{1/n}$.
We set $U_i=\varphi(u_i)$ and define the numbers $K_i,M_i$  and $m_i$ as in (4.1) and (4.2). Select an open neighborhood $\mathcal{O}_{i+1}$ of $1_H$ for which
$$
\mathcal{O}_i\supseteq u_i^n\mathcal{O}_{i+1}^{m_i}.
$$

Let $J_1 = \overline{\mathcal{O}_1}\cap J$, $J_2= u_1^n\overline{\mathcal{O}_2}^{m_1} \cap J$ and for $i>2$ letting
$$
J_i = u_1^n(u_2^n(\cdots (u_{i-1}^n\overline{\mathcal{O}_{i}}^{m_{i-1}})^{m_{i-2}} \cdots )^{m_2})^{m_1} \cap J,$$
it is clear that $J \supseteq J_1 \supseteq J_2 \supseteq \cdots$ and so the sequence $\{J_i\}_{i\in \mathbb{N}}$ is a nesting sequence of nonempty closed sets in the countably compact Hausdorff space $J$. Let $w_1\in \bigcap_{i\in \mathbb{N}}J_i$.  Select $i\in \mathbb{N}$ large enough that $i>|\varphi(w_1)|$.  Select $w_{i+2} \in \overline{\mathcal{O}_{i+2}}$ for which
$$
w_1 = u_1^n(u_2^n(\cdots (u_{i+1}^n w_{i+2}^{m_{i+1}})^{m_i} \cdots )^{m_2})^{m_1}.
$$

Let $w_{i+1} = u_{i+1}^nw_{i+2}^{m_{i+1}}, w_i = u_i^nw_{i+1}^{m_i}, \ldots, w_2 = u_2^nw_3^{m_2}$.  Clearly
$w_1=u_1w_2^{m_1}$.
As in Claim~1 of the proof of Theorem~A we see that $\varphi(w_1), \ldots, \varphi(w_{i+1})$ are loxodromic and arguing as in the conclusion of the proof of Theorem~A we get $|\varphi(w_1)|\geqslant i$, a contradiction.\hfill $\Box$

\end{section}

\begin{section}{Universal acylindrical actions and the elliptic radical}\label{Universal}

%%%%In this section we define the elliptic radical $\Rad(\Ell(G))$ of a group $G$.
%%%%We show that $\Rad(\Ell(G))=\CRad(\Ell(G))$
%%%%%Preparing for this, we need to introduce new and recall some known notions.
%%%%%We also raise two problems, which might be of general interest.
%%%%As a byproduct, we show that the group $\HEG$ is acylindrically hyperbolic,
%%%%but does not admit any universal acylindrical action (see Proposition~\ref{HEG_No_Univer}).
%%%%This seems to be the first torsion-free example of such kind.

%%%%\subsection{Universal acylindrical actions}
%%%%In this paper we are basically interested in the interactions of two categories: the category $\frak{A}$ of
%%%%topological groups (including some wild groups) and the category $\frak{B}$ of groups equipped with acylindrical actions on hyperbolic spaces. Of course any group can be considered as a topological group
%%%%and any group admits an acylindrical action on a hyperbolic space (for example on a point).

%%%%The following definition seems to be necessary for adequate understanding of this interaction.

In this section we introduce the notion of the elliptic radical $\Rad_{\Ell}(G)$ of a group $G$.
This notion is important for two reasons. The first one is that it naturally appears in Theorem~\ref{metricandlcH}.
The second one is that an important class of actions - universal acylindrical actions (introduced by Osin in~\cite{Osin_1}) - can be alternatively defined via the elliptic radical.
Knowing that $G$ admits such an action can help in computing the elliptic radical of $G$.
As it was mentioned in the introduction, many interesting groups admit such an action.

In Proposition~\ref{HEG_No_Univer} we show that the the Hawaiian earring group $\HEG$ is acylindrically hyperbolic, the elliptic radical of $\HEG$ is trivial, and that $\HEG$ does not admit any universal acylindrical action. Note that the latter claim is deduced from Theorem~\ref{thebigone}. We also prove Lemma~\ref{TwoRadicals} which is used in the proof of Theorem~\ref{metricandlcH}.

\begin{definition}\label{Definition_Of_Radical} Let $G$ be a group. An element $g\in G$ is called {\it universally elliptic} if
for any hyperbolic space $S$ and any acylindrical action of $G$ on $S$ the element $g$ acts elliptically on $S$.
The set
$$
\Rad_{\Ell}(G)=\{g\in G\,|\, g\hspace*{2mm}{\text{\rm  is universally elliptic}}\}
$$
is called the {\it elliptic radical} of $G$.
\end{definition}

\noindent
\begin{example}\label{Rad_Rel_Hyp} We consider the case where $G$ is relatively hyperbolic.
\begin{enumerate}
\item[(a)] If $G$ is a relatively hyperbolic group with respect to a collection of subgroups $\{H_{\lambda}\}_{\lambda\in \Lambda}$, then
    $$
\mathcal{U}_1\subseteq \Rad_{\Ell}(G)\subseteq \mathcal{U}_1\cup \mathcal{U}_2,
$$
where $\mathcal{U}_1$ is the set of elements of finite order of $G$ and
$\mathcal{U}_2$ is the union of conjugates of all $H_{\lambda}$'s.

We prove the second inclusion; the first one is obvious.
Let $X$ be a finite relative generating set of $G$ with respect to
$\{H_{\lambda}\}_{\lambda\in \Lambda}$.
We set $\mathcal{H}=\underset{\lambda\in\Lambda}{\sqcup} H_{\lambda}$.
Then the Cayley graph $\Gamma(G,X\cup \mathcal{H})$
is hyperbolic (see~\cite[Corollary~2.54]{Osin_0}) and $G$ acts acylindrically on $\Gamma(G,X\cup \mathcal{H})$
(see~\cite[Proposition 5.2]{Osin_1}). By~\cite[Theorem 4.25]{Osin_0}, the set of elliptic elements with respect to this action lies in $\mathcal{U}_1\cup \mathcal{U}_2$. This proves the second inclusion.

\item[(b)] If $G$ is a hyperbolic group, then $$\Rad_{\Ell}(G)=\mathcal{U}_1.$$
This follows directly from (a).

%the union of conjugates of finite subgroups of $G$ together with the union of conjugates of all $H_{\lambda}$'s.
\item[(c)] If $G$ is a relatively hyperbolic group with respect to a collection of subgroups $\{H_{\lambda}\}_{\lambda\in \Lambda}$, where none of the $H_{\lambda}$'s is virtually cyclic or acylindrically hyperbolic, then
$$
\Rad_{\Ell}(G)=\mathcal{U}_1\cup \mathcal{U}_2.
$$
Indeed, if $G$ acts acylindrically on a hyperbolic space $S$, then the restriction of this action of each $H_{\lambda}$ is elliptic by \cite[Theorem 1.1]{Osin_1}. Together with (a) this completes the proof.

%It would be interesting to compute the elliptic radical for different classes of groups and
%describe it for acylindrically hyperbolic groups.
\end{enumerate}
\end{example}

\begin{remark}\label{EllipticRadicalSmall}
The following shows that, in a certain sense, the elliptic radical of an
acylindrically hyperbolic group is relatively small.
%There are at least two theoretical reasons to claim that the elliptic radical of an
%acylindrically hyperbolic group is relatively small.

(a) In~\cite{Sisto} Sisto proved that for any acylindrically hyperbolic group $G$ generated by a finite symmetrized set $X$,
the probability that the simple random walk on the Cayley graph $\Gamma(G,X)$ arrives at an element of the elliptic radical of $G$ in $n$ steps is $O(\varepsilon^n)$ for some $\varepsilon\in (0,1)$. We don't know how to generalize this statement for the case where $G$ is not finitely generated.

(b) Suppose that $G$ acts acylindrically on a $\delta$-hyperbolic space $S$.
Then any subgroup $H$ which is contained in the elliptic radical of $G$ has bounded orbits (see~\cite[Theorem 1.1]{Osin_1})
and therefore there exists a point $s\in S$ such that the diameter of the orbit $Hx$ is at most $4\delta+1$
(see~\cite[Corollary 6.7]{Osin_1}).
\end{remark}

\medskip

To go further, we recall some relevant definitions from~\cite{Osin_1}.

\begin{definition}
In~\cite[Definition 6.4]{Osin_1} Osin gives the following definition:
Let $G$ be a group. An element $g\in G$ is called {\it generalized loxodromic} if
it satisfies either of the four equivalent conditions in~\cite[Theorem 1.4]{Osin_1}.
Two of these conditions are the following:
\begin{enumerate}
\item[$(L_1)$] There exists a generating set $X$ of $G$ such that the corresponding Cayley graph $\Gamma(G,X)$
is hyperbolic, the natural action of $G$ on $\Gamma(G,X)$ is acylindrical, and $g$ is loxodromic.

\medskip

\item[$(L_2)$] There exists an acylindrical action of $G$ on a hyperbolic space such that $g$ is loxodromic.
\end{enumerate}

Because of quantifiers, we prefer to call these elements {\it existentially loxodromic}.
Comparing with our Definition~\ref{Definition_Of_Radical}, we conclude that any group $G$ is the disjoint union of the set of universally
elliptic and the set of existentially loxodromic elements.

%At the end of~\cite[Section 6]{Osin_1},
%Osin raises two fundamental questions about generalized loxodromic elements.
\end{definition}

Osin notes that, in general, generalized loxodromic elements of a group $G$ may be elliptic for some acylindrical actions of $G$
on unbounded hyperbolic spaces.
%So, any nontrivial element of $G=F(a,b)$ is generalized loxodromic and
%for any nontrivial element $g\in G$, there exists an action of $G$
%on a hyperbolic space with respect to which $g$ is Moreover, there exists a single
This happens, for instance, for $G=F_2$. However for $F_2$ there exists a {\it single} acylindrical action on a hyperbolic space with respect to which {\it all} generalized loxodromic elements of $F_2$ are loxodromic.
This justifies the following definition.

\begin{definition}\label{Definition_Univ_Acyl_Action} (see~\cite[after Definition 6.4]{Osin_1})
An action of a group $G$ on a hyperbolic space is called {\it universal acylindrical} if it is acylindrical
and all generalized loxodromic elements of $G$ are loxodromic with respect to this action.

\medskip

In other words, an action of a group $G$ on a hyperbolic space is called universal acylindrical
if it is acylindrical and the set of elliptic elements of this action coincides with $\Rad_{\Ell}(G)$.
%\marginpar{\tiny not demanded that the action is acylindrical}
\end{definition}

Many interesting groups admit such an action, see groups in
(a)-(f) from in the introduction; these groups were considered in~\cite{ABO,ABD,Osin_1}.
%In~\cite[Example 6.5]{Osin_1}, Osin notes that the mapping class group $\MCG(\Sigma_g)$
%of a closed surface of genus $g\geqslant 2$ acts universal acylindrically
%on the curve complex of $\Sigma_g$. Also, if $G$ is relatively hyperbolic with respect
%to a collection of subgroups $\{H_{\lambda}\}_{\lambda\in \Lambda}$ and none of the $H_{\lambda}$'s
%is virtually cyclic or acylindrically hyperbolic, then $G$ acts universal acylindrically
%on the relative Cayley graph $\Gamma(G,X\cup \mathcal{H})$.
In~\cite[Example 6.10]{Osin_1}, Osin gives a countable (but not finitely) generated group which does not admit
any universal acylindrical action. In~\cite{Abbott}, Abbott shows that Dunwoody's finitely generated inaccessible group also does not admit such an action.
Both proofs ultimately use the presence of elements of increasing finite order and
Lemma~3.6.
The following seems to be the first torsion-free example of this kind.

\begin{proposition}\label{HEG_No_Univer}
The group $\HEG$ is acylindrically hyperbolic, the elliptic radical
$\Rad_{\Ell}(\HEG)$ is trivial, and $\HEG$ does not admit any universal acylindrical action.
\end{proposition}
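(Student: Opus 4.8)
The plan is to establish the three assertions of Proposition~\ref{HEG_No_Univer} in the order stated, leveraging Theorem~\ref{thebigone} for the crucial nonexistence claim. First I would show that $\HEG$ is acylindrically hyperbolic. The natural approach is to exploit the decomposition $\HEG \simeq \HEG_n \ast \HEG^n$ with $\HEG_n \simeq F_n$; in particular $\HEG \simeq F_1 \ast \HEG^1 \simeq \mathbb{Z} \ast \HEG^1$, exhibiting $\HEG$ as a nontrivial free product. Since $\HEG^1 \simeq \HEG$ is nontrivial and $\HEG$ is not isomorphic to $\mathbb{Z}/2\mathbb{Z} \ast \mathbb{Z}/2\mathbb{Z}$ (it is torsion-free and uncountable), a free product that is neither trivial nor infinite dihedral is acylindrically hyperbolic by the standard action on the Bass--Serre tree of the splitting, which is non-elementary and acylindrical.

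Next I would show that $\Rad_{\Ell}(\HEG)$ is trivial. By Definition~\ref{Definition_Of_Radical}, a nontrivial element $g \in \HEG$ lies outside the elliptic radical as soon as we exhibit \emph{one} acylindrical action on a hyperbolic space for which $g$ is loxodromic (i.e.\ $g$ is existentially loxodromic). Given any nontrivial $g$, there is some $n$ with $p_n(g) \neq 1$ in $F_n$, so $g$ maps nontrivially under the retraction $\HEG \to \HEG_n \simeq F_n$ corresponding to the splitting $\HEG \simeq \HEG_n \ast \HEG^n$. Since free groups are hyperbolic and every nontrivial element of a free group is loxodromic for the acylindrical action on the Cayley graph, pulling this action back through the retraction (or using the action on the Bass--Serre tree of a suitable free splitting that separates $g$ from the identity) yields an acylindrical action of $\HEG$ on a hyperbolic space in which $g$ is loxodromic. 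Hence no nontrivial element is universally elliptic, so $\Rad_{\Ell}(\HEG) = \{1\}$.

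Finally, and this is the heart of the proposition, I would show $\HEG$ admits no universal acylindrical action. Suppose for contradiction that $\HEG \curvearrowright S$ is such an action. By Definition~\ref{Definition_Univ_Acyl_Action} the set of elliptic elements coincides with $\Rad_{\Ell}(\HEG) = \{1\}$, so \emph{every} nontrivial element of $\HEG$ acts loxodromically. Now I want to apply Theorem~\ref{thebigone}, which requires the action to be cobounded as well as acylindrical. The hypothesized universal action need not be cobounded a priori, so the key technical step is to replace $S$ by a cobounded invariant subspace without losing either acylindricity or the loxodromic status of elements: one restricts to the orbit $\HEG \cdot s_0$ (or its quasi-convex hull), which is a cobounded $\HEG$-space that remains hyperbolic and on which the action stays acylindrical, and loxodromic elements remain loxodromic. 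Applying Theorem~\ref{thebigone} to this cobounded acylindrical action via the identity homomorphism $\varphi = \mathrm{id}$ produces a natural number $n$ such that $\varphi(\HEG^n) = \HEG^n$ consists entirely of elliptic elements. But $\HEG^n \simeq \HEG$ is nontrivial, so it contains nontrivial elements, contradicting the fact that the only elliptic element is the identity.

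The main obstacle I anticipate is precisely the coboundedness issue in the last paragraph: Theorem~\ref{thebigone} is stated for cobounded acylindrical actions, whereas the definition of a universal acylindrical action imposes no coboundedness. Care is needed to verify that passing to an orbit or a quasi-convex hull preserves all three properties (hyperbolicity, acylindricity, and loxodromicity of the relevant elements); once this reduction is justified, the contradiction with Theorem~\ref{thebigone} is immediate. The first two assertions are comparatively routine, relying only on the free-product structure of $\HEG$ and the retractions $p_n$.
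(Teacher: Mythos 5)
Your first assertion is handled correctly and essentially as in the paper (your Bass--Serre tree action is $(1,1)$-acylindrical by Proposition~\ref{equiv_acylindric}, while the paper phrases the same splitting via relative hyperbolicity with respect to $\HEG^n$). The genuine gap is in your proof that $\Rad_{\Ell}(\HEG)$ is trivial: pulling back the action of $F_n$ on its Cayley graph through the retraction $p_n\colon \HEG\rightarrow \HEG_n$ does \emph{not} yield an acylindrical action of $\HEG$. The kernel of $p_n$ is the normal closure of $\HEG^n$, an infinite subgroup acting trivially, so for \emph{every} pair of points $p,q$ the set $\{g \mid d(p,gp)\leqslant \epsilon,\ d(q,gq)\leqslant \epsilon\}$ of Definition~\ref{def Bowditch} contains this infinite kernel and acylindricity fails outright. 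Your parenthetical fallback via Bass--Serre trees is also insufficient as stated: in the tree of any splitting $\HEG=\HEG_m\ast\HEG^m$ the entire factor $\HEG_m$ is elliptic, so an element such as $g=a_1$ is elliptic in all of these trees, and making it loxodromic would require exhibiting a re-chosen free factorization (e.g.\ $\HEG=\langle a_1a_2\rangle\ast(\langle a_2\rangle\ast\HEG^2)$, in which $a_1=(a_1a_2)a_2^{-1}$ is hyperbolic) -- a construction you do not supply. The paper sidesteps all of this by using the relative Cayley graph $\Gamma_n=\Gamma(\HEG,\{a_1,\dots,a_n\}\cup\HEG^n)$, on which the action is acylindrical with elliptic set \emph{exactly} $\bigcup_{g\in\HEG}\widehat{g}\,(\HEG^n)$, so every element not conjugate into $\HEG^n$ is loxodromic; it then proves $\bigcap_n \Ell(\HEG\curvearrowright\Gamma_n)=1$ by a malnormality argument that assembles the conjugators $z_n\in\HEG^n$ into the infinite product $w=\prod_{i=1}^{\infty}z_i$ -- a step that genuinely uses the infinitary multiplication of $\HEG$. (Your observation that $p_n(g)\neq 1$ for some $n$ \emph{would} combine cleanly with the $\Gamma_n$-actions, since any conjugate of an element of $\HEG^n$ lies in $\ker p_n$; it just cannot be combined with the actions you actually propose.)

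For the third assertion your reduction is exactly the paper's -- apply Theorem~\ref{thebigone} to the identity homomorphism and contradict $\Ell(\HEG\curvearrowright S)=\Rad_{\Ell}(\HEG)=1$ -- and you are right to notice that Theorem~\ref{thebigone} assumes coboundedness while Definition~\ref{Definition_Univ_Acyl_Action} does not state it; the paper applies Theorem~\ref{thebigone} directly without comment, implicitly reading universal acylindrical actions as cobounded (compare the parenthetical ``(cobounded)'' at the start of Section~\ref{Universal} and Osin's original Cayley-graph formulation, under which coboundedness is automatic). Your proposed repair, however, does not work as described: the quasi-convex hull of an orbit need not be cobounded -- for instance, the fiber surface subgroup of a closed hyperbolic fibered $3$-manifold group inherits an acylindrical action on $\mathbb{H}^3$ whose orbit has full limit set, so the hull is coarsely all of $\mathbb{H}^3$ while the action is far from cobounded -- and the orbit itself, though cobounded and inheriting the four-point hyperbolicity inequality and acylindricity under the restricted metric, is not a geodesic space, so the geodesic machinery underlying Theorem~\ref{thebigone} (thin triangles, $\varepsilon$-lines, Lemma~\ref{QuasiGeodLocalGlobal}) does not apply verbatim. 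So your added care identifies a real point of friction in the paper, but the reduction you sketch would need a different and more substantial justification than ``pass to the orbit or its hull.''
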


\begin{proof}
The group $\HEG$ is acylindrically hyperbolic. To see
this, we use the decomposition $\HEG=\HEG_n\ast \HEG^n$, which holds
for any $n\in \mathbb{N}$ as mentioned in Section~\ref{HE}.
Let $\Gamma_n$ be the Cayley graph of $\HEG$ with respect to the generating set $Y_n=\{a_1,a_2,\dots ,a_n\}\,\cup\, \HEG^n$. This Cayley graph is tree-like, and hence hyperbolic, and it is easy to check that $\HEG$ acts acylindrically and nonelementary on~$\Gamma_n$.  

More particularly the group $\HEG$ has relative presentation $\HEG = \langle Y_n \cup \HEG^n \mid \emptyset \cup \Rel(\HEG^n)\rangle$, where $\Rel(\HEG^n)$ is the set of all relations
of $\HEG^n$ with respect to the generating set consisting of all elements of $\HEG^n$.
This relative presentation has trivial relative Dehn function. Therefore $\HEG$ is relatively hyperbolic with respect to the subgroup $\HEG^n$ (see~\cite[Definition 2.35]{Osin_0}).
Hence, $\HEG$ is acylindrically hyperbolic with respect to $Y_n\cup \HEG^n$ (see Definition (AH$_1$)) by~\cite[Proposition~5.2]{Osin_1}.
Moreover,
$$
\Ell\,(\HEG\curvearrowright \Gamma_n)=\underset{g\in \HEG}{\bigcup}\widehat{g}\,(\HEG^n).
$$

%In Remark~6.2 we proved that $\HEG$ is acylindrically hyperbolic and, for each natural $n$,
%the group $\HEG$ acts acylindrically on the Cayley graph $\Gamma_n$, and
%$$
%\Ell\,(\HEG\curvearrowright \Gamma_n)=\underset{g\in \HEG}{\bigcup}\widehat{g}\,(\HEG^n).
%$$
To prove that $\Rad_{\Ell}(\HEG)=1$, it suffices to prove that
$$
\underset{n\in \mathbb{N}}{\cap}\, \Ell\,(\HEG\curvearrowright \Gamma_n)=1.
$$

To the contrary, let $x$ be a nontrivial element from this intersection. Then, for every $n\in \mathbb{N}$, there exists $g_n\in \HEG$ such that $g_n^{-1}xg_n\in \HEG^n$. We denote $h_n=g_n^{-1}xg_n$ and $z_n=g_n^{-1}g_{n+1}$. Then $z_n^{-1}h_nz_n=h_{n+1}$.
By malnormality of $\HEG^n$ in $\HEG=\HEG_n\ast \HEG^n$, we have $z_n\in \HEG^n$. Now we consider the element $w=\overset{\infty}{\underset{i=1}{\prod}} z_i$. For any $n$
we can write $w=u_nv_n$, where $u_n=\overset{n}{\underset{i=1}{\prod}}z_i$, $v_n=\overset{\infty}{\underset{i=n+1}{\prod}}z_i$.
Then
$$
w^{-1}h_1w=v_n^{-1}u_n^{-1}h_1u_nv_n=v_n^{-1}h_{n+1}v_n
$$ lies in $\HEG^{n+1}$ for any $n$, hence it lies in the intersection of all $\HEG^n$, i.e. $h_1=1$.
This contradicts the assumption that $x\neq 1$.

Thus, $\Rad_{\Ell}(\HEG)=1$.
%Therefore each nontrivial element of $\HEG$ is generalized loxodromic.
Suppose that $\HEG$ admits a universal acylindrical  action $\HEG\curvearrowright S$ on a hyperbolic space $S$. Then $\Ell\,(\HEG\curvearrowright S)=1$.
This contradicts Theorem~\ref{thebigone} applied to this action and the identity homomorphism $\operatorname{id}:\HEG\rightarrow \HEG$.
\end{proof}

\begin{remark}
Since free groups admit universal acylindrical actions, this gives still another proof that $\HEG$ is not a free group.
\end{remark}

\medskip

Sometimes it is more convenient to work with cobounded actions.
So, we give a variation of Definition~\ref{Definition_Of_Radical} for cobounded actions.

\begin{definition} Let $G$ be a group. An element $g\in G$ is called {\it  c-universally elliptic} if
for any hyperbolic space $S$ and any cobounded acylindrical action of $G$ on $S$ the element $g$ acts elliptically on $S$.
We set
$$
\CRad_{\Ell}(G)=\{g\in G\,|\, g\hspace*{2mm}{\text{\rm  is c-universally elliptic}}\}.
$$
%is called the {\it cobounded elliptic $radical} of $G$.
\end{definition}

%In our old Theorems A and B, we proved that
%there exists a neighborhood $V$ of $1_H$ such that $\varphi(V)\subseteq \CRad_{\Ell}(G)$.

The following lemma is used in the proof of Theorem~\ref{metricandlcH}.

\begin{lemma}\label{TwoRadicals}
For any group $G$ holds $\Rad_{\Ell}(G)=\CRad_{\Ell}(G)$.
\end{lemma}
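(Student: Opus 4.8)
The plan is to establish the two inclusions separately, one being essentially formal and the other resting on Osin's characterization of existentially loxodromic elements recorded above. For the inclusion $\Rad_{\Ell}(G)\subseteq \CRad_{\Ell}(G)$ I would simply observe that every cobounded acylindrical action of $G$ on a hyperbolic space is, a fortiori, an acylindrical action. Hence if $g$ acts elliptically under \emph{every} acylindrical action, then it acts elliptically under every cobounded such action, so $g\in \CRad_{\Ell}(G)$. No further work is needed here.

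The substance lies in the reverse inclusion $\CRad_{\Ell}(G)\subseteq \Rad_{\Ell}(G)$, which I would prove by contraposition. Suppose $g\notin \Rad_{\Ell}(G)$, so that $g$ is not universally elliptic. By the disjoint-union dichotomy recorded after the definition of generalized loxodromic elements (every group being partitioned into its universally elliptic and its existentially loxodromic elements), $g$ is existentially loxodromic; equivalently, $g$ satisfies condition $(L_2)$, i.e. there is an acylindrical action of $G$ on some hyperbolic space with respect to which $g$ is loxodromic. The key step is then to upgrade this to a \emph{cobounded} action: invoking the equivalence $(L_2)\Leftrightarrow(L_1)$ from Osin's Theorem~1.4, there is a generating set $X$ of $G$ for which $\Gamma(G,X)$ is hyperbolic, the natural action of $G$ on $\Gamma(G,X)$ is acylindrical, and $g$ is loxodromic for this action.

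The decisive observation is that a Cayley graph action is automatically cobounded: $G$ acts transitively on the vertex set of $\Gamma(G,X)$, and every point of $\Gamma(G,X)$ lies within distance $\tfrac12$ of a vertex, so a single bounded ball centered at the base vertex meets every orbit. Therefore $g$ is loxodromic for a cobounded acylindrical action, whence $g\notin \CRad_{\Ell}(G)$. Taking the contrapositive yields $\CRad_{\Ell}(G)\subseteq \Rad_{\Ell}(G)$, and combined with the first paragraph this gives the desired equality.

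I expect the only genuine subtlety --- the ``hard part'' --- to be making explicit that existential loxodromicity can always be witnessed on a cobounded space rather than merely on some possibly non-cobounded hyperbolic space. This is precisely the content of the equivalence $(L_1)\Leftrightarrow(L_2)$ that is already available from Osin's theorem, so once that equivalence and the coboundedness of Cayley graph actions are in hand, the argument is purely formal and requires no new geometric estimates.
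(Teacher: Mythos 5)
Your proposal is correct and follows essentially the same route as the paper: the inclusion $\Rad_{\Ell}(G)\subseteq \CRad_{\Ell}(G)$ is dismissed as formal, and the reverse inclusion is proved by contraposition, upgrading an existentially loxodromic witness to a cobounded one via Osin's equivalence $(L_2)\Leftrightarrow(L_1)$ and the automatic coboundedness of the Cayley graph action. Your proof is slightly more explicit than the paper's in spelling out why the Cayley graph action is cobounded, but this is the identical argument.
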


\begin{proof} We show that $\CRad_{\Ell}(G) \subseteq \Rad_{\Ell}(G)$; the inverse inclusion is obvious.
Suppose that $g\in G\setminus \Rad_{\Ell}(G)$.
Since $g$ is not universally elliptic, $g$ is existentially loxodromic and by condition $(L_1)$ there exists
a generating set $X$ of $G$ such that the corresponding Cayley graph $\Gamma(G,X)$
is hyperbolic, the natural action of $G$ on $\Gamma(G,X)$ is acylindrical, and $g$ is loxodromic.
Since $G$ acts coboundedly on $\Gamma(G,X)$, we have $g\in G\setminus \CRad_{\Ell}(G)$.
\end{proof}

\end{section}

%\medskip

%The following problem aims to get a more precise conclusion in these theorems in special cases.
%As is mentioned above, the elliptic radical has clear description for hyperbolic and most
%relatively hyperbolic groups.
%
%\medskip
%
%\noindent
%{\bf Problem 3.} Compute (describe) explicitly the elliptic radical $\Rad_{\Ell}(G)$ for
%commonly interesting classes of groups, in particular, for mapping class groups, 3-manifold groups,
%right-angled Artin groups, and right-angled Coxeter groups.

%\begin{remark} (for us)
%Theorem 1.1 in~\cite{Osin_1} says that if a group $G$ acts acylindrically on a hyperbolic space,
%then either $G$ has bounded orbits, or is virtually cyclic or acylindrically hyperbolic.
%
%Looking at Theorem B, we conclude that either $\varphi(H)\subseteq \Ell\,(G\curvearrowright S)$,
%or $\varphi(H)$ is virtually cyclic or acylindrically hyperbolic.
%
%Thus, the questions:
%
%1) Which completely metrizable or locally compact Hausdorff topological groups admit a virtually cyclic or
%an acylindrically hyperbolic quotient?  Do we have at least one example?
%
%2) Which completely metrizable or locally compact Hausdorff topological groups are SQ-universal?
%\end{remark}

\begin{section}{Proof of Theorem~\ref{metricandlcH}}\label{ProofofTheoremB}

To complete the proof of Theorem~\ref{metricandlcH}, we need some additional statements about groups which act acylindrically on hyperbolic spaces.

\subsection{Normalizers of elliptic subgroups of acylindrically hyperbolic groups.}
We use the following two results of Osin.
%We will use this corollary in the proof of the last statement of Theorem ....

\begin{theorem}{\rm (see~\cite[Theorem 1.1]{Osin_1})}\label{Osin_trichotomy}
Let $G$ be a group acting acylindrically on a hyperbolic space. Then $G$ satisfies exactly one of the following three
conditions.

\begin{enumerate}
\item[(a)] $G$ has bounded orbits.

\item[(b)] $G$ is virtually cyclic and contains a loxodromic element.

\item[(c)] $G$ contains infinitely many loxodromic elements whose limit sets are pairwise disjoint. In this case the action of $G$ is non-elementary and $G$ is acylindrically hyperbolic.
\end{enumerate}
 \end{theorem}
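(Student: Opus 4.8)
The plan is to establish mutual exclusivity first and then exhaustiveness. Exclusivity is routine: if (a) holds every orbit is bounded, so no element is loxodromic and neither (b) nor (c) can hold; and (b) and (c) are incompatible because a virtually cyclic group coarsely preserves a quasi-line, so all of its loxodromic elements share the same pair of endpoints on $\partial S$, making it impossible to have infinitely many loxodromics with pairwise disjoint limit sets. Cases (a) and (b), (a) and (c) are disjoint because both (b) and (c) produce loxodromic elements and hence unbounded orbits.

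For exhaustiveness I would invoke Bowditch's classification of individual elements (recalled after the definition of loxodromic element): every $g\in G$ is either elliptic or loxodromic. I then split according to whether $\Lox(G\curvearrowright S)=\emptyset$.

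Suppose first $\Lox(G\curvearrowright S)=\emptyset$, so every element is elliptic; I claim $G$ has bounded orbits, giving (a). This is the main obstacle, since a general isometric action on a hyperbolic space may have all elements elliptic yet unbounded orbits (parabolic/horocyclic behaviour), and acylindricity must be used to exclude this. I would argue by contradiction: from unboundedness of orbits one extracts, for a fixed $\epsilon$, two points $p,q$ with $d(p,q)$ exceeding the acylindricity radius $R=R(\epsilon)$ together with more than $N=N(\epsilon)$ group elements moving both $p$ and $q$ by at most $\epsilon$, contradicting Definition~\ref{def Bowditch}. Producing enough such near-stabilizing elements sharing a common pair of far-apart approximate fixed points is the delicate point; equivalently one may appeal to Gromov's classification of isometric actions on hyperbolic spaces and rule out the horocyclic and focal types using acylindricity (cf. $\mathbf{inj}(G\curvearrowright S)>0$ in Lemma~\ref{Bow}).

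If instead there is a loxodromic $g$, let $E_G(g)$ be its maximal virtually cyclic (elementary) subgroup recalled before Lemma~\ref{elem_index}. If $G=E_G(g)$ then $G$ is virtually cyclic and contains a loxodromic, which is case (b). Otherwise pick $h\in G\setminus E_G(g)$; since $E_G(g)$ is the stabilizer of the endpoint pair $\{g^{+\infty},g^{-\infty}\}$, the conjugate $hgh^{-1}$ satisfies $E_G(hgh^{-1})=hE_G(g)h^{-1}\neq E_G(g)$, so $g$ and $hgh^{-1}$ are independent loxodromics with disjoint limit sets. Starting from this pair I would run a ping-pong argument, using Corollary~\ref{lox-lox} (and Corollary~\ref{lox-ell} when an elliptic factor appears) as the engine that keeps suitable products loxodromic, to manufacture an infinite family of loxodromic elements with pairwise disjoint limit sets; this is case (c), and the action is then non-elementary with $G$ acylindrically hyperbolic. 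Verifying that the constructed family genuinely has \emph{pairwise} disjoint limit sets (rather than merely distinct from that of $g$) is the second point requiring care.
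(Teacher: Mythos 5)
First, a point of comparison: the paper does not prove Theorem~\ref{Osin_trichotomy} at all --- it is imported verbatim from Osin (\cite[Theorem 1.1]{Osin_1}), so there is no in-paper argument to measure you against. Your outline does in fact track Osin's actual proof strategy: classify the action Gromov-style, use acylindricity to kill the horocyclic and quasi-parabolic (focal) types, and then split on whether $G$ equals the elementary subgroup $E_G(g)$ of some loxodromic $g$. So the route is right; the problem is that the two places you yourself flag as ``delicate'' are precisely where the entire mathematical content of the theorem lives, and neither is executed.

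Concretely: (1) In the all-elliptic case, asserting that unbounded orbits produce $>N$ elements $\epsilon$-moving two points at distance $>R$ is just a restatement of what must be proved. The actual argument needs the structure of a horocyclic action --- the orbit accumulates at a single boundary point $\xi$, and one must use hyperbolic thinness along geodesics toward $\xi$ to show that orbit points deep inside a horoball supply group elements coarsely fixing a long segment, contradicting Definition~\ref{def Bowditch}. Note that Lemma~\ref{Bow} cannot help you here: $\mathbf{inj}(G\curvearrowright S)>0$ is a statement about loxodromic elements, and in this case there are none. (2) In the loxodromic case, your step ``$E_G(hgh^{-1})\neq E_G(g)$, hence $g$ and $hgh^{-1}$ have \emph{disjoint} limit sets'' is unjustified as written: distinct elementary subgroups only give distinct fixed-point \emph{pairs}, and sharing exactly one endpoint is exactly what happens among loxodromics in a focal action (which exists for non-acylindrical actions, e.g.\ parabolic fixed points of non-uniform lattice-like actions). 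The missing lemma is that in an acylindrical action two loxodromic elements cannot share exactly one limit point: if the axes of $a$ and $b$ share the endpoint $\xi$, they are eventually uniformly close near $\xi$, and the elements $a^nb^{-n}$ (after normalizing translation lengths) yield arbitrarily many elements almost fixing two arbitrarily distant points, violating acylindricity. Once that lemma is in hand, both of your remaining worries dissolve simultaneously: ``distinct $E_G$'' upgrades to ``disjoint limit sets,'' and the family $f^kgf^{-k}$ with $f=hgh^{-1}$ has pairwise disjoint limit sets (if $f^{k-m}$ setwise fixed $\operatorname{Fix}(g)\neq \operatorname{Fix}(f)$, North--South dynamics of $f$ would force a power of $f$ to fix a point outside $\operatorname{Fix}(f)$, a contradiction), so no genuine ping-pong via Corollary~\ref{lox-lox} is even needed for case (c). As submitted, though, the proposal is a correct and honest plan with two unproved core lemmas rather than a proof.
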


Recall~\cite{Osin_1} that a subgroup $E$ of $G$ is called {\it $s$-normal} in $G$ if $|E^g\cap E|=\infty$ for
every $g\in G$. In particular, if $E$ is infinite, then $E$ is $s$-normal in $N_G(E)$.

\begin{lemma}{\rm (see~\cite[Lemma 7.1]{Osin_1})}\label{s-normality}
Let $G$ be a group acting acylindrically and non-elementary on a hyperbolic space $S$.
Then every $s$-normal subgroup of $G$ acts non-elementary on $S$.
\end{lemma}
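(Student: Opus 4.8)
The plan is to apply Osin's trichotomy (Theorem~\ref{Osin_trichotomy}) to the subgroup $E$ and rule out the two elementary alternatives. First I would note that the restriction to $E$ of the action $G \curvearrowright S$ is again acylindrical (the same constants $R,N$ work for any subgroup), so $E$ satisfies exactly one of the conditions (a), (b), (c). Since $E$ is $s$-normal, $|E\cap E|=|E|=\infty$, so $E$ is infinite; hence proving that $E$ acts non-elementarily amounts to excluding cases (a) and (b). For both I will exploit the defining property of $s$-normality: for every $g\in G$ the subgroup $F_g:=E^g\cap E$ is infinite, and being contained in both $E$ and $E^g$ its limit set satisfies $\Lambda(F_g)\subseteq \Lambda(E)\cap \Lambda(E^g)=\Lambda(E)\cap g\Lambda(E)$.

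Second I would dispatch case (b), where $E$ is virtually cyclic and contains a loxodromic element $h$, so that $\Lambda(E)=\{h^{+\infty},h^{-\infty}\}$. For an arbitrary $g\in G$, the subgroup $F_g$ is an infinite subgroup of the virtually cyclic group $E$, hence of finite index, and so contains a loxodromic element $f$ (a nonzero power of $h$). The fixed-point pair of $f$ lies simultaneously in $\Lambda(E)=\{h^{\pm\infty}\}$ and in $\Lambda(E^g)=g\{h^{\pm\infty}\}$, which forces $g\{h^{+\infty},h^{-\infty}\}=\{h^{+\infty},h^{-\infty}\}$. As $g$ was arbitrary, $G$ preserves the two-point set $\{h^{+\infty},h^{-\infty}\}\subseteq \partial S$, so $G$ is elementary, contradicting the hypothesis.

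Third, and this I expect to be the main obstacle, I would treat case (a), where $E$ has bounded orbits, i.e.\ $E\subseteq \Ell(G\curvearrowright S)$. Here both limit sets are empty and the argument of case (b) is vacuous, so I would argue directly through acylindricity. Since $E$ has bounded orbits, by \cite[Corollary 6.7]{Osin_1} (as already invoked in Remark~\ref{EllipticRadicalSmall}(b)) there is a point $s\in S$ with $\operatorname{diam}(Es)\leqslant \epsilon_0$, where $\epsilon_0=4\delta+1$. Because $G$ is non-elementary it contains a loxodromic element $g$, and then $d(s,g^ns)\to\infty$ as $n\to\infty$. Fixing the acylindricity constants $R=R(\epsilon_0)$ and $N=N(\epsilon_0)$ of Definition~\ref{def Bowditch}, I would choose $n$ with $d(s,g^ns)\geqslant R$ and set $F=E^{g^n}\cap E$, which is infinite by $s$-normality. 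Every $f\in F$ lies in $E$, so $d(s,fs)\leqslant \epsilon_0$; and writing $f=g^neg^{-n}$ with $e\in E$ gives $f\cdot g^ns=g^nes$, whence $d(g^ns,fg^ns)=d(s,es)\leqslant \epsilon_0$. Thus $F$ is contained in $\{h\in G\mid d(s,hs)\leqslant \epsilon_0 \text{ and } d(g^ns,hg^ns)\leqslant \epsilon_0\}$, which by acylindricity has at most $N$ elements, contradicting $|F|=\infty$. Having excluded (a) and (b), the infinite subgroup $E$ must fall under case (c), i.e.\ it acts non-elementarily, as required.
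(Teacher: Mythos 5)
Your proof is correct. Note first that the paper does not actually prove this lemma internally --- it is quoted from Osin (\cite[Lemma 7.1]{Osin_1}) --- so your argument should be measured against the standard proof, which it essentially reconstructs: apply the trichotomy of Theorem~\ref{Osin_trichotomy} to the $s$-normal subgroup $E$ (the restricted action is acylindrical with the same constants) and kill the two elementary cases. Your treatment of case (a) is exactly right and is the heart of the matter: the point $s$ with $\operatorname{diam}(Es)\leqslant 4\delta+1$ from \cite[Corollary~6.7]{Osin_1} (the same fact the paper invokes in Lemma~\ref{Elliptic_cobounded} and Remark~\ref{EllipticRadicalSmall}), a loxodromic $g\in G$ supplied by non-elementarity via Theorem~\ref{Osin_trichotomy}, and then the two displacement bounds $d(s,fs)\leqslant \epsilon_0$ and $d(g^ns,fg^ns)\leqslant\epsilon_0$ for all $f\in E^{g^n}\cap E$ contradict Definition~\ref{def Bowditch} once $d(s,g^ns)\geqslant R$. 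Two small points deserve polish, though neither is a gap. First, in case (b) you assert that $G$ preserving the two-point set $\{h^{+\infty},h^{-\infty}\}$ makes $G$ elementary; since $S$ need not be proper one should say why: every $G$-translate of the quasi-axis $L(x,h)$ is a quasi-geodesic with the same pair of endpoints at infinity, hence lies within uniformly bounded Hausdorff distance of $L(x,h)$ by Lemma~\ref{HausdorffQuasiGeod}, so the orbit $Gx$ accumulates only at $h^{\pm\infty}$ and $\Lambda(G)\subseteq\{h^{\pm\infty}\}$ (alternatively, under an acylindrical action the setwise stabilizer of $\{h^{\pm\infty}\}$ is the virtually cyclic group $E_G(h)$ of \cite[Corollary 6.6]{DGO}, so $G=E_G(h)$). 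Second, you silently switch between the conventions $E^g=g^{-1}Eg$ and $E^g=gEg^{-1}$ (compare ``$\Lambda(E^g)=g\Lambda(E)$'' with ``$f=g^neg^{-n}$''); the argument is insensitive to the choice, but fix one.
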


%For a subgroup $E$ of a group $G$ we denote by $N_G(E)$ the normalizer of $E$ in $G$.

\begin{lemma}\label{Normalizer}
Let $G$ be a group acting acylindrically on a hyperbolic space $S$.
Suppose that $E$ is an infinite subgroup of $G$ which is contained in the set $\Ell(G\curvearrowright S)$.
Then the normalizer $N_G(E)$ is also contained in $\Ell(G\curvearrowright S)$.
\end{lemma}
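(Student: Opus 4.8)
The plan is to prove the contrapositive statement at the level of elements: I will show that $N_G(E)$ contains no loxodromic element. Since every element of a group acting acylindrically on a hyperbolic space is either elliptic or loxodromic (Bowditch, \cite[Lemma 2.2]{Bowditch}), this immediately yields $N_G(E)\subseteq \Ell(G\curvearrowright S)$. First I would record two preliminary reductions. The restriction of the action to the subgroup $E$ is again acylindrical, since the cardinality bound in Definition~\ref{def Bowditch} only improves when one passes to a subset of $G$; and $E$ contains no loxodromic element, because $E\subseteq \Ell(G\curvearrowright S)$. Applying the trichotomy of Theorem~\ref{Osin_trichotomy} to $E\curvearrowright S$, cases (b) and (c) are both excluded as each produces loxodromic elements, so $E$ must fall into case (a) and have bounded orbits. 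Consequently, exactly as in Remark~\ref{EllipticRadicalSmall}(b) (via \cite[Corollary 6.7]{Osin_1}), there is a point $s_0\in S$ with $\operatorname{diam}(Es_0)\leqslant 4\delta+1$, where $S$ is taken $\delta$-hyperbolic.

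The core of the argument is a uniform displacement estimate that exploits the normalization $gEg^{-1}=E$. Suppose for contradiction that some $g\in N_G(E)$ is loxodromic. For every $e\in E$ and every $n\in\mathbb{Z}$ we have $eg^n=g^n(g^{-n}eg^n)$ with $g^{-n}eg^n\in E$, and symmetrically $g^ne=(g^neg^{-n})g^n$ with $g^neg^{-n}\in E$, so $E\cdot(g^ns_0)=g^n\cdot(Es_0)$. Since $g^n$ is an isometry, this orbit has diameter at most $4\delta+1$ and contains $g^ns_0$. Hence, writing $\epsilon=4\delta+1$, every $e\in E$ satisfies simultaneously $d(s_0,es_0)\leqslant\epsilon$ and $d(g^ns_0,eg^ns_0)\leqslant\epsilon$.

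Now I would invoke acylindricity for this fixed $\epsilon$, obtaining constants $R,N>0$ as in Definition~\ref{def Bowditch}. Because $g$ is loxodromic, $d(s_0,g^ns_0)\to\infty$, so I may fix an $n$ with $d(s_0,g^ns_0)\geqslant R$. Taking $p=s_0$ and $q=g^ns_0$, the set $\{h\in G : d(p,hp)\leqslant\epsilon,\ d(q,hq)\leqslant\epsilon\}$ has cardinality at most $N$; yet by the previous paragraph this set contains all of $E$, forcing $|E|\leqslant N$. This contradicts the hypothesis that $E$ is infinite, so no element of $N_G(E)$ is loxodromic, completing the proof.

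The only genuinely delicate point is the identity $E\cdot(g^ns_0)=g^n\cdot(Es_0)$, which is precisely where the normalization is used and which is what keeps the $E$-displacement of the faraway point $g^ns_0$ uniformly bounded. Everything else is a routine application of the acylindricity definition together with the bounded-orbit reduction, and I expect no serious obstacle once those two ingredients are in place; in particular the infinitude of $E$ enters only at the very last step, to clash with the finite acylindricity bound $N$.
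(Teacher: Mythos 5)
Your proof is correct, but it takes a genuinely different route from the paper's. The paper applies Osin's trichotomy (Theorem~\ref{Osin_trichotomy}) directly to $N_G(E)$ and disposes of the two non-elliptic cases using (i) the fact that two infinite subgroups of a virtually cyclic group cannot intersect trivially (ruling out the case where $N_G(E)$ is virtually cyclic with a loxodromic), and (ii) the $s$-normality machinery: an infinite $E$ is $s$-normal in $N_G(E)$, so by Lemma~\ref{s-normality} it would have to act non-elementarily, contradicting $E\subseteq \Ell(G\curvearrowright S)$. You instead argue at the level of a single hypothetical loxodromic $g\in N_G(E)$: after reducing to bounded orbits for $E$ (trichotomy applied to the restricted action $E\curvearrowright S$, which is indeed still acylindrical, plus \cite[Corollary 6.7]{Osin_1} to get $s_0$ with $\operatorname{diam}(Es_0)\leqslant 4\delta+1$), you exploit the normalizer identity $E\cdot(g^ns_0)=g^n\cdot(Es_0)$ to conclude that every $e\in E$ displaces both $s_0$ and the faraway point $g^ns_0$ by at most $\epsilon=4\delta+1$; then the acylindricity constants $(R,N)$ for this $\epsilon$, applied with $d(s_0,g^ns_0)\geqslant R$ (available since $g$ is loxodromic), force $|E|\leqslant N$, contradicting infinitude. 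All the steps check out: $1\in E$ places $s_0$ and $g^ns_0$ in the relevant orbits, and the conjugation identity is exactly where normality enters. Your argument is more elementary and self-contained---it bypasses Lemma~\ref{s-normality} entirely, working straight from Definition~\ref{def Bowditch}---and it even yields a quantitative by-product: any elliptic subgroup normalized by a loxodromic element has order at most $N$, where $N$ is the acylindricity constant for $\epsilon=4\delta+1$. What the paper's proof buys in exchange is brevity given Osin's machinery, with the geometric work delegated wholesale to the $s$-normality lemma.
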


\medskip

{\it Proof.} Suppose that $N_G(E)$ is not contained in $\Ell(G\curvearrowright S)$. Applying Theorem~\ref{Osin_trichotomy} to $N_G(E)$, we conclude that either

\begin{enumerate}
\item[1)] $N_G(E)$ contains a finite index cyclic subgroup $Z$ generated by a loxodromic element, or

\item[2)] $N_G(E)$ acts acylindrically and non-elementary on $S$.
\end{enumerate}

Consider the first case. Since $E$ is elliptic, we have $E\cap Z=1$. But two infinite subgroups of a virtually cyclic group cannot intersect trivially. A contradiction.

Consider the second case. Since $E$ is infinite, it is $s$-normal in $N_G(E)$.
Then, by~ Lemma~\ref{s-normality}, $E$ acts non-elementary on $S$.
A contradiction.\hfill $\Box$

\medskip

Recall that a subset $X$ of a group $G$ is called {\it symmetrized} if $X=X^{-1}$.
Koubi~\cite{Koubi} proved the following proposition.

\medskip

\begin{proposition} {\rm (see~\cite[Proposition 3.2]{Koubi})}\label{Koubi_Full}
Let $G$ be a group which acts by isometries on a $\delta$-hyperbolic metric space $S$.
Let $X$ be a finite symmetrized subset of $G$ such that each element of $X\cup X\cdot X$ acts elliptically on $S$.
Then there exists a point $s\in S$ such that $d(s,xs)\leqslant 100\delta$ for every $x\in X$.
\end{proposition}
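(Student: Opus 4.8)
The plan is to work entirely inside the $\delta$-hyperbolic space $S$ and to produce the required point as a coarse common fixed point of the family of \emph{almost-fixed sets} of the elements of $X$. For an isometry $g$ of $S$ and $r\geqslant 0$ write $\Fix_r(g)=\{s\in S\mid d(s,gs)\leqslant r\}$. Two preliminary facts drive everything. First, an elliptic isometry has a point moved by only a bounded multiple of $\delta$: its $\langle g\rangle$-orbit is bounded, and the standard quasi-center argument (the elliptic orbit-diameter bound invoked in the proof of Lemma~\ref{Elliptic_cobounded}) produces a point $s$ with $d(s,gs)\leqslant r_0$ for an absolute constant $r_0=O(\delta)$, so $\Fix_{r_0}(g)\neq\emptyset$ whenever $g$ is elliptic. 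Second, each set $\Fix_r(g)$ is $\sigma$-quasiconvex with $\sigma=\sigma(\delta)$ independent of $r$ and $g$, because the displacement function $s\mapsto d(s,gs)$ is convex up to an additive $O(\delta)$ in a $\delta$-hyperbolic space (compare the geodesics $[s,t]$ and $g[s,t]=[gs,gt]$ using thinness of the quadrilateral with vertices $s,t,gt,gs$). Granting these, it suffices to find a single point $s$ lying within a bounded multiple of $\delta$ of $\Fix_{r_0}(x)$ for every $x\in X$: if $d(s,\Fix_{r_0}(x))\leqslant R$ and $p\in\Fix_{r_0}(x)$ realizes this, then $d(s,xs)\leqslant d(s,p)+d(p,xp)+d(xp,xs)\leqslant 2R+r_0$, which we will arrange to be at most $100\delta$.

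The core is a pairwise statement: there is a constant $D=D(\delta)$ such that whenever $a,b$ and $ab$ are all elliptic, one has $d(\Fix_{r_0}(a),\Fix_{r_0}(b))\leqslant D$. I would prove this by contraposition, showing that if the two almost-fixed sets lie at distance greater than $D$ then $ab$ is loxodromic, contradicting ellipticity. Choosing $p\in\Fix_{r_0}(a)$ and $q\in\Fix_{r_0}(b)$ realizing the distance, with bridge $[p,q]$, the point $q$ is nearly fixed by $b$ while $a$ displaces it across the region $\Fix_{r_0}(a)$; concatenating the geodesic approximations of the jumps $q\mapsto bq\mapsto ab\,q$ over the whole $\langle ab\rangle$-orbit yields a bi-infinite path $L$. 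When the bridge is longer than a suitable multiple of $\delta$, thinness forces consecutive long segments of $L$ to meet in near-geodesic fashion, so $L$ is an $M$-local $(\varkappa,\varepsilon)$-quasi-geodesic; Lemma~\ref{QuasiGeodLocalGlobal} then upgrades $L$ to a global quasi-geodesic, whence $ab$ translates along $L$ and is loxodromic. This ping-pong estimate, and in particular pinning down one threshold $D$ valid for all such pairs, is the main obstacle. Once it is in place, taking $p\in\Fix_{r_0}(a)$ with $d(p,q)\leqslant D$ for some $q\in\Fix_{r_0}(b)$ gives $d(p,ap)\leqslant r_0$ and $d(p,bp)\leqslant d(p,q)+d(q,bq)+d(bq,bp)\leqslant 2D+r_0$, so $p\in\Fix_{c}(a)\cap\Fix_{c}(b)$ with $c=2D+r_0$; that is, the enlarged almost-fixed sets $\Fix_{c}(x)$, $x\in X$, pairwise intersect.

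It then remains to pass from pairwise to simultaneous intersection with a bound independent of $|X|$. Here I would invoke the coarse Helly property of $\delta$-hyperbolic spaces: a finite family of uniformly quasiconvex sets that pairwise intersect admits a point lying within $R=R(\delta,\sigma)$ of each member. Applied to the pairwise-intersecting, uniformly quasiconvex family $\{\Fix_{c}(x)\}_{x\in X}$ (finite because $X$ is finite, and it is here that the hypotheses on $X$ and on the products $X\cdot X$ enter), this yields a point $s$ with $d(s,\Fix_{c}(x))\leqslant R$ for every $x\in X$, and hence, by the triangle-inequality estimate of the first paragraph, $d(s,xs)\leqslant c+2R$ for every $x\in X$. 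Tracking the constants $r_0,D,\sigma,R$, all absolute multiples of $\delta$, and absorbing the slack, one arranges $d(s,xs)\leqslant 100\delta$ for all $x\in X$, as required. The delicate points are that the Helly constant $R$ must be independent of the number of elements of $X$ (which is exactly what the coarse Helly property provides and what makes the pairwise product hypothesis sufficient), and that every intermediate constant is a genuine function of $\delta$ alone.
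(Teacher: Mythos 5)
Note first that the paper contains no proof of this proposition: it is quoted verbatim from Koubi \cite{Koubi}, so your attempt can only be measured against Koubi's argument. Koubi's proof is, in essence, a direct minimal-displacement (circumcenter) argument: one takes a point nearly minimizing $\max_{x\in X}d(s,xs)$ and shows by Gromov-product estimates that if this minimum exceeds $100\delta$, then two near-maximally displacing elements of $X$ have diverging directions of escape, whence some element of $X\cup X\cdot X$ is loxodromic --- contradicting ellipticity. Your route --- quasi-fixed sets $\Fix_{r}(x)$, their uniform quasiconvexity, a Serre-type pairwise closeness lemma (correctly using the ellipticity of the products $ab\in X\cdot X$), and then a coarse Helly property to pass from pairwise to simultaneous --- is a genuinely different architecture, and it is sound in outline: each ingredient is a true statement about $\delta$-hyperbolic spaces.

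As written, however, there are genuine gaps. First, you defer the pairwise threshold $D$ yourself; note that the local-to-global argument via Lemma~\ref{QuasiGeodLocalGlobal} needs a displacement lower bound of the form $d(s,as)\geqslant 2d(s,\Fix_{r_0}(a))-O(\delta)$ for elliptic $a$ (exact in trees, but requiring proof here, and itself using ellipticity of $a^2\in X\cdot X$) to rule out backtracking of the broken path; without it the $M$-local quasi-geodesic claim does not follow. Second, and more seriously, the coarse Helly property you invoke is standard in the literature for \emph{balls} (Chepoi--Estellon), whereas you need it for uniformly quasiconvex sets with a constant independent of $|X|$ --- and this cardinality-independence is exactly the crux, since the paper applies the proposition (in Proposition~\ref{GenerKoubi}) to arbitrarily large finite subsets with the single bound $100\delta$. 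The naive induction (replacing two sets by a coarse intersection) degrades the quasiconvexity constant at every step and yields only an $|X|$-dependent bound; the known cardinality-free proof is itself a circumcenter/descent argument, i.e.\ essentially Koubi's direct proof, so invoking Helly here without supplying that argument or a precise reference is close to circular. Finally, the explicit constant $100\delta$ is never actually tracked: ``absorbing the slack'' is not automatic, since $r_0$, $D$, $\sigma$, $R$ each carry moderate multiples of $\delta$ (and the quasi-center bound cited in the paper, $4\delta+1$, even carries an additive constant), and their combination $c+2R$ must be shown to fall below $100\delta$.
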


Now we strengthen the proposition of Koubi in the case of acylindrical actions.

\begin{proposition}\label{GenerKoubi}
Let $G$ be a group which acts acylindricaly by isometries on a $\delta$-hyperbolic metric space $S$.
Then there exists $\mu>0$ such that the following holds.

Let $X$ be a (possibly infinite) symmetrized subset of $G$ such that each element of $X\cup X\cdot X$ acts elliptically on $S$.
Then there exists a point $s\in S$ such that $d(s,xs)\leqslant \mu$ for every $x\in X$.
\end{proposition}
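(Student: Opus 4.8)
The plan is to deduce the infinite case from Koubi's finite case (Proposition~\ref{Koubi_Full}), using acylindricity to control the ``quasi-fixed'' sets. For $x\in G$ set $A_x=\{s\in S:d(s,xs)\leqslant 100\delta\}$, and for a finite symmetrized subset $F\subseteq X$ put $D(F):=\bigcap_{x\in F}A_x$. Since each element of $F\cup F\cdot F$ lies in $X\cup X\cdot X$ and hence acts elliptically, Proposition~\ref{Koubi_Full} gives $D(F)\neq\varnothing$; thus the family $\{A_x\}_{x\in X}$ has the finite intersection property. The difficulty is that $S$ need not be proper or complete, so the full intersection $\bigcap_{x\in X}A_x$ could a priori be empty; the remedy is to enlarge the constant slightly and to produce a single good point by a triangle-inequality argument.

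First I would fix the constant. Let $R=R(100\delta)$ and $N=N(100\delta)$ be the acylindricity constants from Definition~\ref{def Bowditch} for $\epsilon=100\delta$, and set $\mu=2R+100\delta$; this depends only on the action, as required. If $X$ has at most $N$ elements then $X$ is finite and the conclusion is immediate from Proposition~\ref{Koubi_Full} (with bound $100\delta\leqslant\mu$), so assume otherwise and choose a finite symmetrized subset $F^*\subseteq X$ with more than $N$ elements.

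The key step is a diameter bound: $\operatorname{diam} D(F^*)<R$. Indeed, if there were points $p,q\in D(F^*)$ with $d(p,q)\geqslant R$, then every $x\in F^*$ would satisfy $d(p,xp)\leqslant 100\delta$ and $d(q,xq)\leqslant 100\delta$, so $F^*$ would be contained in the set $\{g\in G:d(p,gp)\leqslant 100\delta,\ d(q,gq)\leqslant 100\delta\}$, which by acylindricity has at most $N$ elements --- contradicting $|F^*|>N$. This is the heart of the argument, and the place where acylindricity rather than mere hyperbolicity is essential.

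Finally I would assemble the point. Pick $s_0\in D(F^*)$, nonempty by Proposition~\ref{Koubi_Full}. Given any $x\in X$, apply Proposition~\ref{Koubi_Full} to the finite symmetrized set $F^*\cup\{x,x^{-1}\}\subseteq X$ to obtain a point $s_x\in D(F^*\cup\{x,x^{-1}\})\subseteq A_x\cap D(F^*)$. Since $s_0,s_x\in D(F^*)$ and $\operatorname{diam} D(F^*)<R$, we have $d(s_0,s_x)<R$, whence, using that $x$ is an isometry and $s_x\in A_x$,
\[
d(s_0,xs_0)\leqslant d(s_0,s_x)+d(s_x,xs_x)+d(xs_x,xs_0)<R+100\delta+R=\mu .
\]
As $x\in X$ was arbitrary, $s_0$ is the desired point. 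The only genuine obstacle is the lack of compactness of $S$, which prevents concluding $\bigcap_{x\in X}A_x\neq\varnothing$ directly and forces the passage to the weaker quasi-fixed statement with the enlarged constant $\mu$; acylindricity supplies exactly the uniform diameter bound that makes this passage work.
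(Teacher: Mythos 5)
Your proposal is correct and follows essentially the same route as the paper: apply Koubi's finite-set proposition to a fixed finite subset of $X$ with more than $N$ elements and to its enlargement by each $x\in X$, use acylindricity to force the two quasi-fixed points within distance $R$ of each other, and finish with the triangle inequality to get $\mu=100\delta+2R$ (your diameter bound on $D(F^*)$ is exactly the paper's observation that $d(s,s_1)<R$). One microscopic caveat: since Definition~\ref{def Bowditch} provides constants only for $\epsilon>0$, your choice $\epsilon=100\delta$ breaks down when $\delta=0$, which is why the paper takes $\varepsilon=100\delta+1$; replacing your $\epsilon$ accordingly (the sets $A_x$ may stay at threshold $100\delta$) repairs this without further change.
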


\begin{proof} We assume that $X$ is infinite.
Let $R,N>0$ be the numbers for $\varepsilon=100\delta+1$ from Definition~\ref{def Bowditch}
of an acylindrical action. We may assume that $N$ is a natural number.
Let $Y$ be a finite subset of $X$ such that $|Y|=N+1$. By Proposition~\ref{Koubi_Full},
there exists $s\in S$ such that
$d(s,ys)\leqslant \varepsilon$ for every $y\in Y$. Now we take an arbitrary $x\in X$.
Again there exists $s_1\in S$ such that $d(s_1,ys_1)\leqslant \varepsilon$ for every $y\in Y\cup \{x\}$.
By acylindrical hyperbolicity, we have $d(s,s_1)<R$. Then
$$
d(s,x(s))\leqslant d(s,s_1)+d(s_1,xs_1)+d(xs_1, xs)\leqslant \varepsilon+2R.
$$
\end{proof}

\subsection{Proof of Theorem~\ref{metricandlcH}.}
Suppose that $\varphi(H)$ is not contained in $\Rad_{\Ell}(G)$. Then there exists a hyperbolic space $S$
and a cobounded acylindrical action $G\curvearrowright S$ such that $\varphi(H)$ is not contained in
$\Ell(G\curvearrowright S)$, see Lemma~\ref{TwoRadicals}.

It follows from  Proposition~\ref{weak_metricandlcH} that there exists an open symmetrized neighborhood $\mathcal{O}$ of $1_H$ with the property
$$
\varphi(\mathcal{O}^{(2)})\subseteq \Ell(G\curvearrowright S).\eqno{(7.1)}
$$
Let $\mu$ be the constant from Proposition~\ref{GenerKoubi}. By this proposition, there exists a point $s\in S$ such that $d(s,\varphi(x)s)\leqslant \mu$ for every $x\in \mathcal{O}$.
Let $R,N>0$ be the numbers such that for any two points $p, q\in S$ satisfying $d(p, q) \geqslant R$ the set
$$
\{g\in G \mid d(p, gp)\leqslant \mu\hspace*{2mm}\text{and}\hspace*{2mm}d(q, gq) \leqslant \mu\}\eqno{(7.2)}
$$
is of cardinality at most $N$.

Since $\varphi(H)$ is not contained in $\Ell(G\curvearrowright S)$, there exists a loxodromic element $\varphi(h)\in \varphi(H)$ (see Theorem~\ref{Osin_trichotomy}).
We take a natural $n$ such that $d(s,\varphi(h)^ns)>R$. Applying (7.2) to $p=s$ and $q=\varphi(h^n)s$, we obtain
$$
|\varphi(\mathcal{O})\cap \varphi(\mathcal{O})^{\varphi(h^n)}|\leqslant N.
$$
We set $U=\mathcal{O}\cap \mathcal{O}^{h^n}$. Then $U$ is an open neighborhood of $1_H$ satisfying $|\varphi(U)|\leqslant N$ and $\varphi(U)\subseteq \Ell(G\curvearrowright S)$.
We may assume that $|\varphi(U)|$ is minimal possible. Then

\bigskip

{\it for any open neighborhood $U'$ of $1_H$ with $U'\subseteq U$ we have $\varphi(U') = \varphi(U)$.}\hfill $(7.3)$

\bigskip

We show that the subgroup $V=\langle U\rangle\cdot \ker(\varphi)$ satisfies the conclusion of
Theorem~\ref{metricandlcH}. Clearly $V$ is open.
Condition (7.3) implies that $\varphi(U)$ is a subgroup. Then, for any $z\in H$, we have
$$
\varphi(V)=\varphi(U)=\varphi(U\cap V^z)\subseteq \varphi(V^z)=\varphi(V)^{\varphi(z)}.
$$
This implies that $\varphi(V)$ is normal in $\varphi(H)$; hence $V$ is normal in $H$. Moreover,
$\varphi(V)=\varphi(U)\subseteq \Ell(G\curvearrowright S)$.
Recall that we have assumed $\varphi(H)\nsubseteq \Ell(G\curvearrowright S)$. Then, by Lemma~\ref{Normalizer}, the group $\varphi(V)$ is finite.
\hfill $\Box$
%This completes the proof of the first statement.

%Now suppose additionally that $G$ is weakly residually finite and that condition 2) is satisfied, i.e.
%that $H$ contains a normal open subgroup $V$ with finite $\varphi(V)$.
%Then there exists a finite index subgroup $G_0\leqslant G$ with $G_0\cap \varphi(V)=1$.
%The subgroup $H_0=\varphi^{-1}(G_0)$ satisfies the last statement.

\medskip

\section{Proof of a strong version of Theorem~\ref{metricandlcH}}\label{ProofofTheoremBstrong}

\begin{notation}
For any subset $X$ of a group $G$ and any natural $n$,
we denote the product $\underbrace{X\cdot X\cdot \ldots \cdot X}_n$ by $X^{(n)}$.
A subset $X$ of $G$ is called {\it symmetrized} if $X=X^{-1}$.
\end{notation}

\begin{lemma}\label{Lemma_for_MCG}
Let $G$ be a group and $H$ a subgroup of $G$ of finite index $n$.
Then for any symmetrized subset $X$ of $G$ with $1\in X$, we have
$\langle H\cap X^{(2n)}\rangle=H\cap \langle  X\rangle$.
\end{lemma}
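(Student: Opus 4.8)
The plan is to establish the two inclusions separately. The inclusion $\langle H\cap X^{(2n)}\rangle\subseteq H\cap\langle X\rangle$ is immediate: every element of $H\cap X^{(2n)}$ lies in $H$ and, since $X^{(2n)}\subseteq\langle X\rangle$, also lies in $\langle X\rangle$; as $H\cap\langle X\rangle$ is a subgroup it contains the subgroup generated by $H\cap X^{(2n)}$. All the content is in the reverse inclusion $H\cap\langle X\rangle\subseteq\langle H\cap X^{(2n)}\rangle$, for which I would run a Schreier-rewriting argument while keeping careful control of the length of coset representatives, so as to land inside $X^{(2n)}$.

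First I would reduce to working inside $\langle X\rangle$. Set $H'=H\cap\langle X\rangle$. The assignment $w\mapsto wH$ induces a well-defined injection from left cosets of $H'$ in $\langle X\rangle$ to left cosets of $H$ in $G$, so $[\langle X\rangle:H']\leqslant[G:H]=n$; and since $X^{(2n)}\subseteq\langle X\rangle$ we have $H\cap X^{(2n)}=H'\cap X^{(2n)}$. Thus it suffices to write an arbitrary $g\in H'$ as a product of elements of $H'\cap X^{(2n)}$. The key preparatory step is to choose a right transversal $T$ for $H'$ in $\langle X\rangle$ all of whose members are short $X$-words. To do this I would form the Schreier coset graph on the at most $n$ right cosets $H'w$, with an edge $H'w\to H'wx$ for each $x\in X$; since $X$ generates $\langle X\rangle$ this graph is connected, and a spanning tree rooted at $H'$ assigns to each coset the representative read off along the tree path. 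The tree has at most $n$ vertices, hence depth at most $n-1$, so every $t\in T$ is a product of at most $n-1$ elements of $X$, with $1\in T$ representing $H'$; because $X=X^{-1}$, each $t^{-1}$ admits the same bound.

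Given $g=x_1\cdots x_m\in H'$ with all $x_i\in X$, I would let $t_j\in T$ be the representative of the coset $H'x_1\cdots x_j$ (so $t_0=t_m=1$) and set $h_j=t_{j-1}x_jt_j^{-1}$. Since $H't_{j-1}x_j=H'x_1\cdots x_j=H't_j$, each $h_j$ lies in $H'$; moreover $h_j$ is a product of at most $(n-1)+1+(n-1)=2n-1$ elements of $X$, which, using $1\in X$, places it in $H'\cap X^{(2n)}$. The product then telescopes, $h_1h_2\cdots h_m=t_0(x_1\cdots x_m)t_m^{-1}=g$, exhibiting $g$ as a product of elements of $H\cap X^{(2n)}$ and completing the proof.

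The step I expect to be the main obstacle is exactly the length bookkeeping that yields the constant $2n$. A naive induction that locates a returning subword $x_{i+1}\cdots x_j\in H'$ by pigeonhole and deletes it fails, because excising a middle segment conjugates the remaining word by the prefix $x_1\cdots x_i$ and so need not remain in $H'$; this is why one must track coset representatives rather than merely coset labels. The hypotheses that $X$ is symmetrized and contains $1$ are used precisely here, the former to bound the length of $t_j^{-1}$ and the latter to absorb shorter products into $X^{(2n)}$.
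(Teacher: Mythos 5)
Your proof is correct and follows essentially the same route as the paper: the paper likewise forms the $X$-labeled coset graph on the $n$ cosets of $H$ and decomposes a loop at the base vertex $H$ into loops of length at most $2n$, which is exactly the spanning-tree/Schreier decomposition you carry out explicitly via the representatives $t_j$ and the generators $h_j=t_{j-1}x_jt_j^{-1}$. The only cosmetic differences are that you first pass to $H'=H\cap\langle X\rangle$ inside $\langle X\rangle$ (the paper works directly with cosets of $H$ in $G$, which is equally valid since only the component of the base vertex matters) and that you spell out the length bookkeeping that the paper compresses into the assertion that the loop is homotopic to a product of loops $p_i$ of length at most $2n$.
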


\begin{proof} We prove only the nontrivial inclusion $H\cap \langle  X\rangle\subseteq \langle H\cap X^{(2n)}\rangle$.
Let $Hg_1$, $Hg_2$, $\dots$, $Hg_n$ be all right cosets of $H$ in $G$, where $g_1=1$.
For any $x\in X$, we connect $Hg_i$ to $Hg_j$ by a directed edge with label $x$ if $Hg_ix=Hg_j$.
We denote the resulting graph by $\Gamma$.
Let $h\in H\cap \langle  X\rangle$. We consider $h$ as a word in the alphabet~$X$. Then there exists a path $p$ in $\Gamma$ that starts and ends at $H$
and has the label $h$. This path is homotopic in $\Gamma$ to a product of paths $p_1,\dots ,p_k$,
where each $p_i$ starts and ends at $H$ and has length at most $2n$.
Let $h_i$ be the element of $H$ corresponding to the label of $p_i$. Then $h=h_1\dots h_k$ and each $h_i$ lies in $H\cap X^{(2n)}$.
\end{proof}

\medskip

We need the following slight generalization of Theorem~\ref{metricandlcH} for
the proof of Theorem C.

\begin{theorem}\label{generalisation_of_B}
Let $H$ be a topological group which is either completely metrizable, or locally countably compact Hausdorff, and let $H_0$ be a finite index subgroup of $H$.
Let $\varphi:H_0\rightarrow G$ an abstract homomorphism, where $G$ is an arbitrary group.
%Let $G$ be a group which acts coboundedly and acylindrically on a hyperbolic space $S$.
Then either $\varphi(H_0)$ is contained in the elliptic radical $\Rad_{\Ell}(G)$,
or there exists an open subgroup $V\leqslant H$
such that $H_0\cap V$ normal in $H_0$ and $\varphi(H_0\cap V)$ is finite.

%for any abstract group homomorphism $\varphi:H_0 \rightarrow G$
%either $\varphi(H_0)$ acts elliptically on $S$, or there exists an open subgroup $V\leqslant H$
%such that $H_0\cap V$ normal in $H_0$ and $\varphi(H_0\cap V)$ is finite.

%In particular, for any abstract group homomorphism $\varphi:H_0 \rightarrow G$
%there exists an open subgroup $U\leqslant H$ such that $\varphi(H_0\cap U)\subseteq\Ell(G\curvearrowright S)$.
\end{theorem}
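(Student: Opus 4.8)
The plan is to run the proofs of Proposition~\ref{weak_metricandlcH} and of Theorem~\ref{metricandlcH} again, but carrying the finite-index subgroup $H_0$ through every step; the genuinely new tool is Lemma~\ref{Lemma_for_MCG}, which lets me pass between the subgroup generated by a neighbourhood of $1_H$ in $H$ and those products of its elements that happen to lie in $H_0$. Before anything else I would reduce to the case in which $H_0$ is dense. Indeed $\overline{H_0}$ is a finite-index subgroup, so its finitely many cosets are closed and $\overline{H_0}$ is clopen in $H$; since every topological group is regular, an open subgroup of a completely metrizable (respectively locally countably compact Hausdorff) group is again of the same type, and is open in $H$. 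Thus I may replace $H$ by $\overline{H_0}$ and assume $H_0$ is dense of finite index $d=[H:H_0]$. I also record the normal core $H_1=\bigcap_{g\in H}gH_0g^{-1}\trianglelefteq H$, of finite index $e$, so that $h^{e}\in H_1\subseteq H_0$ for every $h\in H$.

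Next I would prove an analogue of Proposition~\ref{weak_metricandlcH}: assuming $\varphi(H_0)\not\subseteq\Rad_{\Ell}(G)$, fix (via Lemma~\ref{TwoRadicals}) a cobounded acylindrical action $G\curvearrowright S$ for which some $\varphi(h_0)$ with $h_0\in H_0$ is loxodromic; then there is an open neighbourhood $\mathcal O$ of $1_H$ with $\varphi\big(H_0\cap\mathcal O^{(2)}\big)\subseteq\Ell(G\curvearrowright S)$. Calling $h\in H_0$ \emph{pre-loxodromic} when $\varphi(h)\in\Lox(G\curvearrowright S)$, I argue by contradiction: if every neighbourhood of $1_H$ met the pre-loxodromic elements, I would build pre-loxodromic $u_i\in H_0$ with $u_i^{n}$ in shrinking neighbourhoods $\mathcal O_i$, together with the constants $K_i,M_i,m_i$ of (4.1)--(4.2) for $U_i=\varphi(u_i)$, exactly as in Section~\ref{ProofofTheoremBweak}. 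In the completely metrizable case the finite products $w_{r,i}=u_r^{n}(\cdots)^{m_r}$ are Cauchy; in the locally countably compact case they give a descending chain of nonempty closed subsets of a countably compact neighbourhood. Either way I obtain tail elements $w_r$ with $w_r=u_r^{n}w_{r+1}^{m_r}$, and the length estimate of Corollary~\ref{length_power_1}, run as in the proof of Theorem~\ref{thebigone}, forces $|\varphi(w_1)|$ to exceed every integer.

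The hard part will be exactly this last step. The contradiction needs $\varphi(w_1)$, indeed every $\varphi(w_r)$, to be defined, i.e. each $w_r$ must lie in $H_0$; yet $w_r$ is produced only as a limit (or as a point of an intersection of closures) in $H$, and a finite-index subgroup such as $H_0$ need not be closed -- after the reduction above it is even dense, so I cannot simply invoke closedness. To keep the construction inside $H_0$ I would force all exponents $n,m_i$ to be divisible by $e$, so that every finite product $w_{r,i}$ lies in the core $H_1\subseteq H_0$; the crux is then to guarantee that the limiting elements $w_r$ themselves do not escape $H_0$. I expect to secure this by combining the divisibility with Lemma~\ref{Lemma_for_MCG}, expressing each $w_r$ as a word in the elements of $H_0\cap\mathcal O_r^{(2d)}$ rather than as a raw limit, so that membership in $H_0$ is built into the bookkeeping.

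With the weak analogue in hand the endgame follows Section~\ref{ProofofTheoremB}. Choosing $\mathcal O$ symmetrized, the set $\varphi(H_0\cap\mathcal O)$ satisfies the hypotheses of Proposition~\ref{GenerKoubi}, yielding a point $s\in S$ moved a bounded amount by all of $\varphi(H_0\cap\mathcal O)$; taking a power $h_0^{k}\in H_0$ with $\varphi(h_0^{k})s$ far from $s$ and invoking acylindricity (Definition~\ref{def Bowditch}) bounds $|\varphi(H_0\cap U)|$ for the open symmetrized neighbourhood $U=\mathcal O\cap\mathcal O^{h_0^{k}}$. Passing to a neighbourhood of minimal image as in (7.3) makes $F:=\varphi(H_0\cap U)$ a finite subgroup contained in $\Ell(G\curvearrowright S)$; shrinking $U$ so that $U^{(2d)}$ still lies in the minimal region and setting $V=\langle U\cup\ker(\varphi\!\upharpoonright\!H_0)\rangle$, an open subgroup of $H$, Lemma~\ref{Lemma_for_MCG} identifies $H_0\cap V$ with a subgroup on which $\varphi$ takes values in $\langle F\rangle=F$. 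Since $H_0\cap V$ contains $\ker(\varphi\!\upharpoonright\!H_0)$, the coset computation of Theorem~\ref{metricandlcH} shows $\varphi(H_0\cap V)$ is normalized by $\varphi(H_0)$, whence $H_0\cap V\trianglelefteq H_0$; and because $\varphi(H_0)\not\subseteq\Ell(G\curvearrowright S)$, Lemma~\ref{Normalizer} forces the elliptic normal subgroup $\varphi(H_0\cap V)$ to be finite, completing the proof.
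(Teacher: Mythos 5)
Your architecture coincides with the paper's: rerun Proposition~\ref{weak_metricandlcH} choosing the pre-loxodromic elements $u_i$ inside $H_0$ and the exponents $m_i$ divisible by a multiple of the index, then run the endgame of Section~\ref{ProofofTheoremB}, and finally use Lemma~\ref{Lemma_for_MCG} to pass from an open subgroup $V=\langle A_1\rangle$ of $H$ to its trace $H_0\cap V$. You also correctly isolate the crux: the tail elements $w_r$ arise only as limits (or as points of a nested intersection of closed sets), and $H_0$ --- dense after your own reduction --- gives no control over limits. But your mechanism for closing this gap fails. Lemma~\ref{Lemma_for_MCG} is a purely algebraic statement about the subgroup generated by a symmetrized set; it cannot express a limit point as a finite word in elements of $H_0\cap\mathcal{O}_r^{(2d)}$ --- indeed, once $H_0$ is dense its limit points comprise all of $H$, so no bookkeeping of finite products can certify $w_r\in H_0$. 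Likewise, imposing $e\mid n,m_i$ in order to place the finite approximants $w_{r,i}$ in the core $H_1$ is vacuous: these are words in the $u_j\in H_0$ and already lie in $H_0$; the divisibility does nothing there.

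The resolution is a one-liner you almost wrote down, and it is exactly what the paper does: apply the divisibility to the limit identity rather than to the approximants. Passing to the limit yields $w_r=u_r^{n}w_{r+1}^{m_r}$ in $H$, where $w_{r+1}\in H$ may well lie outside $H_0$; but since $m_r$ is a multiple of $e$ (the paper uses $|H:H_0|!$, which works the same way), one has $h^{m_r}\in H_1\subseteq H_0$ for \emph{every} $h\in H$, so $w_{r+1}^{m_r}\in H_0$ unconditionally, and hence $w_r\in H_0$ for every $r$ --- no closedness, density, or word-expressing of limits is needed at all, and your preliminary reduction to dense $H_0$ becomes superfluous. With that substitution your weak analogue is sound. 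Your endgame is then the paper's modulo one patchable wrinkle: with $V=\langle U\cup\ker(\varphi\upharpoonright H_0)\rangle$, an element of $H_0\cap(U\cup\ker(\varphi\upharpoonright H_0))^{(2d)}$ is a product whose individual factors may lie outside $H_0$, so you cannot factor $\varphi$ across them to land in $\langle F\rangle$. The paper sidesteps this by first producing the relatively open normal subgroup $A\leqslant H_0$ with finite image entirely within $H_0$ (the arguments from (7.1) onward applied to $H_0\cap\mathcal{O}$), then choosing a symmetrized open $A_1$ with the set-level containment $A_1^{(2k)}\subseteq\widetilde{A}$, so that Lemma~\ref{Lemma_for_MCG} gives $H_0\cap\langle A_1\rangle=\langle H_0\cap A_1^{(2k)}\rangle\subseteq A$ without ever applying $\varphi$ to an element outside $H_0$.
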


{\it Proof.}
Suppose that $\varphi(H_0)$ is not contained in $\Rad_{\Ell}(G)$. Then there exists a hyperbolic space $S$
and a cobounded acylindrical action $G\curvearrowright S$ such that $\varphi(H_0)$ is not contained in
$\Ell(G\curvearrowright S)$, see Lemma~\ref{TwoRadicals}.

First we prove that there exists an open neighborhood $\mathcal{V}$ of identity $1_H$ such that $\varphi(H_0\cap \mathcal{V})\subseteq \Ell(G\curvearrowright S)$.
The proof is basically the same as the proof of Proposition~\ref{weak_metricandlcH}.
We need only two following changes.
Assuming the contrary, we can choose $u_i$'s so that they additionally lie in $H_0$.
Moreover, we can choose $m_i$'s so that they additionally become multiplies of $|H:H_0|!$.
Then, because of equalities $w_i=u_i^{n}w_{i+1}^{m_{i}}$,
all constructed $w_i$'s lie in $H_0$, which enables us to apply $\varphi$ to this equality and get $W_i=U_i^{n}W_{i+1}^{m_{i}}$ in $G$. Analyzing these inequalities as in the case $H=\HEG$ of Theorem~\ref{thebigone}, we get a contradiction.

Thus, there exists such a neighborhood $\mathcal{V}$. Then there exists an open neighborhood $\mathcal{O}\subseteq \mathcal{\mathcal{V}}$ of identity $1_H$ such that
$\mathcal{O}^{(2)}\subseteq \mathcal{\mathcal{V}}$. Then
$\varphi((H_0\cap \mathcal{O})^{(2)})\subseteq \Ell(G\curvearrowright S)$.
Arguing as in the proof of Theorem~\ref{metricandlcH} starting from (7.1), we obtain that
there exists a {\it relatively} open subgroup $A\leqslant H_0$ such that
$\varphi(A)$ is finite and $A$ is normal in $H_0$. Let $\widetilde{A}$ be an open subset of $H$ such that $H_0\cap \widetilde{A}=A$.

\medskip

Let $k$ be the index of $H_0$ in $H$.
Let $A_1\subseteq H$ be a symmetrized open neighborhood of $1_H$ such that $A_1^{(2k)}\subseteq \widetilde{A}$.
Then, using Lemma~\ref{Lemma_for_MCG}, we have
$$
H_0\cap \langle  A_1\rangle = \langle H_0\cap A_1^{(2k)}\rangle\subseteq \langle H_0\cap \widetilde{A}\rangle=A.
$$
We set $V=\langle A_1\rangle$. Then $V$ is an open subgroup of $H$ such that $\varphi(H_0\cap V)$ is finite
and $H_0\cap V$ is normal in $H_0$.

\hfill $\Box$

\end{section}

\begin{section}{Homomorphisms to the Mapping Class Groups} \label{MCG}

For $(g,b)\in \mathbb{N}_0^2$, let $\Sigma=\Sigma_{g,b}$ be the compact orientable surface of genus $g$ with $b$ boundary components. Let $\MCG(\Sigma)$ be the mapping class group of $\Sigma$ (we recall
the definition and some relevant results about this group below).
%For any topological group $H$, let $H^0$
%be the connected component of $1_H$. It is well known that $H^{0}$ is a closed subgroup of $H$.
%If $n=0$, we will skip the corresponding subscript and write $\Sigma_{g,b}$ instead of $\Sigma_{g,b,0}$.
%In this section we prove the following theorem.

%\begin{theorem}\label{MCG_1}
%Let $H$ be a topological group which is either completely metrizable or locally countably compact Hausdorff.
%Then for any homomorphism $\varphi:H\rightarrow \MCG(\Sigma_{g,b})$
%%almost factors through the canonical projection $H\rightarrow H/H_0$.
%there exists an open subgroup $V$ of $H$ such that $\varphi(V)$ is a finite subgroup of $\MCG(\Sigma_{g,b})$.
%\end{theorem}

%\marginpar{\tiny I think that your note must be replaced: The proofs of Kramer's statements C,D for the case of locally compact groups implicitly contain even a stronger claim that the open subgroup $V$ can be chosen to contain $H^{\circ}$. The proof of statement $D$ does not use the compactness of $H/H^{\circ}$. Instead, it uses Dantzig's theorem.}

We will prove the following slight generalization of Theorem B.
Note that we do not know how to prove Theorem B without using this generalization.

\begin{theorem}\label{MCG_2}
Let $H$ be a topological group which is either completely metrizable or locally countably compact Hausdorff.
Let $H_0$ be a finite index subgroup of $H$.
Let $\Sigma$ be a connected compact surface (possibly non-orientable and possibly with boundary components).
Then for any homomorphism $\varphi:H_0\rightarrow \MCG(\Sigma)$
%almost factors through the canonical projection $H\rightarrow H/H_0$.
there exists an open subgroup $V$ of $H$ such that $\varphi(H_0\cap V)$ is finite
and normal in $\varphi(H_0)$.
%or $\varphi(H_0)$ acts elliptically on $S$.
%a finite subgroup of $\MCG(\Sigma)$.
\end{theorem}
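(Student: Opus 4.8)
The plan is to prove Theorem~\ref{MCG_2} by induction on the complexity of $\Sigma$, reducing everything to repeated applications of Theorem~\ref{generalisation_of_B} combined with Ivanov's subgroup structure theorem \cite[Theorem 1]{Ivanov_2}. Two preliminary reductions are convenient. First, since every open subgroup of $H$ is again completely metrizable (resp.\ locally countably compact Hausdorff) and $H_0\cap V$ has finite index in any open $V$, it suffices to produce a single open $V\leqslant H$ with $\varphi(H_0\cap V)$ merely \emph{finite}; the normality of $\varphi(H_0\cap V)$ in $\varphi(H_0)$ then comes for free by a minimality argument as in the proof of Theorem~\ref{metricandlcH}. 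Concretely, choosing $V$ so that $|\varphi(H_0\cap V)|$ is minimal among open subgroups inside an initial such $V$ gives $\varphi(H_0\cap V')=\varphi(H_0\cap V)$ for every open $V'\subseteq V$; applying this to $V'=V\cap z^{-1}Vz$ for $z\in H_0$ and using $z(H_0\cap V')z^{-1}=H_0\cap(V\cap zVz^{-1})$ forces $\varphi(z)\varphi(H_0\cap V)\varphi(z)^{-1}=\varphi(H_0\cap V)$. Secondly, if $\Sigma$ is non-orientable I would pass to the orientation double cover $\widetilde\Sigma$: the lifting homomorphism $\MCG(\Sigma)\to\MCG(\widetilde\Sigma)$ has finite kernel, so finiteness of the image of the composite over $H_0\cap V$ yields finiteness of $\varphi(H_0\cap V)$. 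Thus I may assume $\Sigma$ orientable.

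The induction runs on the complexity $\xi(\Sigma)=3g-3+b$. The base cases are the surfaces with $\MCG(\Sigma)$ finite (take $V=H$) and the annulus, where $\MCG(\Sigma)\cong\mathbb{Z}$: since $\Rad_{\Ell}(\mathbb{Z})=\{1\}$ ($\mathbb{Z}$ being a torsion-free hyperbolic group, Example~\ref{Rad_Rel_Hyp}), Theorem~\ref{generalisation_of_B} applied to $\varphi\colon H_0\to\mathbb{Z}$ at once yields an open $V$ with $\varphi(H_0\cap V)$ finite, hence trivial. For every remaining orientable $\Sigma$, the group $\MCG(\Sigma)$ carries a universal acylindrical action on a hyperbolic space (its curve graph, or the Farey graph in the sporadic cases), as in case~(c) of the introduction, whose elliptic elements are exactly the non-pseudo-Anosov ones; in particular $\Rad_{\Ell}(\MCG(\Sigma))$ is the set of finite-order and reducible classes. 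I then apply Theorem~\ref{generalisation_of_B}. If $\varphi(H_0)\not\subseteq\Rad_{\Ell}(\MCG(\Sigma))$ it hands me an open $V$ with $\varphi(H_0\cap V)$ finite, and we are done.

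The crux is the case $\varphi(H_0)\subseteq\Rad_{\Ell}(\MCG(\Sigma))$, i.e.\ the image $\Gamma=\varphi(H_0)$ contains no pseudo-Anosov element. By Ivanov's theorem \cite[Theorem 1]{Ivanov_2}, $\Gamma$ is then either finite (take $V=H$) or admits a nonempty $\Gamma$-invariant essential curve system $\sigma=\{c_1,\dots,c_r\}$. Passing to the finite-index subgroup $\Gamma_1\leqslant\Gamma$ fixing each $c_i$ and each complementary component, and setting $H_1=\varphi^{-1}(\Gamma_1)\cap H_0$ (finite index in $H$), the cutting homomorphism $\rho\colon\Gamma_1\to\prod_j\MCG(\Sigma_j)$ onto the pieces of $\Sigma|\sigma$ has kernel the free abelian twist group $T=\langle\text{Dehn twists about }c_1,\dots,c_r\rangle\cong\mathbb{Z}^r$. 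Each $\Sigma_j$ has strictly smaller complexity, so the induction hypothesis applied to the composites $H_1\to\MCG(\Sigma_j)$ gives open subgroups whose finite intersection $V'$ satisfies $\rho(\varphi(H_1\cap V'))=:D$ finite. Hence $\varphi$ maps the finite-index subgroup $H_3=(\rho\varphi)^{-1}(1)\cap H_1\cap V'$ of the open (thus completely metrizable / locally countably compact Hausdorff) group $V'$ into $T\cong\mathbb{Z}^r$. Applying Theorem~\ref{generalisation_of_B} coordinatewise to $H_3\to\mathbb{Z}$ and using once more $\Rad_{\Ell}(\mathbb{Z})=\{1\}$, I obtain an open $W\leqslant V'$ with $\varphi(H_3\cap W)=\{1\}$; tracking the finite indices $[H_1\cap V':H_3]$ and $[H_0:H_1]$ back up then shows $\varphi(H_0\cap W)$ is finite, completing the induction.

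The main obstacle is exactly this reducible case: here the image lies in the (generally infinite) elliptic radical, so Theorem~\ref{generalisation_of_B} by itself is inconclusive, and one must open the group up via Ivanov's reduction and descend to proper subsurfaces. The delicate point throughout is that the intermediate subgroups $H_1,H_3$ are only of \emph{finite index} in $H$ and need not be open, which is precisely why I rely on Theorem~\ref{generalisation_of_B} (valid for finite-index subgroups) rather than on slenderness of $\mathbb{Z}$, and why the twist subgroup $T$ is absorbed through the computation $\Rad_{\Ell}(\mathbb{Z})=\{1\}$ instead of a direct continuity argument.
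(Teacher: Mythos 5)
Your proposal is correct and takes essentially the same route as the paper's proof: induction on the complexity of $\Sigma$, Theorem~\ref{generalisation_of_B} combined with Ivanov's theorem to produce an invariant curve system in the elliptic (reducible) case, cutting along it with the finitely generated free abelian twist kernel absorbed via $\Rad_{\Ell}(\mathbb{Z})=\{1\}$, the orientation double cover to reduce to orientable surfaces, and the minimality argument over open subgroups to get normality of $\varphi(H_0\cap V)$ in $\varphi(H_0)$. The only cosmetic differences are that you carry out by hand the finite-index and open-subgroup bookkeeping that the paper packages into its ``$H$-admissible'' formalism (Proposition~\ref{extensions}), and you treat the sporadic surfaces via the Farey graph where the paper instead invokes virtual freeness of $\PMCG$ in those cases.
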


We note that, in case $H$ is locally compact and one is not concerned with finite index subgroups, a slight refinement of the conclusion of Theorem \ref{MCG_1} holds.  A proof of Kramer and Varghese \cite[Proposition C, Theorem D]{KV} shows that for any homomorphism $\varphi: H \rightarrow \MCG(\Sigma_{g,b})$ one has an open subgroup $V$ of $H$, with $V$ containing the connected component of $1_H$, such that $\varphi(V)$ is finite.  Their argument uses the structure theory for locally compact groups.  This approach cannot be applied in the more general situation involving finite indices and completely metrizable and locally countably compact groups.

\subsection{The mapping class group and the curve complex of $\Sigma$.}
%We must be precise giving a definition of the mapping class group $\MCG(\Sigma)$
%(it seems that some authors use different definitions).
%Some authors use slightly different definitions of the mapping class group that cause difficulties in
%using their results. Therefore
Some authors use slightly different definitions of the mapping class group. Since we use results of
Masur, Minsky, Ivanov, and Bowditch, we also use their definition of this group. The {\it mapping class group} $\MCG(\Sigma)$ is the group of isotopy classes
of orientation preserving self-homeomorphisms of $\Sigma$, where admissible isotopies fix each component of $\partial \Sigma$ setwise. In particular, the homeomorphisms are allowed to permute the boundary components, and the classes of Dehn twists around curves parallel to boundary components are trivial. Let $\PMCG(\Sigma)$ be the subgroup of $\MCG(\Sigma)$ consisting of the classes of homeomorphisms which induce the identity permutation of the boundary components and the identity permutation of the punctures.
Note that this definition slightly differs from that given in the book of Farb and Margalit~\cite{Farb_Margalit}.

A powerful tool for studying the mapping class group is the curve complex of $\Sigma$ defined by Harvey in~\cite{Harvey}.
We need only the 1-skeleton of this complex, which is called the {\it curve graph} of $\Sigma$ and is denoted by $\C (\Sigma)$ in our paper.
Recall that the vertices of $\C (\Sigma)$ are isotopy classes of essential (i.e., neither homotopically trivial nor peripheral) simple closed curves on $\Sigma$, and two isotopy classes are joined by an edge if they contain
disjoined representatives. The mapping class group $\MCG(\Sigma)$ naturally acts on the curve graph $\C(\Sigma)$.

\begin{theorem}\label{Masur-Minsky, Bowditch} Let $\Sigma=\Sigma_{g,b}$ be an orientable surface of genus $g$ with $b\geqslant 0$ boundary components such that $3g+b-4>0$. Then the following statements hold:

\begin{enumerate}
\item[(a)] {\rm (first proved by Masur-Minsky~\cite{MM};
Bowditch~\cite{Bowditch_2} gave an alternative proof)} The curve graph $\C(\Sigma)$ is hyperbolic.

\item[(b)] {\rm (Bowditch~\cite{Bowditch})} The action $\MCG(\Sigma)\curvearrowright \C(\Sigma)$ is acylindrical.

\item[(c)] {\rm (Masur-Minsky~\cite{MM})} An element of the mapping class group $\MCG(\Sigma)$ acts on the curve graph $\C (\Sigma)$ loxodromically, i.e., with positive translation length, if and only if it is pseudo-Anosov.

\item[(d)] The action $\MCG(\Sigma)\curvearrowright \C(\Sigma)$ is cobounded.
\end{enumerate}
\end{theorem}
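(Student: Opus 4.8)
Parts (a)--(c) are quoted from the cited works of Masur--Minsky and Bowditch, so the plan below addresses only the coboundedness assertion (d). The strategy is to produce a bounded set $B\subseteq \C(\Sigma)$ with $\MCG(\Sigma)\cdot B=\C(\Sigma)$, and the essential input is that $\MCG(\Sigma)$ acts on the vertex set of $\C(\Sigma)$ with only finitely many orbits.

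First I would record that, under the standing hypothesis $3g+b-4>0$, the curve graph $\C(\Sigma)$ is connected; this is classical (see, e.g., \cite{Farb_Margalit}) and it guarantees that any finite set of vertices has finite diameter and is therefore bounded in the graph metric. Next I would classify the vertices by topological type. A vertex is the isotopy class of an essential simple closed curve $\gamma$; cutting $\Sigma$ along $\gamma$ either leaves it connected (the non-separating type) or splits it into two complementary subsurfaces, whose genera and distributions of boundary components then determine the type of $\gamma$. For fixed $(g,b)$ there are only finitely many such possibilities, so the vertices of $\C(\Sigma)$ fall into finitely many topological types.

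The crucial step is the change of coordinates principle: for each fixed topological type, $\MCG(\Sigma)$ acts transitively on the isotopy classes of simple closed curves of that type. Since the homeomorphisms realizing this transitivity may be taken orientation preserving, the principle applies to $\MCG(\Sigma)$ as defined here. Combining transitivity within each type with the finiteness of the number of types shows that the action of $\MCG(\Sigma)$ on the vertex set $V(\C(\Sigma))$ has finitely many orbits.

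Finally I would choose one representative $v_1,\dots,v_k$ from each orbit and set $B_0=\{v_1,\dots,v_k\}$; by connectivity $B_0$ is bounded, and $\MCG(\Sigma)\cdot B_0=V(\C(\Sigma))$ by construction. Replacing $B_0$ by its closed $1$-neighborhood $B$ (still bounded) yields $\MCG(\Sigma)\cdot B=\C(\Sigma)$, which is precisely coboundedness. The only substantive point, and hence the main obstacle, is the correct invocation of the change of coordinates principle in the orientation-preserving setting together with the bookkeeping of topological types; once connectivity is in hand the remaining argument is purely formal.
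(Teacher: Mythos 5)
Your proof of (d) is correct and follows essentially the same route as the paper, which likewise deduces coboundedness from the fact that $\MCG(\Sigma)$ acts transitively on isotopy classes of nonseparating curves and with finitely many orbits on separating ones, so that a finite set of orbit representatives forms a bounded set whose translates cover $\C(\Sigma)$. Your additional remarks on connectivity and passing to a $1$-neighborhood are harmless elaborations of the same argument.
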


%{\it Proof of statement (d).}
%A separating simple closed curve $C$ in $\Sigma_{g,b}$ is defined to be
%of type $\{(g_1,b_1), (g_2,b_2)\}$ if $C$ splits $\Sigma$ into
%two surfaces homeomorphic to $\Sigma_{g_1,b_1}$ and $\Sigma_{g_2,b_2}$.
Statement (d) follows from the fact that $\MCG(\Sigma)$ acts transitively (respectively, has finitely many orbits) acting on the set
of isotopy classes of nonseparating (respectively, separated) simple closed curves of $\Sigma$.
%Indeed, these orbits are defined by the types of the connected components of $\Sigma\setminus C$.

\medskip

\begin{remark}\label{exceptional cases}
If $3g+b-4\leqslant 0$, then the groups $\PMCG(\Sigma_{g,b})$ are very simple:
$\PMCG(\Sigma_{0,b})=1$ for $b\leqslant 3$ and $\PMCG(\Sigma_{0,4})=F_2$ (see~\cite[Chapter 4]{Farb_Margalit}),
%(pages 92 and 93 of~\cite{Farb_Margalit}),
$\PMCG(\Sigma_{1,1})\cong \SL_2(\mathbb{Z})$, see~\cite[Theorem 3.6]{Farb_Margalit}.
\end{remark}

We also need the following theorem of Ivanov, which is a generalization of Thurston's theorem (see~\cite{Thurston})
on elements of the mapping class group.

\begin{theorem}\label{Ivanov} {\rm (see~\cite[Theorem 1]{Ivanov_2})} Let $\Sigma$ be a compact orientable surface.\break
For any subgroup $H$ of the mapping class group $\MCG(\Sigma)$ one of the following holds:
\begin{enumerate}
\item[(a)] $H$ is finite.

\item[(b)] $H$ is reducible (that is there is an  essential $1$-submanifold of $\Sigma$
whose isotopy class is fixed by each element of $H$).

\item[(c)] $H$ contains a pseudo-Anosov element.
\end{enumerate}
\end{theorem}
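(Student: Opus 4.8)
The plan is to derive the trichotomy from Thurston's classification of individual mapping classes together with the dynamics of the action of $\MCG(\Sigma)$ on the Thurston boundary $\mathcal{PMF}(\Sigma)$ of Teichm\"{u}ller space $\mathcal{T}(\Sigma)$. Recall Thurston's theorem: every $f\in \MCG(\Sigma)$ is periodic, reducible, or pseudo-Anosov, and a pseudo-Anosov acts on the compact space $\mathcal{PMF}(\Sigma)$ with source--sink dynamics (two fixed foliations, all other points converging to one of them under forward, respectively backward, iteration). If some element of $H$ is pseudo-Anosov we are in case~(c), so assume henceforth that $H$ contains no pseudo-Anosov; then every element of $H$ is periodic or reducible. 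If $H$ is finite we are in case~(a). Thus the real content is to show that an infinite subgroup $H$, each of whose elements is periodic or reducible, is reducible, i.e. fixes the isotopy class of some essential $1$-submanifold.

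First I would reduce to the presence of an infinite-order element. By Serre's theorem $\MCG(\Sigma)$ has a torsion-free normal subgroup $\Gamma$ of finite index (for instance the kernel of the action on $H_1(\Sigma;\mathbb{Z}/3)$); hence any torsion subgroup injects into the finite quotient $\MCG(\Sigma)/\Gamma$ and is finite. So an infinite $H$ contains an element $f$ of infinite order, and $f$, being neither periodic nor (by assumption) pseudo-Anosov, is reducible. To this $f$ I attach its canonical reduction system $\sigma(f)$ in the sense of Birman--Lubotzky--McCarthy and Ivanov: a nonempty $f$-invariant multicurve which is \emph{natural}, i.e. $\sigma(hfh^{-1})=h\cdot\sigma(f)$ for all $h$. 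The ambient dynamical framework is the action of $\MCG(\Sigma)$ on the compact Thurston compactification $\overline{\mathcal{T}(\Sigma)}=\mathcal{T}(\Sigma)\sqcup \mathcal{PMF}(\Sigma)$, which is properly discontinuous on $\mathcal{T}(\Sigma)$ with finite stabilizers; in particular an infinite $H$ has an infinite, hence boundary-accumulating, orbit, and its limit set $\Lambda_H\subseteq \mathcal{PMF}(\Sigma)$ is a nonempty closed $H$-invariant set, the locus on which the source--sink behaviour of any pseudo-Anosov would be detected.

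The engine of the argument is a dichotomy driven by irreducibility. Suppose $H$ is \emph{not} reducible. Then no finite multicurve is $H$-invariant, and I would invoke the standard lemma that in this case finitely many $H$-translates of the curves making up $\sigma(f)$ fill $\Sigma$: if no finite subcollection of the orbit $H\cdot\sigma(f)$ filled, then since there are only finitely many topological types of complementary subsurface, a compactness argument would yield a persistent essential complementary piece and hence an $H$-invariant isotopy class, contradicting irreducibility. Once a filling collection of curves is in hand, Thurston's construction (or Penner's) produces a pseudo-Anosov element supported on it, concretely a product of sufficiently high powers of the associated twists, contradicting the standing assumption that $H$ contains no pseudo-Anosov. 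Hence $H$ must be reducible and case~(b) holds.

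The step I expect to be the main obstacle is converting the crude reducing curves of the infinite-order element $f$ into an honest filling configuration while controlling the internal dynamics: a reducible map can restrict to a pseudo-Anosov on a complementary subsurface, so the curves of $\sigma(f)$ by themselves need not twist enough, and the elements of $H$ are general mapping classes rather than Dehn twists. The natural remedy is an induction on the complexity $\xi(\Sigma)=3g-3+b$: cut along a candidate $H$-invariant multicurve, pass to a finite-index subgroup of $H$ stabilizing each piece, apply the inductive hypothesis to the restricted actions on the pieces, and reassemble. The genuinely delicate point is to promote the invariant multicurves obtained on the pieces, together with the filling obstruction above, either into a global $H$-invariant multicurve on $\Sigma$ or into an honest pseudo-Anosov on all of $\Sigma$; making the ``filling forces pseudo-Anosov'' implication uniform and compatible with the required $H$-equivariance is the crux of Ivanov's argument.
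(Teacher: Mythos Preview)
The paper does not contain a proof of this statement at all. Theorem~\ref{Ivanov} is quoted verbatim from Ivanov's monograph \cite[Theorem~1]{Ivanov_2} and is used as a black box in the proof of Theorem~\ref{MCG_2}; there is no argument in the paper to compare your proposal against.

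As for the sketch itself: the overall architecture (reduce to an infinite torsion-free situation via a congruence subgroup, pick an infinite-order reducible element, use the naturality of canonical reduction systems, and argue by contradiction assuming irreducibility) is the right one and is indeed close in spirit to the Ivanov/Birman--Lubotzky--McCarthy approach. The point you flag as the main obstacle is genuinely the gap: from the fact that finitely many $H$-translates of $\sigma(f)$ fill $\Sigma$ you cannot simply invoke the Thurston--Penner construction, because that produces a pseudo-Anosov in the group generated by the \emph{Dehn twists} about those curves, and there is no reason for those twists to lie in $H$. What actually closes the argument is a ping-pong on $\mathcal{PMF}(\Sigma)$ (or, in Ivanov's treatment, a careful analysis of limit sets and the structure of stabilizers of laminations): one shows that if $H$ is irreducible and contains an infinite-order reducible element, then suitable conjugates of powers of that element have fixed-lamination sets in $\mathcal{PMF}$ that are mutually separated, and a product of high powers is pseudo-Anosov \emph{as an element of $H$}. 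Your inductive cut-and-reassemble suggestion is not the mechanism Ivanov uses to manufacture the pseudo-Anosov; the induction on complexity enters elsewhere (in analyzing the pieces after one already has a reduction system). So your plan is pointed in the right direction but the crucial production of a pseudo-Anosov inside $H$ is not yet justified.
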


The following lemma is due to Szepeitowski (in the case $b=0$ it easily follows from~\cite{Birman_Chil}).

\begin{lemma}\label{nonorMod} {\rm (see~\cite[Lemma 3]{Szep})}
Suppose that $g+2b\geqslant 3$. Let $N_{g,b}$ be a non-orientable surface of genus $g$ with $b$ boundary components. Let $\Sigma_{g-1,2b}$ be an orientable surface which is a double cover of $N_{g,b}$.
Then there exists a natural embedding of $\MCG(N_{g,b})$ into $\MCG (\Sigma_{g-1,2b})$.
\end{lemma}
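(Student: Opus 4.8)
The plan is to realize the embedding through the orientation double cover together with the Birman--Hilden correspondence. First I would construct the canonical orientation double cover $p\colon \Sigma\rightarrow N_{g,b}$: its total space is the orientable surface of pairs $(x,o_x)$, where $o_x$ is a local orientation of $N_{g,b}$ at $x$, equipped with the free, orientation-reversing deck involution $\sigma$ sending $(x,o_x)$ to $(x,-o_x)$, so that $N_{g,b}=\Sigma/\sigma$. Since $N_{g,b}$ is non-orientable the orientation character $w_1\colon\pi_1(N_{g,b})\rightarrow\mathbb{Z}/2\mathbb{Z}$ is surjective, so $\Sigma$ is connected. A short Euler-characteristic bookkeeping identifies its topological type: $\chi(\Sigma)=2\chi(N_{g,b})=2(2-g-b)=4-2g-2b$, and because a neighbourhood of each boundary circle of $N_{g,b}$ is an orientable annulus the cover is trivial there, so each of the $b$ boundary circles lifts to two; hence $\Sigma$ has $2b$ boundary components and $\chi(\Sigma)=2-2h-2b$ forces genus $h=g-1$, i.e. $\Sigma\cong\Sigma_{g-1,2b}$. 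The hypothesis $g+2b\geqslant 3$ serves only to discard the exceptional surfaces $\mathbb{RP}^2$ and the Klein bottle, whose double covers are the sphere and the torus, and for which the argument below would require separate comment.

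Next I would define the map $\Theta\colon\MCG(N_{g,b})\rightarrow\MCG(\Sigma_{g-1,2b})$. Because the orientation cover is characteristic, every self-homeomorphism $f$ of $N_{g,b}$ lifts, and it has exactly two lifts $\widetilde f$ and $\sigma\widetilde f$; since there are only two, $\widetilde f\sigma$ must equal $\sigma\widetilde f$, so both lifts commute with $\sigma$. As $\sigma$ reverses orientation, exactly one of the two lifts preserves orientation. Setting $\Theta([f])$ to be the class of the orientation-preserving lift gives a candidate homomorphism into the (orientation-preserving) group $\MCG(\Sigma_{g-1,2b})$. To see it is well defined on isotopy classes I would lift a given isotopy $f_t$ of $N_{g,b}$ by the covering-homotopy property, obtaining a continuous family of lifts $\widetilde f_t$; as being orientation-preserving is a clopen condition, the orientation-preserving lift varies continuously, giving an isotopy of $\Sigma$ between the chosen lifts of $f_0$ and $f_1$. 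That $\Theta$ is a homomorphism is immediate, since the orientation-preserving lift of a composite is the composite of the orientation-preserving lifts. All of this is compatible with the paper's convention that admissible isotopies fix the boundary only setwise and that boundary-parallel twists are trivial, since $p$ carries boundary to boundary.

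The crux is injectivity, and here the main obstacle is passing from an ordinary isotopy upstairs to a $\sigma$-equivariant one. Suppose $\Theta([f])$ is trivial, i.e. the orientation-preserving lift $\widetilde f$ is isotopic to $\operatorname{id}_\Sigma$. The map $\widetilde f$ is symmetric (it commutes with $\sigma$), and what I need is that it is isotopic to the identity \emph{through} symmetric homeomorphisms, for then the isotopy descends along $p$ to an isotopy of $f$ to $\operatorname{id}_{N_{g,b}}$. This is exactly the Birman--Hilden property for the free involution $\sigma$: for a sufficiently complicated surface, two fibre-preserving homeomorphisms that are isotopic are equivariantly isotopic, equivalently the forgetful map from the symmetric mapping class group $\operatorname{SMod}(\Sigma)$ to $\MCG(\Sigma)$ is injective. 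Invoking this (the free, unbranched case being the classical one, valid once the exceptional surfaces excluded by $g+2b\geqslant 3$ are set aside), together with the standard identification $\operatorname{SMod}(\Sigma)\cong\MCG(N_{g,b})$ coming from the bijection between symmetric isotopy classes upstairs and isotopy classes downstairs, yields that $f$ is isotopic to the identity. Hence $\Theta$ is injective and is the asserted natural embedding.
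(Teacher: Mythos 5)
The paper does not prove this lemma itself; it quotes it from \cite[Lemma 3]{Szep}, remarking only that in the closed case ($b=0$) it follows easily from \cite{Birman_Chil}. Your argument --- constructing the orientation double cover, identifying its topological type by the Euler characteristic and boundary count, sending $[f]$ to the class of the unique orientation-preserving lift, and reducing injectivity to the Birman--Hilden property for the free involution $\sigma$ (equivalently, the injectivity of the symmetric mapping class group in $\MCG(\Sigma_{g-1,2b})$) --- is correct and is essentially the argument of the cited sources, with the sole external input, the unbranched Birman--Hilden property for surfaces with boundary under the paper's setwise-boundary conventions, being exactly the classical ingredient (Birman--Chillingworth in the closed case, extended by Szepietowski to the bounded case) on which the quoted lemma rests.
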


\subsection{Preparation to the proof of Theorem~\ref{MCG_2}}
\medskip

\begin{definition}
Let $H$ be a topological group which is either completely metrizable or locally countably compact Hausdorff.
A group $G$ is called {\it $H$-admissible} if for any finite index subgroup $H_0$ of $H$
and any homomorphism $\varphi: H_0\rightarrow G$, there exists an open subgroup $V$ of $H$ such that $\varphi(H_0\cap V)$ is finite.
\end{definition}

\begin{proposition}\label{extensions}
Let $H$ be a topological group which is either completely metrizable, or locally countably compact Hausdorff.
Then the following holds.

\begin{enumerate}
\item The class of $H$-admissible groups is closed under taking subgroups and under taking of overgroups
of finite index. In particular, the class of $H$-admissible groups is closed under the commensurability relation.
\item The class of $H$-admissible groups is closed under taking extensions, i.e. if $G/A=B$
and the groups $A$ and $B$ are $H$-admissible, then $G$ is $H$-admissible as well.
\item The class of $H$-admissible groups contains all finite groups and all acylindrically hyperbolic groups $G$ whose elliptic radical $\Rad_{\Ell}(G)$ contains only $H$-admissible subgroups of~$G$. In particular,
all hyperbolic groups are $H$-admissible.

\item Let $H_1$ be an open subgroup of $H$. Then $H_1$ is itself either completely metrizable or locally countably compact Hausdorff. Moreover, the class of $H_1$-admissible groups is contained in the class
    of $H$-admissible groups.
\end{enumerate}
\end{proposition}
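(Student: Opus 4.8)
The plan is to prove the four assertions in the order (1), (4), (3), (2), since each later part leans on the earlier ones, and to use throughout the elementary principle that if $\psi\colon K\rightarrow G'$ is a homomorphism, $K'\leqslant K$ has finite index, and $\psi(K')$ is finite, then $\psi(K)$ is finite (write $K$ as a finite union of cosets of $K'$ and apply $\psi$). For (1), closure under subgroups is immediate: a homomorphism $\varphi\colon H_0\rightarrow G'$ into a subgroup $G'\leqslant G$ is also a homomorphism into $G$, and the open $V$ furnished by $H$-admissibility of $G$ satisfies $\varphi(H_0\cap V)\subseteq G'$ finite. For a finite-index overgroup $G\leqslant K$, given $\varphi\colon H_0\rightarrow K$ set $G_0=\varphi^{-1}(G)\cap H_0$; then $G_0$ has finite index in $H_0$, hence in $H$, and $H$-admissibility of $G$ applied to $\varphi|_{G_0}\colon G_0\rightarrow G$ yields an open $V$ with $\varphi(G_0\cap V)$ finite, whence $\varphi(H_0\cap V)$ is finite by the principle above (as $G_0\cap V$ has finite index in $H_0\cap V$). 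Commensurability-closure then follows formally.

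For (4), an open subgroup $H_1$ is also closed, so in the completely metrizable case it inherits a complete metric; in the locally countably compact Hausdorff case one uses that Hausdorff groups are regular, together with the fact that a closed subset of a countably compact space is countably compact, to produce a countably compact neighbourhood of $1$ contained in $H_1$, and homogeneity promotes this to all points. For the inclusion of admissibility classes one works in the \emph{clean} direction: given finite-index $H_0\leqslant H$ and $\varphi\colon H_0\rightarrow G$ with $G$ being $H_1$-admissible, note that $H_0\cap H_1$ has finite index in $H_1$; applying $H_1$-admissibility to $\varphi|_{H_0\cap H_1}$ gives an open $V\leqslant H_1$ (hence open in $H$) with $\varphi((H_0\cap H_1)\cap V)=\varphi(H_0\cap V)$ finite.

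For (3), finite groups are trivially $H$-admissible (take $V=H$). For an acylindrically hyperbolic $G$ whose elliptic radical contains only $H$-admissible subgroups, fix finite-index $H_0\leqslant H$ and $\varphi\colon H_0\rightarrow G$ and invoke Theorem~\ref{generalisation_of_B}: either there is an open $V$ with $\varphi(H_0\cap V)$ finite, or $\varphi(H_0)\subseteq\Rad_{\Ell}(G)$, in which case $\varphi(H_0)$ is an $H$-admissible subgroup of $G$ and applying its admissibility to the corestriction $\varphi\colon H_0\rightarrow\varphi(H_0)$ produces $V$. The hyperbolic corollary follows by cases: a non-elementary hyperbolic group is acylindrically hyperbolic with $\Rad_{\Ell}(G)$ its set of torsion elements (Example~\ref{Rad_Rel_Hyp}), and torsion subgroups of hyperbolic groups are finite, hence $H$-admissible; finite groups are handled directly; and an infinite virtually cyclic group is a finite-index overgroup of a copy of $\mathbb{Z}$, which is $H$-admissible as a subgroup of the non-elementary hyperbolic group $F_2$, so (1) applies.

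Finally (2) is where I expect the real difficulty. The natural strategy, given $\varphi\colon H_0\rightarrow G$ with quotient map $\pi\colon G\rightarrow B=G/A$, is first to apply $H$-admissibility of $B$ to $\pi\varphi$, obtaining an open $V_1\leqslant H$ with $\pi\varphi(H_0\cap V_1)$ finite; then $K=\ker(\pi\varphi|_{H_0\cap V_1})$ has finite index in $H_0\cap V_1$ and $\varphi(K)\subseteq A$, so one wants to apply admissibility of $A$ to $\varphi|_K\colon K\rightarrow A$ and transport the finite image back up via the finite-index principle. \emph{The obstacle} is that $H_0\cap V_1$, and hence $K$, is in general only finite-index in the \emph{open} subgroup $V_1$ and not in $H$, so the $H$-admissibility hypothesis on $A$ does not directly apply. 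I would overcome this by passing to $V_1$ as a new ambient group, legitimate by the first sentence of (4) which keeps $V_1$ completely metrizable or locally countably compact Hausdorff; over $V_1$ the map $\pi\varphi$ has finite image on the \emph{whole} finite-index subgroup $H_0\cap V_1$, so $K$ is finite index in $V_1$ and the required statement becomes admissibility of $A$ \emph{relative to} $V_1$. Reconciling this with the hypothesis, which supplies admissibility of $A$ only relative to $H$, is the technical heart: it amounts to a descent of admissibility to open subgroups (a converse to the inclusion in (4)), which I would establish by re-running the domain-topological construction of Proposition~\ref{weak_metricandlcH} — the Cauchy-sequence argument in the completely metrizable case and the nested-countably-compact argument otherwise — directly over $V_1$, thereby yielding an open $W\leqslant V_1$ with $\varphi(K\cap W)$ finite and hence $\varphi(H_0\cap W)$ finite.
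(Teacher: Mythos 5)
Your treatments of (1), (3) and (4) are correct and essentially coincide with the paper's (which dismisses (1) and (4) as obvious and derives (3) from Theorem~\ref{generalisation_of_B}); your explicit case analysis for elementary hyperbolic groups, via $\mathbb{Z}\leqslant F_2$ together with closure under subgroups and finite-index overgroups, fills in a detail the paper leaves implicit. For (2) your skeleton is also exactly the paper's proof: the paper applies $H$-admissibility of $B$ to $\varphi_1\circ\varphi$ to get an open $U$ with $\varphi_1\circ\varphi(H_0\cap U)$ finite, passes to the finite-index kernel $H_1\leqslant H_0\cap U$ with $\varphi(H_1)\subseteq A$, observes via the first sentence of (4) that $U$ is again completely metrizable or locally countably compact Hausdorff, invokes admissibility of $A$ over $U$ to get an open $V\leqslant U$ with $\varphi(H_1\cap V)$ finite, and transports finiteness up by the same index computation you describe.

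The genuine gap is in your proposed repair of the step you rightly flagged. You correctly see that $K$ (the paper's $H_1$) has finite index only in $V_1$, not in $H$, so what is needed is $V_1$-admissibility of $A$, whereas the hypothesis supplies only $H$-admissibility, and statement (4) gives the inclusion in the wrong direction. But re-running the constructions of Proposition~\ref{weak_metricandlcH} over $V_1$ cannot close this: those arguments (the Cauchy-sequence and nested-countably-compact constructions) are about homomorphisms into a group equipped with a fixed cobounded acylindrical action on a hyperbolic space, and they produce a neighborhood mapping into elliptic elements of that action. Here $A$ is an abstract group assumed only to be $H$-admissible; no action on a hyperbolic space is available, and $H$-admissibility is not characterized by any such action, so there is nothing to re-run. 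It should be said that the paper's own proof performs this very step silently (``Therefore there exists an open subgroup $V$ in $U$\dots'' uses $U$-admissibility of $A$, which is not among the stated hypotheses). The step is harmless in every application in the paper, because there the admissibility of the relevant groups ($\mathbb{Z}^s$, mapping class groups of smaller complexity, groups covered by (3)) is always established uniformly in the domain --- indeed the induction in the proof of Theorem~\ref{MCG_2} proves admissibility for \emph{every} completely metrizable or locally countably compact Hausdorff $H$ at once, hence in particular for $V_1$. So the viable fix is definitional rather than geometric: either strengthen the hypothesis of (2) to admissibility of $A$ relative to all open subgroups of $H$ (equivalently, relative to all suitable domains), or observe that every admissibility proof available in the paper, being routed through Theorem~\ref{generalisation_of_B} and the closure properties (1), (3), (4), automatically carries this uniformity. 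As written, your argument for (2) --- like the paper's, but now explicitly --- rests on an unproved descent of $H$-admissibility to open subgroups.
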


\medskip

{\it Proof.} Statements (1) and (4) are obvious, statement (3) follows from Theorem~\ref{generalisation_of_B}.
We prove statement (2).

Let $H_0$ be a finite index subgroup of $H$ and let $\varphi:H_0\rightarrow G$ be an arbitrary
homomorphism. Let $\varphi_1:G\rightarrow B$ be the canonical homomorphism.
Consider the homomorphism $\varphi_1\circ \varphi: H_0\rightarrow B$. Since
$B$ is $H$-admissible, there exists an open subgroup $U$ of $H$ such that $\varphi_1\circ \varphi\,(H_0\cap U)$ is finite. Then there exists a finite index subgroup $H_1$ of $H_0\cap U$ such that $\varphi_1\circ \varphi\,(H_1)=1$, i.e. $\varphi(H_1)\subseteq A$. Clearly $H_1$ has finite index in $U$ and $U$ (as an open subgroup of $H$) is itself either completely metrizable or locally countably compact Hausdorff. Therefore there exists an open subgroup $V$ in $U$ such that $\varphi(H_1\cap V)$ is finite.
But
$$
|(H_0\cap V):(H_1\cap V)|=|((H_0\cap U)\cap V):(H_1\cap V)|\leqslant |(H_0\cap U):H_1|<\infty.
$$
Therefore $\varphi(H_0\cap V)$ is finite.
\hfill $\Box$

\subsection{Proof of Theorem~\ref{MCG_2}.}
Due to Lemma~\ref{nonorMod}, we may assume that $\Sigma$ is orientable, say $\Sigma=\Sigma_{g,b}$ for some $g$ and $b$. We set $G=\MCG(\Sigma_{g,b})$.
%We shall prove that $G$ is $H$-admissible and either $H_0\cap V$ is normal in $H_0$ or $\varphi(H_0)$ acts elliptically on $S$
First we prove by induction on the complexity of $\Sigma$ (defined by the parameters $(g, b)$ under the lexicographic ordering) the following statement:

{\it The group $G$ 
is $H$-admissible for each $H$ which is either completely metrizable or locally countably compact Hausdorff.}

We fix such an $H$, a finite index subgroup $H_0\leqslant H$, and a homomorphism $\varphi:H_0\rightarrow G$.
We shall show that there exists an open subgroup $V\leqslant H$ such that $\varphi(H_0\cap V)$ is finite.

If $3g+b-4\leqslant 0$, then $G$ is virtually free by Remark~\ref{exceptional cases}, and the statement follows from Proposition~\ref{extensions}.

Now we consider the case $3g+b-4>0$.
%We order the tuples $(g,b)\in \mathbb{N}_0^2$ lexicographically and induct on this order.
By Theorem~\ref{Masur-Minsky, Bowditch}, $\MCG(\Sigma)$ acts acylindrically and coboundedly on the curve graph $\C(\Sigma)$, which is hyperbolic.
Therefore, by Theorem~\ref{generalisation_of_B}, either $\varphi(H_0)$ is elliptic,
or there exists an open subgroup $U\leqslant H$
such that $H_0\cap U$ is normal in $H_0$ and $\varphi(H_0\cap U)$ is finite.
%or there exists an open subgroup $U\leqslant H$ such that $\varphi(H_0\cap U)$ is finite.

In the latter case we are done. Therefore suppose that $\varphi(H_0)$ is elliptic. If $\varphi(H_0)$
is finite, we are again done. Thus, we may assume that $\varphi(H_0)$ is infinite.
%If $\varphi(H_0\cap U)$ is finite, we are done.
%\marginpar{\tiny Finite in $G$ should be made finite in $\PMCG(\Sigma)$}
%Suppose that $\varphi(H_0\cap U)$ is infinite.
%Then $\varphi(H_0)$ is elliptic and, by Theorem~\ref{Ivanov} of Ivanov, there exists an essential
%$1$-submanifold $M$ of $\Sigma$ whose isotopy class is fixed by each element of $\varphi(H_0\cap U)$.
Then, by Theorem~\ref{Ivanov} of Ivanov, there exists an essential $1$-submanifold $M$ of $\Sigma$ whose
isotopy class is fixed by each element of $\varphi(H_0)$.
We introduce the following notations.

\begin{enumerate}
\item[(a)] Let $\Sigma_1,\dots ,\Sigma_m$ be the closures of all components of $\Sigma\setminus M$.
%Note that the elements $\varphi(H_0\cap \langle U_1\rangle)$ permute the isotopy classes of these subsurfaces.

%$\bullet$ We denote $\mathcal{Z}=\mathcal{Z}(\Sigma)$ and $\mathcal{Z}_i=\mathcal{Z}(\Sigma_i)$ for $i=1,\dots,m$.

\item[(b)] Let $\PMCG(\Sigma, M)$ denote the subgroup of $\PMCG(\Sigma)$ which fixes
the isotopy class of each component of $M$ and induces the trivial permutation of $\Sigma_i$'s.
\end{enumerate}

Then there exists a finite index subgroup $K$ of $H_0$ such that
$$
\varphi(K)\leqslant \PMCG(\Sigma, M).
$$
%Recalling that $|H:H_0|<\infty$, we deduce
%$$
%|U:K|<\infty.\eqno{(9.3)}
%$$
There is an exact sequence
$$
\{1\}\rightarrow A\rightarrow \PMCG(\Sigma, M)\rightarrow \overset{m}{\underset{i=1}{\prod}}\PMCG(\Sigma_i)\rightarrow \{1\},
$$
where $A$ is the free abelian group generated by Dehn-twists around the components of $M$.
%Recall that the subgroup $U$ of $H$ is open (and hence closed).
%Therefore $U$ is itself either completely metrizable or locally countably compact Hausdorff.
Since the complexity of each $\Sigma_i$ is smaller than the complexity of $\Sigma$,
all the groups $\MCG(\Sigma_i)$ are $H$-admissible.
By Proposition~\ref{extensions}, $\PMCG(\Sigma,M)$ is $H$-admissible.
%Using (9.2) and (9.3),
Since the index $|H:K|$ is finite and $\varphi$ maps $K$ to $\PMCG(\Sigma,M)$,
we conclude that there exists an open subgroup $V\leqslant H$ such that $\varphi(K\cap V)$ is finite.
Then $\varphi(H_0\cap V)$ is also finite, since
$
|(H_0\cap V):(K\cap V|\leqslant |H_0:K|<\infty.
$
%$$
%|(H_0\cap V):(K\cap V|=|((H_0\cap U)\cap V):(K\cap V)|\leqslant |(H_0\cap U):K|<\infty.
%$$
%Therefore $\varphi(H_0\cap V)$ is finite.

We may assume that $|\varphi(H_0\cap V)|$ is minimal possible over all open subgroups $V\leqslant H$. We prove that $\varphi(H_0\cap V)$ is normal in $\varphi(H_0)$. Let $h\in H_0$. Then $V\cap V^h$ is open in $H$ and, by minimality, we have
$$
\varphi(H_0\cap V)=\varphi(H_0\cap (V\cap V^h))\leqslant\varphi(H_0\cap V^h)=\varphi(H_0\cap V)^{\varphi(h)}.
$$
\hfill $\Box$

\medskip

\noindent
\begin{theorem}
Let $H$ be a topological group which is either completely metrizable, or locally countably compact Hausdorff.
Let $H_0$ be a finite index subgroup of $H$.
Let $G$ be a one-ended hyperbolic group.
Then for any homomorphism $\varphi:H\rightarrow \Out(G)$
there exists an open subgroup $V$ of $H$ such that $\varphi(H_0\cap V)$ is finite
and normal in $\varphi(H_0)$.
\end{theorem}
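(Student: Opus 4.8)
The plan is to prove the stronger assertion that $\Out(G)$ is $H$-admissible in the sense of the preceding subsection, and then to extract the open subgroup $V$ together with the normality of $\varphi(H_0\cap V)$ exactly as in the final lines of the proof of Theorem~\ref{MCG_2}. Once $H$-admissibility of $\Out(G)$ is in hand, applying the definition to the restriction $\varphi\upharpoonright H_0:H_0\rightarrow \Out(G)$ yields an open subgroup $V\leqslant H$ with $\varphi(H_0\cap V)$ finite. Choosing $V$ so that $|\varphi(H_0\cap V)|$ is minimal over all open subgroups of $H$, and then for $h\in H_0$ comparing $V$ with the open subgroup $V\cap V^h$, minimality forces
$$\varphi(H_0\cap V)=\varphi(H_0\cap(V\cap V^h))\leqslant \varphi(H_0\cap V^h)=\varphi(H_0\cap V)^{\varphi(h)},$$
and equality of (finite) cardinalities upgrades the inclusion to equality, giving normality of $\varphi(H_0\cap V)$ in $\varphi(H_0)$.

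The heart of the argument is to recognize $\Out(G)$ as being built from $H$-admissible pieces. Here I would invoke the structure theorem of Levitt~\cite{Levitt} for the outer automorphism group of a one-ended hyperbolic group. Passing to the canonical JSJ decomposition of $G$, there is a finite index subgroup $\Out_0\leqslant \Out(G)$ fitting into a short exact sequence
$$1\rightarrow \mathcal{T}\rightarrow \Out_0\rightarrow Q\rightarrow 1,$$
where $\mathcal{T}$ is a finitely generated free abelian group of twists along the edges of the decomposition, and $Q$ embeds into a finite direct product $F\times\prod_{i=1}^{p}\MCG(\Sigma_i)$ with $F$ finite and the $\Sigma_i$ connected compact surfaces arising from the quadratically hanging vertices.

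Each ingredient is $H$-admissible, and I would assemble them with Proposition~\ref{extensions}. The surfaces $\Sigma_i$ are connected and compact, so each $\MCG(\Sigma_i)$ is $H$-admissible by Theorem~\ref{MCG_2}; finite groups are $H$-admissible by Proposition~\ref{extensions}(3), and a finite direct product of $H$-admissible groups is $H$-admissible by closure under extensions, whence $Q$ is $H$-admissible as a subgroup. The twist group $\mathcal{T}\cong\mathbb{Z}^{r}$ is an iterated extension of copies of $\mathbb{Z}$; since $\mathbb{Z}$ is hyperbolic it is $H$-admissible, and closure under extensions makes $\mathcal{T}$ $H$-admissible. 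The displayed sequence then shows $\Out_0$ is $H$-admissible, and as $\Out_0$ has finite index in $\Out(G)$, closure under passage to finite index overgroups (Proposition~\ref{extensions}(1)) yields that $\Out(G)$ itself is $H$-admissible, completing the core of the argument.

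The main obstacle I anticipate is the faithful importation of Levitt's theorem: one must check that the mapping class groups appearing in his decomposition are precisely those of connected compact surfaces, so that Theorem~\ref{MCG_2} applies verbatim, that the twist subgroup is indeed finitely generated free abelian, and that the rigid vertices contribute only a finite group after passage to $\Out_0$. Once these structural facts are pinned down, the remainder is a routine combination of the closure properties of the class of $H$-admissible groups with the minimality/normalization argument already used at the end of the proof of Theorem~\ref{MCG_2}.
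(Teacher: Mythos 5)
Your proposal follows the paper's proof in all essentials: both reduce the theorem to showing $\Out(G)$ is $H$-admissible via Levitt's structure theorem for the JSJ decomposition, assemble the pieces with the closure properties of Proposition~\ref{extensions}, and finish with the same minimality argument $\varphi(H_0\cap V)=\varphi(H_0\cap(V\cap V^h))\leqslant \varphi(H_0\cap V)^{\varphi(h)}$ from the end of the proof of Theorem~\ref{MCG_2}. The one obstacle you flagged but left open is genuine, and it is the only place where your sketch diverges from being complete: in Levitt's theorem the relevant vertex groups are fundamental groups of compact \emph{2-orbifolds} (the ``MHF'' vertices in Bowditch's JSJ decomposition), so the factors in the quotient are orbifold mapping class groups, and Theorem~\ref{MCG_2} does not apply to them verbatim. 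The paper bridges exactly this gap by citing Harvey--Maclachlan (extended to the non-orientable case by Fujiwara): each orbifold mapping class group $\MCG(\Sigma_v)$ is commensurable with the mapping class group $\MCG(\Sigma_v')$ of a genuine compact surface, and since the class of $H$-admissible groups is closed under commensurability by Proposition~\ref{extensions}(1), the admissibility of $\MCG(\Sigma_v')$ supplied by Theorem~\ref{MCG_2} transfers to $\MCG(\Sigma_v)$, after which your assembly of the extension and the finite-index overgroup goes through unchanged. (Your slightly different packaging of Levitt's theorem --- free abelian twist group $\mathcal{T}$ and $Q$ embedding in $F\times\prod_i\MCG(\Sigma_i)$, versus the paper's $\mathbb{Z}^q\times M$ with $A$ virtually $\mathbb{Z}^s$ --- is immaterial, since either version feeds into Proposition~\ref{extensions} the same way.)
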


\medskip

{\it Proof.} In~\cite{Bowditch_1}, Bowditch described a canonical JSJ-decomposition
of a one-ended hyperbolic group $G$.
%Some vertex groups of this decomposition are called ``MHF subgroups''.
%To formulate a result of Levitt that we use,
This is a special finite graph of groups $\Gamma$ such that $\pi_1(\Gamma)=G$.
%The set of vertices of $\Gamma$ is the disjoint union of
We need only to know that some vertex subgroups $G_v$ of $\Gamma$ are called ``MHF subgroups''
and that each such subgroup $G_v$ is the fundamental group of a compact orbifold $\Sigma_v$.
Let $V_2$ be the set of vertices $v$ of $\Gamma$ corresponding to the MHF subgroups.

Using this decomposition, Levitt proved in~\cite[Theorem 5.3]{Levitt} that $\Out(G)$ has a finite index subgroup
of the form $\mathbb{Z}^q\times M$, where $q\in \mathbb{N}$ and $M$
fits in the exact sequence
$$
\{1\} \rightarrow A\rightarrow M\rightarrow \underset{v\in V_2}{\prod}\MCG(\Sigma_v)\rightarrow \{1\}
$$
with $A$ virtually $\mathbb{Z}^s$ for some $s\in \mathbb{N}$.

By~\cite{HarveyMaclachlan}, generalized to the non-orientable case in~\cite{Fujiwara},
each group $\MCG(\Sigma_v)$ is commensurable with the the mapping class groups $\MCG(\Sigma_v')$ of a regular compact surface $\Sigma_v'$ (see also~\cite[Section 6]{Levitt}).
By Theorem~\ref{MCG_2}, each $\MCG(\Sigma_v')$ is $H$-admissible.
Using Proposition~\ref{extensions}, we conclude that $\Out(G)$ is $H$-admissible.
Therefore there exists an open subgroup $V\leqslant H$ such that $\varphi(H_0\cap V)$ is finite.
We may assume that $|\varphi(H_0\cap V)|$ is minimal possible. Then, arguing as in the last paragraph of the proof of Theorem~\ref{MCG_2},
we conclude that $\varphi(H_0\cap V)$ is normal in $\varphi(H_0)$.
\hfill $\Box$
\end{section}

\begin{section}{General applications}\label{GeneralApplications}

Given a group $G$ and an element $g\in G$, let $\widehat{g}:G\mapsto G$ be the map given by $\widehat{g}(x)=g^{-1}xg$, $x\in G$.

\subsection{Applications to endomorphisms of the group $\HEG$}

The following lemma can be easily deduced from the Bass -- Serre theory.
It can be also proved directly by using normal forms.

\begin{lemma}\label{free_product}
Let $H$ be a subgroup of a free product $A\ast B$. If every element of $H$ is conjugate to an element of $B$,
then the whole group $H$ is conjugate to a subgroup of $B$.
\end{lemma}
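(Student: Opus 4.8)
The plan is to pass to the Bass--Serre tree $T$ of the splitting $A\ast B$ and to show that $H$ has a global fixed vertex of the correct type. Recall that $T$ is bipartite, that the vertex stabilizers are exactly the conjugates of $A$ and of $B$, and --- crucially --- that all edge stabilizers are trivial; moreover an element of $A\ast B$ fixes a vertex of $T$ (is \emph{elliptic}) if and only if it is conjugate into $A$ or into $B$. Thus the hypothesis that every element of $H$ is conjugate into $B$ gives at once that every element of $H$ acts elliptically on $T$.

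First I would locate a common fixed vertex. Because edge stabilizers are trivial, a nontrivial element cannot fix two distinct vertices (fixing two vertices would force it to fix the geodesic between them, in particular an edge, hence be trivial); so for each nontrivial $h\in H$ the fixed set $\Fix(h)$ is a single vertex $v_h$. Now I use the standard fact that if two elliptic isometries of a tree have disjoint fixed-point sets then their product is hyperbolic. Applied to two nontrivial $g,h\in H$: the product $gh$ lies in $H$ and is therefore elliptic, so $\Fix(g)\cap\Fix(h)\neq\emptyset$, i.e.\ $v_g=v_h$. Hence all nontrivial elements of $H$ fix one and the same vertex $v$, and so $H\leqslant \operatorname{Stab}_{A\ast B}(v)$ (the case $H=\{1\}$ being trivial). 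This step is where one would normally worry about $H$ being infinitely generated and only fixing an \emph{end} rather than a vertex; the triviality of edge stabilizers is exactly what collapses each $\Fix(h)$ to a point and removes that difficulty, so no finiteness or Helly-type hypothesis on $H$ is needed.

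It remains to see that $v$ lies in the $B$-orbit. The stabilizer of $v$ is $gAg^{-1}$ or $gBg^{-1}$ for some $g\in A\ast B$. If it were $gAg^{-1}$ and $H\neq\{1\}$, pick a nontrivial $h\in H$; then $g^{-1}hg$ is a nontrivial element of $A$, while by hypothesis $h$, and hence $g^{-1}hg$, is conjugate into $B$. But in a free product a nontrivial element of $A$ is never conjugate to an element of $B$ (compare cyclically reduced forms), a contradiction. Therefore $\operatorname{Stab}_{A\ast B}(v)=gBg^{-1}$, that is $g^{-1}Hg\leqslant B$, as required. The genuine content of the lemma is thus concentrated in two elementary tree facts (singleton fixed sets from trivial edge stabilizers, and hyperbolicity of products with disjoint fixed sets); the only place the full strength of the hypothesis --- conjugacy into $B$ rather than mere ellipticity --- is used is this last paragraph, to distinguish $A$-vertices from $B$-vertices. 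A parallel argument via normal forms, tracking cyclic reduction directly, would reprove the same statement without invoking $T$.
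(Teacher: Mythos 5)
Your proof is correct, and it follows exactly the route the paper intends: the paper gives no written proof, remarking only that the lemma ``can be easily deduced from the Bass--Serre theory'' (or alternatively via normal forms), and your argument supplies precisely those Bass--Serre details --- trivial edge stabilizers collapsing each fixed set to a single vertex, ellipticity of all products forcing a common fixed vertex even for infinitely generated $H$, and the cyclically-reduced-form argument ruling out a vertex of type $A$. There is nothing to correct.
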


Let $a_i\in \HEG$ be the element corresponding the $i$-th loop of the Hawaiian earring.
For any natural $n$, the subgroup $\HEG_n$ generated by $a_1,\dots,a_n$ is free of rank $n$.
We use the decomposition $\HEG=\HEG_n\ast \HEG^n$ from Section~\ref{HE}.

%\begin{remark}\label{remark_6.1}

\begin{corollary}\label{Eda_algebraic}
For any endomorphism $\varphi:\HEG\rightarrow \HEG$ there exists $g\in \HEG$ such that the following holds:
For any natural $n$, there exists a natural $m$ such that $$(\widehat{g}\circ\varphi)(\HEG^m)\subseteq \HEG^n.$$
In particular, for $i\geqslant m+1$, the elements $(\widehat{g}\circ\varphi)(a_i)$ considered as reduced words do not contain the letters $a_1,\dots ,a_n$.
\end{corollary}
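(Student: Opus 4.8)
The plan is to bootstrap Theorem~\ref{thebigone} through the explicit actions $\HEG\curvearrowright\Gamma_n$ built in Proposition~\ref{HEG_No_Univer} and then to glue the resulting level-by-level conjugators into a single element of $\HEG$ via an infinite product. Recall from that proposition that each $\Gamma_n$ is hyperbolic, that the action is cobounded and acylindrical, and that $\Ell(\HEG\curvearrowright\Gamma_n)=\bigcup_{h\in\HEG}\widehat{h}(\HEG^n)$. The basic move is the following: fixing $n$, apply Theorem~\ref{thebigone} to the composite of $\varphi$ with $\HEG\curvearrowright\Gamma_n$ to obtain an $m$ with $\varphi(\HEG^m)\subseteq\Ell(\HEG\curvearrowright\Gamma_n)$; this says precisely that every element of the subgroup $\varphi(\HEG^m)$ is conjugate in $\HEG$ into $\HEG^n$. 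Since $\HEG=\HEG_n\ast\HEG^n$, Lemma~\ref{free_product} (with $B=\HEG^n$) then upgrades this to a \emph{single} conjugator $c$ with $\widehat{c}\bigl(\varphi(\HEG^m)\bigr)\subseteq\HEG^n$.

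The difficulty is that this $c$ a priori depends on $n$, whereas the statement requires one $g$ serving all $n$ at once. I would therefore iterate the basic move inside the nested subgroups to produce a coherent sequence of conjugators. Construct $p_0=1$, integers $0=m_0\leqslant m_1\leqslant m_2\leqslant\cdots$, and factors $c_k\in\HEG^{k-1}$ with $p_k=p_{k-1}c_k$, maintaining inductively that $\widehat{p_k}\bigl(\varphi(\HEG^{m_k})\bigr)\subseteq\HEG^k$. At step $k$, restrict $\widehat{p_{k-1}}\circ\varphi$ to $\HEG^{m_{k-1}}$, which by induction lands in $\HEG^{k-1}$. Identifying $\HEG^{k-1}\cong\HEG$ via the natural isomorphism, the standard decomposition becomes $\HEG^{k-1}=\langle a_k\rangle\ast\HEG^k$, with an associated cobounded acylindrical action of $\HEG^{k-1}$ on a hyperbolic graph $\Gamma$ whose elliptic set is $\bigcup_{h\in\HEG^{k-1}}\widehat{h}(\HEG^k)$. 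Applying Theorem~\ref{thebigone} to $\widehat{p_{k-1}}\circ\varphi\colon\HEG^{m_{k-1}}\to\HEG^{k-1}\curvearrowright\Gamma$ (and enlarging if needed) gives an $m_k\geqslant m_{k-1}$ with the image of $\HEG^{m_k}$ elliptic, hence conjugate into $\HEG^k$; Lemma~\ref{free_product} applied inside $\HEG^{k-1}$ then yields $c_k\in\HEG^{k-1}$ with $\widehat{c_k}\bigl(\widehat{p_{k-1}}(\varphi(\HEG^{m_k}))\bigr)\subseteq\HEG^k$, and setting $p_k=p_{k-1}c_k$ closes the induction.

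Finally I would assemble $g$ and conclude. Since $c_k\in\HEG^{k-1}$, a fixed letter $a_i$ can occur only in the finitely many factors $c_1,\dots,c_i$, so the infinite product $g=c_1c_2c_3\cdots$ is a well-defined element of $\HEG$ and $p_k\to g$ in the filtration. For each $n$ write $g=p_n\cdot r_n$ with $r_n=c_{n+1}c_{n+2}\cdots\in\HEG^n$ (each factor lying in $\HEG^{k-1}\subseteq\HEG^n$ for $k\geqslant n+1$); then, because $r_n\in\HEG^n$ normalizes $\HEG^n$,
$$\widehat{g}\bigl(\varphi(\HEG^{m_n})\bigr)=\widehat{r_n}\Bigl(\widehat{p_n}\bigl(\varphi(\HEG^{m_n})\bigr)\Bigr)\subseteq\widehat{r_n}(\HEG^n)=\HEG^n,$$
which is the corollary with $m=m_n$. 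The \emph{in particular} clause is then immediate, since $a_i\in\HEG^{m_n}$ for all $i\geqslant m_n+1$. The main obstacle is exactly this coherence: the per-level conjugators from the basic move depend on $n$, and the real content is to arrange them as a convergent product whose prefixes $p_n$ simultaneously realize every containment — a point where the completeness of $\HEG$ under products of elements going deeper into the subgroups $\HEG^k$ is essential.
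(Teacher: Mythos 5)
Your proposal is correct, and its skeleton matches the paper's proof: both run Theorem~\ref{thebigone} against the cobounded acylindrical actions $\HEG\curvearrowright\Gamma_n$ constructed in Proposition~\ref{HEG_No_Univer}, use the description $\Ell(\HEG\curvearrowright\Gamma_n)=\bigcup_{h}\widehat{h}(\HEG^n)$ to get elementwise conjugacy into $\HEG^n$, upgrade this to a single conjugator via Lemma~\ref{free_product}, and assemble the global $g$ as an infinite product whose tail beyond level $n$ lies in $\HEG^n$ and hence preserves it under conjugation. Where you genuinely diverge is the coherence step, which is the real content. The paper chooses a conjugator $g_n$ for each $n$ \emph{independently}, always using the decomposition $\HEG=\HEG_n\ast\HEG^n$, and only afterwards proves that consecutive choices cohere: a nontrivial element of $\varphi(\HEG^{f(n+1)})$ lies in two conjugates of $\HEG^n$, so malnormality of $\HEG^n$ in the free product forces the two conjugators to differ by an element of $\HEG^n$ --- except in the degenerate case $\varphi(\HEG^{f(n+1)})=1$, where nothing forces coherence and the paper must redefine $g_{n+1}:=g_n$ by hand. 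You instead build coherence \emph{a priori}: working inside the already-achieved copy $\HEG^{k-1}\cong\HEG$, decomposing it as $\langle a_k\rangle\ast\HEG^k$, and applying Theorem~\ref{thebigone} to the conjugated, restricted homomorphism $\widehat{p_{k-1}}\circ\varphi\upharpoonright\HEG^{m_{k-1}}$ (legitimate, since $\HEG^{m_{k-1}}\cong\HEG$ compatibly with the filtrations and the transported action is again of $\Gamma_1$-type), so that the correction factor $c_k$ automatically lies in $\HEG^{k-1}$. This buys you three things: no appeal to malnormality, no case split for trivial image (one may take $c_k=1$), and cleaner bookkeeping --- indeed your identities $\widehat{p_{k-1}c_k}=\widehat{c_k}\circ\widehat{p_{k-1}}$ and $g=p_nr_n$ with $r_n=c_{n+1}c_{n+2}\cdots\in\HEG^n$ are exactly right for the paper's convention $\widehat{g}(x)=g^{-1}xg$, whereas the paper's displayed relation $g_{n+1}g_n^{-1}\in\HEG^n$ is the mirror image of what that convention actually yields ($g_n^{-1}g_{n+1}\in\HEG^n$, with the tail multiplied on the right), a cosmetic slip your construction sidesteps. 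The modest price is that your induction invokes Theorem~\ref{thebigone} and Lemma~\ref{free_product} inside isomorphic copies rather than in $\HEG$ itself, but those identifications are harmless.
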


\medskip

\begin{proof}  Let $\varphi:\HEG\rightarrow \HEG$ be an endomorphism and let $n$ be an arbitrary natural number.
In the first paragraph of the proof of Proposition~\ref{HEG_No_Univer}, we established that $\HEG$ acts acylindrically
on the hyperbolic Cayley graph $\Gamma_n$.
By Theorem A, applied to $G=\HEG$ acting on its Cayley graph~$\Gamma_n$, there exists a natural number $f(n)$ such that $$\varphi (\HEG^{f(n)})\subseteq \Ell\, (\HEG\curvearrowright \Gamma_n).\eqno{(10.1)}$$
Without loss of generality, we may assume that the function $f:\mathbb{N}\rightarrow \mathbb{N}$ is monotone increasing.
Using (10.1), we deduce
$$\varphi (\HEG^{f(n)})\subseteq \underset{g\in \HEG}{\bigcup}\widehat{g}\,(\HEG^n).$$

Thus, each element of $\varphi (\HEG^{f(n)})$ is conjugate to an element of the second factor of the free product $\HEG=\HEG_n\ast \HEG^n$.  By Lemma~\ref{free_product}, there exists $g_n\in \HEG$ such that
$$
(\widehat{g_n}\circ \varphi) (\HEG^{f(n)})\leqslant \HEG^n.\eqno{(10.2)}
$$
It remains to show that $g_n$ can be chosen independently of $n$.
Since the function $f$ is monotone increasing, we have
$$
\varphi(\HEG^{f(n+1)})\subseteq \widehat{g_n}^{-1}(\HEG^n)\, \bigcap \,\,\widehat{g_{n+1}}^{-1}(\HEG^n).
$$

{\it Case 1.} Suppose that $\varphi(\HEG^{f(n+1)})\neq 1$.
Then, since $\HEG^n$ is malnormal in the free product $\HEG=\HEG_n\ast \HEG^n$, we have
$g_{n+1}g_n^{-1}\in \HEG^n$.
Therefore there exists $z_n\in \HEG^n$ such that $g_{n+1}=z_ng_n$.

\medskip

{\it Case 2.} Suppose that $\varphi(\HEG^{f(n+1)})= 1$.
Then we redefine $g_{n+1}$ by setting $g_{n+1}=g_n$.

\medskip

Thus, we may assume that for any $n$, there exists $z_n\in \HEG^n$ such that $g_{n+1}=z_ng_n$. We set $g=(\dots z_2z_1)g_1$. Then (10.2) implies that
$$
(\widehat{g}\circ \varphi) (\HEG^{f(n)})\leqslant \HEG^n
$$
for any $n$.
\end{proof}

\medskip

The following corollary follows straightforwardly from Corollary~\ref{Eda_algebraic}.
Recall that $\HEG$ is a topological group with respect to the topology $\mathcal{T}$ described in Section~2.

\begin{corollary}\label{App1}
Any endomorphism of the abstract group $\HEG$ is continuous with respect to the topology $\mathcal{T}$
of the topological group $\HEG$.
\end{corollary}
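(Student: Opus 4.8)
The plan is to reduce $\mathcal{T}$-continuity of $\varphi$ to a single containment between the basic open subgroups $\ker(p_n)$, and then to feed in the conclusion of Corollary~\ref{Eda_algebraic}. Since $\varphi$ is a homomorphism between topological groups, it suffices to verify continuity at the identity; and since $\{\ker(p_n)\mid n\in\mathbb{N}_0\}$ is a neighborhood basis of $1$, this amounts to showing that for every $n$ there exists $m$ with $\varphi(\ker(p_m))\subseteq\ker(p_n)$.

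First I would observe that each inner automorphism $\widehat{g}$ is a $\mathcal{T}$-homeomorphism of $\HEG$: because every $\ker(p_n)$ is a \emph{normal} subgroup of $\HEG$ (it is the normal closure of $\HEG^n$), conjugation by $g$ carries each basic neighborhood $\ker(p_n)$ onto itself. Consequently $\varphi$ is continuous if and only if $\widehat{g}\circ\varphi$ is continuous, so I may replace $\varphi$ by $\widehat{g}\circ\varphi$, where $g$ is the element furnished by Corollary~\ref{Eda_algebraic}. After this replacement we have, for every $n$, a natural number $m$ with $\varphi(\HEG^m)\subseteq\HEG^n$.

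Now fix $n$ and take the corresponding $m$. The point is to promote the containment $\varphi(\HEG^m)\subseteq\HEG^n$ to one involving the normal closures. Since $\HEG^n\subseteq\ker(p_n)$ and $\ker(p_n)$ is normal in $\HEG$, every conjugate $\varphi(h)^{-1}\varphi(y)\varphi(h)$ with $h\in\HEG$ and $y\in\HEG^m$ lies in $\ker(p_n)$. As $\ker(p_m)$ is by definition the normal closure of $\HEG^m$ in $\HEG$, its image $\varphi(\ker(p_m))$ is generated by exactly such conjugates, all of which lie in the subgroup $\ker(p_n)$; hence $\varphi(\ker(p_m))\subseteq\ker(p_n)$. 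This gives continuity at $1$, and therefore everywhere.

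The argument is essentially immediate once Corollary~\ref{Eda_algebraic} is in hand, so I do not expect a serious obstacle; the only point requiring care is the passage from the subgroups $\HEG^n$ to the basic open sets $\ker(p_n)$, which are their normal closures. This is handled entirely by the normality of $\ker(p_n)$, and the conjugating element $g$ is absorbed harmlessly because inner automorphisms respect $\mathcal{T}$.
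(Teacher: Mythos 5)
Your proof is correct and follows essentially the same route as the paper, which simply states that the corollary follows straightforwardly from Corollary~\ref{Eda_algebraic}; your write-up supplies exactly the routine details left implicit there. In particular, the reduction to continuity at $1_{\HEG}$, the observation that $\widehat{g}$ is a $\mathcal{T}$-homeomorphism because each basic neighborhood $\ker(p_n)$ is normal, and the promotion of $\varphi(\HEG^m)\subseteq\HEG^n$ to $\varphi(\ker(p_m))\subseteq\ker(p_n)$ via normal closures are all sound and are the intended argument.
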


The following corollary also follows straightforward from Corollary~\ref{Eda_algebraic}.
It was first proved by Eda in~\cite[Corollary 2.11]{Ed1}.

\begin{corollary}\label{App2}  {\rm (see~\cite{Ed1})}
For any endomorphism $\varphi:\HEG\rightarrow \HEG$ there exists $g\in \HEG$ such that
the endomorphism $\widehat{g}\circ\varphi$ is induced by a continuous map from the
Hawaiian earring $\operatorname{E}$ to itself preserving the basepoint $(0,0)$.
\end{corollary}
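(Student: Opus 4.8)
The plan is to build the required continuous self-map of $\operatorname{E}$ directly from the combinatorial control supplied by Corollary~\ref{Eda_algebraic}. Write $\psi=\widehat{g}\circ\varphi$, where $g\in\HEG$ is the element produced by that corollary, so that for every $n$ there is an $m=m(n)$ with $\psi(\HEG^m)\subseteq\HEG^n$; in particular, for every $i\geqslant m(n)+1$ the reduced word $\psi(a_i)$ involves only the letters $a_{n+1},a_{n+2},\dots$. I will construct a basepoint-preserving continuous map $f\colon\operatorname{E}\to\operatorname{E}$ with $f_\ast=\psi$, which is exactly the assertion that $\psi$ is induced by $f$.

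For each $i$ let $\ell_i\colon C_i\to\operatorname{E}$ be the canonical loop representing $\psi(a_i)$ under the identification $\pi_1(\operatorname{E},(0,0))\cong\HEG$: reading the word $\psi(a_i)\colon Y\to A$ along its ordered index set $Y$ and inserting, at each $y\in Y$, a single traversal of the circle carrying the letter $\psi(a_i)(y)$ (the finiteness condition on words guarantees that $\ell_i$ is a genuine continuous loop). Since all the $C_i$ meet only at $(0,0)$, the prescription $f|_{C_i}=\ell_i$ together with $f(0,0)=(0,0)$ defines a single map $f\colon\operatorname{E}\to\operatorname{E}$. The image of $\ell_i$ lies in the sub-earring $\bigcup\{C_k:a_k^{\pm1}\text{ occurs in }\psi(a_i)\}$; hence, by the displayed consequence of Corollary~\ref{Eda_algebraic}, whenever $i\geqslant m(n)+1$ we have $\ell_i(C_i)\subseteq\bigcup_{k>n}C_k$.

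The crux is continuity of $f$, and the only delicate point is continuity at the basepoint. Fix a neighborhood $U$ of $(0,0)$; since $C_k$ is contained in the ball of radius $2/k$ about the origin, I may choose $n$ with $\bigcup_{k>n}C_k\subseteq U$, and set $m=m(n)$. For the infinitely many $i>m$ the previous paragraph gives $f(C_i)\subseteq\bigcup_{k>n}C_k\subseteq U$, while for the finitely many $i\leqslant m$ continuity of each $\ell_i$ at the basepoint yields an arc $V_i\subseteq C_i$ about $(0,0)$ with $\ell_i(V_i)\subseteq U$. Then $W=\{(0,0)\}\cup\bigcup_{i\leqslant m}V_i\cup\bigcup_{i>m}C_i$ is an open neighborhood of $(0,0)$ with $f(W)\subseteq U$, so $f$ is continuous at the basepoint. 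Away from $(0,0)$ the space $\operatorname{E}$ is locally an arc contained in a single circle $C_i$, on which $f$ agrees with the continuous loop $\ell_i$, so continuity there is immediate.

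It remains to identify $f_\ast$ with $\psi$. By construction $f_\ast(a_i)=[\ell_i]=\psi(a_i)$ for every $i$. The homomorphism $f_\ast$ induced by a continuous basepoint-preserving map automatically respects the infinite-word (equivalently, $\mathcal{T}$-continuous) structure of $\HEG$, and $\psi$ is $\mathcal{T}$-continuous by Corollary~\ref{App1}; two $\mathcal{T}$-continuous endomorphisms of $\HEG$ agreeing on every generator $a_i$ coincide (a standard feature of the infinite-word calculus in $\HEG$), whence $f_\ast=\psi$. I expect the continuity argument at the basepoint to be the main obstacle: it is precisely there that the shrinking of the circles $C_k$ must be matched against the letter-avoidance guaranteed by Corollary~\ref{Eda_algebraic}, and the choice of the canonical representatives $\ell_i$ is what makes the two compatible.
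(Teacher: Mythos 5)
Your proof is correct and takes essentially the same route as the paper: the paper deduces Corollary~\ref{App2} directly from Corollary~\ref{Eda_algebraic} (citing Eda for the original argument) and leaves the details implicit, and your circle-by-circle construction of $f$, with continuity at the basepoint secured by matching the shrinking circles against the letter-avoidance $(\widehat{g}\circ\varphi)(\HEG^{m(n)})\subseteq \HEG^{n}$, is exactly the intended elaboration. Your identification $f_{\ast}=\psi$ is also sound, since by Corollary~\ref{App1} both are $\mathcal{T}$-continuous and they agree on the subgroup $\langle a_1,a_2,\dots\rangle$, which is dense in the Hausdorff topology $\mathcal{T}$.
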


\subsection{Applications to homomorphisms to relatively hyperbolic groups}

We use the terminology of the monograph~\cite{Osin_0}.
Let $G$ be a relatively hyperbolic group with respect to a collection of subgroups $\{H_{\lambda}\}_{\lambda\in \Lambda}$.
Let $X$ be a finite relative generating set of $G$ with respect to $\{H_{\lambda}\}_{\lambda\in \Lambda}$
and let $\Gamma=\Gamma(G,X\cup \mathcal{H})$ be the right Cayley graph of $G$ with respect to the generating set $X\cup \mathcal{H}$, where
$$
\mathcal{H}=\underset{\lambda\in \Lambda}\cup H_{\lambda}.
$$
It is proved in~\cite{Osin_0} that the elements of $G$ are either elliptic or loxodromic with respect to
the left action of $G$ on $\Gamma(G,X\cup \mathcal{H})$. This classification of elements of $G$ does not depend
on the choice of the finite set $X$. Moreover, the elliptic elements of $G$ are precisely those which have finite order or conjugate to elements of $\mathcal{H}$.

We call a subgroup $H$ of $G$ {\it parabolic} if it is conjugate to a subgroup of $H_{\lambda}$ for some $\lambda\in \Lambda$. Otherwise we call $H$ {\it non-parabolic}.

\begin{lemma} {\rm (see~\cite[Lemma~2.9]{Bog_Bux})}\label{BB}
Let $H$ be a subgroup of a relatively hyperbolic group $G$.
Then $H$ contains a loxodromic element if and only if
$H$ is infinite and non-parabolic.
\end{lemma}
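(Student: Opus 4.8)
The plan is to prove the two implications separately, with the forward direction being a short direct argument and the reverse direction going by contraposition through Osin's trichotomy. Throughout write $\Gamma=\Gamma(G,X\cup\mathcal{H})$, on which $G$ acts acylindrically with $\Gamma$ hyperbolic, and recall the classification quoted above: an element of $G$ is elliptic precisely when it has finite order or is conjugate into $\mathcal{H}$. For the implication ``$H$ contains a loxodromic element $\Rightarrow$ $H$ is infinite and non-parabolic'', suppose $h\in\Lox(G\curvearrowright\Gamma)\cap H$. A loxodromic element has an unbounded cyclic orbit, so it has infinite order and $H$ is infinite. If $H$ were parabolic, say $H\leqslant f^{-1}H_{\lambda}f$, then $fhf^{-1}\in H_{\lambda}\subseteq\mathcal{H}$, so $h$ would be conjugate into $\mathcal{H}$ and hence elliptic, contradicting $h\in\Lox(G\curvearrowright\Gamma)$. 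Thus $H$ is non-parabolic.

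For the reverse implication I would assume that $H$ contains no loxodromic element and deduce that $H$ is finite or parabolic. Since every element of $G$ is either elliptic or loxodromic, this assumption forces every element of $H$ to be elliptic. The subgroup $H$ inherits an acylindrical action on the hyperbolic graph $\Gamma$, so I would apply Theorem~\ref{Osin_trichotomy} to $H\curvearrowright\Gamma$: alternatives (b) and (c) each supply a loxodromic element and are therefore excluded, leaving alternative (a), that $H$ has bounded orbits. (Alternatively one obtains a common basepoint directly from Proposition~\ref{GenerKoubi} applied to the symmetrized set $H$, every element of $H\cup H\cdot H=H$ being elliptic.) Choosing a vertex $g_{0}$ with $Hg_{0}$ bounded and conjugating so that $g_{0}=1$, this says exactly that all elements of $H$ have uniformly bounded length in $X\cup\mathcal{H}$.

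The remaining task is to promote ``bounded orbits'' to ``finite or parabolic'', and I expect this to be the main obstacle. The conceptual route is as follows. If $H$ has an element $a$ of infinite order, then $a$ is elliptic, hence conjugate into $\mathcal{H}$, so $a$ lies in a maximal parabolic subgroup $P$; because distinct maximal parabolics intersect in finite subgroups (almost malnormality of the peripheral family), $P$ is the unique one containing $a$. For each $b\in H$ the element $bab^{-1}\in H$ lies in $bPb^{-1}$, and if $bPb^{-1}\neq P$ then $a$ and $bab^{-1}$ would lie in distinct maximal parabolics, whence a combination argument produces a loxodromic element of $\langle a,bab^{-1}\rangle\leqslant H$, contradicting the absence of loxodromics; hence $bPb^{-1}=P$, so $b$ normalizes $P$, and as a maximal parabolic $P$ is self-normalizing, giving $b\in P$ and $H\leqslant P$. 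The residual case where $H$ is an infinite torsion group is handled by the same mechanism. The genuine difficulty is precisely the combination statement invoked here, that two infinite-order elements lying in distinct maximal parabolic subgroups generate a subgroup containing a loxodromic element; note that acylindricity by itself cannot deliver the conclusion, since an infinite parabolic subgroup is a legitimate infinite bounded-orbit subgroup, so the finer geometry of the peripheral structure is unavoidable. Concretely I would either run a ping-pong argument on $\Gamma$ using the bounded coset penetration property, or, more economically, simply cite Osin's classification of subgroups of relatively hyperbolic groups from \cite{Osin_0}, from which the statement ``an infinite subgroup with bounded orbits in $\Gamma(G,X\cup\mathcal{H})$ is parabolic'' follows directly.
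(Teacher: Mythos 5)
Your forward implication and the first half of the converse are sound and would compile against the paper's toolkit: loxodromic elements have infinite order, conjugates into $\mathcal{H}$ are elliptic by the element classification \cite[Theorem 4.25]{Osin_0}, and the restriction of the acylindrical action on $\Gamma(G,X\cup\mathcal{H})$ to $H$ is again acylindrical, so Theorem~\ref{Osin_trichotomy} (or, as you note, Proposition~\ref{GenerKoubi} applied to the symmetrized set $H$) does force bounded orbits once all elements of $H$ are elliptic. Be aware, though, that the paper contains no proof to match your argument against: the lemma is quoted verbatim from \cite[Lemma~2.9]{Bog_Bux}, so the entire burden of a self-contained proof rests exactly on the step you yourself flag as the obstacle, namely promoting ``infinite with bounded orbits'' to ``parabolic''.

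That step is where the genuine gap lies, in three respects. First, your combination claim --- that two infinite-order elements lying in distinct maximal parabolics generate a subgroup containing a loxodromic --- is asserted, not proved, and as framed it is essentially circular: the natural derivation from the tools you have assembled (no loxodromics $\Rightarrow$ bounded orbits $\Rightarrow$ parabolic, then almost malnormality) runs through the very implication being established; an independent proof, e.g.\ that $a^nb^n$ is loxodromic for large $n$ via relative geodesics and bounded coset penetration, is real work and is not carried out. Second, your fallback citation does not exist in the form you need: \cite{Osin_0} classifies \emph{elements} of a relatively hyperbolic group (finite order, parabolic, or hyperbolic), but contains no subgroup classification from which ``an infinite subgroup bounded in $\Gamma(G,X\cup\mathcal{H})$ is parabolic'' follows directly; that subgroup-level statement is precisely the content of \cite[Lemma~2.9]{Bog_Bux}, and in the literature it is extracted from Tukia's classification of subgroups of convergence groups combined with the geometrically finite convergence action on the Bowditch boundary (or from a BCP ping-pong of the kind you defer). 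Third, the infinite-torsion case is not ``handled by the same mechanism'': with no infinite-order element there is no anchor maximal parabolic $P$ to normalize, and the fact that an infinite torsion subgroup of a relatively hyperbolic group is parabolic is itself a nontrivial theorem requiring the same convergence-type input. (Your self-normalization step, by contrast, is fine: $P$ contains the infinite-order element $a$, so almost malnormality of the peripheral family \cite[Proposition 2.36]{Osin_0} applies.) In sum, the proposal is a correct and well-organized reduction with an honestly flagged hole, but the hole is the theorem.
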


\begin{proposition}\label{RelHypAcylindrical}{\rm (see~\cite[Proposition~5.2]{Osin_1})}
Let $G$ be a relatively hyperbolic group with respect to a collection of subgroups $\{H_{\lambda}\}_{\lambda\in \Lambda}$. Let $X$ be a finite relative generating set of $G$ with respect to $\{H_{\lambda}\}_{\lambda\in \Lambda}$.
Then the canonical action of $G$ on the Cayley graph $\Gamma(G, X\cup \mathcal{H})$ is acylindrical.
\end{proposition}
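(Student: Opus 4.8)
This is Osin's Proposition~5.2, so the plan is to reconstruct its proof from the coarse geometry of relatively hyperbolic groups rather than to cite it. Write $\hat X = X\cup \mathcal H$ and let $\hat d$ denote the word metric of $\Gamma(G,\hat X)$; recall from \cite[Corollary~2.54]{Osin_0}, already invoked in the proof of Proposition~\ref{HEG_No_Univer}, that $\Gamma(G,\hat X)$ is $\delta$-hyperbolic. The action is by left translations, which are isometries and transitive on vertices, and acylindricity is translation-invariant; so after translating one marked point to the identity vertex it suffices to produce, for each $\epsilon>0$, numbers $R,N>0$ such that for every vertex $h$ with $\hat d(1,h)\ge R$ one has
\[
\bigl|\{g\in G: \hat d(1,g)\le \epsilon \text{ and } \hat d(1,h^{-1}gh)\le \epsilon\}\bigr|\le N.
\]
(Points in the interiors of edges lie within $\tfrac12$ of a vertex and are absorbed into $\epsilon$ and $R$.)

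The geometric engine is fellow-travelling together with bounded coset penetration. For such a $g$ the geodesic $[1,h]$ and its translate $g[1,h]=[g,gh]$ have endpoints $\epsilon$-close, so by $\delta$-thinness they $(2\delta+\epsilon)$-fellow-travel outside a bounded end region. The input special to relative hyperbolicity is the Bounded Coset Penetration property (equivalently, Osin's analysis of components of paths via the linear relative isoperimetric inequality): two quasi-geodesics with close endpoints penetrate exactly the same parabolic cosets $fH_\lambda$, with matching entry and exit vertices up to a uniform $\hat d$-error, while the complementary portions are read off in the finite alphabet $X$. The plan is to cut $[1,h]$ into its coset-penetration segments and its $X$-segments and to pin down $g$ on each type.

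On the $X$-segments the action is effectively proper: a long non-penetrating subword in the finite set $X$ that $g$ must match is translated by a bounded amount, so finiteness of $X$ leaves only boundedly many candidates for $g$. The genuine difficulty is the coset-penetration segments, where properness fails outright, since every element of $H_\lambda$ moves $1$ by $\hat d\le 1$. Here I would use that $g$ carries a deep penetration of $fH_\lambda$ --- one whose entry and exit vertices are far apart in the \emph{$X$-metric} --- to a penetration of the \emph{same} coset with nearby entry and exit, and then appeal to the defining local finiteness of the relative metric $\widehat d_\lambda$ on $H_\lambda$ (for each $n$ only finitely many $h\in H_\lambda$ satisfy $\widehat d_\lambda(1,h)\le n$). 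Choosing $R$ large enough forces at least one penetration to be deep, and the two-sided matching at its ends confines $g$ to a set of size $N$ depending only on $\epsilon,\delta$ and the penetration constants.

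The main obstacle is exactly this reconciliation of non-properness at the parabolic cosets with the required \emph{uniform} finiteness. The reason acylindricity survives, despite infinitely many isometries moving a single point boundedly, is the two-point hypothesis: an element approximately fixing both ends of a sufficiently deep penetration must act almost trivially across that coset, and local finiteness of $\widehat d_\lambda$ upgrades ``almost trivially'' to ``one of finitely many''. Arranging that the depth threshold $R$ and the count $N$ depend only on $\epsilon$ and the fixed data $\delta, X, \{H_\lambda\}$ --- and not on the chosen far vertex $h$ --- is where the quantitative care concentrates; the rest is routine thin-quadrilateral bookkeeping.
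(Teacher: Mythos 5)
The paper offers no proof of this proposition at all: it is quoted from Osin with the citation \cite[Proposition~5.2]{Osin_1}, so the only benchmark is Osin's own argument, and your sketch does follow its outline correctly --- hyperbolicity of $\Gamma(G,X\cup\mathcal{H})$, thinness of the quadrilateral on $1,h,g,gh$, matching of deep coset penetrations, and local finiteness of the relative metrics $\widehat{d}_\lambda$ as the ultimate source of the uniform bound $N$. However, two steps would fail as written. First, matching entry and exit vertices ``up to a uniform $\hat d$-error'' is vacuous: any two vertices of a single coset $fH_\lambda$ are joined by one $\mathcal{H}$-edge, so they are automatically at $\hat d$-distance at most $1$, and local finiteness of $\widehat{d}_\lambda$ extracts nothing from $\hat d$-closeness (the $\hat d$-ball of radius $1$ inside the coset is the whole coset). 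The matching must be asserted in the locally finite metric $\widehat{d}_\lambda$ itself; this is exactly the content of Osin's component analysis (Proposition~3.15 of \cite{Osin_0} and its relatives), and it is also why your phrase ``entry and exit vertices far apart in the $X$-metric'' is not meaningful here: $X$ is only a finite \emph{relative} generating set, $G$ and the $H_\lambda$ need not be finitely generated, so there is no $X$-metric in which to measure depth --- depth, too, must be measured in $\widehat{d}_\lambda$. Relatedly, ``penetrate exactly the same parabolic cosets'' is only true above a uniform depth threshold; shallow penetrations of the two paths can differ.

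Second, the $X$-segment case is not ``effectively proper'' for the reason you give. The hypothesis $\hat d(1,g)\leqslant \epsilon$ confines $g$ to an $\epsilon$-ball of $\Gamma(G,X\cup\mathcal{H})$, which is \emph{infinite} whenever some $H_\lambda$ is, and the short connectors produced by $\delta$-thinness may carry $\mathcal{H}$-letters; so what you obtain is $g\in P\cdot B\cdot P^{-1}$ with $P$ a finite set of $X$-words but $B$ an infinite $\hat d$-ball, and finiteness of $X$ alone does not leave boundedly many candidates for $g$. Controlling the $\mathcal{H}$-letters of the connectors is once more the isolated-component argument (an $H_\lambda$-component isolated in a cycle of bounded length represents an element of uniformly bounded $\widehat{d}_\lambda$-length, hence ranges over a finite set), so in a correct write-up both of your cases funnel through the same relative-length lemma rather than splitting into a ``proper'' case and a ``parabolic'' case. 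Your final paragraph shows you have located the right mechanism --- two-point rigidity across a deep penetration plus local finiteness of $\widehat{d}_\lambda$ --- but as the proposal stands, these two steps are the actual gap, not routine bookkeeping.
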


\begin{corollary}\label{App3} Let $G$ be a relatively hyperbolic group with respect to a collection of subgroups $\{H_{\lambda}\}_{\lambda\in \Lambda}$.
Let $\varphi:\HEG\rightarrow G$ be a homomorphism.
Then there exists $n\in \mathbb{N}$ such that the subgroup $\varphi(\HEG^n)$ is finite or parabolic.
\end{corollary}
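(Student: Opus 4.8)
The plan is to recognize the hypotheses of Theorem~\ref{thebigone} in the present relatively hyperbolic setting, and then to convert its conclusion into the dichotomy ``finite or parabolic'' by means of Lemma~\ref{BB}.

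First I would fix a finite relative generating set $X$ of $G$ with respect to $\{H_{\lambda}\}_{\lambda\in\Lambda}$ and pass to the relative Cayley graph $\Gamma=\Gamma(G,X\cup\mathcal{H})$. This graph is hyperbolic (as recalled in Example~\ref{Rad_Rel_Hyp}, via \cite[Corollary~2.54]{Osin_0}), the canonical action of $G$ on $\Gamma$ is acylindrical by Proposition~\ref{RelHypAcylindrical}, and it is cobounded since $G$ acts transitively on the vertices of its own Cayley graph. Thus $G$ acts coboundedly and acylindrically on a hyperbolic space, which is precisely the hypothesis required by Theorem~\ref{thebigone}.

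Applying Theorem~\ref{thebigone} to the homomorphism $\varphi:\HEG\rightarrow G$ and this action then yields a natural number $n$ with
\[
\varphi(\HEG^n)\subseteq \Ell(G\curvearrowright \Gamma).
\]
As recalled in the paragraph preceding Lemma~\ref{BB}, the elliptic elements of $G$ for this action are exactly the elements of finite order together with those conjugate into $\mathcal{H}$; in particular the subgroup $\varphi(\HEG^n)$ contains no loxodromic element.

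Finally I would invoke Lemma~\ref{BB}: a subgroup of $G$ contains a loxodromic element if and only if it is infinite and non-parabolic. Since $\varphi(\HEG^n)$ contains no loxodromic element, the contrapositive gives that $\varphi(\HEG^n)$ is finite or parabolic, which is the assertion. The only points demanding care are verifying that the action is genuinely cobounded (so that the full strength of Theorem~\ref{thebigone} applies, rather than merely an elliptic-radical statement) and the correct identification of the elliptic elements of this particular action; once these are in place the argument is immediate.
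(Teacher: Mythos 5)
Your proposal is correct and follows essentially the same route as the paper: pass to the relative Cayley graph $\Gamma(G,X\cup\mathcal{H})$, invoke Proposition~\ref{RelHypAcylindrical} for acylindricity, apply Theorem~\ref{thebigone} to get $\varphi(\HEG^n)\subseteq \Ell(G\curvearrowright\Gamma)$, and conclude via Lemma~\ref{BB}. Your explicit verification of coboundedness (vertex-transitivity of the Cayley graph action) is a point the paper leaves implicit, and is a welcome bit of care since Theorem~\ref{thebigone} genuinely requires it.
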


\begin{proof}  Let $X$ be a finite relative generating set of $G$ with respect to $\{H_{\lambda}\}_{\lambda\in \Lambda}$. By Proposition~\ref{RelHypAcylindrical}, $G$ acts acylindrically on the
Cayley graph $\Gamma=\Gamma(G, X\cup \mathcal{H})$.
By Theorem~\ref{thebigone}, there exists a natural number $m$ such that $\varphi (\HEG^m)\subseteq \Ell(G\curvearrowright \Gamma)$.
Thus, $\varphi (\HEG^m)$ does not contain loxodromic elements. By Lemma~\ref{BB}, $\varphi (\HEG^m)$ is finite or parabolic.
\end{proof}

In the following corollary we obtain a stronger conclusion for a broader class of groups $H$.
The proof directly follows from Proposition~\ref{RelHypAcylindrical}, Lemma~\ref{BB} and Theorem~\ref{metricandlcH}.

\begin{corollary}\label{App3.5} Let $G$ be a relatively hyperbolic group with respect to a collection of subgroups $\{H_{\lambda}\}_{\lambda\in \Lambda}$.
Let $\varphi:H \rightarrow G$ be a homomorphism with $H$ either completely metrizable or locally countably compact Hausdorff.
Then either $\varphi(H)$ is parabolic, or there exists a normal open subgroup $V\leqslant H$ with finite $\varphi(V)$.
\end{corollary}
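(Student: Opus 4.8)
The plan is to apply Theorem~\ref{metricandlcH} to $\varphi$ and then reinterpret its ``small image'' alternative through the concrete acylindrical action underlying relative hyperbolicity. First I would fix a finite relative generating set $X$ of $G$ with respect to $\{H_{\lambda}\}_{\lambda\in\Lambda}$ and pass to the relative Cayley graph $\Gamma=\Gamma(G,X\cup\mathcal{H})$, which is hyperbolic and on which $G$ acts acylindrically by Proposition~\ref{RelHypAcylindrical}. Theorem~\ref{metricandlcH} then yields one of two alternatives: either there is a normal open subgroup $V\leqslant H$ with $\varphi(V)$ finite, in which case the second conclusion of the corollary holds verbatim and there is nothing left to prove, or $\varphi(H)\subseteq\Rad_{\Ell}(G)$.

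All the remaining work lies in the second alternative. Since $G\curvearrowright\Gamma$ is an acylindrical action on a hyperbolic space, every universally elliptic element is in particular elliptic for this one action; thus $\varphi(H)\subseteq\Rad_{\Ell}(G)\subseteq\Ell(G\curvearrowright\Gamma)$, so $\varphi(H)$ contains no loxodromic element with respect to the $\Gamma$-action. By Lemma~\ref{BB} a subgroup of $G$ contains a loxodromic element precisely when it is infinite and non-parabolic, so the absence of loxodromic elements forces $\varphi(H)$ to be finite or parabolic. If $\varphi(H)$ is parabolic we are in the first conclusion; if $\varphi(H)$ is finite I would simply take $V=H$, a normal open subgroup with $\varphi(V)=\varphi(H)$ finite, landing again in the second conclusion.

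I do not anticipate a serious obstacle, in keeping with the remark preceding the statement that the proof follows directly. The only points demanding care are bookkeeping: one must explicitly specialize the \emph{universal} ellipticity delivered by Theorem~\ref{metricandlcH} to ellipticity for the single action $G\curvearrowright\Gamma$ before Lemma~\ref{BB} becomes applicable, and one must remember to absorb the ``finite image'' output of Lemma~\ref{BB} into the corollary's second alternative via the trivial choice $V=H$, so that the stated dichotomy genuinely exhausts all cases.
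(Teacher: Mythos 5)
Your proof is correct and takes essentially the same route as the paper, which gives no written proof but states that the corollary ``directly follows from Proposition~\ref{RelHypAcylindrical}, Lemma~\ref{BB} and Theorem~\ref{metricandlcH}'' --- precisely your chain of reasoning. The two bookkeeping points you flag (specializing universal ellipticity in $\Rad_{\Ell}(G)$ to the single acylindrical action on $\Gamma(G,X\cup\mathcal{H})$ before invoking Lemma~\ref{BB}, and absorbing the finite-image case into the second alternative via $V=H$) are exactly the routine steps the paper leaves implicit.
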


\subsection{Applications to homomorphisms
%from certain topological groups
to fundamental groups of graphs of groups}

For a graph $\Gamma$ we denote by $\Gamma^{0}$ the set of its vertices and by $\Gamma^1$ the set of its edges.
For any edge $e\in \Gamma^1$, let $e_{-}$ be the initial vertex of $e$ and $e_{+}$ be the terminal vertex of $e$.
Let $e^{-1}$ be the edge inverse to $e$.

We recall some terminology from the book~\cite{Serre}.
A {\it graph of groups} is a pair $\mathcal{G}=(\Gamma, \mathbb{G})$,
where $\Gamma$ is a connected graph and $\mathbb{G}$ is a triple consisting of groups $G_v$, where $v\in \Gamma^0$, of groups $G_e$ where $e\in \Gamma^1$ (we require that $G_{e}=G_{e^{-1}}$), and of embeddings $\alpha_e:G_e\rightarrow G_{e_{-}}$ and $\omega_e:G_e\rightarrow G_{e_{+}}$ where $e\in \Gamma^1$ (we require that $\alpha_e=\omega_{e^{-1}}$). A graph of groups $(\Gamma,\mathbb{G})$  is called {\it finite} if $\Gamma$ is finite.

Let $\Delta$ be a maximal subtree of $\Gamma$. The fundamental group $\pi_1(\Gamma,\mathbb{G},\Delta)$ of the graph of groups $(\Gamma,\mathbb{G})$ is the group generated by the elements of $G_v$ $(v\in \Gamma^{1})$ and the elements $t_e$ $(e\in \Gamma^1)$,
subject to the relations
$$t_e^{-1}\alpha_e(g)t_e=\omega_e(g),\hspace*{3mm} g\in G_e,\, e\in \Gamma^0,$$
$$t_{e^{-1}}=t_e^{-1},\hspace*{3mm} e\in \Gamma^0,$$
$$t_e=1,\hspace*{3mm} e\in \Delta^1.$$
The group $\pi_1(\Gamma,\mathbb{G},\Delta)$ is independent, up to isomorphism, of the choice of the maximal tree
$\Delta$.
The subgroups $G_v$ of the group $G=\pi_1(\Gamma,\mathbb{G},\Delta)$ are called the {\it vertex subgroups} of $G$.
Particular cases of fundamental groups of graph of groups are amalgamated products and HNN extensions.

The group $G$ acts by left multiplication on the Bass-Serre tree $T$ associated with $(\Gamma, \mathbb{G})$, see~\cite{Serre}.
Recall that the vertices of $T$ are the cosets $gG_v$, where $g$ runs over $G$ and $G_v$ runs over vertex subgroups of $G$. It is well known that each element of a group acting on a tree has either a nonempty fixed
points set (which is a subtree), or an invariant axis where it acts by translation through a nonzero distance. In the first case the element is called elliptic, in the second one loxodromic.

\medskip

Our nearest aim is to give a condition which provides the acylindricity of a group action on a simplicial tree.
The following definition is very similar to the definition of Sela of $k$-acylindricity
and to the definition of Bowditch of acylindricity.

\begin{definition} Let $G$ be a group acting on a simplicial tree $T$. Let $k$ and $n$ be natural numbers.
We say that $G$ acts on $T$\, {\it $(k,n)$-acylindrically}
if the pointwise stabilizer of any segment of $T$ of length $k$ contains at most $n$ elements.

%We say that $G$ acts on $T$ {\it weakly acylindrically} if
%$G$ acts on $T$\, $(k,n)$-acylindrically for some $k$ and $n$.
\end{definition}

%Recall that this amalgamation $G=A\ast_CB$ is called {\it acylindrical}
%in the sense of Sela if there exists a natural $k$
%such that the stabilizer of any segment of length $k$ is finite.
%We also say that $G$ acts on $T$ acylindrically (or $k$-acylindrically) {\it in the sense of Sela}.

\noindent
\begin{lemma}\label{number of roots lox}  Let $G$ be a group that acts $(k,n)$-acylindrically on a simplicial
tree~$T$. Then for any loxodromic element $a\in G$ and any nonzero integer $m$,
there exist at most $n$ elements $b$ satisfying $a=b^m$.
\end{lemma}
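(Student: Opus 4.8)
The plan is to exploit the uniqueness of axes for loxodromic isometries of a tree together with the $(k,n)$-acylindricity hypothesis. First I would observe that any $b$ with $b^m=a$ is itself loxodromic: on a simplicial tree every isometry is elliptic or loxodromic, and an elliptic element (fixing some point $p$) has elliptic powers (still fixing $p$); since $a=b^m$ is loxodromic, $b$ cannot be elliptic. Because the powers of a loxodromic tree isometry all share its axis, the axis of $b$ coincides with the axis of $b^m=a$. Calling this common bi-infinite geodesic $\ell$, we conclude that \emph{every} root $b$ of $a$ has axis exactly $\ell$.

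Next I would pin down how each such $b$ moves $\ell$. Restricting the action to $\ell$ (a line, with vertex set order-isomorphic to $\mathbb{Z}$), the element $b$ acts as a translation by some signed amount $t_b$, so $b^m$ translates $\ell$ by $m\,t_b$. Since $b^m=a$, we get $m\,t_b=T$, where $T$ is the fixed nonzero signed translation length of $a$ along $\ell$. Hence $t_b=T/m$ is the same for every root $b$; in particular all roots translate $\ell$ by one and the same signed amount (and automatically in the same direction, since they share the power $a$).

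Then, fixing one root $b_0$ (if no root exists the statement is vacuous), for any other root $b$ the product $b_0^{-1}b$ acts on $\ell$ as the trivial translation and therefore fixes $\ell$ pointwise. As $\ell$ is bi-infinite it contains a segment $\sigma$ of length $k$, and $b_0^{-1}b$ lies in the pointwise stabilizer of $\sigma$. By $(k,n)$-acylindricity this stabilizer has at most $n$ elements. Since the map $b\mapsto b_0^{-1}b$ is injective, the set of roots of $a$ injects into a set of cardinality at most $n$, which gives the desired bound.

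The only delicate ingredients are the standard Bass--Serre facts that a loxodromic tree isometry has a unique axis preserved by all its powers, and the elliptic/loxodromic dichotomy; these I would cite from~\cite{Serre} or verify directly through the displacement function $x\mapsto d(x,bx)$ and its minimal set. The genuinely load-bearing step is the passage from \emph{all roots inducing the same translation on $\ell$} to \emph{their pairwise quotients fixing a length-$k$ segment}, after which $(k,n)$-acylindricity immediately yields the count; I do not expect real difficulty beyond keeping track of the common translation direction.
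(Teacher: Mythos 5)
Your proposal is correct and follows essentially the same approach as the paper's proof: all $m$-th roots of $a$ are loxodromic with the same axis, the same signed translation length, and hence pairwise quotients fix the axis pointwise, so the pointwise stabilizer of a length-$k$ segment bounds their number by $n$ via $(k,n)$-acylindricity. The paper's version is merely a terser rendering of the identical argument (comparing two roots $b,b_1$ via $b_1b^{-1}$), with your write-up supplying the routine details it leaves implicit.
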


%\medskip

\begin{proof} Let $b$ and $b_1$ be two loxodromic elements satisfying $b^m=b_1^m$. Then the axes of $b$ and $b_1$ coincide, they have the same translation length and translation direction.
Then $b_1b^{-1}$ fixes this axis pointwise, hence the number of $b_1b^{-1}$ is at most $n$.\end{proof}

\noindent
\begin{proposition}\label{equiv_acylindric}
An action of a group $G$ on a simplicial tree $T$ is $(k,n)$-acylindrical for some $k,n$ if and only if it is acylindrical in the sense of Definition~\ref{def Bowditch}.
\end{proposition}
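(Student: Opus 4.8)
The plan is to prove the two implications separately, the forward one (acylindricity in the sense of Definition~\ref{def Bowditch} implies $(k,n)$-acylindricity) being essentially immediate and the converse requiring the geometry of isometries of trees. For ``acylindrical $\Rightarrow$ $(k,n)$-acylindrical'' I would apply Definition~\ref{def Bowditch} with $\epsilon=1$ to obtain constants $R,N>0$ and then set $k=\lceil R\rceil$ and $n=\lceil N\rceil$. If $g$ fixes a segment $[u,v]$ pointwise with $d(u,v)=k$, then $d(u,gu)=d(v,gv)=0\le 1$ while $d(u,v)=k\ge R$, so $g$ lies in the set counted in Definition~\ref{def Bowditch}; hence at most $N\le n$ such $g$ exist, giving $(k,n)$-acylindricity.

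For the converse I would invoke the standard structure theory of isometries of a tree: for every isometry $g$ the minimal displacement $\ell(g)=\min_x d(x,gx)$ is attained on a nonempty closed convex subtree $\operatorname{Min}(g)$, with $d(x,gx)=\ell(g)+2\,d(x,\operatorname{Min}(g))$ for all $x$; moreover $\operatorname{Min}(g)$ is the fixed subtree when $\ell(g)=0$ and an axis (a line) along which $g$ translates by the positive integer $\ell(g)$ when $\ell(g)>0$. Fix $\epsilon>0$ and set $F=\{g\in G:\ d(p,gp)\le\epsilon,\ d(q,gq)\le\epsilon\}$ for points with $D:=d(p,q)$ large. The crux is the claim that, once $D>\epsilon$, a long \emph{central} subsegment of $[p,q]$ lies inside $\operatorname{Min}(g)$ for \emph{every} $g\in F$. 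Indeed, the displacement formula gives $d(p,\operatorname{Min}(g))\le\epsilon/2$ and $d(q,\operatorname{Min}(g))\le\epsilon/2$; the nearest-point projections $\pi(p),\pi(q)$ onto the convex set $\operatorname{Min}(g)$ must be distinct (else $d(p,q)\le\epsilon$), so $[\pi(p),\pi(q)]\subseteq\operatorname{Min}(g)$ by convexity and $[\pi(p),\pi(q)]\subseteq[p,q]$ by a short tree computation. Trimming $\lceil\epsilon/2\rceil$ from each end of $[p,q]$ then yields a segment $J\subseteq\operatorname{Min}(g)$ of length at least $D-\epsilon-2$. This simultaneously rules out edge inversions, for which $\operatorname{Min}(g)$ is a single point.

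Now I would choose $R$ so large that for $D\ge R$ the segment $J$ contains a subsegment $I_0\subseteq[p,q]$ of length $k+2\lceil\epsilon\rceil$, with central subsegment $I_1$ of length exactly $k$. Since $\ell(g)\le d(p,gp)\le\epsilon$ and $I_1\subseteq\operatorname{Min}(g)$ coincides with $[p,q]$ along $I_0$, each $g\in F$ acts on $I_1$ simply as the shift by $\ell(g)$ towards $p$ or towards $q$, the image remaining inside $I_0\subseteq[p,q]$ by the length choice. As $\ell(g)$ is an integer in $\{0,1,\dots,\lfloor\epsilon\rfloor\}$ and there are two directions, the restriction $g\upharpoonright I_1$ takes at most $2\lfloor\epsilon\rfloor+1$ distinct values. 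If $g\upharpoonright I_1=g'\upharpoonright I_1$ then $g'^{-1}g$ fixes $I_1$ (a segment of length $k$) pointwise, so by $(k,n)$-acylindricity each such restriction is realized by at most $n$ elements. Hence $|F|\le(2\lfloor\epsilon\rfloor+1)\,n=:N$, the required bound for the chosen $R$.

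I expect the main obstacle to be the geometric containment claim, namely establishing that the central part of $[p,q]$ genuinely lies \emph{on} $\operatorname{Min}(g)$ (not merely near it) uniformly over all $g\in F$, together with the correct projection and inversion bookkeeping in the tree. Once this is secured, the counting via translation length, direction, and the pointwise-stabilizer bound coming from $(k,n)$-acylindricity is routine.
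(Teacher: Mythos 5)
Your proposal is correct, and its substantive direction shares its geometric core with the paper's proof, but the counting step is genuinely different. Both arguments rest on the same key fact: every $g$ displacing $p$ and $q$ by at most $\epsilon$ must contain a long central subsegment of $[p,q]$ in its fixed subtree or axis. You derive this from the Culler--Morgan displacement formula $d(x,gx)=\ell(g)+2\,d(x,\operatorname{Min}(g))$ together with nearest-point projections onto the convex set $\operatorname{Min}(g)$ (which also disposes of inversions), whereas the paper argues more bare-handedly, via connectivity of fixed-point sets for elliptics and the observation that a loxodromic axis must meet both $[u,gu]$ and $[v,gv]$. Where you diverge is in the count: the paper splits into elliptic and loxodromic cases and, for two loxodromics $g,f$ sharing the central segment on their axes, forms the element $g^{t(h)}\bigl(h^{t(g)}\bigr)^{-1}$ with $h\in\{f,f^{-1}\}$ chosen to match translation directions; this element pointwise fixes a length-$\geqslant k$ segment, but undoing the passage to powers requires the auxiliary Lemma~\ref{number of roots lox} on roots of loxodromic elements, and the resulting bound is $2\sigma n^2+n$. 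You instead fiber the whole set $F$ over the restriction map $g\mapsto g\upharpoonright I_1$: the restriction is one of at most $2\lfloor\epsilon\rfloor+1$ shifts along $[p,q]$, and if $g,g'$ lie in the same fiber then $g'^{-1}g$ pointwise fixes the length-$k$ segment $I_1$, so each fiber has size at most $n$ by $(k,n)$-acylindricity. This treats elliptics and loxodromics uniformly, avoids the roots lemma entirely, and yields the sharper bound $(2\lfloor\epsilon\rfloor+1)n$, linear rather than quadratic in $n$. Two bookkeeping points are worth writing out in a final version: since $p,q$ are arbitrary points of the geometric realization, $I_1$ may have non-vertex endpoints, but an automorphism without inversions fixing a positive-length piece of an edge fixes that edge pointwise, so the stabilizer of $I_1$ still pointwise fixes a vertex-to-vertex segment of length $k$; and translation lengths of loxodromics in a simplicial action are positive integers, which is what makes your list of possible restrictions finite.
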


%\medskip

\begin{proof}  Suppose that $G$ acts on $T$ $(k,n)$-acylindrically.
We prove that $G$ acts on $T$ acylindrically in the sense of Definition~\ref{def Bowditch}.
For that we check that for any integer $\sigma>1$ and any two vertices $u,v\in T^0$ with $d(u,v)\geqslant k+2\sigma+2\sigma^2$ the set
$$
\mathcal{M}=\{g\in G\,|\, d(u,gu)<\sigma,\, d(v,gv)<\sigma\}
$$
contains at most $2\sigma n^2 + n$ elements.

Let $u_1,u_2,v_1,v_2$ be vertices on $[u,v]$ such that
$d(u,u_1)=\sigma$, $d(u_1,u_2)=\sigma^2$, $d(v_1,v)=\sigma$, $d(v_2,v_1)=\sigma^2$.
Thus, $[u_1,v_1]$ is the central subsegment of $[u,v]$ of length at least $k+2\sigma^2$ and $[u_2,v_2]$ is the central subsegment of $[u_1,v_1]$ of length at least $k$.
Let $g$ be an element of the set $\mathcal{M}$.

\medskip

{\it Case 1.}
Suppose that $g$ acts elliptically on $T$. Then $g$ has fixpoints on the segments
$[u,gu]$ and $[v,gv]$. Since the fixpoint set of $g$ is connected, it contains the segment $[u_1,v_1]$.
Since the length of this segment is larger than $k$, the number of such $g$ is at most $n$.

\medskip

{\it Case 2.} Now suppose that $g$ acts loxodromically on $T$. Then the translation length $t(g)$ of $g$ is at most~$\sigma$.
Moreover, the axis of $g$ has edges in $[u,gu]$ and in $[v,gv]$. Then $[u_1,v_1]$ lies on the axis of $g$.
Let $f$ be another loxodromic element from~$\mathcal{M}$.
Then $[u_1,v_1]$ lies on the axes of $g$ and $f$ and $0<t(g)\leqslant \sigma$ and $0<t(f)\leqslant \sigma$.

Let $h\in \{f,f^{-1}\}$ be the element with the same translation direction as $g$. Then $g^{t(h)}(h^{t(g)})^{-1}$ pointwise fixes $[u_2,v_2]$. Since the length of $[u_2,v_2]$ is at least $k$,
the number of possible elements $g^{t(h)}(h^{t(g)})^{-1}$ is at most $n$.
For the selected $g$ the number of possible $h^{t(g)}$ is at most $\sigma n$.
By Lemma~\ref{number of roots lox}, the number of possible $h$ is at most $\sigma n^2$. Hence the number
of possible $f$ is at most $2\sigma n^2$ and we are done.

The other implication is trivial.
\end{proof}
\begin{remark}
The definition of the $(k,n)$-acylindricity can be written in purely algebraic terms.
For example, if $G=A\ast_CB$, then the action of $G$
on the associated Bass-Serre tree is $(2,n)$-acylindrical if and only if $|C^g\cap C|<\infty$ for every $g\in G\setminus C$.
\end{remark}

%\begin{corollary}\label{App5}
%Let $G$ be the fundamental group of a finite graph of groups $(\Gamma,\mathbb{G})$. Suppose that the action of $G$
%on the Bass-Serre tree associated with $(\Gamma,\mathbb{G})$ is $(k,n)$-acylindrical for some $k,n\in \mathbb{N}$.
%Then for any abstract group homomorphism $\varphi:\HEG\rightarrow G$, there exists a natural number $m$
%such that $\varphi (\HEG^m)$ is conjugate to a subgroup of a vertex subgroup of $G$.
%\end{corollary}

%\medskip

%\begin{proof}  Let $T$ be the Bass-Serre tree associated with the graph of groups $(\Gamma,\mathbb{G})$.
%Since $\Gamma$ is finite,  $G$ acts coboundedly on $T$. By Lemma~\ref{equiv_acylindric},
%this action is acylindrical, and the statement directly follows from Theorem~\ref{thebigone}
%and Lemma~\ref{product_elliptic}.
%\end{proof}

\medskip

\begin{lemma}\label{FixedpointLemma} Suppose that $G$ is a group acting acylindrically on a simplicial tree $T$.
If $G$ does not contain loxodromic elements, then there exists a vertex or an edge of $T$
which, up to inversion, is fixed by $G$.
\end{lemma}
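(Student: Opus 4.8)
The plan is to regard $T$ as a geodesic metric space and to read off the conclusion from the trichotomy of Theorem~\ref{Osin_trichotomy}. A connected simplicial tree, equipped with its path metric (each edge having length $1$), is a geodesic space which is $0$-hyperbolic, and on it the combinatorial and the metric notions of elliptic and loxodromic elements agree: an element with a nonempty fixed subtree has bounded orbits (elliptic), while an element translating along an invariant axis is loxodromic. By Proposition~\ref{equiv_acylindric} the hypothesis that $G$ acts acylindrically on $T$ is exactly acylindricity in the sense of Definition~\ref{def Bowditch}, so Theorem~\ref{Osin_trichotomy} applies to the action $G\curvearrowright T$.

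The next step is to eliminate two of the three alternatives. In case (b) of Theorem~\ref{Osin_trichotomy} the group $G$ is virtually cyclic and \emph{contains a loxodromic element}, while in case (c) it \emph{contains (infinitely many) loxodromic elements}; both are incompatible with the standing hypothesis that $G$ has no loxodromic element. Hence only case (a) can occur, so $G$ has bounded orbits on $T$.

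It then remains to convert a bounded orbit into a fixed vertex or an inverted edge. I would pick any vertex $v$ and pass to its orbit $G\cdot v$, which is bounded; its convex hull $C$, namely the union of the geodesic segments joining pairs of points of $G\cdot v$, is a bounded $G$-invariant subtree. A bounded tree has a canonical center, which is either a single vertex or the midpoint of a single edge, defined intrinsically as the minimizer of the eccentricity function and therefore preserved by every isometry of $C$, in particular by $G$. If the center is a vertex, then $G$ fixes that vertex; if it is the midpoint of an edge $e$, then $G$ stabilizes $e$ setwise, that is, fixes $e$ up to inversion. In either case we obtain a vertex or an edge of $T$ which, up to inversion, is fixed by $G$, which is the desired conclusion.

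The only point requiring genuine care is the last passage: that a bounded subtree of a simplicial tree has a well-defined center (a vertex or an edge midpoint) invariant under all automorphisms preserving the subtree. This is the classical center construction for trees, and it is precisely here that the alternative of an \emph{inverted} rather than pointwise fixed edge enters the statement. Everything preceding it is a direct application of the already-established trichotomy, so I expect this bounded-orbit-to-fixed-point step to be the main (though routine) obstacle.
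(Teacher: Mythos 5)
Your proposal is correct and takes essentially the same route as the paper: both apply Theorem~\ref{Osin_trichotomy} to the acylindrical action on $T$, rule out alternatives (b) and (c) because they produce loxodromic elements, and then extract from the bounded-orbit case a $G$-invariant vertex or inverted edge via the center of the bounded $G$-invariant subtree spanned by an orbit. The only cosmetic difference is in the last step: where you invoke the classical center (minimizer of eccentricity) of a bounded tree, the paper proves that fact directly by inductively pruning outer vertices and edges to decrease the diameter, which is precisely the standard construction of the center you cite.
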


\medskip

{\it Proof.}  By Theorem~\ref{Osin_trichotomy}, $G$ has an orbit of finite diameter, say $Gx$, where $x$ is a vertex of $T$. Let $\Gamma$ be the tree spanned by $Gx$. Then $\Gamma$ has finite diameter and is $G$-invariant.
A vertex of $\Gamma$ is called {\it outer} if it has degree 1.
An edge of $\Gamma$ is called {\it outer} if at least one of its endpoints is outer.
All other vertices and edges of $\Gamma$ are called {\it inner}.
Let $\Gamma_1$ be the subset of $\Gamma$ consisting of all inner vertices and edges.
If $\Gamma$ contains at least 2 edges, then $\Gamma_1$ is a nonempty subtree of $\Gamma$.
Moreover, $\Gamma_1$ has smaller diameter than $\Gamma$ and is $G$-invariant.
Inducting on the diameter of $\Gamma$, we complete the proof. \hfill $\Box$

\medskip

The following two corollaries follow directly from Theorem~\ref{metricandlcH} (respectively, from Theorem~\ref{thebigone}) with the help of
Proposition~\ref{equiv_acylindric} and Lemma~\ref{FixedpointLemma}.

\begin{corollary}\label{App6}   Let $H$ be a topological group which is either completely metrizable, or locally countably compact Hausdorff.
Let $G$ be the fundamental group of a (possibly infinite) graph of groups $(\Gamma,\mathbb{G})$. Suppose that the action of $G$
on the Bass-Serre tree associated with $(\Gamma,\mathbb{G})$ is $(k,n)$-acylindrical for some $k,n\in \mathbb{N}$.
Then for any abstract group homomorphism $\varphi:H \rightarrow G$ either $\varphi(H)$ is conjugate into
a vertex subgroup of $G$ or there exists a normal open subgroup $V\leqslant H$ such that $\varphi (V)$ is finite.
%conjugate into a vertex subgroup of $G$.\marginpar{\tiny $V$ was
%an open neighborhood of $1_H$. Now need more fine arguments, I'll write them.}
\end{corollary}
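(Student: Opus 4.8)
The plan is to assemble the conclusion from three ingredients already in hand: the equivalence of the two notions of acylindricity, the main structural dichotomy of Theorem~\ref{metricandlcH}, and the fixed-point Lemma~\ref{FixedpointLemma}. First I would pass from the combinatorial hypothesis to a geometric one. Since the action of $G$ on the Bass--Serre tree $T$ associated with $(\Gamma,\mathbb{G})$ is $(k,n)$-acylindrical, Proposition~\ref{equiv_acylindric} tells us it is acylindrical in the sense of Definition~\ref{def Bowditch}. As a simplicial tree, $T$ is $0$-hyperbolic, so we have an acylindrical action of $G$ on a hyperbolic space.

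Next I would apply Theorem~\ref{metricandlcH} to the homomorphism $\varphi:H\to G$; this is legitimate since that theorem places no restriction on the target $G$. It produces a dichotomy. In one alternative there is a normal open subgroup $V\leqslant H$ with $\varphi(V)$ finite, which is precisely the second conclusion of the corollary, so in that case there is nothing more to do. In the remaining alternative we have $\varphi(H)\subseteq \Rad_{\Ell}(G)$, and I would work toward the first conclusion.

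In this case I would argue that $\varphi(H)$ fixes a point of $T$. By the definition of the elliptic radical, every element of $\Rad_{\Ell}(G)$ acts elliptically under any acylindrical action of $G$ on a hyperbolic space; applied to $G\curvearrowright T$, this shows every element of $\varphi(H)$ is elliptic, so $\varphi(H)$ contains no loxodromic elements. I would then note that the restriction of an acylindrical action to a subgroup is again acylindrical --- the set counted in Definition~\ref{def Bowditch} for $\varphi(H)$ is contained in the corresponding set for $G$, so the same constants $R,N$ suffice --- so $\varphi(H)$ acts acylindrically on $T$ with no loxodromic elements. Lemma~\ref{FixedpointLemma} now furnishes a vertex or an edge of $T$ that is fixed, up to inversion, by $\varphi(H)$.

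Finally I would convert this fixed point into the algebraic statement. Because the $G$-action on the Bass--Serre tree is without inversions, ``fixed up to inversion'' means genuinely fixed, and since an edge stabilizer is contained in the stabilizer of either of its endpoints, in every case $\varphi(H)$ lies in the stabilizer of some vertex of $T$. Recalling that the vertices of $T$ are the cosets $gG_v$ and that the stabilizer of such a vertex is the conjugate $gG_vg^{-1}$ of the vertex subgroup $G_v$, we conclude that $\varphi(H)$ is conjugate into a vertex subgroup, which is the first conclusion. The one point requiring care is this last passage from the fixed simplex to conjugacy into a vertex subgroup: one must invoke that the Bass--Serre action has no inversions (or subdivide $T$) and use the standard description of vertex stabilizers. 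Everything else is a direct concatenation of the quoted results, so I expect the bookkeeping around inversions and edge stabilizers to be the only genuine, though routine, obstacle.
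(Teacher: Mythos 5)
Your proposal is correct and follows essentially the same route as the paper, which proves this corollary precisely by combining Theorem~\ref{metricandlcH} with Proposition~\ref{equiv_acylindric} and Lemma~\ref{FixedpointLemma}. Your additional bookkeeping --- that acylindricity passes to the subgroup $\varphi(H)$, and that absence of inversions in the Bass--Serre action turns the fixed vertex or edge into containment in a vertex stabilizer $gG_vg^{-1}$ --- correctly supplies the routine details the paper leaves implicit.
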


\begin{corollary}\label{App7}   Let $H$ be a topological group which is either completely metrizable, or locally countably compact Hausdorff.
Let $G$ be the fundamental group of a (possibly infinite) graph of groups $(\Gamma,\mathbb{G})$. Suppose that the action of $G$
on the Bass-Serre tree associated with $(\Gamma,\mathbb{G})$ is $(k,n)$-acylindrical for some $k,n\in \mathbb{N}$.
Then for any abstract group homomorphism $\varphi:\HEG \rightarrow G$
there exists a natural number $m$ such that $\varphi (\HEG^m)$ is a subgroup of $G$ consisting of only elliptic elements with respect to this action.
\end{corollary}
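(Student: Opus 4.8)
The plan is to deduce the statement from Theorem~\ref{thebigone} applied to the action of $G$ on the Bass--Serre tree $T$ of $(\Gamma,\mathbb{G})$. First I would pass from the combinatorial hypothesis to the analytic one: since $G$ acts $(k,n)$-acylindrically on $T$, Proposition~\ref{equiv_acylindric} shows this action is acylindrical in the sense of Definition~\ref{def Bowditch}, and $T$, being a simplicial tree, is $0$-hyperbolic. When $(\Gamma,\mathbb{G})$ is finite the action is in addition cobounded, so Theorem~\ref{thebigone} applies verbatim and yields a natural number $m$ with $\varphi(\HEG^m)\subseteq\Ell(G\curvearrowright T)$, which is precisely the conclusion.

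The delicate point, and what I expect to be the main obstacle, is that for an infinite graph of groups the action of $G$ on $T$ need not be cobounded, so Theorem~\ref{thebigone} cannot simply be quoted. I would instead reproduce its proof by contradiction. The subgroups $\varphi(\HEG^0)\supseteq\varphi(\HEG^1)\supseteq\cdots$ form a descending chain, so either some $\varphi(\HEG^m)$ contains no loxodromic element---in which case Bowditch's dichotomy (equivalently Lemma~\ref{FixedpointLemma}, which even furnishes a common fixed vertex or edge) forces every element of the subgroup $\varphi(\HEG^m)$ to be elliptic and we are done---or for every $m$ there is $u_m\in\HEG^m$ with $\varphi(u_m)$ loxodromic. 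I would then assume the latter and seek a contradiction exactly as in the proof of Theorem~\ref{thebigone}.

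Concretely, I would build $w_1=u_1^{n}(u_2^{n}(\cdots)^{m_2})^{m_1}$ with the exponents $n$ and $m_i$ chosen as in $(4.1)$--$(4.2)$, verify through the same four-case analysis that each $W_i=\varphi(w_i)$ is loxodromic, and extract the impossible inequality $|W_1|\geqslant i(i+1)/2$ valid for all $i$. The ingredients of that argument are Lemma~\ref{elem_index}, Corollary~\ref{length_power_1}, and Corollaries~\ref{lox-lox} and~\ref{lox-ell}; the first two are already stated for arbitrary acylindrical actions, so the real work is to check that the ping-pong estimates~\ref{lox-lox} and~\ref{lox-ell}, established earlier under a coboundedness assumption, persist with uniform constants for \emph{any} acylindrical action on a simplicial tree. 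Coboundedness entered those corollaries only through Lemma~\ref{Elliptic_cobounded}; but on a $0$-hyperbolic tree an elliptic element fixes a point outright, while acylindricity bounds the overlap of two distinct axes and of an axis with a fixed subtree, and the injectivity radius is positive by Lemma~\ref{Bow}. These facts supply the uniform thresholds $C_0$ and $C_1$ directly, with no appeal to coboundedness, after which the contradiction goes through and the required $m$ exists.
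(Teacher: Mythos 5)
Your first paragraph is exactly the paper's own proof: the authors deduce Corollary~\ref{App7} in one line from Theorem~\ref{thebigone} together with Proposition~\ref{equiv_acylindric} (Lemma~\ref{FixedpointLemma} being needed only for the companion Corollary~\ref{App6}), and for a finite graph of groups this is complete, since the action on the Bass--Serre tree is then cobounded. You are also right, and more careful than the paper, to flag that for an infinite graph of groups the tree action need not be cobounded, so that Theorem~\ref{thebigone} cannot be quoted verbatim; the paper passes over this point in silence. Your reduction to the dichotomy (either some $\varphi(\HEG^m)$ has no loxodromic element, whence it consists of elliptic elements by Bowditch's alternative, or one can start the recursive construction) is correct, and you correctly note that Lemma~\ref{elem_index} and Corollary~\ref{length_power_1} are stated for arbitrary acylindrical actions, so the only missing inputs are uniform constants $C_0$ and $C_1$.

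The gap is in your claim that ``acylindricity bounds the overlap of two distinct axes,'' from which you say the uniform threshold $C_0$ of Corollary~\ref{lox-lox} follows directly. That claim is false: $F_2=\langle x,y\rangle$ acting freely on its Cayley tree is $(1,1)$-acylindrical, yet for $a=x$ and $b=x^Ny$ one has $E_G(a)\neq E_G(b)$ while the axes overlap in a segment of length $N$, with $N$ arbitrary. What the pigeonhole argument with segment stabilizers actually gives is an overlap bound depending on the translation lengths (roughly $k+n\cdot\operatorname{lcm}([a],[b])$, obtained by comparing the elements $a^{i[b]/d}b^{\mp i[a]/d}$ on the common segment), so naive ping-pong yields a threshold growing with $[b]/[a]$, not a uniform $C_0$; indeed a balanced elliptic product $a^qb^p$ with $\min(p,q)$ large is consistent with $(k,n)$-acylindricity when $[a],[b]$ are large and coprime. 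Your argument for $C_1$, by contrast, can be made to work: if $b$ is elliptic with $b\notin E_G(a)$, the conjugates $a^{-i}ba^i$ for $0\leqslant i\leqslant n$ fix a common subsegment of $\operatorname{Fix}(b)\cap A(a)$, and $(k,n)$-acylindricity then bounds that intersection by $k+(n+1)[a]$, giving a uniform $C_1$ since $[a]\geqslant 1$ on a simplicial tree. To close the lox-lox case you should instead exploit the specific shape of the recursion: on a simplicial tree a product of two independent loxodromics $g,h$ can be elliptic only when their axes overlap with opposite orientations and $[g]=[h]$, i.e.\ $n[U_i]=m_i[W_{i+1}]$; but by (4.2) one has $m_i\geqslant 2nM_i\geqslant 2n\cdot n[U_i]>n[U_i]$, while $[W_{i+1}]\geqslant 1$ by integrality of translation lengths, so this balance is impossible and Case~1 of Claim~1 goes through without any uniform $C_0$. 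With that substitution (and your $C_1$ argument supplied in full) your strategy does yield a correct proof covering the non-cobounded case.
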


\end{section}

\begin{section}{Combination theorems for automatic continuity}\label{Automatic_Continuity}
From Corollaries \ref{App3} and \ref{App3.5} one obtains new results regarding automatic continuity.  Recall that a group $G$ is \emph{n-slender} if for every abstract homomorphism $\varphi: \HEG \rightarrow G$ there exists $n\in \mathbb{N}$ such that $\HEG^n \leqslant \ker(\varphi)$.  Similarly $G$ is \emph{cm-slender} (respectively \emph{lcH-slender}) if every abstract group homomorphism from a completely metrizable (respectively locally compact Hausdorff) topological group to $G$ has open kernel \cite{CC}.
The abbreviation \emph{lccH-slender} stands for locally countably compact slenderness.
These can be considered as a strong type of automatic continuity: one can endow a cm-slender group $G$ with the discrete topology and assert that any abstract group homomorphism from a completely metrizable topological group to $G$ is continuous.  Free groups, Thompson's group $F$, Baumslag-Solitar groups, $\mathbb{Z}[\frac{1}{m}]$, torsion-free word hyperbolic groups, graph products of such groups and numerous other groups are now known to satisfy these notions of slenderness.  A group which is either n-slender, cm-slender, or lccH-slender must be torsion-free.

%\begin{bigtheorem}\label{slendernessrelhyp}
%\begin{corollary}\label{slendernessrelhyp}
\medskip
\noindent
{\bf Theorem~\ref{slendernessrelhyp}.}
{\it If $G$ is torsion-free and relatively hyperbolic with respect to
a collection $\{H_{\lambda}\}_{\lambda \in \Lambda}$ of n-slender (respectively cm-slender, lcH-slender, lccH-slender) subgroups, then $G$ is also n-slender (respectively cm-slender, lcH-slenderm, lccH-slender).
}

\begin{proof}  Suppose $G$ is torsion-free, relatively hyperbolic to a collection of n-slender subgroups. Let $\varphi: \HEG \rightarrow G$ be a homomorphism. By Corollary \ref{App3}, there exists some $n\in \mathbb{N}$ for which $\varphi(\HEG^n)$ is parabolic.  Then the restriction $\varphi\upharpoonright \HEG^n$ is a map from an isomorphic copy of $\HEG$ to an n-slender group, so there exists an $m \geqslant n$ for which $\varphi\upharpoonright \HEG^m$ is the trivial map.  Thus $G$ is n-slender. The proof for other types of slenderness is similar.
\end{proof}

We obtain the following from Corollaries~\ref{App6} and~\ref{App7}:

%\begin{corollary}\label{slendernessgraph}
\medskip
\noindent
{\bf Theorem~\ref{slendernessgraph}.}
{\it Suppose that $G$ is the fundamental group of a (possibly infinite) graph of groups $(\Gamma,\mathbb{G})$ where each vertex group $G_v$ is n-slender (respectively cm-slender, lcH-slender, lccH-slender) and that the action of $G$ on the Bass-Serre tree associated with $(\Gamma,\mathbb{G})$ is $(k,n)$-acylindrical for some $k,n\in \mathbb{N}$.  Then $G$ is n-slender (respectively cm-slender, lcH-slender, lccH-slender).
}

\end{section}

\end{document}